\newtheorem{theorem}{Theorem}[section]
\newtheorem{lemma}[theorem]{Lemma}
\newtheorem{corollary}[theorem]{Corollary}
\newtheorem{claim}[theorem]{Claim}
\newtheorem{proposition}[theorem]{Proposition}
\theoremstyle{definition}
\newtheorem{definition}[theorem]{Definition}
\newtheorem{example}[theorem]{Example}
\theoremstyle{problem}
\newtheorem{problem}[theorem]{Problem}
\theoremstyle{remark}
\numberwithin{equation}{section}
\newcommand{\xmax}{x_{\scriptsize\rm max}}
\newcommand{\xmin}{x_{\scriptsize\rm min}}
\begin{document}

\title{Toeplitz Subshifts of Finite Rank }
\author{Su Gao}
\address{School of Mathematical Sciences and LPMC, Nankai University, Tianjin 300071, P.R. China}
\email{sgao@nankai.edu.cn}
\thanks{The first author acknowledges the partial support of his research by the National Natural Science Foundation of China (NSFC) grants 12271263 and 12250710128.}
\author{Ruiwen Li}
\address{School of Mathematical Sciences and LPMC, Nankai University, Tianjin 300071, P.R. China}
\email{rwli@mail.nankai.edu.cn}
\thanks{The second author acknowledges the partial support of his research by the National Natural Science Foundation of China (NSFC) grant 124B2001.}
\author{Bo Peng}
\address{Department of Mathmatics and Statistics, McGill University. 805 Sherbrooke Street West Montreal, Quebec, Canada, H3A 2K6}
\email{bo.peng3@mail.mcgill.ca}
\author{Yiming Sun}
\address{School of Mathematical Sciences and LPMC, Nankai University, Tianjin 300071, P.R. China}
\email{ymsun@mail.nankai.edu.cn}
\date{\today}
\begin{abstract} In this paper we study some basic problems about Toeplitz subshifts of finite topological rank. We define the notion of a strong Toeplitz subshift of finite rank $K$ by combining the characterizations of Toeplitz-ness and of finite topological rank $K$ from the point of view of the Bratteli--Vershik representation or from the $\mathcal{S}$-adic point of view. The characterization problem asks if for every $K\geq 2$, every Toeplitz subshift of topological rank $K$ is a strong Toeplitz subshift of rank $K$. We give a negative answer to the characterization problem by constructing a Toeplitz subshift of topological rank $2$ which fails to be a strong Toeplitz subshift of rank $2$. However, we show that the set of all strong Toeplitz subshifts of finite rank is generic in the space of all infinite minimal subshifts. In the second part we consider several classification problems for Toeplitz subshifts of topological rank $2$ from the point of view of descriptive set theory. We completely determine the complexity of the conjugacy problem, the flip conjugacy problem, and the bi-factor problem by showing that, as equivalence relations, they are hyperfinite and not smooth. We also consider the inverse problem for all Toeplitz subshifts. We give a criterion for when a Toeplitz subshift is conjugate to its own inverse, and use it to show that the set of all such Toeplitz subshifts is a meager set in the space of all infinite minimal subshifts. Finally, we show that the automorphism group of any Toeplitz subshift of finite rank is isomorphic to $\mathbb{Z}\oplus C$ for some finite cyclic group $C$, and for every nontrivial finite cyclic group $C$, $\mathbb{Z}\oplus C$ can be realized as the isomorphism type of an automorphism group of a strong Toeplitz subshift of finite rank greater than $2$.
\end{abstract}

\maketitle
\section{Introduction} This paper is a contribution to the study of minimal Cantor systems. Among all minimal Cantor systems, the odometers are well understood; these are characterized as either the equicontinuous ones or the ones with topological rank $1$ (see, e.g. \cite{DSurvey}). Thus it is a natural next step to consider minimal Cantor systems that are expansive and have finite topological rank greater than $1$.
By the well-known result of Downarowicz--Maass \cite{DM}, these are minimal subshifts of finite topological rank.

Toeplitz subshifts were first defined by Jacobs--Keane \cite{JK} and are arguably the most-studied kind of minimal subshifts. The Toeplitz subshifts we consider in this paper have finite alphabets. According to recent results of Pavlov--Schmieding \cite{PS}, there is a natural Polish topology on the space of all infinite minimal subshifts so that the subset of all Toeplitz subshifts is generic. This is saying in a rigorous way that Toeplitz subshifts are {\em typical} minimal subshifts.

The notion of topological rank, particularly that of finite topological rank, originated from \cite{DM}; the terminology was first used by Durand in \cite{Du10}. Even earlier, Ferenczi \cite{Fe96} has introduced a notion of $\mathcal{S}$-adic subshift and defined the notion of alphabet rank. Only recently, Donoso--Durand--Maass--Petite \cite{DDMP21} showed that the class of all minimal subshifts of finite topological rank and that of all minimal $\mathcal{S}$-adic subshifts of finite alphabet rank coincide up to conjugacy. More recently, Gao--Jacoby--Johnson--Leng--Li--Silva--Wu \cite{GJJLLSW25} introduced yet another notion of spacer rank (or symbolic rank) for subshifts, and Gao--Li \cite{GL25} showed that the class of all minimal subshifts of finite spacer (symbolic) rank again coincides with that of all minimal subshifts of finite topological rank up to conjugacy. Thus, we sometimes refer to this class as {\em minimal subshifts of finite rank}, without specifying exactly which rank we are using to measure the complexity. However, when it comes to a numerical value of the rank, we need to be specific, since the values of these different rank notions can differ. 

It is worth noting that Pavlov--Schmieding \cite{PS} also showed that among all infinite minimal subshifts, those having topological rank $2$ form a generic class. It follows from results of \cite{DDMP21} and \cite{GL25} that this still holds with the topological rank replaced by the other two notions of rank.

In this paper we consider several basic problems about Toeplitz subshifts of finite rank. 

The first problem is called the characterization problem. Previous research have provided characterizations of Toeplitz-ness and finite topological rank from the points of view of Bratteli diagrams and $\mathcal{S}$-adic subshifts. For example, Gjerde--Johansen \cite{GJ00} characterized the Bratteli diagrams of Toeplitz subshifts by a so-called equal path number property; by definition, a minimal Cantor system has finite topological rank $K$ if it has a Bratteli diagram of rank $K$. Thus one naturally wonders whether all Toeplitz subshifts of finite rank $K$ coincide with those having Bratteli diagrams with both the equal path number property and having rank $K$. Similarly, from the $\mathcal{S}$-adic point of view, Arbul\'u--Durand--Espinoza \cite{ADE2024} characterized Toeplitz subshifts by properties of directive sequences generating the subshift (the most important property being constant-length), and results of \cite{DDMP21} characterized finite topological rank $K$ in terms of finite alphabet rank $K$. Thus one wonders if Toeplitz subshifts of finite topological rank $K$ coincide with the $\mathcal{S}$-adic subshifts with both finite alphabet rank $K$ and the constant-length property. It turns out that these classes with the combined properties, either from the Bratteli diagram point of view or from the $\mathcal{S}$-adic point of view, are the same. For the convenience of our discussion, we call them {\em strong Toeplitz subshifts of rank $K$}. The characterization problem is thus formulated as follows.

\begin{problem}[The Characterization Problem] For $K\geq 2$, is every Toeplitz subshift of rank $K$ a strong Toeplitz subshift of rank $K$?
\end{problem}

Unfortunately, the answer is no. We will construct a Toeplitz subshift of topological rank $2$ which is not a strong Toeplitz subshift of rank $2$. We then study in more depth the notion of strong Toeplitz subshift of rank $2$, and show that they also form a generic class in the space of all infinite minimal subshifts. 

The second problem we study in this paper is the conjugacy problem. Here we consider the conjugacy problem as a Borel equivalence relation on a Polish space and study its complexity from the point of view of descriptive set theory. This methodology has been successfully applied to many classification problems in mathematics (for an overview, see \cite{Hj} and \cite{Ga}). The conjugacy problem for all Toeplitz subshifts have also been approached this way, by Thomas \cite{Thomas2013}, Sabok--Tsankov \cite{SabokTsankov2017}, Kaya \cite{Kaya}, and Yu \cite{Yu}.  Most notably, Kaya \cite{Kaya} showed that the conjugacy problem for all Toeplitz subshifts with the so-called {\em growing blocks property} is hyperfinite. Here we show that the conjugacy problem for all Toeplitz subshifts of topological rank $2$ is also hyperfinite (and not smooth). This completely determines the complexity of this classification problem in the Borel reducibility hierarchy. Using recent results of Espinoza \cite{Es23}, we obtain a similar result for the bi-factor problem. The following theorem summarizes our results on the complexity of the classification problems for Toeplitz subshifts of topological rank $2$.   

\begin{theorem} The following classification problems for all Toeplitz subshifts of topological rank $2$ are hyperfinite and not smooth:
\begin{enumerate}
\item[\rm (1)] the conjugacy problem;
\item[\rm (2)] the flip conjugacy problem;
\item[\rm (3)] the bi-factor problem.
\end{enumerate}
\end{theorem}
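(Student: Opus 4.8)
The plan is to handle the two directions—hyperfiniteness (the upper bound) and non-smoothness (the lower bound)—separately, and then to leverage the relationships among the three equivalence relations so that most of the work is done once, for conjugacy, and transferred to the other two. For the upper bound I would first fix a Polish space $Y$ coding the Toeplitz subshifts of topological rank $2$. The natural choice is the space of directive sequences $(\sigma_n)_n$ of constant-length morphisms on a fixed two-letter alphabet, or equivalently the space of Bratteli diagrams of rank $2$ with the equal path number property; by the characterizations recalled in the Introduction every rank-$2$ Toeplitz subshift arises from such data, and the map sending a code to its subshift is Borel. The key structural observation is that topological conjugacy is implemented by sliding block codes, of which there are only countably many, so the conjugacy relation $\cong$ on $Y$ is a \emph{countable} Borel equivalence relation.

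I would then prove hyperfiniteness by exploiting the rigidity of rank-$2$ representations: two codes yield conjugate subshifts precisely when their directive sequences eventually agree up to a bounded recoding. This eventual-agreement structure lets me write $\cong$ as an increasing union $\bigcup_n E_n$ of Borel equivalence relations with finite classes (the $E_n$-class of a code consisting of those codes matching it from level $n$ onward up to recoding), whence $\cong$ is hyperfinite. An alternative, and perhaps cleaner, route is to show that $\cong$ is generated by a Borel $\mathbb{Z}$-action and to invoke the Weiss--Slaman--Steel theorem that $\mathbb{Z}$-actions induce hyperfinite orbit equivalence relations. Having established hyperfiniteness of conjugacy, the other two cases follow by closure properties. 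Flip conjugacy is the equivalence relation generated by $\cong$ together with the involution $X \mapsto \overline{X}$ sending a subshift to its reverse; each flip-conjugacy class is the union of at most two conjugacy classes, so flip conjugacy is a finite-index Borel extension of $\cong$ and is therefore hyperfinite, since hyperfiniteness is preserved under finite-index extensions. For the bi-factor problem I would invoke the structural results of Espinoza \cite{Es23} on factors of finite-rank minimal subshifts: these control the factor maps tightly enough that, on $Y$, the bi-factor relation again becomes a finite-index Borel extension of conjugacy (two rank-$2$ Toeplitz subshifts being mutual factors only through maps that are, up to finite data, recodings), so hyperfiniteness follows by the same principle.

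For the lower bound I would produce a single Borel reduction witnessing non-smoothness of all three relations at once. Concretely, I would define a continuous map $x \mapsto X_x$ from $2^{\mathbb{N}}$ into the rank-$2$ Toeplitz subshifts, built by interleaving the bits of $x$ into the aperiodic filling of a fixed periodic skeleton, so that the maximal equicontinuous (odometer) factor is the same for all $x$ and the bits of $x$ are carried entirely by the filling data. The construction should be arranged so that $X_x$ and $X_y$ are conjugate exactly when $x$ and $y$ agree off a finite set, i.e.\ when $x \mathrel{E_0} y$. The forward implication is immediate from the construction; the reverse—that $x \mathrel{E_0} y$ is \emph{necessary}—requires extracting from $X_x$ an invariant, namely the tail of the filling data above the common scale, that is preserved by every sliding block code. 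I would further ensure the skeleton is not reversal-symmetric and that no proper factoring collapses distinct $E_0$-classes, so that the same map reduces $E_0$ simultaneously into flip conjugacy and into the bi-factor relation; since $E_0$ is not smooth, none of the three relations is smooth.

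The main obstacle throughout is the rigidity analysis of rank-$2$ representations under conjugacy—and, for (3), under factor maps: pinning down exactly how sliding block codes act on the directive sequence is what both drives the hyperfiniteness decomposition and supplies the non-collapsing needed for the non-smoothness reduction. I expect the hyperfinite upper bound to be the more delicate half, since it demands a uniform, level-by-level description of the recodings realizing conjugacy, whereas the non-smoothness reduction, once the invariant extracted from the filling tail is in hand, is comparatively direct.
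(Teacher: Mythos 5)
Your high-level architecture matches the paper in several places --- the conjugacy relation is countable by Curtis--Hedlund--Lyndon, flip conjugacy and (via Espinoza's finiteness of aperiodic factors) the bi-factor relation are finite-index extensions of conjugacy and hence hyperfinite by Lemma~\ref{lem:JKL}, and non-smoothness comes from an $E_0$-reduction of Thomas type. But your upper-bound argument has two genuine gaps. First, your coding premise is false: you parametrize rank-$2$ Toeplitz subshifts by constant-length directive sequences on a two-letter alphabet (equivalently, rank-$2$ Bratteli diagrams with the equal path number property), asserting that every rank-$2$ Toeplitz subshift arises from such data. That is precisely the Characterization Problem, and Theorem~\ref{thm:char} of this paper answers it negatively: there is a Toeplitz subshift of topological rank $2$ that is \emph{not} a strong Toeplitz subshift of rank $2$. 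So your space $Y$ omits some of the subshifts the theorem is about, and your proof would only cover the strong ones. The paper avoids any such coding by working directly on the Borel set $\boldsymbol{T}_2$ inside the Polish space of subshifts. Second, even on the subshifts your coding does reach, the claimed rigidity (``conjugate iff the directive sequences eventually agree up to bounded recoding'') is unproven, and it is the actual mathematical heart of the result; moreover, your proposed decomposition $\cong=\bigcup_n E_n$ would not consist of \emph{finite} relations, since fixing agreement from level $n$ onward still leaves countably many admissible initial segments, so the $E_n$-classes are countably infinite. The paper's route is different: it proves a skeleton-based conjugacy criterion (Theorem~\ref{thm:main}: $X\cong Y$ iff $\chi(X,p_n)\,E_{p_n}^{\rm fin}\,\chi(Y,p_n)$ for all large $n$, after matching scales via Williams' theorem, Lemma~\ref{lem:mef}), whose proof requires the distinguished prefix/suffix analysis, recognizability, and the tracking of $p$-holes under block codes; it then packages this as a Borel reduction to the tail relation $E_1$ and invokes Kechris--Louveau (Theorem~\ref{thm:KL}), which converts ``countable and reducible to $E_1$'' into hyperfiniteness. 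Your sketch contains no substitute for either the criterion or the $E_1$ step, and your fallback (exhibiting a Borel $\mathbb{Z}$-action generating $\cong$) is likewise unsupported.

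On the lower bound, your interleaving construction is essentially a re-derivation of Thomas's, but the necessity direction ($X_x\cong X_y\Rightarrow x\,E_0\,y$) is only gestured at, and your insistence that the single map simultaneously reduce $E_0$ into flip conjugacy and bi-factor is unnecessary work. The paper instead cites Thomas's existing reduction into $\{0,1\}$-Toeplitz subshifts with single holes, checks (Lemma~\ref{lem:sh}) that these are strong Toeplitz subshifts of rank $2$, and then transfers non-smoothness to the two larger relations for free via Lemma~\ref{lem:Thomas}: if $E\subseteq F$ are countable Borel equivalence relations and $F$ is smooth, so is $E$; hence non-smoothness of conjugacy forces non-smoothness of flip conjugacy and of the bi-factor relation, with no reversal-asymmetry or non-collapsing analysis needed.
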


We remark that from the point of view of descriptive set theory, these problems all belong to the class of so-called {\em countable Borel equivalence relations}, and it is abstractly known that any countable Borel equivalence relation is hyperfinite on a comeager subset. Here we prove that these problems are hyperfinite on a {\em specific} comeager subset, namely the set of all Toeplitz subshifts of topological rank $2$. 

Next we consider the inverse problem for all Toeplitz subshifts. Here we provide a criterion for a given Toeplitz subshift to be conjugate to its own inverse, and use it to show that such Toeplitz subshifts form a meager class in the space of all infinite minimal subshifts. 

Finally we study the automorphism groups of Toeplitz subshifts of finite rank. The following summarizes our main findings on this topic.

\begin{theorem} The automorphism group of any Toeplitz subshift of finite rank is isomorphic to $\mathbb{Z}\oplus C$ for a finite cyclic group $C$. Conversely, for any finite cyclic group $C$, there is a strong Toeplitz subshift of finite rank whose automorphism group is isomorphic to $\mathbb{Z}\oplus C$.
\end{theorem}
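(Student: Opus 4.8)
The plan is to prove the two directions separately: the structural (forward) direction rests on embedding the automorphism group into the odometer factor, while the realization (reverse) direction rests on an explicit $\mathcal{S}$-adic construction carrying a built-in cyclic phase symmetry.

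For the forward direction I would first recall the classical fact that a minimal Toeplitz subshift $(X,\sigma)$ is an almost one-to-one extension of its maximal equicontinuous factor, which is an odometer $(\mathbb{Z}_{\mathbf{p}},+1)$, via a factor map $\pi\colon X\to\mathbb{Z}_{\mathbf{p}}$. Since the maximal equicontinuous factor is canonical, every $\phi\in\Aut(X,\sigma)$ descends to a self-conjugacy $\bar\phi$ of the odometer; as $\Aut(\mathbb{Z}_{\mathbf{p}},+1)$ consists exactly of the translations, $\bar\phi$ is translation by some $a_\phi\in\mathbb{Z}_{\mathbf{p}}$, and $\phi\mapsto a_\phi$ is a homomorphism into the abelian group $\mathbb{Z}_{\mathbf{p}}$ with $a_\sigma=1$. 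The key point is injectivity: if $\bar\phi=\mathrm{id}$ then $\pi\circ\phi=\pi$, so $\phi$ fixes every point in the dense (residual) set on which $\pi$ has singleton fibers; since $\{x:\phi(x)=x\}$ is closed and dense, $\phi=\mathrm{id}$. Hence $\Aut(X,\sigma)$ is abelian and embeds in $\mathbb{Z}_{\mathbf{p}}$. Note this step uses no rank hypothesis; finite rank enters only next.

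To finish the forward direction I would invoke the theorem of Donoso--Durand--Maass--Petite \cite{DDMP21} that for a minimal subshift of finite topological rank the automorphism group is virtually $\mathbb{Z}$, so that $\langle\sigma\rangle$ has finite index. Thus $\Aut(X,\sigma)$ is a finitely generated abelian group containing $\mathbb{Z}$ with finite index, whence $\Aut(X,\sigma)\cong\mathbb{Z}\oplus T$ with $T$ finite. Finally $T$ is a finite subgroup of the odometer $\mathbb{Z}_{\mathbf{p}}$, which is procyclic; since every finite subgroup of a procyclic group is cyclic, $T\cong C$ is finite cyclic, completing the forward implication.

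For the realization, given $C=\mathbb{Z}/n$, I would engineer the odometer to be $\mathbb{Z}_{\mathbf{p}}\cong\mathbb{Z}/n\times\mathbb{Z}_\ell$ for a prime $\ell\nmid n$, so that $g_0=(1,0)$ has exact order $n$, and build a strong Toeplitz subshift of rank greater than $2$ from a constant-length (substitutive) directive sequence carrying a $\mathbb{Z}/n$ phase symmetry. The constant-length property yields Toeplitzness and, together with recognizability and primitivity, the strong finite-rank property; the design should make the residue of a position modulo $n$ locally readable, so that the phase shift by $g_0$ is realized by a sliding block code $\tau\in\Aut(X,\sigma)$ of order $n$ with $a_\tau=g_0$. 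Since $a_\sigma=(1,1)$, the subgroup generated by $a_\sigma$ and $a_\tau$ is $\langle(1,1),(1,0)\rangle=\mathbb{Z}/n\times\mathbb{Z}\cong\mathbb{Z}\oplus C$. The main obstacle is the exactness of this last computation: one must show the image of $\Aut(X,\sigma)$ in $\mathbb{Z}_{\mathbf{p}}$ is no larger than $\langle(1,1),(1,0)\rangle$, i.e. no nonintegral element of $\{0\}\times\mathbb{Z}_\ell$ and no further shift in the $\mathbb{Z}/n$ direction lifts to an automorphism. This forces the construction to be simultaneously symmetric in the $\mathbb{Z}/n$ coordinate and rigid in the $\mathbb{Z}_\ell$ coordinate, so that the only odometer translations preserving the Toeplitz coloring lie in the prescribed subgroup; securing this rigidity while keeping the morphisms of constant length, primitive, recognizable, and of a fixed alphabet rank $K>2$ is the delicate part of the argument.
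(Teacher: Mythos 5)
Your forward direction is correct, and it is essentially the paper's own argument: the automorphism group of a Toeplitz subshift embeds in the odometer group via the maximal equicontinuous factor (the paper cites this from Donoso--Durand--Maass--Petite \cite{DDMP16}, \cite{DDMP17} rather than re-proving it, but your injectivity argument via the dense set of singleton fibers is sound), finite rank gives virtually $\mathbb{Z}$ (note the paper attributes this to Espinoza--Maass \cite{EM} for finite alphabet rank, not to \cite{DDMP21} --- a minor attribution slip), and finite subgroups of the procyclic odometer group are cyclic. This matches the paper's Proposition~\ref{prop:aut}.

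The converse is where you have a genuine gap, in two respects. First, no construction is actually given: the whole content of the realization statement is the existence of a constant-length, primitive, proper, \emph{recognizable} directive sequence of finite alphabet rank that simultaneously (a) admits a letterwise automorphism of order exactly $n$ and (b) has scale of the form $n\cdot q^{\infty}$ with $\gcd(n,q)=1$, and your proposal defers exactly this step (``the design should make the residue of a position modulo $n$ locally readable\dots''). The paper's construction is nontrivial: an alphabet of size $n^2$ encoding (symbol, phase) pairs, a phase-rotation morphism $\phi$, level-$i$ words of length $n(mn+1)^i$ over alphabets of size $n^n$, and word-combinatorial properties forcing properness, primitivity, recognizability, and --- the delicate point your sketch omits --- \emph{equivariance}: one must verify that $\phi^s$ applied to a level-$i$ word occurs inside level-$(i+1)$ words, so that the letterwise map actually preserves $X_{\boldsymbol{\tau}}$. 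Second, you have mislocated the difficulty. The ``main obstacle'' you flag --- showing the image of $\Aut(X,S)$ in the odometer is no larger than $\langle(1,1),(1,0)\rangle$ --- is not needed, because the theorem asserts only an isomorphism type, and your own forward direction disposes of it for free: $\Aut(X,S)$ is abelian and virtually $\mathbb{Z}$, hence isomorphic to $\mathbb{Z}\oplus T$ with $T$ embedding in the torsion subgroup of the odometer group, which is exactly $\mathbb{Z}/n\mathbb{Z}$ once the scale is $n\cdot q^{\infty}$ with $\gcd(n,q)=1$; together with the explicit order-$n$ automorphism this forces $T\cong\mathbb{Z}/n\mathbb{Z}$, with no rigidity argument in the $\mathbb{Z}_\ell$ coordinate whatsoever. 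This is precisely how the paper concludes: the scale computation (via its generalization of Lemma~\ref{gcd}) caps the torsion from above, and the explicit $\varphi$ bounds it from below. So the part you describe as the delicate obstacle dissolves, while the part you treat as routine engineering is where all the actual work lies.
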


Donoso--Durand--Maass--Petite (\cite{DDMP16} and \cite{DDMP21}) had shown that the automorphism group of a minimal Cantor system of topological rank $2$ always has an automorphism group isomorphic to $\mathbb{Z}$. Thus, for nontrivial finite cyclic group $C$, the strong Toeplitz subshift of finite rank whose automorphism group is isomorphic to $\mathbb{Z}\oplus C$ necessarily has alphabet rank greater than $2$.

Before we close this introduction, we would like to note that, although there is a lot of interesting and important research about Toeplitz subshifts that was done recently, our basic tools to study Toeplitz subshifts still come from the seminal papers of Williams \cite{Williams1984} and Downarowicz--Kwiatkowski--Lacroix \cite{DKL}.

The rest of this paper is organized as follows. In Section~\ref{sec:2} we review the preliminaries about descriptive set theory, word combinatorics, minimal Cantor systems, odometers, subshifts, Toeplitz subshifts, $\mathcal{S}$-adic subshifts, and Bratteli--Vershik representations of minimal Cantor systems. In Section~\ref{sec:3} we formulate and answer the characterization problem. In Section~\ref{sec:4} we characterize the strong Toeplitz subshifts of rank $2$, and use the characterization to show that they form a generic class in the space of all infinite minimal subshifts. In Section~\ref{sec:5} we study the classification problems and show that they are hyperfinite and not smooth. In Section~\ref{sec:6} we study the inverse problem. In Section~\ref{sec:8} we study the automorphism groups. Finally, in Section~\ref{sec:7} we make some further remarks about the orbit equivalence, and present some problems left open by our research.

\section{Preliminaries\label{sec:2}}

\subsection{Descriptive set theory}
In this paper we will use some concepts, terminology and notation from descriptive set theory. In this subsection we review these concepts, terminology and notation, which can be found in \cite{Ke} and \cite{Ga}. 

A \textbf{Polish space} is a topological space that is separable and completely metrizable. Let $X$ be a Polish space and let $d_X$ be a compatible complete metric on $X$. Let $K(X)$ be the space of all compact subsets of $X$. Let $d_H$ be the \textbf{Hausdorff metric} defined on $K(X)$ as follows. For $A\in K(X)$ and $x\in X$, let $d(x,A)=\inf\{d(x,y)\colon y\in A\}$. Now for $A, B\in K(X)$, let
$$ d_H(A,B)=\max\left\{ \sup\{d(x, B)\colon x\in A\}, \sup\{d(y,A)\colon y\in B\}\right\}. $$
Then $d_H$ is a metric on $K(X)$ that makes $K(X)$ a Polish space. Moreover, if $X$ is compact, then $K(X)$ is compact. 

Let $X$ be a Polish space. A subset $A$ of $X$ is $\boldsymbol{G_\delta}$ if $A$ is the intersection of countably many open subsets of $X$. A subspace $Y$ of $X$ is Polish if and only if $Y$ is a $G_\delta$ subset of $X$. We say that a subset $A$ of $X$ is \textbf{generic}, or the elements of $A$ are \textbf{generic} in $X$, if $A$ contains a dense $G_\delta$ subset of $X$.

More generally, by a transfinite induction on $1\leq \alpha<\omega_1$, we can define the \textbf{Borel hierarchy} on $X$ as follows:
$$\begin{array}{rcl}
{\bf\Sigma}^0_1&=& \mbox{ the collection of all open subsets of $X$} \\
{\bf\Pi}^0_1&=& \mbox{ the collection of closed subsets of $X$} \\
{\bf\Sigma}^0_\alpha&=& \left\{ \bigcup_{n\in\mathbb{N}} A_n\,:\, A_n\in {\bf\Pi}^0_{\beta_n} \mbox{ for some $\beta_n<\alpha$}\right\} \\
{\bf\Pi}^0_\alpha &=& \left\{ X\setminus A\,:\, A\in {\bf\Sigma}^0_\alpha\right\}
\end{array}
$$
We also define ${\bf\Delta}^0_\alpha={\bf\Sigma}^0_\alpha\cap{\bf\Pi}^0_\alpha$. Thus ${\bf\Delta}^0_1$ is the collection of all clopen subsets of $X$. With this notation, $\bigcup_{\alpha<\omega_1}{\bf\Sigma}^0_\alpha=\bigcup_{\alpha<\omega_1}{\bf\Pi}^0_\alpha=\bigcup_{\alpha<\omega_1}{\bf\Delta}^0_\alpha$ is the collection of all \textbf{Borel} subsets of $X$. The collection of all $G_\delta$ subsets of $X$ is exactly ${\bf\Pi}^0_2$.

Let $X$ be a Polish space. Recall that a subset $A$ of $X$ is \textbf{nowhere dense} in $X$ if the interior of the closure of $A$ is empty. $A$ is {\em meager} in $X$ if $A\subseteq \bigcup_{n\in\mathbb{N}} B_n$ where each $B_n$ is nowhere dense in $X$. $A$ is \textbf{nonmeager} in $X$ if it is not meager in $X$; $A$ is \textbf{comeager} in $X$ if $X\setminus A$ is meager in $X$. 


The following lemma is a folklore in descriptive set theory.

\begin{lemma}\label{lem:categoryquantifiers} Let $X, Y$ be Polish spaces, $V\subseteq Y$ be nonempty open, $\alpha<\omega_1$, and $A\subseteq X\times Y$. Then the following hold.
\begin{enumerate}
\item[\rm (i)] If $A$ is ${\bf\Sigma}^0_\alpha$, then the set
$$ \{ x\in X\,:\, \mbox{ $\{y\in V\,:\, (x,y)\in A\}$ is nonmeager in $V$}\} $$
is ${\bf\Sigma}^0_\alpha$ in $X$.
\item[\rm (ii)] If $A$ is ${\bf\Pi}^0_\alpha$, then the set
$$ \{ x\in X\,:\, \mbox{ $\{y\in V\,:\, (x,y)\in A\}$ is comeager in $V$}\} $$
is ${\bf\Pi}^0_\alpha$ in $X$.
\end{enumerate}
\end{lemma}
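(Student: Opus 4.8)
The statement is the classical theorem on category quantifiers, and the plan is to prove it by transfinite induction on $\alpha$, proving (i) and (ii) together. Writing $A^x=\{y\in Y:(x,y)\in A\}$ for the sections, I would first record that (i) and (ii) are formally dual: since $A^x\cap V$ is nonmeager in $V$ exactly when its complement $(Y\setminus A)^x\cap V=V\setminus A^x$ fails to be comeager in $V$, the set in (i) for $A$ is the complement of the set in (ii) for $X\times Y\setminus A$. As $A\in{\bf\Sigma}^0_\alpha$ iff $X\times Y\setminus A\in{\bf\Pi}^0_\alpha$, statement (i) at level $\alpha$ is equivalent to statement (ii) at level $\alpha$, so it suffices to prove, say, (i) by induction, with (ii) available at all lower levels. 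To feed the induction I would prove the \emph{localized} forms: for every nonempty open $U\subseteq Y$, the sets $\{x:A^x\cap U\text{ is nonmeager in }U\}$ and $\{x:A^x\cap U\text{ is comeager in }U\}$ lie in ${\bf\Sigma}^0_\alpha$ and ${\bf\Pi}^0_\alpha$ respectively, the desired lemma being the case $U=V$.

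The engine of the argument is a localization lemma for sets with the Baire property: if $B\subseteq Y$ has the Baire property and $U$ is nonempty open, then $B$ is nonmeager in $U$ if and only if $B$ is comeager in some nonempty basic open $W\subseteq U$. I would prove this by writing $B=G\,\triangle\,M$ with $G$ open and $M$ meager; nonmeagerness of $B$ in $U$ forces $G\cap U\neq\varnothing$, and for any basic $W\subseteq G\cap U$ one computes $B\cap W=W\setminus M$, so $W\setminus B\subseteq M$ and $B$ is comeager in $W$, the converse being immediate since nonempty open subsets of a Polish space are Baire. The key point making this applicable is that every section $A^x$ of a Borel set $A$ is Borel (it is the preimage of $A$ under the continuous map $y\mapsto(x,y)$), hence has the Baire property. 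Fixing a countable basis $\mathcal B$ of $Y$, the lemma rewrites nonmeagerness as a countable disjunction of comeagerness statements over $W\in\mathcal B$ with $W\subseteq U$.

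For the base case $\alpha=1$, if $A$ is open then $A^x\cap U$ is open, hence nonmeager in $U$ iff nonempty, so $\{x:A^x\cap U\text{ is nonmeager in }U\}$ is the projection of the open set $A\cap(X\times U)$ and is open; statement (ii) follows by duality. For the inductive step, write $A=\bigcup_n A_n$ with $A_n\in{\bf\Pi}^0_{\beta_n}$ and $\beta_n<\alpha$. Since the meager sets form a $\sigma$-ideal, $A^x\cap U$ is nonmeager iff some $A_n^x\cap U$ is nonmeager, so the target set equals $\bigcup_n\{x:A_n^x\cap U\text{ is nonmeager in }U\}$. Applying the localization lemma to each $A_n$ turns the $n$-th set into $\bigcup_{W\in\mathcal B,\,W\subseteq U}\{x:A_n^x\cap W\text{ is comeager in }W\}$; by the inductive hypothesis (ii) at level $\beta_n$ each inner set is ${\bf\Pi}^0_{\beta_n}$, so the countable union is ${\bf\Sigma}^0_{\beta_n+1}\subseteq{\bf\Sigma}^0_\alpha$. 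A further countable union over $n$ stays in ${\bf\Sigma}^0_\alpha$, completing the induction, and (ii) at level $\alpha$ then follows by duality.

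I expect the main obstacle to be the correct orchestration of the two dual quantifiers rather than any single hard estimate: one must verify that nonmeagerness of a ${\bf\Pi}^0_{\beta}$ section, via localization, costs exactly one level of the hierarchy (so that $\beta+1\le\alpha$ keeps us inside ${\bf\Sigma}^0_\alpha$), and that the Baire-property hypothesis needed for localization is automatically supplied by Borelness of the sections. The bookkeeping with the basis $\mathcal B$ and the reduction of ``nonmeager/comeager in $U$'' to the ambient notions in $Y$ (valid because $U$ is open) are the routine but essential details to get right.
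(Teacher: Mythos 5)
Your proof is correct. The paper itself gives no argument for this lemma --- it is stated as folklore, with the reader pointed to the standard references --- so there is nothing to diverge from; what you have written is precisely the classical Montgomery--Novikov category-quantifier argument (Theorem 16.1 in Kechris's book): induction on $\alpha$ with (i) and (ii) linked by complementation, the base case handled by openness of projections, and the inductive step handled by the $\sigma$-ideal property of meager sets together with localization (a set with the Baire property is nonmeager in $U$ iff it is comeager in some basic open $W\subseteq U$), the Baire property of sections being supplied by their Borelness. The hierarchy bookkeeping is also right: each set $\{x : (A_n)^x \text{ comeager in } W\}$ is ${\bf\Pi}^0_{\beta_n}$ with $\beta_n<\alpha$ by the inductive hypothesis, and the resulting double countable union falls directly under the paper's definition of ${\bf\Sigma}^0_\alpha$.
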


A \textbf{Borel space} is a pair $(X, \mathcal{B})$ where $X$ is a set and $\mathcal{B}$ is a $\sigma$-algebra of subsets of $X$. A \textbf{standard Borel space} is a Borel space $(X, \mathcal{B})$ where $\mathcal{B}$ is the $\sigma$-algebra of Borel sets generated by a Polish topology on $X$. When the $\sigma$-algebra $\mathcal{B}$ is clear from the context, we often omit writting it. Thus a Polish space is a standard Borel space. Moreover, if $X$ is a standard Borel space and $A\subseteq X$ is a Borel subset, then the subspace $A$ is standard Borel. 

Let $X, Y$ be standard Borel spaces. A map $f\colon X\to Y$ is \textbf{Borel} if for any Borel subset $V$ of $Y$, $f^{-1}(V)$ is a Borel subset of $X$. $f$ is a \textbf{Borel isomorphism} if $f$ is a Borel bijection. We say that $X$ and $Y$ are \textbf{Borel isomorphic} if there is a Borel isomorphism from $X$ to $Y$. It is a classical result of descriptive set theory (see, e.g. \cite[Corollary 1.3.8]{Ga}) that any two uncountable stardard Borel spaces are Borel isomorphic.

Let $E, F$ be equivalence relations on standard Borel spaces $X, Y$ respectively. We say that $E$ is \textbf{Borel reducible} to $F$, and denote it by $E\leq_B F$, if there is a Borel map $f\colon X\to Y$ such that for all $x, x'\in X$, 
$$ x\,E\, x'\iff f(x)\,F\,f(x'). $$
The function $f$ in this definition is called a \textbf{Borel reduction}. If $E\leq_B F$ and $F\leq_B E$, we say that $E$ and $F$ are \textbf{Borel bireducible}, and denote it as $E\sim_B F$. The notion of Borel reducibility is a way to compare relative complexity of equivalence relations.

An equivalence relation $E$ on a standard Borel space $X$ is \textbf{Borel} if $E$ is a Borel subset of $X\times X$. $E$ is \textbf{countable} if every equivalence class of $E$ is countable. $E$ is \textbf{finite} if every equivalence class of $E$ is finite. $E$ is \textbf{hyperfinite} if there is a sequence $\{F_n\}_{n\geq 0}$ of finite Borel equivalence relations such that $F_n\subseteq F_{n+1}$ for all $n\in\mathbb{N}$ and
$$ E=\bigcup_n F_n. $$
Given $E\subseteq F$ on a standard Borel space, we say that $E$ has \textbf{finite index} in $F$, or $F$ has \textbf{finite index} over $E$, if every equivalence class of $F$ contains only finitely many equivalence classes of $E$. 

We use the following important examples and results about Borel equivalence relations. Let $X$ be an uncountable standard Borel space. The equivalence relation ${\rm id}(X)$ is the identity (or equality) equivalence relation on $X$: $(x, y)\in {\rm id}(X)$ if and only if $x=y$. If $X, Y$ are uncountable standard Borel spaces, then ${\rm id}(X)\sim_B {\rm id}(Y)$ since $X$ and $Y$ are Borel isomorphic. We say that an equivalence relation $E$ is \textbf{smooth} if $E\leq_B {\rm id}(X)$ for some standard Borel space $X$. 

The equivalence relation $E_0$ is defined on $\{0,1\}^\mathbb{N}$ by
$$ x\,E_0\, y\iff \exists N\in\mathbb{N}\ \forall n>N\ \bigl(x(n)=y(n)\bigr). $$
$E_0$ is hyperfinite and not smooth. It is well known (see, e.g. Dougherty--Jackson--Kechris \cite{DJK}) that a countable Borel equivalence relation $E$ is hyperfinite if and only if $E\leq_B E_0$, and $E$ is not smooth if and only if $E_0\leq_B E$. Thus, a countable Borel equivalence relation $E$ is hyperfinite and not smooth if and only if $E\sim_B E_0$.

Let $X$ be an uncountable standard Borel space. The equivalence relation $E_1(X)$ is defined on $X^\mathbb{N}$ by
$$ (x_n)_n\, E_1(X)\, (y_n)_n \iff \exists N\in \mathbb{N}\ \forall n>N\ (x_n=y_n). $$
If $X, Y$ are uncountable standard Borel spaces, then $E_1(X)\sim_B E_1(Y)$ since $X$ and $Y$ are Borel isomorphic. The following is a consequence of the main result of \cite{KL}.

\begin{theorem}[Kechris--Louveau \cite{KL}]\label{thm:KL} Let $X, Y$ be uncountable standard Borel spaces, and let $E$ be a countable Borel equivalence relation on $X$. If $E\leq_B E_1(Y)$, then $E$ is hyperfinite.
\end{theorem}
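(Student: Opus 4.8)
The plan is to transfer the hypersmooth structure of $E_1$ through the reduction and then apply a closure property of hyperfiniteness. Recall that $E_1$ on $Y^{\mathbb{N}}$ is the increasing union $E_1=\bigcup_n F_n$, where $F_n$ is the relation ``agree from coordinate $n+1$ onward'': $(y_k)_k\,F_n\,(z_k)_k$ iff $y_k=z_k$ for all $k>n$. Each $F_n$ is smooth, as witnessed by the continuous map $(y_k)_k\mapsto(y_{n+1},y_{n+2},\dots)$, which reduces $F_n$ to ${\rm id}(Y^{\mathbb{N}})$. So $E_1$ is the canonical example of a \emph{hypersmooth} relation, and the strategy is to show that $E$ inherits enough of this structure to conclude hyperfiniteness.

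First I would pull back the filtration. Fix a Borel reduction $f\colon X\to Y^{\mathbb{N}}$ witnessing $E\leq_B E_1$, so that $x\,E\,x'\iff f(x)\,E_1\,f(x')$, and set $E_n=f^{-1}(F_n)=\{(x,x')\colon f(x)\,F_n\,f(x')\}$. Since $F_n$ is an equivalence relation and $f$ is a function, each $E_n$ is a Borel equivalence relation; since $F_n\subseteq F_{n+1}$ we have $E_n\subseteq E_{n+1}$; and since $\bigcup_n F_n=E_1$ and $f$ is a reduction, $\bigcup_n E_n=f^{-1}(E_1)=E$. Thus $E$ is displayed as an increasing union of Borel equivalence relations. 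Next I would check that each level is hyperfinite. By construction $f$ reduces $E_n$ to $F_n$, so $E_n\leq_B F_n\leq_B{\rm id}(Y^{\mathbb{N}})$ is smooth; and $E_n\subseteq E$ with $E$ countable forces $E_n$ to be a countable Borel equivalence relation. A countable smooth Borel equivalence relation is hyperfinite: one has ${\rm id}\leq_B E_0$ (via, e.g., the continuous map $\psi$ that records the initial segment $x(0)\cdots x(k)$ on the $k$-th block of a fixed partition of $\mathbb{N}$ into successive finite blocks, so that $x\neq y$ produces disagreements in cofinitely many blocks), whence $E_n\leq_B{\rm id}\leq_B E_0$, and the cited characterization (a countable Borel $E$ is hyperfinite iff $E\leq_B E_0$) makes $E_n$ hyperfinite.

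Finally, $E=\bigcup_n E_n$ is an increasing union of hyperfinite Borel equivalence relations, and I would invoke the theorem of Dougherty--Jackson--Kechris that an increasing union of hyperfinite Borel equivalence relations is again hyperfinite to conclude that $E$ is hyperfinite.

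The main obstacle is precisely this last step: stability of hyperfiniteness under increasing unions is a genuinely nontrivial result, since one must coherently glue the finite approximations of the individual $E_n$ into a single increasing sequence of finite relations exhausting $E$; I would import it as a black box. Everything else---pulling back the tail-equivalence filtration and verifying that each level is a countable smooth, hence hyperfinite, relation---is routine. I note an alternative derivation that leans on Kechris--Louveau directly: the argument above shows $E$ is hypersmooth, and the Kechris--Louveau dichotomy asserts that a hypersmooth relation is either hyperfinite or satisfies $E_1\leq_B E$; the second option is impossible here, since it would give $E_1\sim_B E$ with $E$ countable, contradicting the Kechris--Louveau theorem that $E_1$ is not Borel reducible to any countable Borel equivalence relation.
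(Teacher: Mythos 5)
Your argument is correct. Note first that the paper offers no proof of this statement at all: it is imported with the remark that it is ``a consequence of the main result of \cite{KL}'', so any complete derivation is necessarily supplied by you rather than compared line-by-line with the paper. Your primary route is sound in every step: pulling back the tail filtration $F_n$ of $E_1(Y)$ through the reduction $f$ yields Borel equivalence relations $E_n=f^{-1}(F_n)$ with $E_n\subseteq E_{n+1}$ and $\bigcup_n E_n=E$; each $E_n$ is countable because $E_n\subseteq E$, and smooth because $f$ reduces $E_n$ to $F_n\leq_B {\rm id}(Y^{\mathbb{N}})$, hence hyperfinite by the characterization $E_n\leq_B E_0$ of \cite{DJK}; and the Dougherty--Jackson--Kechris theorem that an increasing union of (countable) hyperfinite Borel equivalence relations is hyperfinite, which legitimately applies since all the $E_n$ are countable, finishes the proof. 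You are right that this last step is the genuinely deep ingredient; in effect your main route trades one black box for another, replacing the Kechris--Louveau machinery by the DJK union theorem, which is of comparable difficulty but conceptually independent. Your alternative sketch is precisely what the paper's citation intends: the filtration shows $E$ is hypersmooth, the Kechris--Louveau dichotomy gives $E\leq_B E_0$ or $E_1\leq_B E$, and the second horn is impossible because $E_1$ is not Borel reducible to any countable Borel equivalence relation (a consequence, via Feldman--Moore, of the main theorem of \cite{KL} that $E_1$ does not reduce to any Polish group action orbit equivalence relation); then $E\leq_B E_0$ together with the DJK characterization, already quoted in the paper, yields hyperfiniteness. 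Both routes are complete modulo the named imported theorems, and either would serve as a correct proof of the statement.
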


We will also use the following results. The first is \cite[Proposition 1.3 (vii)]{JKL}; the second is \cite[Proposition 2.1]{Thomas2013}.

\begin{lemma}[Jackson--Kechris--Louveau \cite{JKL}]\label{lem:JKL} Let $X$ be a standard Borel space and let $E\subseteq F$ be countable Borel equivalence relations. Suppose $E$ is hyperfinite and $F$ is finite index over $E$, then $F$ is hyperfinite.
\end{lemma}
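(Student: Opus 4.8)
The plan is to produce a Borel reduction of $F$ into an auxiliary hyperfinite equivalence relation and then invoke the characterization recalled above, that a countable Borel equivalence relation is hyperfinite if and only if it is Borel reducible to $E_0$. First I would reduce to the case of bounded index. For $k\geq 1$ the sets $X_k=\{x\in X: [x]_F \text{ contains exactly } k\ E\text{-classes}\}$ are Borel and $F$-invariant and partition $X$; since a countable subequivalence relation of a hyperfinite relation is hyperfinite (intersect the witnessing finite relations), each $E\rest X_k$ is hyperfinite, and since hyperfiniteness is preserved under countable disjoint unions of invariant Borel pieces (on $X_{k'}$ with $k'\leq m$ use the $m$-th finite relation, elsewhere use the diagonal, and let $m\to\infty$), it suffices to treat a single $X_k$. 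So I assume henceforth that every $F$-class is the union of exactly $k$ $E$-classes.

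The key idea is a power trick that converts this general finite-index extension into a far more tractable finite-\emph{group} extension. On $X^k$ consider the relation $\hat E$ declaring $\bar x\mathrel{\hat E}\bar y$ iff there is a permutation $\sigma\in S_k$ with $x_i\mathrel{E}y_{\sigma(i)}$ for all $i$; thus $\hat E$ is generated by the componentwise relation $E^k$ together with the coordinate-permutation action of $S_k$. I claim $\hat E$ is hyperfinite directly from the definition. Writing $E=\bigcup_n E_n$ with $E_n$ finite Borel and increasing, each $E_n^k$ is finite, increasing, and invariant under coordinate permutations; let $G_n$ be the equivalence relation on $X^k$ generated by $E_n^k$ and the $S_k$-action. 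Every $G_n$-class of a tuple $\bar x$ remains inside $S^k$, where $S$ is the finite union of the $E_n$-classes of the coordinates of $\bar x$, so each $G_n$ is a finite Borel equivalence relation; the $G_n$ increase to $\hat E$, witnessing hyperfiniteness. (That $E^k$ itself is hyperfinite is the special case of the trivial group, using that finite products of finite equivalence relations are finite.)

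Next I would construct a Borel reduction $g\colon X\to X^k$ of $F$ to $\hat E$. By the Feldman--Moore theorem, write $F=E_\Gamma$ for a countable group $\Gamma=\{\gamma_i\}_{i\in\mathbb{N}}$ of Borel automorphisms, so that $[x]_F=\{\gamma_i(x):i\in\mathbb{N}\}$. Set $g_1(x)=x$ and, recursively, let $g_{j+1}(x)=\gamma_i(x)$ for the least $i$ with $\gamma_i(x)$ not $E$-equivalent to any of $g_1(x),\dots,g_j(x)$; since $E$ is Borel and the index is exactly $k$, this selects in a Borel way one representative from each of the $k$ $E$-classes of $[x]_F$, and $g=(g_1,\dots,g_k)$ is Borel. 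If $x\mathrel{F}y$ then $g(x)$ and $g(y)$ enumerate the same $k$ $E$-classes with one representative each, so $g(x)\mathrel{\hat E}g(y)$; conversely $g(x)\mathrel{\hat E}g(y)$ forces $x=g_1(x)\mathrel{E}g_{\sigma(1)}(y)\mathrel{F}y$, using $E\subseteq F$. Hence $x\mathrel{F}y\iff g(x)\mathrel{\hat E}g(y)$, so $F\leq_B\hat E\leq_B E_0$, and since $F$ is countable Borel it is hyperfinite.

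The main point to get right is the power trick: permutation-invariance of $\hat E$ is exactly what lets me avoid the (in general impossible) task of canonically and $F$-invariantly \emph{ordering} the $k$ $E$-classes inside each $F$-class, while the finiteness of the $E_n$-classes together with the finiteness of $S_k$ confines each approximating class to a finite set and thereby keeps the entire argument inside the elementary definition of hyperfiniteness, with no circular appeal to the statement being proved. The remaining work is routine Borel bookkeeping: verifying that the partition into the $X_k$ and the selector $g$ are Borel, and confirming the two closure facts (subrelations and disjoint invariant unions) used in the reduction to bounded index.
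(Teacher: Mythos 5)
The paper offers no proof of this lemma to compare against: it is quoted verbatim as \cite[Proposition 1.3 (vii)]{JKL}, so the only question is whether your blind argument is sound, and it is. The partition into the $F$-invariant Borel sets $X_k$ and the closure of hyperfiniteness under countable unions of invariant Borel pieces both check out (your interleaving of the $m$-th witnesses with the diagonal does produce an increasing sequence of finite Borel relations with union $F$). The symmetric-power relation $\hat E$ on $X^k$ is indeed hyperfinite by your direct witnesses $G_n$: the relation ``there is $\sigma\in S_k$ with $x_i\mathrel{E_n}y_{\sigma(i)}$ for all $i$'' is already symmetric and transitive, each of its classes is contained in $S^k$ for a finite set $S$, and these relations increase to $\hat E$. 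The Feldman--Moore selector $g$ is Borel and is a genuine reduction: if $x\mathrel{F}y$ the two tuples enumerate the same $k$ pairwise inequivalent $E$-classes, so some permutation matches them, and conversely $g(x)\mathrel{\hat E}g(y)$ gives $x\mathrel{E}g_{\sigma(1)}(y)\mathrel{F}y$ using $E\subseteq F$. The final step correctly invokes the Dougherty--Jackson--Kechris characterization (which the paper itself records just before Theorem~\ref{thm:KL}): $F\leq_B\hat E\leq_B E_0$ and $F$ countable Borel imply $F$ hyperfinite, with no circularity since $\hat E$'s hyperfiniteness was verified from the definition. Two items you defer to ``routine bookkeeping'' are worth flagging explicitly, though both are fine: Borelness of $X_k$ itself already needs Feldman--Moore (or Lusin--Novikov) to express the count of $E$-classes inside an $F$-class by countable quantification, and the recursion defining $g_{j+1}$ needs the index to be \emph{exactly} $k$ so that the least witness always exists --- which is precisely what the reduction to a single $X_k$ guarantees. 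What your route buys is a self-contained proof of the cited black box using only Feldman--Moore and facts the paper already states; the permutation-invariant power trick cleanly sidesteps the impossibility of Borel-ordering the $E$-classes within an $F$-class, which is the genuine obstruction a naive argument would hit.
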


\begin{lemma}[Thomas \cite{Thomas2013}]\label{lem:Thomas} Let $X$ be a standard Borel space and let $E\subseteq F$ be countable Borel equivalence relations. If $F$ is smooth, then so is $E$.
\end{lemma}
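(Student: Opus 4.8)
The plan is to prove the statement directly: assuming $F$ is smooth, I will construct an explicit Borel reduction of $E$ to the equality relation ${\rm id}(X)$, which is exactly what ``$E$ is smooth'' requires. The guiding idea is that smoothness of $F$ gives a Borel complete invariant $g\colon X\to Y$ with $x\,F\,x'\iff g(x)=g(x')$, whose nonempty fibers $g^{-1}(y)$ are precisely the $F$-classes and hence are countable (since $F$ is countable). Because $E\subseteq F$, every $E$-class is contained in a single such fiber, so it suffices to name, in a Borel and $E$-invariant way, the $E$-classes sitting inside each fiber.

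First I would record that $g$ is a Borel map with countable fibers. By the Lusin--Novikov uniformization theorem I can enumerate each fiber by countably many Borel partial sections $p_n\colon Y\to X$, so that $p_n(g(x))$ ranges (as $n$ varies) exactly over $[x]_F=g^{-1}(g(x))$ and $g(p_n(y))=y$ whenever $p_n(y)$ is defined. The crucial feature is that the index $n$ attached to a point of a fiber is anchored to the $F$-invariant datum $y=g(x)$, and is therefore constant along $E$-classes; this is what makes the enumeration canonical.

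Next I would define the selector
$$ r(x)=p_{n_0}(g(x)),\qquad n_0=\min\{\,n\colon p_n(g(x))\text{ is defined and } p_n(g(x))\,E\,x\,\}. $$
The minimum exists because $x$ itself occurs as some $p_n(g(x))$, so the set is nonempty. I would then verify three things: that $r$ is Borel (it is built from $g$, the $p_n$, the Borel relation $E$, and a countable least-index search); that $r(x)\,E\,x$; and that $r$ is $E$-invariant, i.e.\ $x\,E\,x'$ implies $r(x)=r(x')$. The last point uses that $x\,E\,x'$ forces $g(x)=g(x')$ (as $E\subseteq F$) and $[x]_E=[x']_E$, so the defining least-index condition is literally the same for $x$ and $x'$. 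Together these give $x\,E\,x'\iff r(x)=r(x')$ (the backward direction follows from $r(x)=r(x')$ together with $r(x)\,E\,x$ and $r(x')\,E\,x'$), exhibiting $r$ as a Borel reduction of $E$ to ${\rm id}(X)$ and completing the proof.

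I expect the only genuinely delicate point to be the realization that smoothness of $F$ by itself does \emph{not} separate the $E$-classes lying within a single $F$-class: the invariant $g$ collapses every such class to one value and so carries no information distinguishing the finer $E$-classes. The actual content is therefore not the invariant $g$ but the countability of $F$-classes, which lets Lusin--Novikov furnish a \emph{canonically indexed} Borel enumeration of each fiber; the minimal-index selector then does the rest. Verifying the Borelness of $r$ and its $E$-invariance is routine once this enumeration is in hand.
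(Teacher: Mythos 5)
Your proof is correct, and it is essentially the standard argument: the paper itself gives no proof (it cites \cite{Thomas2013}, Proposition 2.1), and Thomas's proof there proceeds in just the way you do, using Lusin--Novikov to enumerate the countable $F$-classes over the $F$-invariant and then selecting a least-index $E$-equivalent representative. Your observation that the real content is the countability of $F$ rather than its smoothness is also on point, since the lemma fails for non-countable $F$ (e.g.\ $E_0\subseteq X\times X$ with $F=X\times X$ smooth).
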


\subsection{Word combinatorics\label{sec:2.1}}

Throughout the paper we let $\mathsf{A}$ be a finite alphabet. An element of $\mathsf{A}$ is called a \textbf{letter}. Let $\mathsf{A}^*$ be the set of all finite words with alphabet $\mathsf{A}$. Let $\varnothing$ denote the empty word. For any $u\in \mathsf{A}^*$, let $|u|$ denote the \textbf{length} of $u$. For each $n\in\mathbb{N}$, let $\mathsf{A}^n$ denote the set of all words in $\mathsf{A}^*$ of length $n$. Thus for any $u\in \mathsf{A}^*$, $u\in \mathsf{A}^{|u|}$. We always write $u=u(0)\cdots u(|u|-1)$, where $u(i)\in\mathsf{A}$ for $0\leq i<|u|$. Given $u\in \mathsf{A}^*$ and $0\leq i\leq j<|u|$, let $u[i,j)$ denote the unique word $w$ of length $j-i$ such that for any $0\leq k < j-i$, $w(k)=u(i+k)$. A word $w\in \mathsf{A}^*$ is a \textbf{subword} of $u$ if $w=u[i,j)$ for some $0\leq i<j<|u|$; in this case we also say that $w$ \textbf{occurs} in $u$ at \textbf{position} $i$. For $u, v\in \mathsf{A}^*$, we say that $u$ is a \textbf{prefix} of $v$ if $|u|<|v|$ and $v[0,|u|)=u$, and $u$ is a \textbf{suffix} of $v$ if $v[|v|-|u|, |v|)=u$. If $u, v\in \mathsf{A}^*$, we let $uv$ denote the \textbf{concatenation} of $u$ and $v$, which is defined as the unique word $w$ of length $|u|+|v|$ such that $w(k)=u(k)$ for all $0\leq k<|u|$ and $w(|u|+k)=v(k)$ for all $0\leq k<|v|$. The general concatenations of multiple words are similarly defined. 

If $W, V\subseteq \mathsf{A}^*$, we let $WV$ denote the set of all words $wv$, where $w\in W$ and $v\in V$. When $W=\{w\}$, we write $WV$ as $wV$. If $W\subseteq \mathsf{A}^*$, we let $W^*$ denote the set of all words of the form $u_1\cdots u_k$, where $u_1, \dots, u_k\in W$ and $k\geq 0$ (when $k=0$ the word represented is the empty word). In other words, $W^*=\bigcup_{n\in\mathbb{N}}W^n$. More generally, for $W\subseteq \mathsf{A}^*$, let $W^+$ be the set of all nonempty words of the form
$$ w=v_0 u_1\dots u_k v_{k+1}, $$
where $u_1,\dots, u_k\in W$ and $k\geq 0$, and for some $u_0, u_{k+1}\in W$, $v_0$ is a suffix of $u_0$, and $v_{k+1}$ is a prefix of $u_{k+1}$. In this case we say that the word $w$ is \textbf{built} from $W$, and we call the above demonstrated concatenation a \textbf{building} of $w$ from $W$. In other words, $W^+$ consists of all nonempty subwords of elements of $W^*$.

For a bi-infinite word $x\in \mathsf{A}^\mathbb{Z}$ and $i\leq j\in\mathbb{Z}$, we also let $x[i,j)$ be the unique finite word $w$ of length $j-i$ such that for any $0\leq k<j+i$, $w(k)=x(i+k)$; in this case we also say $w$ is a \textbf{subword} of $x$ and that $w$ \textbf{occurs} in $x$ at position $i$. For $W\subseteq \mathsf{A}^*$ and $x\in A^\mathbb{Z}$, we say that $x$ is \textbf{built} from $W$ if $x$ can be written as a bi-infinite concatenation of words from $W$, i.e.,
$$ x=\cdots u_{-2}u_{-1}u_0u_1\cdots, $$
where $u_i\in W$ for all $i\in\mathbb{Z}$. Again this demonstrated concatenation is called a \textbf{building} of $x$ from $W$; this has also been called a \textbf{$W$-factorization} of $x$ in the literature (see e.g. \cite{DDMP21}). To uniquely represent a building of an element $x\in \mathsf{A}^\mathbb{Z}$ we use a dot to represent position $0$; for example, if
$$ x=\cdots u_{-2}u_{-1}.u_0u_1\cdots $$
then position $0$ of $x$ is the beginning position of the word $u_0$. 

We say that $x\in \mathsf{A}^\mathbb{Z}$ is \textbf{periodic} if there exist $i, p\in\mathbb{N}$ such that for all $k\in \mathbb{Z}$ and $0\leq j< p$, $x(i+j)=x(i+j+kp)$; such a $p$ is called a \textbf{period} of $x$. Equivalently, $x$ is periodic if and only if it is built from a single finite word. We say that $x\in\mathsf{A}^\mathbb{Z}$ is \textbf{aperiodic} if $x$ is not periodic. For any subset $X\subseteq \mathsf{A}^{\mathbb{Z}}$, its \textbf{periodic part} is the set of all periodic elements of $X$, and its \textbf{aperiodic part} is the set of all aperiodic elements of $X$; $X$ is \textbf{aperiodic} if it consists only of aperiodic elements. 

If $x\in \mathsf{A}^\mathbb{Z}$ and $i\in\mathbb{Z}$, then $x(-\infty, i)$ denotes the element $y\in \mathsf{A}^{\mathbb{N}^*}$ where $\mathbb{N}^*=\{-n\colon n\in\mathbb{N}\}$ and $y(-n)=x(-n+i-1)$ for all $n\in\mathbb{N}$. For $x, x'\in\mathsf{A}^\mathbb{Z}$, we say that $\{x, x'\}$ is a 
\textbf{left asymptotic pair} if for some $i, i'\in\mathbb{Z}$, $x(-\infty, i)=x'(-\infty, i')$; $\{x, x'\}$ is a \textbf{center left asymptotic pair} if $x(-\infty, 0)=x'(-\infty, 0)$ and $x(0)\neq x'(0)$.

If $u\in\mathsf{A}^*$, we let
$$ \llbracket u\rrbracket=\{x\in \mathsf{A}^{\mathbb{Z}}\colon x[0,|u|)=u\}. $$

\subsection{Minimal Cantor systems, odometers, and subshifts\label{sec:2.2}}
We refer the reader to \cite{Au} for basic concepts, notation, and results on topological dynamical systems. 
By a \textbf{topological dynamical system} we mean a pair $(X, T)$, where $X$ is a compact metrizable space and $T\colon  X\to X$ is a homeomorphism. If $(X, T)$ is a topological dynamical system and $Y\subseteq X$ satisfies $TY=Y$, then $Y$ is called a \textbf{$T$-invariant} subset. 

If $(X, T)$ and $(Y, S)$ are topological dynamical systems and $\varphi: X\to Y$ is a continuous surjection satisfying $\varphi\circ T=S\circ \varphi$, then $\varphi$ is called a \textbf{factor map} and $(Y, S)$ is called a \textbf{factor} of $(X, T)$. If in addition $\varphi$ is a homeomorphism, then it is called a \textbf{conjugacy} (\textbf{map}) and we say that $(X, T)$ and $(Y,S)$ are \textbf{conjugate}. 

If $(X, T)$ is a topological dynamical system and $\rho$ is a compatible metric on $X$, then $\rho$ is necessarily complete since $X$ is compact. Let $(X, T)$ be a topological dynamical system and fix $\rho$ a compatible metric on $X$. We say that $(X, T)$ is \textbf{equicontinuous} if for all $\epsilon>0$ there is $\delta>0$ such that for all $n\in\mathbb{Z}$, if $\rho(x,y)<\delta$ then $\rho(T^nx, T^ny)<\epsilon$. We say that $(X, T)$ is \textbf{expansive} if for some $\delta>0$, whenever $x,y\in X$ satisfy that $\rho(T^nx, T^ny)\leq \delta$ for all $n\in\mathbb{Z}$, we have $x=y$. Since $X$ is compact, the equicontinuity and the expansiveness are topological notions and do not depend on the compatible metric $\rho$.

Every topological dynamical system $(X, T)$ has a \textbf{maximal equicontinuous factor}, i.e., an equicontinuous factor $(Y, S)$ with the factor map $\varphi$ such that if $(Z, G)$ is another equicontinuous factor of $(X, T)$ with factor map $\psi$ then there is a factor map $\theta: (Y,S)\to (Z, G)$ such that $\psi=\theta\circ \varphi$.

If $(X, T)$ is a topological dynamical system and $x\in X$, then the \textbf{orbit} of $x$ is defined as $\{T^kx\colon k\in\mathbb{Z}\}$ and is denoted by $\mathcal{O}(x)$. 

Recall that a \textbf{Cantor space} is a zero-dimensional, perfect, compact metrizable space. Let $X$ be a Cantor space and let $T\colon X\to X$ be a homeomorphism. Then $(X, T)$ is called a \textbf{Cantor system}. $T$ is \textbf{minimal} if every orbit is dense, i.e., for all $x\in X$, $\mathcal{O}(x)$ is dense in $X$. A \textbf{minimal Cantor system} is a pair $(X, T)$ where $X$ is a Cantor space and $T\colon X\to X$ is a minimal homeomorphism.

We define an important class of minimal Cantor systems, known as odometers. Let $P$ be the set of all prime numbers. A \textbf{supernatural number} is an expression of the form
$$ \mathsf{u}=\prod_{p\in P} p^{n_p} $$
where $n_p\in\mathbb{N}\cup\{\infty\}$ for each $p\in P$. A natural number is a supernatural number. Supernatural numbers can be multiplied in the natural way, and it is natural to define (finite or supernatural) factors of a supernatural number. Let $\mathsf{u}$ be a supernatural number and $n\in\mathbb{N}$. We write $n\,|\,\mathsf{u}$ if $n$ is a factor of $\mathsf{u}$. Consider the inverse system that consists of additive groups $\mathbb{Z}_n$ for $n\,|\,\mathsf{u}$ and homorphisms $\pi_{n,m}\colon \mathbb{Z}_n\to \mathbb{Z}_m$ for $m\,|\,n\,|\,\mathsf{u}$, where $\pi_{n,m}(a)\equiv a\ (\!\!\!\!\mod m)$. Let $\mbox{\rm Odo}(\mathsf{u})$ be the inverse limit of this inverse system. Then $\mbox{\rm Odo}(\mathsf{u})$ is an abelian topological group. Let $\boldsymbol{1}$ be the unique element of $\mbox{\rm Odo}(\mathsf{u})$ that projects to $1$ in every $\mathbb{Z}_n$ for $n\,|\,\mathsf{u}$. Let $S\colon\mbox{\rm Odo}(\mathsf{u})\to \mbox{\rm Odo}(\mathsf{u})$ be the map $S(x)=x+\boldsymbol{1}$. Then $(\mbox{\rm Odo}(\mathsf{u}), S)$ is a minimal Cantor system. We call  $(\mbox{\rm Odo}(\mathsf{u}), S)$ an \textbf{odometer} and call $\mathsf{u}$ the \textbf{scale} of $(\mbox{\rm Odo}(\mathsf{u}), S)$. 

Now let $(p_n)_{n\geq 0}$ be a strictly increasing sequence of natural numbers such that $p_n\,|\, p_{n+1}$ for all $n\in\mathbb{N}$. The least common multiple of all $p_n$ on the sequence is a supernatural number. We denote this supernatural number by $\mathsf{u}=\mbox{\rm lcm}(p_n)_{n\geq 0}$. The inverse system $(\mathbb{Z}_{p_n}; \pi_{p_{n+1}, p_n}\colon \mathbb{Z}_{p_{n+1}}\to \mathbb{Z}_{p_{n}})_{n\geq 0}$ has an inverse limit, which we denote as ${\rm Odo}((p_n)_{n\geq 0})$. $\mbox{\rm Odo}((p_n)_{n\geq 0})$ is isomorphic to $\mbox{\rm Odo}(\mathsf{u})$ as a topological group, and $(\mbox{\rm Odo}((p_n)_{n\geq 0}), S)$ and $(\mbox{\rm Odo}(\mathsf{u}), S)$ are conjugate as topological dynamical systems. 

Let $\mathsf{A}$ be a finite alphabet. We consider the \textbf{Bernoulli shift} on $\mathsf{A}^{\mathbb{Z}}$, which is the homeomorphism $S\colon \mathsf{A}^{\mathbb{Z}}\to \mathsf{A}^{\mathbb{Z}}$ defined by
$$ S(x)(i)=x(i+1) $$
for all $x\in \mathsf{A}^\mathbb{Z}$ and $i\in\mathbb{Z}$. Since $\mathsf{A}^{\mathbb{Z}}$ is homeomorphic to the Cantor space, $(\mathsf{A}^{\mathbb{Z}}, S)$ is a Cantor system. If $X$ is a closed $S$-invariant subset of $\mathsf{A}^{\mathbb{Z}}$, then we call $(X, S)$ a \textbf{subshift}. 

By a classical result of Hedlund, a Cantor system is expansive if and only if it is conjugate to a subshift. It was shown in \cite{DM} that every minimal Cantor system of finite topological rank (the notion of finite topological rank is to be defined in Subsection~\ref{sec:2.5}) is conjugate to either an odometer or a subshift.

Another classical result of Curtis--Hedlund--Lyndon is the following characterization of factor map in terms of block codes.

\begin{theorem}[Curtis--Hedlund--Lyndon]\label{thm:CHL} Let $\mathsf{A}$ be a finite alphabet, let $X, Y\subseteq A^\mathbb{Z}$ be subshifts, and let $\varphi\colon X\to Y$ be a factor map from $(X, S)$ to $(Y, S)$. Then there exists $n\in\mathbb{N}$ and a function $C\colon \mathsf{A}^{2n+1}\to \mathsf{A}$ such that for all $x\in X$ and $i\in\mathbb{Z}$, 
$$ \varphi(x)(i)=C(x[i-n, i+n+1)). $$
\end{theorem}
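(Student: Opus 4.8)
The plan is to exploit the two hypotheses separately: shift-commutation lets us reduce the problem to understanding a single coordinate of $\varphi(x)$, and continuity together with the compactness of $X$ upgrades a pointwise dependence into a uniform finite-window dependence. First I would define $c\colon X\to\mathsf{A}$ by $c(x)=\varphi(x)(0)$. Since $\varphi$ is continuous and the evaluation map $y\mapsto y(0)$ from $Y$ into the finite discrete alphabet $\mathsf{A}$ is continuous, $c$ is continuous. Using $\varphi\circ S=S\circ\varphi$ one gets, for every $i\in\mathbb{Z}$,
$$ \varphi(x)(i)=\big(S^i\varphi(x)\big)(0)=\varphi(S^i x)(0)=c(S^i x), $$
so it suffices to express $c$ as a function of a bounded central block of its argument.

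The key step is to produce a single window size $n$ that works for all $x\in X$ simultaneously. Because $\mathsf{A}$ carries the discrete topology, continuity of $c$ at a point $x$ means that $c$ is constant on some basic clopen neighborhood of $x$; such neighborhoods are exactly the central cylinders $\{x'\in X : x'[-m,m+1)=x[-m,m+1)\}$ for $m\in\mathbb{N}$. Thus for each $x$ there is $m_x$ such that $c(x')=c(x)$ whenever $x'[-m_x,m_x+1)=x[-m_x,m_x+1)$. These cylinders form an open cover of $X$; since $X$ is compact I would extract a finite subcover and let $n$ be the maximum of the finitely many corresponding $m_x$. Refining a window can only shrink a cylinder, so $c(x)$ depends only on $x[-n,n+1)$: if $x[-n,n+1)=x'[-n,n+1)$ then $c(x)=c(x')$.

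Finally I would define $C\colon\mathsf{A}^{2n+1}\to\mathsf{A}$ by setting $C(w)=c(x)$ whenever $w=x[-n,n+1)$ for some $x\in X$ (well-defined by the previous step) and assigning $C$ an arbitrary value on words that never occur as a central block of a point of $X$. Since $(S^i x)[-n,n+1)=x[i-n,i+n+1)$, combining this with the reduction above yields
$$ \varphi(x)(i)=c(S^i x)=C\big((S^i x)[-n,n+1)\big)=C\big(x[i-n,i+n+1)\big), $$
as required. The only real subtlety is the compactness argument in the middle paragraph: passing from the pointwise local constancy supplied by continuity to a uniform radius $n$ valid throughout $X$. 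Everything else is bookkeeping with the shift and a harmless choice of values of $C$ on non-occurring blocks.
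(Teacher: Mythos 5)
Your proof is correct and complete: the reduction $\varphi(x)(i)=c(S^ix)$ via shift-equivariance, the compactness argument upgrading pointwise local constancy of $c$ to a uniform window radius $n$, and the harmless arbitrary extension of $C$ to non-occurring blocks together constitute the standard proof of the Curtis--Hedlund--Lyndon theorem. The paper states this result as classical and gives no proof, so there is nothing to compare against; your argument is exactly the canonical one, and the step you flag as the only subtlety (extracting a uniform $n$ from the finite subcover, using that agreement on a window of radius $n\geq m_{x_j}$ implies agreement on the smaller window) is handled correctly.
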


The function $C$ in the above theorem is called a \textbf{block code} for $\varphi$, and $2n+1$ is the \textbf{width} of the block code $C$. If $\varphi\colon X\to Y$ is a conjugacy map between $(X, S)$ and $(Y, S)$, then by Theorem~\ref{thm:CHL}, there are block codes $C$ for $\varphi$ and $C'$ for $\varphi^{-1}$. We denote by $|\varphi|$ the larger value between the width of $C$ and the width of $C'$. Then without loss of generality we may assume both $C$ and $C'$ have width $|\varphi|$.

\subsection{Toeplitz subshifts\label{sec:2.3}}
Toeplitz subshifts and Toeplitz sequences are standard methods of constructing 
minimal subshifts. See, e.g. Downarowicz \cite{DSurvey} for a survey of the topic. We will use similar notation as in Williams \cite{Williams1984}.

An element $x \in \mathsf{A}^{\mathbb{Z}}$ is a \textbf{Toeplitz sequence} if for all $i \in \mathbb{Z}$ there exists $p \ge 1$ such that $x(i) = x(i + kp)$ for all $k \in \mathbb{Z}$. If $x\in\mathsf{A}^\mathbb{Z}$ is a Toeplitz sequence, then the closure of its orbit in $\mathsf{A}^\mathbb{Z}$, denoted $\overline{\mathcal{O}(x)}$, is a subshift; we call it the \textbf{Toeplitz subshift} genereated by $x$. It is well known that every Toeplitz subshift is minimal, and it is aperiodic if and only if it is infinite if and only if any Toeplitz sequence generating it is aperiodic. In any Toeplitz subshift, the set of Toeplitz sequences is comeager.

Let $x \in \mathsf{A}^\mathbb{Z}$ and let $p > 1$ be an integer. Define the \textbf{$p$-periodic part} of $x$ as
    $$
        {\rm Per}_p(x) = \{i \in \mathbb{Z}\colon  x(i)=x(i + kp) \mbox{ for all }  k \in \mathbb{Z}\}.
    $$
The \textbf{periodic part} of $x$ is 
$$ {\rm Per}(x)=\bigcup_{p>1}{\rm Per}_p(x) $$
and the \textbf{aperiodic part} of $x$ is ${\rm Aper}(x)=\mathbb{Z}\setminus {\rm Per}(x)$. By definition, $x\in \mathsf{A}^\mathbb{Z}$ is a Toeplitz sequence if and only if ${\rm Per}(x)=\mathbb{Z}$. A number $i \in \mathbb{Z}$ is called a \textbf{$p$-hole} of $x$ if $i \not \in {\rm Per}_p(x)$. We usually use a  \textbf{blank symbol} $\square$ to represent $p$-holes. More precisely, define the \textbf{$p$-skeleton} of $x$ by  
    \begin{equation*}
        {\rm Skel}(x, p)(i) = 
        \begin{cases}
            x(i), & \textrm{ if } i \in {\rm Per}_p(x), \\
            \square, & \textrm{ otherwise.}
        \end{cases}
    \end{equation*}
Then ${\rm Skel}(x, p)$ is a $p$-periodic sequence in the alphabet $\mathsf{A}\cup\{\square\}$.
    
 Now let $x$ be a Toeplitz sequence. Call $p$ an \textbf{essential period} of $x$ if the $p$-skeleton of $x$ is not periodic with any smaller period. 
    A \textbf{period structure} for $x$ is a strictly increasing sequence $(p_n)_{n \ge 0}$ of essential periods of $x$  such that 
    $p_n\,|\,p_{n+1}$ for every $n$, and 
    $$
        \bigcup_{n \ge 1} {\rm Per}_{p_n}(x) = \mathbb{Z}.   
    $$
It was proven in \cite{Williams1984} that if $p$ is an essential period for $x$ then $p$ is an essential period for any Toeplitz sequence $y\in \overline{\mathcal{O}(x)}$. Thus we can speak of an essential period for a Toeplitz subshift. Consequently, if $(p_n)_{n\geq 0}$ is a period structure for $x$ then it is a period structure for any Toeplitz sequence $y\in \overline{\mathcal{O}(x)}$; thus we can also speak of a period structure for a Toeplitz subshift.

 Let $x$ be a Toeplitz sequence. 
The \textbf{scale} of $x$ is the supernatural number $\mathsf{u}_x=\mbox{\rm lcm}(p_n)_{n\geq 0}$ where $p_n$ is an enumeration of all essential periods of $x$. It was provd in \cite{Williams1984} that the odometer $(\mbox{\rm Odo}(\mathsf{u}_x), S)$ is the maximal equicontinuous factor of the Toeplitz subshift $(\overline{\mathcal{O}(x), S)}$. In particular, the scale does not depend on the individual Toeplitz sequence $x$, and we can speak of the scale for a Toeplitz subshift.

 Let $X=\overline{\mathcal{O}(x)}$ be the Toeplitz subshift generated by an aperiodic Topelitz sequence $x$. Following \cite{Williams1984}, for $p>1$ and $0\leq k<p$, define
$$ A(x, p, k)=\{S^i(x)\colon i\equiv k\ (\!\!\!\!\mod p)\}. $$
Then 
$$ \overline{A(x,p,k)}=\{y\in X\colon {\rm Skel}(y, p)= S^k {\rm Skel}(x,p)\}; $$
in particular, all elements of $\overline{A(x,p,k)}$ have the same $p$-skeleton. Moreover, 
$$\left\{\overline{A(x,p,k)}\colon 0\leq k<p\right\}$$ is a partition of $X$. Denote this partition by ${\rm Parts}(X, p)$. If $W\in {\rm Parts}(X, p)$, then all $y\in W$ have the same $p$-skeleton; thus we can define ${\rm Skel}(W,p)$ to be the $p$-skeleton of any $y\in W$.

By Theorem~\ref{thm:CHL}, if a subshift is conjugate to a Toeplitz subshift, then it is itself a Toeplitz subshift. 

For conjugacy between Toeplitz subshifts, we have the following chracterization (\cite[Theorem 1]{DKL}).

\begin{theorem}[Downarowicz--Kwitakowski--Lacroix \cite{DKL}]  \label{thm:DKL} Let $\mathsf{A}$ be a finite alphabet, let $X=\overline{\mathcal{O}(x)}, Y=\overline{\mathcal{O}(y)}\subseteq \mathsf{A}^\mathbb{Z}$ be Toeplitz subshifts, where $x, y$ are Toeplitz sequences. Then there exists a conjugacy map $\varphi$ between $(X, S)$ and $(Y, S)$ with $\varphi(x)=y$ if and only if there exist a positive integer $p$ and a permutation $\phi\colon \mathsf{A}^p\to \mathsf{A}^p$ such that for all $k\in\mathbb{Z}$, 
$$ y[kp, (k+1)p)=\phi(x[kp, (k+1)p)). $$
\end{theorem}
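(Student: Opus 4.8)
The plan is to prove the two implications separately. The forward implication --- extracting $p$ and $\phi$ from a conjugacy --- I expect to be a direct block-code computation, whereas the reverse implication is where the real difficulty lies. For the forward direction I would start from a conjugacy $\varphi$ with $\varphi(x)=y$ and, invoking Theorem~\ref{thm:CHL}, fix block codes $C$ for $\varphi$ and $C'$ for $\varphi^{-1}$ of common width $|\varphi|=2n+1$. The crucial preliminary step is the choice of $p$: since $X$ and $Y$ are conjugate they share the same scale, so I can pick a common essential period $p>|\varphi|$ with $\{-n,\dots,n\}\subseteq {\rm Per}_p(x)\cap {\rm Per}_p(y)$, using that the sets ${\rm Per}_p$ exhaust $\mathbb{Z}$ along a period structure. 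Because each ${\rm Per}_p$ is invariant under translation by $p$, the hole pattern of $x$ is $p$-periodic; hence in computing $y[kp,(k+1)p)$ through $C$, the only positions read outside the block $[kp,(k+1)p)$ lie within distance $n$ of the endpoints, are therefore ${\rm Per}_p$-positions, and carry $k$-independent values (namely $x(-n),\dots,x(-1)$ and $x(0),\dots,x(n-1)$). This shows $y[kp,(k+1)p)$ depends only on $x[kp,(k+1)p)$, defining a map $\phi$ on the $p$-blocks occurring in $x$; the symmetric computation with $C'$ produces a mutually inverse map on the $p$-blocks of $y$, and I would extend $\phi$ to a permutation of $\mathsf{A}^p$ arbitrarily on the unused words, which does not disturb the relation.

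For the reverse direction I would define $\varphi$ on the orbit of $x$ by $\varphi(S^ix)=S^iy$ (unambiguous since $x$ is aperiodic) and aim to extend it continuously to $X=\overline{\mathcal{O}(x)}$, from which a conjugacy follows by symmetry. First I would reduce to the case that $p$ is an \emph{essential} period of $x$: replacing $\phi$ by its $m$-fold product shows the relation survives when $p$ is replaced by any multiple $mp$ (under the identification $\mathsf{A}^{mp}\cong(\mathsf{A}^p)^m$), while the Toeplitz-ness of $y$ should force the relation to descend to the minimal period of ${\rm Skel}(x,p)$, so that I may assume ${\rm Skel}(x,p)$ has minimal period exactly $p$. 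I expect this reduction to be the main obstacle, precisely because it is what eliminates a potential \emph{phase ambiguity}: once $p$ is essential, the $p$ pieces $\overline{A(x,p,k)}$, $0\le k<p$, are genuinely distinct, hence (being finitely many disjoint closed sets covering the compact space $X$) clopen, so the $p$-phase $z\mapsto k$ with $z\in\overline{A(x,p,k)}$ is locally constant and thus determined by a central window $z[-M_1,M_1)$ of fixed width.

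With $p$ essential in hand, the uniform-continuity estimate should be routine. Given a target radius $N$, I would set $M=\max(M_1,N+p)$ and check that if $x[i-M,i+M)=x[j-M,j+M)$, then $S^ix$ and $S^jx$ lie in the same piece of ${\rm Parts}(X,p)$, forcing $i\equiv j\pmod p$; the block alignments at $i$ and $j$ then match, the $p$-blocks overlapping $[i-N,i+N)$ coincide with those overlapping $[j-N,j+N)$ after the shift (as $M\ge N+p$), and applying $\phi$ blockwise yields $y[i-N,i+N)=y[j-N,j+N)$. This gives a continuous, $S$-equivariant, surjective extension $\varphi\colon X\to Y$ with $\varphi(x)=y$, all properties passing from the dense orbit. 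Finally, running the identical construction with $\phi^{-1}$ in place of $\phi$ produces a continuous $S$-equivariant $\psi\colon Y\to X$ with $\psi(y)=x$; since $\psi\circ\varphi$ and $\varphi\circ\psi$ are the identity on the dense orbits of $x$ and $y$, they are the identity everywhere, so $\varphi$ is the desired conjugacy.
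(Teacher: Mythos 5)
The paper offers no proof of this statement (it is quoted from \cite{DKL}), so I am judging your argument on its own terms. Your forward direction is sound: it is the same computation as Kaya's Lemma~\ref{lem:kaya}, and choosing $p$ with $[-n,n]\subseteq \mathrm{Per}_p(x)\cap \mathrm{Per}_p(y)$ so that the overhanging coordinates read by the block code are $p$-periodic, hence $k$-independent, is exactly right; the two mutually inverse block maps extend to a permutation of $\mathsf{A}^p$ as you say.

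The reverse direction, however, has a genuine gap at precisely the step you flagged: the claim that the relation ``descends to the minimal period of $\mathrm{Skel}(x,p)$'' is false. Take $x$ an aperiodic binary Toeplitz sequence with scale $3^\infty$ (so $\mathrm{Per}_2(x)=\varnothing$), $p=2$, and $\phi(ab)=ba$. Then $y$, defined by $y(2k)=x(2k+1)$ and $y(2k+1)=x(2k)$, is again Toeplitz (position $m$ is $2\cdot 3^k$-periodic in $y$ as soon as $m\pm 1\in \mathrm{Per}_{3^k}(x)$), the hypothesis holds with $p=2$, $\mathrm{Skel}(x,2)$ has minimal period $1$, and no letter permutation $\phi'$ with $y(i)=\phi'(x(i))$ exists. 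Worse, no multiple of $2$ is an essential period of $x$ (essential periods divide the scale), so your reduction cannot reach an essential $p$ by any combination of ascent and descent. In fact this example shows the ``if'' direction is false as literally printed: every window $x[i-M,i+M)$ recurs at $j=i+3^k$, an odd shift, and since $y_m=x_{m+1}$ for even $m$ and $y_m=x_{m-1}$ for odd $m$, one checks $y[i-N,i+N)\neq y[j-N,j+N)$ unless the window is $2$-periodic; thus $S^{j}x\to S^{i}x$ while $S^{j}y\not\to S^{i}y$, so no continuous equivariant $\varphi$ with $\varphi(x)=y$ can exist. The correct statement --- and the only way this paper ever invokes it, e.g.\ in Theorems~\ref{thm:main} and~\ref{thm:inv} --- requires $p$ compatible with the scale ($p\,|\,\mathsf{u}$); under that hypothesis the right move is your $m$-fold-product observation applied \emph{upward}, to a member $p_n$ of a period structure divisible by $p$, and descent is never needed.

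There is a second, subtler gap even once $p$ is essential for $x$: your closing step, ``running the identical construction with $\phi^{-1}$,'' is not identical. Your continuity proof for $\varphi$ rests on the sets $\overline{A(x,p,k)}$, $0\le k<p$, having pairwise distinct $p$-skeletons, hence being disjoint closed sets covering $X$, hence clopen. On the $Y$ side this mechanism can vanish: with $x(2k)=0$ for all $k$ (so $p=2$ is essential for $x$) and $\phi(00)=01$, $\phi(01)=10$, the resulting Toeplitz $y$ has $\mathrm{Per}_2(y)=\varnothing$, so all the sets $\overline{A(y,2,k)}$ carry the same blank skeleton and the skeleton argument yields no phase recovery for $y$. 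Disjointness of the $\overline{A(y,p,k)}$'s is equivalent to injectivity of your $\varphi$ and is the real content of the theorem; it holds in this example, but only via the ad hoc relation $y(2k+1)=1-y(2k)$, not via skeletons. So your proposal needs (i) the hypothesis $p\,|\,\mathsf{u}$ together with ascent to an essential period, replacing the false descent, and (ii) a genuine argument for injectivity (equivalently, phase separation on the $Y$ side) rather than an appeal to symmetry.
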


This theorem motivated the following definition of Kaya \cite{Kaya}. Let $\mathsf{A}$ be a finite set and $p>1$ be an integer. Let ${\rm Sym}(\mathsf{A}^p)$ denote the set of all permutations on $\mathsf{A}^p$. For any $\phi\in {\rm Sym}(\mathsf{A}^p)$, define $\widehat{\phi}\colon \mathsf{A}^\mathbb{Z}\to \mathsf{A}^\mathbb{Z}$ by
$$\widehat{\phi}(x)[kp,(k+1)p)=\phi (x[kp,(k+1)p)) $$
for any $x\in \mathsf{A}^\mathbb{Z}$ and any $k\in\mathbb{Z}$. Then $\widehat{\phi}$ is a homeomorphism. Define an equivalence relation $E_p$ on $K(\mathsf{A}^{\mathbb{Z}})$ by 
    $$
    K\, E_p\, L \Leftrightarrow 
L = \widehat{\phi}(K) \textrm{ for some } \phi \in {\rm Sym}(\mathsf{A}^p).
    $$
Since there are only finitely many elements of ${\rm Sym}(\mathsf{A}^p)$, $E_p$ is a finite equivalence relation.
We will also use the following equivalence relation $E_p^{\rm fin}$ introduced in \cite{Kaya}. For finite sets $\mathcal{K}, \mathcal{L} \subseteq K(\mathsf{A}^{\mathbb{Z}})$, define
   $$
    \mathcal{K}\, E_p^{\rm fin}\, \mathcal{L}
    \Leftrightarrow
    \left\{ [K]_{E_p} \colon K \in \mathcal{K} \right\}
    =
    \left\{ [L]_{E_p} \colon L \in \mathcal{L} \right\}.
    $$   
Then $E_p^{\rm fin}$ is also a finite equivalence relation.

\subsection{$\mathcal{S}$-adic subshifts\label{sec:2.4}}

We recall the basic definition of $\mathcal{S}$-adic subshifts and related notions following Donoso--Durand--Maass--Petite \cite{DDMP21}. 
If $A, B$ are finite alphabets, a \textbf{morphism} $\tau\colon A^*\to B^*$ is a map satisfying that $\tau(\varnothing)=\varnothing$ and for all $u, v\in A^*$, $\tau(uv)=\tau(u)\tau(v)$. A morphism $\tau\colon A^*\to B^*$ is \textbf{erasing} if $\tau(a)=\varnothing$ for some $a\in A$; otherwise it is \textbf{non-erasing}. In this paper we tacitly assume that all morphisms we consider are non-erasing. 

For a morphism $\tau\colon A^*\to B^*$ and for any $x\in \mathsf{A}^\mathbb{Z}$, define $\tau(x)$ by
$$ \tau(x)=\cdots \tau(x(-2))\tau(x(-1)).\tau(x(0))\tau(x(1))\cdots. $$
Then $\tau$ is a continuous map from $A^\mathbb{Z}$ to $B^\mathbb{Z}$, and every element of $\tau(A^\mathbb{Z})$ is built from $\tau(A)=\{\tau(a)\colon a\in A\}\subseteq B^*$. If $X\subseteq A^\mathbb{Z}$ is a subshift, then the smallest subshift containing $\tau(X)$ is $\bigcup_{k\in\mathbb{Z}}S^k\tau(X)$. It is clear that an element $x\in B^\mathbb{Z}$ is built from $\tau(A)$ if and only if $x \in \bigcup_{k\in\mathbb{Z}}S^k\tau(A^\mathbb{Z})$.

A \textbf{directive sequence} is a sequence of morphisms $\boldsymbol{\tau}=(\tau_n\colon A_{n+1}^*\to A_n^*)_{n\geq 0}$. For $0\leq n<N$, let
$$\tau_{[n,N)}=\tau_{[n,N-1]}=\tau_n\circ\tau_{n+1}\circ\cdots\circ \tau_{N-1}. $$
Then $\tau_{[n,N)}\colon A_N^*\to A_n^*$ is a morphism. 
For any $n\geq 0$, define
$$ L^{(n)}(\boldsymbol{\tau})=\{w\in A_n^*\colon \mbox{$w$ occurs in $\tau_{[n,N)}(a)$ for some $a\in A_N$ and $N>n$}\} $$
and
$$ X^{(n)}_{\boldsymbol{\tau}}=\{x\in A_n^\mathbb{Z}\,:\, \mbox{every finite subword of $x$ is a subword of some $w\in L^{(n)}(\boldsymbol{\tau})$}\}. $$
$X^{(n)}_{\boldsymbol{\tau}}$ is a subshift on the alphabet $A_n$, and we denote the shift map by $S$. Now let $X_{\boldsymbol{\tau}}=X^{(0)}_{\bf\tau}$. Then $(X_{\boldsymbol{\tau}},S)$ is the \textbf{$\mathcal{S}$-adic subshift} generated by the directive sequence $\boldsymbol{\tau}$.

For an arbitrary strictly increasing sequence $(n_k)_{k\geq 0}$ with $n_0=0$, the \textbf{contraction} or \textbf{telescoping} of a directive sequence $\boldsymbol{\tau}=(\tau_n\colon A_{n+1}^*\to A_n^*)_{n\geq 0}$ with respect to $(n_k)_{k\geq 0}$ is a directive sequence $\boldsymbol{\tau}'=(\tau'_k\colon (A'_{k+1})^*\to (A'_k)^*)_{k\geq 0}$, where for each $k\geq 0$, $A'_k=A_{n_k}$, $\tau'_k=\tau_{[n_k,n_{k+1})}$. It is clear that if $\boldsymbol{\tau}'$ is a contraction of $\boldsymbol{\tau}$, then $X_{\boldsymbol{\tau}'}=X_{\boldsymbol{\tau}}$. 

A directive sequence $\boldsymbol{\tau}=(\tau_n\colon A_{n+1}^*\to A_n^*)_{n\geq 0}$ has \textbf{finite alphabet rank} if $\liminf_n |A_n|<+\infty$; here $K=\liminf_n |A_n|$ is the \textbf{alphabet rank} of $\boldsymbol{\tau}$. It is clear that a directive sequence $\boldsymbol{\tau}$ has alphabet rank $K$ if and only there is a contraction $\boldsymbol{\tau}'=(\tau'_k\colon (A'_{k+1})^*\to (A'_k)^*)_{k\geq 0}$ of $\boldsymbol{\tau}$ such that $|A'_k|=K$ for all $k\geq 1$. The \textbf{alphabet rank} of a subshift $(X, S)$ is the minimum possible value of the alphabet rank of a directive sequence $\boldsymbol{\tau}$ such that $X=X_{\boldsymbol{\tau}}$. 

A directive sequence $\boldsymbol{\tau}=(\tau_n)_{n\geq 0}$ is \textbf{primitive} if for any $n\geq 0$ there exists $N>n$ such that all letters in $A_n$ occur in $\tau_{[n,N)}(a)$ for all $a\in A_N$.

A morphism $\tau\colon A^*\to B^*$ is \textbf{proper} if there exists $p, q\in B$ such that for all $a\in A$, $\tau(a)$ starts with $p$ and ends with $q$; A directive sequence $\boldsymbol{\tau}=(\tau_n)_{n\geq 0}$ is \textbf{proper} if all morphisms $\tau_n$ for $n\geq 0$ are proper.

A morphism $\tau\colon A^*\to B^*$ has \textbf{constant length} if the length of $\tau(a)$ does not depend on $a$. In this case, denote by $|\tau|$ the \textbf{length} of $\tau(a)$ for any $a\in A$. A directive sequence $\boldsymbol{\tau}=(\tau_n)_{n\geq 0}$ has \textbf{constant length} if all morphisms $\tau_n$ for $n\geq 0$ have constant length.

Suppose $\tau\colon A^*\to B^*$ is a constant-length morphism. A \textbf{coincidence} of $\tau$ relative to $A'\subseteq A$ is an integer $i\in [0,|\tau|)$ such that the map that sends a letter $a\in A'$ to the $i$-th letter of $\tau(a)$ is constant. When $A'=A$ these integers are called \textbf{coincidences}. The set of all coincidences is denoted $\mbox{coinc}(\tau)$.
A constant-length directive sequence $\boldsymbol{\tau}=(\tau_n)_{n\geq 0}$ has \textbf{coincidences} if there is a contraction $(\tau_k')_{k\geq 0}$ such that $\mbox{coinc}(\tau'_k)\neq\varnothing$ for all $k\geq 0$.

For the following notion of recognizability, we follow Berth\'e--Steiner--Thuswaldner--Yassawi \cite{BSTY19}. Let $\tau\colon A^*\to  B^*$ be a morphism and let $x\in B^\mathbb{Z}$. A \textbf{$\tau$-representation} of $x$ is a pair $(y,k)$, where $y\in A^{\mathbb{Z}}$ and $k\in\mathbb{Z}$, such that $x=S^k\tau(y)$. Clearly, a $\tau$-representation of $x$ corresponds to a building of $x$ from $\tau(A)$; thus $x$ has a $\tau$-representation if and only if $x$ is in the smallest subshift containing $\tau(A^\mathbb{Z})$. A $\tau$-representation $(y,k)$ is \textbf{centered} if $0\leq k<|\tau(y(0))|$. For a subshift $Y\subseteq A^\mathbb{Z}$, if a $\tau$-representation $(y,k)$ satisfies $y\in Y$, then we say that $(y,k)$ is \textbf{in $Y$}. We say that $\tau$ is \textbf{recognizable in $Y$} if every $x\in B^\mathbb{Z}$ has at most one centered $\tau$-representation in $Y$. If every aperiodic point $x\in B^\mathbb{Z}$ has at most one centered $\tau$-representation in $Y$, we say that $\tau$ is \textbf{recognizable in $Y$ for aperiodic points}. If $\tau$ is recognizable in $A^\mathbb{Z}$ (for aperiodic points), then we say that $\tau$ is \textbf{fully recognizable} (for aperiodic points). 

It is well known (see e.g. \cite[Lemma 3.2]{DDMP21}) that a morphism $\tau: A^*\to B^*$ is recognizable in $Y$ if and only if
$$ \mathcal{P}=\{S^k \tau(\llbracket a\rrbracket\cap Y)\colon a\in A,\ 0\leq k<|\tau(a)|\} $$
defines a partition of the subshift $\bigcup_{k\in\mathbb{Z}}S^k\tau(Y)$. Similarly, $\tau$ is recognizable in $Y$ for aperiodic points if and only if $\mathcal{P}$ gives rise to a partition of the aperiodic part of $\bigcup_{k\in\mathbb{Z}}S^k\tau(Y)$.

We will use the following basic fact about recognizability for morphisms, which appeared as a part of \cite[Theorem 2.5]{BSTY19} and also as a part of \cite[Lemma 3.2]{DDMP21}.

\begin{lemma}[Berth\'e--Steiner--Thuswaldner--Yassawi \cite{BSTY19}] \label{lem:rec} Let $\tau\colon A^*\to B^*$ be a morphism and let $Y\subseteq A^\mathbb{Z}$ be a subshift. Then $\tau$ is recognizable in $Y$ if and only if
there exists a positive integer $r$ such that for any $x, x'\in\bigcup_{k\in\mathbb{Z}}S^k\tau(Y)$ and any centered $\tau$-representations $(y,k), (y',k')$  for $x, x'$ respectively, if $x[-r,r)=x'[-r,r)$, then $k=k'$ and $y(0)=y'(0)$.
\end{lemma}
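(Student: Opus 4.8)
The plan is to prove the two implications separately. The forward direction (recognizability in $Y$ $\Rightarrow$ existence of a recognizability radius $r$) is a compactness argument, while the converse is an induction along the shift orbit that bootstraps the \emph{local} matching of center data into \emph{global} uniqueness of the representation. Throughout I would write $Z=\bigcup_{k\in\mathbb{Z}}S^k\tau(Y)$ for the smallest subshift containing $\tau(Y)$, and recall that since $\tau$ is non-erasing, every offset $k$ occurring in a centered representation lies in the finite interval $[0,\max_{a\in A}|\tau(a)|)$; this finiteness is what makes the compactness step work.

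For the forward direction I argue by contradiction. If no $r$ works, then for every $n$ there are $x_n,x_n'\in Z$ with centered $\tau$-representations $(y_n,k_n),(y_n',k_n')$ in $Y$ such that $x_n[-n,n)=x_n'[-n,n)$ while the center data disagree, i.e. $(k_n,y_n(0))\neq(k_n',y_n'(0))$. Since $A$ is finite and the offsets range over a finite set, after passing to a subsequence I may assume $k_n=k$, $y_n(0)=a$, $k_n'=k'$, $y_n'(0)=a'$ are constant with $(k,a)\neq(k',a')$. Using compactness of $B^{\mathbb{Z}}$ and of the closed set $Y\subseteq A^{\mathbb{Z}}$, I pass to a further subsequence with $x_n\to x$, $x_n'\to x'$, $y_n\to y$, $y_n'\to y'$, where $y,y'\in Y$. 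Because $x_n[-n,n)=x_n'[-n,n)$ with $n\to\infty$, the limits coincide, $x=x'$. Continuity of $\tau$ and of $S$ then gives $x=S^{k}\tau(y)$ and $x=S^{k'}\tau(y')$, and $y(0)=a$, $y'(0)=a'$ keep both representations centered and in $Y$. Thus $x$ has two centered $\tau$-representations in $Y$ with distinct center data, contradicting recognizability; this produces the desired $r$.

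For the converse I assume such an $r$ exists and show that any $x$ has at most one centered $\tau$-representation in $Y$. If $(y,k)$ and $(y',k')$ are both centered representations in $Y$ of the same $x$, applying the hypothesis to the trivially matching pair $x=x'$ at once gives $k=k'$ and $y(0)=y'(0)$; in particular $v:=\tau(y)=S^{-k}x=\tau(y')$. It then remains to show that $v=\tau(y)=\tau(y')$ with $y,y'\in Y$ forces $y=y'$, which I establish by reading off one coordinate at a time from a suitable shift of $v$. For $m\geq 0$, once the blocks $\tau(y(0)),\dots,\tau(y(m-1))$ and $\tau(y'(0)),\dots,\tau(y'(m-1))$ are known to agree, their common total length $L$ is a shared cut point, so $S^{L}v=\tau(S^m y)=\tau(S^m y')$ carries the two centered representations $(S^m y,0)$ and $(S^m y',0)$ (centered because $\tau$ is non-erasing), and the hypothesis forces $(S^m y)(0)=(S^m y')(0)$, i.e. $y(m)=y'(m)$. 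Here I use that $Y$ is shift-invariant, so $S^m y,S^m y'\in Y$.

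The negative indices are the only genuinely delicate point, since there the block lengths to the left of position $0$ are not yet known to agree, so one cannot locate the \emph{left} endpoint of the $(-j)$-th block in advance. I would handle them symmetrically by anchoring at the \emph{right} endpoint instead: assuming $y(-i)=y'(-i)$ for $1\le i<j$, the position $C$ at which the $(-j)$-th block ends is common to both factorizations, and at $S^{C-1}v$ — whose $0$-th coordinate is the last letter of that block — the two factorizations furnish the centered representations $(S^{-j}y,\,|\tau(y(-j))|-1)$ and $(S^{-j}y',\,|\tau(y'(-j))|-1)$. The hypothesis then yields equality of offsets, hence $|\tau(y(-j))|=|\tau(y'(-j))|$, together with $y(-j)=y'(-j)$. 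Iterating in both directions gives $y=y'$, hence uniqueness, hence recognizability. The main obstacle is thus not conceptual but the careful alignment of block boundaries in this backward induction; everything else reduces to the compactness extraction above and the shift-invariance of $Y$.
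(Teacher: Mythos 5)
Your proof is correct. Note that the paper itself gives no proof of this lemma: it is imported from \cite{BSTY19} (part of Theorem 2.5 there; see also Lemma 3.2 of \cite{DDMP21}), so there is no internal argument to compare against, and what you wrote is essentially the standard argument from that literature. The forward direction is sound: centered offsets lie in $[0,\max_{a\in A}|\tau(a)|)$, so after stabilizing the finite center data $(k_n,y_n(0))$ and $(k_n',y_n'(0))$ and extracting limits $y_n\to y$, $y_n'\to y'$ inside the closed set $Y$, continuity of $\tau$ and of the shift yields two centered representations in $Y$ of a single limit point with distinct center data, contradicting recognizability. Two points you handled that deserve explicit flagging: first, the phrase ``any centered $\tau$-representations'' in the statement must be read as representations \emph{in} $Y$ (otherwise the forward implication fails), and you read it that way; second, the genuinely delicate step is the backward induction in the converse, and your right-endpoint anchoring is exactly what is needed --- once $y(-i)=y'(-i)$ for $1\le i<j$, the endpoint $C=-\sum_{1\le i<j}|\tau(y(-i))|$ is common to both factorizations, and $\bigl(S^{-j}y,\,|\tau(y(-j))|-1\bigr)$ and $\bigl(S^{-j}y',\,|\tau(y'(-j))|-1\bigr)$ are centered representations of $S^{C-1}\tau(y)$ (centered precisely because $\tau$ is non-erasing), so applying the hypothesis with $x=x'$ returns $y(-j)=y'(-j)$. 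Since every auxiliary point used is a shift of $\tau(y)$ and hence lies in $\bigcup_{k\in\mathbb{Z}}S^k\tau(Y)$, and shift-invariance of $Y$ keeps the representing sequences in $Y$, each invocation of the hypothesis is legitimate, and the two inductions give $y=y'$, completing the equivalence.
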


A directive sequence $\boldsymbol{\tau}=(\tau_n)_{n\geq 0}$ is \textbf{recognizable} if for each $n\geq 0$, $\tau_n$ is recognizable in $X^{(n+1)}_{\boldsymbol{\tau}}$. It follows from \cite[Lemma 3.5]{BSTY19} that $\boldsymbol{\tau}$ is recognizable if and only if for any $n\geq 1$, any $x\in X_{\boldsymbol{\tau}}$ has a unique building from $\{\tau_{[0,n)}(a)\colon a\in A_n\}$.

\subsection{Bratteli--Vershik representations\label{sec:2.5}}
The concepts and terminology reviewed in this subsection are from Herman--Putnam--Skau \cite{HPS}, Giordano--Putnam--Skau  \cite{GPS} and \cite{DM}. Some notations are from \cite{DDMP21}.
Recall that a \textbf{Bratteli diagram} is an infinite graph $(V,E)$ with the following properties:
\begin{itemize}
\item The vertex set $V$ is decomposed into pairwise disjoint nonempty finite sets $V=V_0\cup V_1\cup V_2\cup\cdots$, where $V_0$ is a singleton $\{v_0\}$;
\item The edge set $E$ is decomposed into pairwise disjoint nonempty finite sets $E=E_1\cup E_2\cup\cdots$;
\item For any $n\geq 1$, each $e\in E_n$ connects a vertex $u\in V_{n-1}$ with a vertex $v\in V_n$. In this case we write $\mathsf{s}(e)=u$ and $\mathsf{r}(e)=v$. Thus $\mathsf{s}, \mathsf{r}: V\to E$ are maps such that $\mathsf{s}(E_n)\subseteq V_{n-1}$ and $\mathsf{r}(E_n)\subseteq V_n$ for all $n\geq 1$.
\item $\mathsf{s}^{-1}(v)\neq\varnothing$ for all $v\in V$ and $\mathsf{r}^{-1}(v)\neq\varnothing$ for all $v\in V\setminus V_0$.
\end{itemize}

An \textbf{ordered Bratteli diagram} is a Bratteli diagram $(V,E)$ together with a partial ordering $\preceq$ on $E$ so that edges $e$ and $e'$ are $\preceq$-comparable if and only if $\mathsf{r}(e)=\mathsf{r}(e')$. 

A finite or infinite \textbf{path} in a Bratteli diagram $(V, E)$ is a sequence $(e_1, e_2, \dots)$ where  $\mathsf{r}(e_i)=\mathsf{s}(e_{i+1})$ for all $i\geq 1$. 
Given a Bratteli diagram $(V, E)$ and $0\leq n<m$, let $E_{n,m}$ be the set of all finite paths connecting vertices in $V_n$ and those in $V_m$. If $p=(e_{n+1},\dots, e_m)\in E_{n,m}$, define $\mathsf{r}(p)=\mathsf{r}(e_m)$ and $\mathsf{s}(p)=\mathsf{s}(e_{n+1})$. If in addition the Bratteli diagram is partially ordered by $\preceq$, then we also define a partial ordering $p\preceq' q$ for $p=(e_{n+1},\dots, e_m), q=(f_{n+1},\dots, f_m)\in E_{n,m}$ as either $p=q$ or there exists $n+1\leq i\leq m$ such that $e_i\neq f_i$, $e_i\preceq f_i$ and $e_j=f_j$ for all $i<j\leq m$. For an arbitrary strictly increasing sequence $(n_k)_{k\geq 0}$ of natural numbers with $n_0=0$, the \textbf{contraction} or \textbf{telescoping} of a Bratteli diagram $(V,E)$ with respect to $(n_k)_{k\geq 0}$ is a Bratteli diagram $(V',E')$ where
$V'_k=V_{n_k}$ for $k\geq 0$ and $E'_k=E_{n_{k-1}, n_k}$ for $k\geq 1$. If in addition the given Bratteli diagram is ordered, then by contraction or telescoping we also obtain an ordered Bratteli diagram $(V',E',\preceq')$ with the order $\preceq'$ defined above. 

A Bratteli diagram $(V, E)$ is \textbf{simple} if there is a strictly increasing sequence $(n_k)_{k\geq 0}$ of natural numbers with $n_0=0$ such that the telescoping $(V', E')$ of $(V, E)$ with respect to $(n_k)_{k\geq 0}$ satisfies that for all $n\geq 1$, $u\in V'_{n-1}$ and $v\in V'_n$, there is $e\in E'_n$ with $\mathsf{s}(e)=u$ and $\mathsf{r}(e)=v$. This is equivalent to the property that for any $n\geq 1$ there is $m>n$ such that every pair of vertices $u\in V_n$ and $v\in V_m$ are connected by a finite path. 

A Bratteli diagram $B=(V, E)$ is of \textbf{finite rank} if $\liminf_n|V_n|<+\infty$; here $K=\liminf_n|V_n|$ is called the \textbf{rank} of $B$. It is clear that the rank of $B$ is $K<+\infty$ if and only if for some contraction $B'=(V', E')$, $|V'_n|=K$ for all $n\geq 1$. 

A Bratteli diagram $B=(V,E)$ has the \textbf{equal path number property} or is of \textbf{Toeplitz type}, if for any $n\in\mathbb{N}$ and for any $u, v\in V_{n+1}$, $|\{e\in E_n\colon \mathsf{r}(e)=u\}|=|\{e\in E_n\colon \mathsf{r}(e)=v\}|$.

Given a Bratteli diagram $B=(V,E)$, define
$$ X_B=\{(e_n)_{n\geq 1}\,:\, e_n\in E_n, \mathsf{r}(e_n)=\mathsf{s}(e_{n+1}) \mbox{ for all $n\geq 1$}\}. $$
Since $X_B$ is a subspace of the product space $\prod_{n\geq 1}E_n$, we equip $X_B$ with the subspace topology of the product topology on $\prod_{n\geq 1}E_n$. An ordered Bratteli diagram $B=(V,E,\preceq)$ is \textbf{properly ordered} if there are unique elements $\xmax=(e_n)_{n\geq 1}, \xmin=(f_n)_{n\geq 1}\in X_B$ such that for every $n\geq 1$, $e_n$ is a $\preceq$-maximal element and $f_n$ is a $\preceq$-minimal element. 

Given a simple properly ordered Bratteli diagram $B=(V,E,\preceq)$, the \textbf{Vershik map} $\lambda_B: X_B\to X_B$ is defined as follows: $\lambda_B(\xmax)=\xmin$; if $(e_n)_{n\geq 1}\in X_B$ and $(e_n)_{n\geq 1}\neq \xmax$, then let 
$$ \lambda_B((e_1,e_2,\dots, e_k, e_{k+1},\dots))=(f_1,f_2,\dots, f_k, e_{k+1},\dots), $$
where $k$ is the least such that $e_k$ is not $\preceq$-maximal, $f_k$ is the $\preceq$-successor of $e_k$, and $(f_1,\dots, f_{k-1})$ is the unique path from $v_0$ to $\mathsf{s}(f_k)=\mathsf{r}(f_{k-1})$ such that   $f_i$ is $\preceq$-minimal for each $1\leq i\leq k-1$. Then $(X_B, \lambda_B)$ is a minimal Cantor system (\cite{GPS}), which we call the \textbf{Bratteli--Vershik system} generated by $B$; 

Given a minimal Cantor system $(X, T)$ and a simple properly ordered Bratteli diagram $B$, we say that $B$ is a \textbf{Bratteli--Vershik representation} of $(X, T)$ if $(X, T)$ is conjugate to the Bratteli--Vershik system $(X_B, \lambda_B)$. A minimal Cantor system $(X, T)$ has \textbf{finite topological rank} if it has a Bratteli--Vershik representation which has finite rank $K$, the minimum possible value of $K$ is called the {\em topological rank} of $(X, T)$. 

A minimal Cantor system $(X, T)$ has topological rank $1$ if and only if it is conjugate to an odometer. 

Given a simple properly ordered Bratteli diagram $B=(V,E,\preceq)$, we associate to it a directive sequence $\boldsymbol{\tau}_B=(\tau_n\colon A_{n+1}^*\to A_n^*)_{n\geq 0}$ as follows. Let $A_0=E_1$ and $A_n=V_n$ for all $n\geq 1$. Define $\tau_0\colon A_1^*\to A_0^*$ by letting $\tau_0(v)=e_1(v)\cdots e_{\ell}(v)$ for each $v\in V_1=A_1$, where $e_1(v), \dots, e_{\ell}(v)$ is an enumeration of $\{e\in E_1\colon \mathsf{r}(e)=v\}$ in the $\preceq$-order. For general $n\geq 1$, define $\tau_n\colon A_{n+1}^*\to A_n^*$ by letting $\tau_n(v)=\mathsf{s}(e_1(v))\cdots\mathsf{s}(e_{\ell}(v))$ for each $v\in V_{n+1}=A_{n+1}$, where $e_1(v), \dots, e_{\ell}(v)$ is an enumeration of $\{e\in E_n\colon \mathsf{r}(e)=v\}$ in the $\preceq$-order. $\boldsymbol{\tau}_B$ is called the directive sequence \textbf{read on} $B$, and $X_{\boldsymbol{\tau}_B}$ is the $\mathcal{S}$-adic subshift \textbf{read on} $B$.

\section{The Characterization Problem\label{sec:3}} 

In this section we study the Characterization Problem of Toeplitz subshifts of finite topological rank. We state two equivalent forms of the Characterization Problem in Subsection \ref{sec:3.1}, and show in Subsection \ref{sec:3.3} that they have a negative answer. Our counterexample is a Toeplitz subshift of topological rank $2$.

\subsection{Two forms of the Characterization Problem\label{sec:3.1}}

We are interested in characterizing Toeplitz subshifts of finite topological rank. Both these properties (i.e., being Toeplitz and having finite topological rank) can be characterized by properties of their $\mathcal{S}$-adic representations. The following characterization of Toeplitz subshifts is a variation of  \cite[Proposition~2.5]{ADE2024}.

\begin{proposition}[Arbul\'u--Durand--Espinoza \cite{ADE2024}]\label{prop:ADE} Let $(X,S)$ be a subshift. The following are equivalent.

\begin{enumerate}
\item[\rm (1)] $(X,S)$ is a Toeplitz subshift;
\item[\rm (2)] $(X,S)$ is generated by a constant-length, primitive, proper and recognizable directive sequence; 
\item[\rm (3)] $(X,S)$ is generated by a constant-length, primitive and recognizable directive sequence with coincidences;
\item[\rm (4)] $(X, S)$ is generated by a constant-length, primitive directive sequence with coincidences;
\item[\rm (5)] $(X, S)$ is generated by a constant-length, primitive and proper directive sequence.
\end{enumerate}
\end{proposition}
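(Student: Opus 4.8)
The plan is to prove the five conditions equivalent by a single cycle of implications, isolating the genuinely dynamical content in two links and treating everything else as routine bookkeeping. First I record the easy reductions among (2)--(5). If $\boldsymbol{\tau}$ is proper, then $0\in\mathrm{coinc}(\tau_n)$ for every $n$ (all $\tau_n(a)$ share their first letter), so the trivial contraction witnesses that a proper constant-length primitive sequence automatically has coincidences; this gives $(2)\Rightarrow(3)$ and $(5)\Rightarrow(4)$ at once. Simply forgetting the recognizability hypothesis gives $(3)\Rightarrow(4)$ and $(2)\Rightarrow(5)$. Hence it remains to establish the two substantive links $(1)\Rightarrow(2)$ and $(4)\Rightarrow(1)$, after which the cycle $(1)\Rightarrow(2)\Rightarrow(3)\Rightarrow(4)\Rightarrow(1)$ together with $(2)\Rightarrow(5)\Rightarrow(4)$ closes and yields the full equivalence, with $(5)$ reached through $(5)\Rightarrow(4)\Rightarrow(1)\Rightarrow(2)\Rightarrow(5)$.

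For $(1)\Rightarrow(2)$ I would build the directive sequence directly from a period structure $(p_n)_{n\ge0}$ of the Toeplitz subshift $X=\overline{\mathcal{O}(x)}$. Let $A_0=\mathsf{A}$ and, for $n\ge1$, let $A_n$ be the set of distinct length-$p_n$ words occurring in elements of $X$ aligned with the $p_n$-grid; this is finite because such words agree with the common $p_n$-skeleton off the finitely many holes in a period. Define $\tau_0\colon A_1^*\to\mathsf{A}^*$ by spelling a $p_1$-block into its letters, and for $n\ge1$ define $\tau_n\colon A_{n+1}^*\to A_n^*$ by decomposing a $p_{n+1}$-block into its $q_n=p_{n+1}/p_n$ consecutive $p_n$-sub-blocks; all these morphisms have constant length (and $\ell_n=|\tau_{[0,n)}|=p_n$). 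Primitivity follows from minimality of $X$, and recognizability follows because the essential-period property makes the skeleton offset $k_n(y)$, hence the whole $p_n$-grid, recoverable from $y$, so the building is unique (alternatively invoke Lemma~\ref{lem:rec} and \cite{BSTY19}). For properness I would use $\bigcup_n\mathrm{Per}_{p_n}(x)=\mathbb{Z}$ to telescope to a subsequence for which the first and last $p_n$-sub-blocks of every $p_{n+1}$-window lie entirely in the filled part of the $p_{n+1}$-skeleton; these boundary sub-blocks are then constant across $X$, producing common first and last letters. I expect this properness step to be the fiddliest point of the direction; if preferred, one may instead cite \cite{ADE2024} for $(1)\Leftrightarrow(2)$ outright.

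For $(4)\Rightarrow(1)$ I would split into cases. If $X_{\boldsymbol{\tau}}$ is finite it is a periodic orbit, and a periodic sequence is a Toeplitz sequence, so $X_{\boldsymbol{\tau}}$ is trivially a Toeplitz subshift. If $X_{\boldsymbol{\tau}}$ is aperiodic, then primitivity forces recognizability by the generalized Moss\'e theorem (\cite{BSTY19}, as used in \cite{DDMP21}), so $(4)$ upgrades to $(3)$ and I may invoke \cite{ADE2024} to conclude $(1)$. To argue $(3)\Rightarrow(1)$ from scratch instead, I would pass to a contraction with $\mathrm{coinc}(\tau'_k)\neq\varnothing$ for all $k$, set $\ell_n=|\tau_{[0,n)}|$, and use recognizability to define the factor map $\pi\colon X_{\boldsymbol{\tau}}\to\mathrm{Odo}((\ell_n)_{n\ge0})$ sending each point to its tower of block-offsets. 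Propagating a single coincidence down through all intermediate levels shows that at each level a whole residue class of $\mathbb{Z}$ carries a forced letter; a Baire-category argument then shows the set of $\omega$ with singleton fibre $\pi^{-1}(\omega)$ is a dense $G_\delta$, so $\pi$ is almost one-to-one and $X_{\boldsymbol{\tau}}$ is a minimal almost one-to-one extension of an odometer, i.e.\ a Toeplitz subshift by the characterization in \cite{DSurvey}.

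The main obstacle is precisely this coincidence $\Rightarrow$ Toeplitz step. The naive estimate on the density of determined positions need not tend to $1$ when the lengths $|\tau_n|$ grow quickly, so I would avoid asserting that every position becomes periodic and instead prove almost one-to-oneness directly, forcing position $0$ (and, by applying the shift, every position) to be coincidence-determined for a dense set of offsets and then intersecting the resulting comeager sets. The secondary delicate point is checking that automatic recognizability applies exactly in the aperiodic regime and that the periodic case is genuinely covered, so that the hypotheses of $(4)$ and $(5)$, which omit recognizability, are indeed equivalent to their recognizable counterparts.
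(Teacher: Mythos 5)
Your proposal is correct, and its logical skeleton (lean on \cite{ADE2024} for the substantive implications, glue the rest with easy observations) matches the paper's; but the one place where real work is needed, namely $(4)\Rightarrow(1)$, is handled by a genuinely different device. The paper's proof is four lines: it cites \cite{ADE2024} for $(1)\Rightarrow(2)\Rightarrow(3)\Rightarrow(1)$, notes $(3)\Rightarrow(4)$ and $(2)\Rightarrow(5)$ are trivial and that properness implies coincidences (your observation that $0\in\mbox{coinc}(\tau_n)$), and then gets $(4)\Rightarrow(1)$ by \emph{inspecting the internals} of the \cite{ADE2024} proof of $(3)\Rightarrow(1)$ and observing that recognizability is never used there. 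You instead \emph{restore} recognizability: in the aperiodic case you upgrade $(4)$ to $(3)$ via the automatic recognizability theorem of \cite{BSTY19}, and you handle the periodic case by hand (minimality from primitivity forces a finite orbit, which is Toeplitz). Your route is arguably more robust, since it depends only on the stated conclusion of \cite{ADE2024} rather than on a feature of its proof; the paper's route is shorter and avoids the case split. Two of your flagged worries deserve a word: the application of \cite{BSTY19} needs aperiodicity of \emph{all} levels $X^{(n+1)}_{\boldsymbol{\tau}}$, not just $X_{\boldsymbol{\tau}}$, but this propagates automatically (a periodic point at level $n+1$ maps under the constant-length $\tau_{[0,n+1)}$ to a periodic point of $X_{\boldsymbol{\tau}}$, using primitivity to see the image lies in $X^{(0)}_{\boldsymbol{\tau}}$), so the worry is dischargeable; and your direct construction for $(1)\Rightarrow(2)$ tacitly assumes an infinite Toeplitz subshift, since a strictly increasing period structure of essential periods exists only in the aperiodic case --- the periodic case needs the separate (easy) one-letter-alphabet construction, though your citation fallback to \cite{ADE2024} covers this anyway. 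Your from-scratch sketch of $(3)\Rightarrow(1)$ via almost one-to-one extensions of the odometer $\mbox{\rm Odo}((\ell_n)_{n\geq 0})$ is the standard conceptual proof and your caution about the density of coincidence-determined positions is well placed, but since you permit citing \cite{ADE2024} there, nothing in your argument rests on it.
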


\begin{proof} The implications (1)$\Rightarrow$(2)$\Rightarrow$(3)$\Rightarrow$(1) are proved in \cite{ADE2024}. It is obvious that (3)$\Rightarrow$(4) and (2)$\Rightarrow$(5). The proof of (3)$\Rightarrow$(1) in \cite{ADE2024} does not use recognizability; this gives the implication (4)$\Rightarrow$(1). For (5)$\Rightarrow$(1), note that properness implies coincidences. 
\end{proof} 

On the other hand, we have the following theorem from \cite[Theorem~4.1(1)]{DDMP21}.

\begin{theorem}[Donoso--Durand--Maass--Petite \cite{DDMP21}]\label{thm:DDMP21} An expansive minimal Cantor system of finite topological rank $K$ is conjugate to an $\mathcal{S}$-adic subshift generated by a primitive and recognizable directive sequence with alphabet rank $k\leq K$. 
\end{theorem}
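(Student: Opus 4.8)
The plan is to convert the topological (Bratteli--Vershik) description of finite topological rank into the symbolic ($\mathcal{S}$-adic) one: read a directive sequence off a rank-$K$ diagram, and then verify the three required properties --- the alphabet-rank bound, primitivity, and recognizability --- in turn, using expansiveness to secure the conjugacy with an actual subshift.

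First I would unwind the definition of topological rank. Since $(X,T)$ has finite topological rank $K$, it admits a Bratteli--Vershik representation by a simple, properly ordered Bratteli diagram $B=(V,E,\preceq)$ of rank $K$; using the characterization of rank via contraction, after telescoping I may assume $|V_n|=K$ for all $n\geq 1$ and $(X,T)\cong(X_B,\lambda_B)$. Reading the directive sequence $\boldsymbol{\tau}_B=(\tau_n\colon A_{n+1}^*\to A_n^*)_{n\geq 0}$ on $B$, the alphabets satisfy $A_n=V_n$ for $n\geq 1$, so $|A_n|=K$ and hence the alphabet rank $\liminf_n|A_n|$ is at most $K$. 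This disposes of the alphabet-rank bound for free.

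Next I would establish primitivity and the conjugacy with the $\mathcal{S}$-adic subshift. Primitivity is a direct translation of simplicity: for every $n$ there is $m>n$ so that every $u\in V_n$ and $v\in V_m$ are joined by a path, which says precisely that every letter of $A_n$ occurs in $\tau_{[n,m)}(v)$ for all $v\in A_m$. For the conjugacy, consider the coding map $\pi\colon X_B\to A_0^{\mathbb{Z}}=E_1^{\mathbb{Z}}$ sending a path $\tilde{x}$ to the bi-infinite sequence of its level-$1$ edges read along the $\lambda_B$-orbit, i.e.\ $\pi(\tilde{x})(j)$ is the first edge of $\lambda_B^{\,j}(\tilde{x})$. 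Then $\pi$ is continuous with $\pi\circ\lambda_B=S\circ\pi$, and $\pi(X_B)$ is a closed $S$-invariant set which, by minimality and the definition of the subshift read on $B$, equals $X_{\boldsymbol{\tau}_B}$. The point where the hypothesis enters is injectivity of $\pi$: two paths with the same $\lambda_B$-orbit coding agree in their level-$1$ symbol under every power of the map, and expansiveness then forces them to coincide. Hence $\pi$ is a conjugacy and $(X,T)\cong(X_{\boldsymbol{\tau}_B},S)$. (By the dichotomy from \cite{DM} the finite-rank hypothesis, together with expansiveness, leaves only this subshift alternative, which as an infinite minimal subshift is aperiodic --- a fact I will need below.)

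The hard part is recognizability, and this is where I expect the real obstacle. By the criterion recalled just before Lemma~\ref{lem:rec}, it suffices to show that for every $n\geq 1$ each $x\in X_{\boldsymbol{\tau}_B}$ has a unique building from $W_n=\{\tau_{[0,n)}(a)\colon a\in A_n\}$. Injectivity of $\pi$ already yields uniqueness among those buildings whose block-index sequence lies in $X^{(n)}_{\boldsymbol{\tau}_B}$, i.e.\ buildings lifting to genuine paths of $X_B$; what must still be excluded is a \emph{spurious} re-cutting of $x$ that reuses the finitely many words of $W_n$ in an incoherent order. This is exactly the content of the recognizability theorem for aperiodic systems of finite rank, and it is the technically deep step. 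The strategy I would follow is a contradiction argument of Moss\'e type adapted to finite rank: if recognizability failed at level $n$, then for every radius $r$ one could find points with identical central $r$-windows but distinct cut positions; since finite rank bounds $|W_n|\leq K$ and the lengths $|\tau_{[0,n)}(a)|$ lie in a bounded range, a pigeonhole argument on overlapping occurrences of the boundedly many level-$n$ words produces arbitrarily long words admitting two shifted self-matchings, forcing a shift-periodicity of some point of $X_{\boldsymbol{\tau}_B}$ and contradicting aperiodicity. Combining this with the primitivity, the alphabet-rank bound, and the conjugacy of the previous step yields the theorem; the recognizability step is the crux and the one using the finite-rank hypothesis in an essential way.
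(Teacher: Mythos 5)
First, a point of reference: the paper does not prove this theorem at all --- it is imported verbatim from \cite[Theorem 4.1(1)]{DDMP21} --- so your proposal can only be compared with the argument in that reference, whose skeleton (read the directive sequence on a telescoped rank-$K$ Bratteli--Vershik representation; primitivity from simplicity; conjugacy from expansiveness; recognizability as the last step) you have reproduced correctly. Your first genuine gap is the injectivity of the coding $\pi$. From $\pi(\tilde{x})=\pi(\tilde{y})$ you only know that $\lambda_B^{\,j}(\tilde{x})$ and $\lambda_B^{\,j}(\tilde{y})$ share their level-$1$ edge for every $j$, i.e.\ that the two orbits stay within the diameter of the level-$1$ cylinders; expansiveness applies only when orbits stay within the expansiveness constant $\delta$, which this does not give. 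The standard repair, and the one used in \cite{DM} and \cite{DDMP21}, is a \emph{further} telescoping: expansiveness implies that for some $n$ the clopen partition by the first $n$ edges has atoms of diameter less than $\delta$, hence is a generator, and after collapsing levels $0,\dots,n$ into a single level the level-$1$ coding becomes injective. This telescoping preserves simplicity and the rank bound $K$, so it is harmless, but without it your inference ``same level-$1$ symbol under every power plus expansiveness implies equality'' is invalid; telescoping only to normalize $|V_n|=K$, as you did, does not suffice.

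The second and more serious issue is the recognizability step, where your sketch both attacks a stronger statement than required and rests on a false implication. By the definition in Subsection 2.4, recognizability of $\boldsymbol{\tau}$ asks for uniqueness of centered $\tau_n$-representations $(y,k)$ \emph{with $y\in X^{(n+1)}_{\boldsymbol{\tau}}$}; the ``spurious re-cuttings'' with inadmissible index sequences that you try to exclude are irrelevant to it. For admissible representations the uniqueness is structural rather than combinatorial: once $\pi$ is injective at every level (after the telescoping above), each point of $X_B$ is a single infinite path, the $n$-th Kakutani--Rokhlin partition of $X_B$ is carried by the conjugacy to the family $\bigl\{S^k\tau_{[0,n)}(\llbracket a\rrbracket\cap X^{(n)}_{\boldsymbol{\tau}})\bigr\}$, and the fact that this family partitions $X$ is exactly the recognizability criterion recalled before Lemma~\ref{lem:rec}. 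No Moss\'e-type argument is needed. Moreover, your proposed pigeonhole --- two distinct buildings of an aperiodic point from the finite set $W_n$ force a periodic point --- is false in general: if, for instance, $\tau_{[0,n)}(a_3)=\tau_{[0,n)}(a_1)\tau_{[0,n)}(a_2)$ as words, an aperiodic point may admit many buildings from $W_n$ while the directive sequence remains perfectly recognizable, since only representations lying in $X^{(n)}_{\boldsymbol{\tau}}$ count. The deep combinatorial statement in this neighborhood is the (eventual) recognizability of finite-alphabet-rank directive sequences over aperiodic subshifts, \cite[Theorem 4.6]{BSTY19}, whose proof is substantially more involved than a self-overlap pigeonhole; but for the theorem at hand the structural route through the tower partitions already closes the argument, so your ``crux'' dissolves once the definition of recognizability is applied as stated.
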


Thus it is natural to wonder whether one can combine both properties from the $\mathcal{S}$-adic viewpoint and obtain a characterization of all Toeplitz subshifts of finite topological rank $K$ to be exactly those conjugate to an $\mathcal{S}$-adic subshift generated by a constant-length, primitive, proper and recognizable directive sequence of alphabet rank $k\leq K$. This  is the first form of our \textbf{Characterization Problem}.

To facilitate further discussions we make the following definition.

\begin{definition} Let $K\geq 2$ be an integer. We call a subshift a \textbf{strong Toeplitz subshift of rank $K$} if it is conjugate to an $\mathcal{S}$-adic subshift generated by a constant-length, primitive, proper and recognizable directive sequence of alphabet rank $k\leq K$. If a subshift is a strong Toeplitz subshfit of rank $K$ for some $K$, then we call it a \textbf{strong Toeplitz subshift of finite rank}.
\end{definition}

The first form of the Characterization Problem can be formulated as follows.

\begin{problem} Given $K\geq 2$. Is every Toeplitz subshift of topological rank $K$ a strong Toeplitz subshift of rank $K$?
\end{problem}

We now turn to the second form of the Characterization Problem. This time we note that both being Toeplitz and having finite topological rank can be characterized by properties of Bratteli--Vershik representations.

The following theorem (\cite[Theorem 8]{GJ00}) characterizes Toeplitz-ness of a subshift from the viewpoint of its Bratteli--Vershik representations.

\begin{theorem}[Gjerde--Johansen \cite{GJ00}] Any Toeplitz subshift has a Bratteli--Vershik representation with the equal path number property. Conversely, an expansive minimal Cantor system which has a Bratteli--Vershik representation with the equal path number property is conjugate to a Toeplitz subshift.
\end{theorem}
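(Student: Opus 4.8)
The plan is to transport the statement to the $\mathcal{S}$-adic side via the directive sequence $\boldsymbol{\tau}_B$ read on a Bratteli diagram $B$, and then apply Proposition~\ref{prop:ADE}. The bridge is the observation that the equal path number property of $B$ is equivalent to $\boldsymbol{\tau}_B$ being of constant length: the number of edges entering a vertex $v$ is exactly the length of the word that $\boldsymbol{\tau}_B$ assigns to $v$, so all vertices at a given level have the same in-degree precisely when each morphism $\tau_n$ has constant length. Since telescoping multiplies lengths, constant length is preserved under contraction, and by simplicity we may also arrange primitivity along the way; I will therefore contract $B$ freely.

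For the forward direction, let $(X,S)$ be a Toeplitz subshift. By Proposition~\ref{prop:ADE} it is conjugate to the $\mathcal{S}$-adic subshift $X_{\boldsymbol{\tau}}$ of a constant-length, primitive, proper and recognizable directive sequence $\boldsymbol{\tau}=(\tau_n\colon A_{n+1}^*\to A_n^*)_{n\geq 0}$. I would manufacture a Bratteli diagram $B$ realizing $\boldsymbol{\tau}$: take $V_n=A_n$, and for each $v\in V_{n+1}$ and each position $i\in[0,|\tau_n(v)|)$ install an edge into $v$ whose source is $\tau_n(v)(i)$, ordering the edges into $v$ by $i$. Then $|\tau_n(v)|$ is independent of $v$ by constant length, so $B$ has the equal path number property; primitivity makes $B$ simple; and properness makes $B$ properly ordered, since if every $\tau_n(v)$ begins with a fixed letter $p_n$ and ends with a fixed letter $q_n$, the $\preceq$-minimal (resp.\ maximal) edges force a single spine through the vertices $p_n$ (resp.\ $q_n$), yielding a unique $\xmin$ and $\xmax$. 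Finally, recognizability of $\boldsymbol{\tau}$ guarantees, by the standard correspondence between expansive Bratteli--Vershik systems and the $\mathcal{S}$-adic subshift read on them, that $(X_B,\lambda_B)$ is conjugate to $X_{\boldsymbol{\tau}}\cong(X,S)$, so $B$ is the desired representation.

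For the converse, suppose $(X,T)$ is expansive and minimal with a simple, properly ordered Bratteli--Vershik representation $B$ having the equal path number property. Reading $\boldsymbol{\tau}_B$ on $B$, the equal path number property gives constant length, and simplicity gives primitivity after a telescoping. The decisive point is to upgrade the proper \emph{ordering} of $B$ to properness of the \emph{morphisms}. Consider the downward minimal-source maps $\phi_n\colon V_n\to V_{n-1}$ sending $v$ to $\mathsf{s}$ of the $\preceq$-minimal edge into $v$; minimal infinite paths correspond exactly to points of the inverse limit $\varprojlim(V_n,\phi_n)$, so proper ordering says this inverse limit is a singleton. Because the sets $V_n$ are finite, this forces the images of the composites $\phi_{[n,N]}=\phi_{n+1}\circ\cdots\circ\phi_N$ to shrink to a single vertex $m_n$ for $N$ large; equivalently, the first letter of $\tau_{[n,N)}(v)$ equals $m_n$ for every $v\in V_N$. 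The maximal paths give, symmetrically, a common last letter. Hence after telescoping along a sufficiently sparse sequence, $\boldsymbol{\tau}_B$ becomes constant-length, primitive and proper, so $X_{\boldsymbol{\tau}_B}$ is Toeplitz by Proposition~\ref{prop:ADE}. Since $(X,T)\cong(X_B,\lambda_B)$ is expansive, this Bratteli--Vershik system is conjugate to the symbolic subshift $X_{\boldsymbol{\tau}_B}$, whence $(X,T)$ is Toeplitz.

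The main obstacle is exactly the reverse direction's extraction of proper morphisms from the mere uniqueness of $\xmin$ and $\xmax$; the inverse-limit-of-finite-sets argument is the crux, and it genuinely relies on the finiteness of the $|V_n|$ along the relevant levels. It is instructive that the equal path number property alone, even with expansiveness, is not sufficient: the Thue--Morse subshift is expansive and carries an equal-path-number Bratteli diagram yet is not Toeplitz, which is consistent with the theorem only because that diagram has two minimal and two maximal paths and so admits no proper ordering. Thus proper ordering is indispensable, and expansiveness is needed precisely to pass from the abstract Bratteli--Vershik system to the genuine symbolic subshift. The remaining care points—matching the $A_0$ versus $E_1$ indexing in the read-on dictionary and invoking the standard expansive Bratteli--Vershik/$\mathcal{S}$-adic conjugacy—are routine within the framework already set up.
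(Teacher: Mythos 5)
The paper itself does not prove this theorem: it quotes it directly from Gjerde--Johansen \cite{GJ00}, whose original argument constructs the diagram by hand from a period structure $(p_n)_{n\geq 0}$ of a Toeplitz sequence (via Kakutani--Rokhlin partitions refining the sets ${\rm Per}_{p_n}$). Your route is genuinely different: you derive both directions from Proposition~\ref{prop:ADE} together with the diagram/directive-sequence dictionary, which is essentially the content the paper records as Proposition~\ref{prop:equiv}. Your forward direction is exactly the construction in the proof of Proposition~\ref{prop:equiv}~(ii) applied to the sequence furnished by Proposition~\ref{prop:ADE}~(1)$\Rightarrow$(2), and your identifications are correct: equal path number is constant length of $\boldsymbol{\tau}_B$, primitivity gives simplicity, properness of the morphisms forces unique minimal and maximal paths, and recognizability is what upgrades the natural coding from a factor map to a conjugacy. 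The one genuinely new ingredient you supply is the converse's extraction of \emph{proper morphisms} from the mere \emph{proper ordering}, and your inverse-limit argument is sound: the eventual images of the composed minimal-source maps stabilize by finiteness, the bonding maps between eventual images are surjective, so a singleton inverse limit forces singleton eventual images, i.e.\ a common first letter of $\tau_{[n,N)}(v)$ for $N$ large (and symmetrically for last letters); the level-$0$ bookkeeping ($A_0=E_1$, where distinct $v\in V_1$ necessarily get distinct minimal in-edges) is repaired by composing through $\tau_0$ after telescoping, as you indicate. Two caveats. First, your proof is non-circular only because the equivalences in \cite{ADE2024} are established $\mathcal{S}$-adically rather than via \cite{GJ00}; this dependence deserves an explicit sentence, since otherwise the appeal to Proposition~\ref{prop:ADE} would beg the question. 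Second, your Thue--Morse aside contains a literal error: the stationary Thue--Morse diagram \emph{does} admit a proper ordering (order both in-edge pairs so that the minimal edges have source $1$ and the maximal edges have source $0$; then the minimal-source map is constant and the minimal and maximal paths are unique), but that reordering changes the Vershik dynamics --- the reading degenerates to the periodic word $10$ and the system becomes an odometer, hence non-expansive. The correct statement is that no proper ordering of an equal-path-number diagram can represent Thue--Morse, not that no proper ordering exists; this does not affect your proof, which is otherwise correct at the level of detail the paper itself employs when citing this machinery.
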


By definition, a minimal Cantor system has topological rank $K$ if it has a Bratteli--Vershik representation with rank $K$. Thus it is again natural for us to wonder if any Toeplitz subshift of topological rank $K$ must have a Bratteli--Vershik representation which has both the equal path number property and rank $K$. This is the second form of our Characterization Problem. 

The two forms of the Characterization Problem are equivalent by the following result of \cite{DDMP21}.

\begin{proposition}[\cite{DDMP21}]\label{prop:equiv} Let $K\geq 2$.
\begin{enumerate}
\item[\rm (i)] Let $B$ be a simple properly ordered Bratteli diagram with rank $K$. Suppose the Bratteli--Vershik system $(X_B, \lambda_B)$ is expansive. Then there is a directive sequence $\boldsymbol{\tau}$ which is primitive, proper, recognizable, and has alphabet rank $K$, such that the $\mathcal{S}$-adic subshift $(X_{\boldsymbol{\tau}}, S)$ is conjugate to $(X_B, \lambda_B)$. Moreover, if $B$ has the equal path number property, then $\boldsymbol{\tau}$ has constant length.
\item[\rm (ii)] Let $\boldsymbol{\tau}$ be a primitive, proper and recognizable directive sequence with alphabet rank $K$. Then there is a simple properly ordered Bratteli diagram $B$ of rank $K$ such that  $(X_{\boldsymbol{\tau}}, S)$ is conjugate to $(X_B, \lambda_B)$. Moreover, if $\boldsymbol{\tau}$ has constant length, then $B$ has the equal path number property.
\end{enumerate}
\end{proposition}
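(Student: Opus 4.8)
The plan is to establish both directions through the explicit correspondence between simple properly ordered Bratteli diagrams and directive sequences given by the \emph{reading off} construction of Subsection~\ref{sec:2.5}, verifying that each structural property matches its counterpart across the correspondence. For (i) I would take $\boldsymbol{\tau}_B$ to be the directive sequence read on $B$; for (ii) I would invert the construction, building from $\boldsymbol{\tau}$ a diagram $B$ whose read-off sequence is $\boldsymbol{\tau}$ (up to relabeling the base alphabet $A_0$). Throughout I will freely telescope, using that telescoping replaces $(X_B,\lambda_B)$ by a conjugate system and leaves $X_{\boldsymbol{\tau}}$ unchanged.

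For part (i), I would first telescope $B$ along a subsequence of levels on which $|V_n|=K$, which is possible since $\liminf_n|V_n|=K$; this makes $|V_n|=K$ at every level $\geq 1$, so the read-off sequence $\boldsymbol{\tau}_B$ (for which $A_n=V_n$) has alphabet rank exactly $K$. Simplicity of $B$, after a further telescoping making consecutive levels fully connected, yields primitivity of $\boldsymbol{\tau}_B$ at once. For properness I would exploit the proper ordering: the first and last letters of $\tau_n(v)=\mathsf{s}(e_1(v))\cdots\mathsf{s}(e_{\ell}(v))$ are the sources of the $\preceq$-minimal and $\preceq$-maximal edges into $v$, so $\boldsymbol{\tau}_B$ is proper exactly when, at each level, all minimal edges into $V_{n+1}$ share a single source and likewise all maximal edges. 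I expect to secure this by one more telescoping, arguing that the reverse-minimal (resp.\ reverse-maximal) paths from all vertices of a sufficiently high level merge to the vertex of the lower level lying on $\xmin$ (resp.\ $\xmax$); if this failed at arbitrarily high levels, a K\"onig's lemma argument would manufacture a second $\preceq$-minimal (resp.\ maximal) infinite path, contradicting proper ordering. Recognizability of $\boldsymbol{\tau}_B$ and the conjugacy $(X_{\boldsymbol{\tau}_B},S)\cong(X_B,\lambda_B)$ both flow from the expansiveness hypothesis via the finite-rank recognizability results of \cite{DDMP21} (cf.\ Theorem~\ref{thm:DDMP21}), since an expansive finite-rank minimal system is an aperiodic minimal subshift. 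Finally, if $B$ has the equal path number property, then $\ell=|\{e\in E_n:\mathsf{r}(e)=v\}|$ is independent of $v$, so $|\tau_n(v)|=\ell$ for all $v$ and $\boldsymbol{\tau}_B$ has constant length.

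For part (ii), after telescoping so that $|A_n|=K$ at every level $\geq 1$, I would construct $B=(V,E,\preceq)$ by reversing the reading-off: put $V_0=\{v_0\}$ and $V_n=A_n$ for $n\geq 1$, and for each $v\in V_{n+1}$ introduce one edge into $v$ per letter-occurrence in $\tau_n(v)$, with source that letter (and source $v_0$ at the bottom level), ordered by position in $\tau_n(v)$. Then reading off $B$ returns $\boldsymbol{\tau}$ up to relabeling $A_0$, and the properties transfer dually: primitivity of $\boldsymbol{\tau}$ gives simplicity of $B$; $|V_n|=|A_n|=K$ gives rank $K$; properness of $\boldsymbol{\tau}$ forces the minimal edges into each level to share one source and the maximal edges to share one source, so the induced order admits unique $\xmin$ and $\xmax$, i.e.\ $B$ is properly ordered; and constant length of $\boldsymbol{\tau}$ makes $|\{e\in E_n:\mathsf{r}(e)=v\}|=|\tau_n(v)|$ independent of $v$, which is the equal path number property. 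The conjugacy $(X_{\boldsymbol{\tau}},S)\cong(X_B,\lambda_B)$ again uses recognizability of $\boldsymbol{\tau}$, which guarantees that the natural coding of infinite paths is a bijection intertwining $\lambda_B$ with $S$.

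The hard part will be the properness/proper-ordering equivalence: reading off an arbitrary proper-ordered $B$ need not produce proper morphisms, so the equivalence only holds after a telescoping whose existence rests on the merging argument for reverse-minimal and reverse-maximal paths sketched above, which is exactly where the uniqueness of $\xmin$ and $\xmax$ is used. A secondary subtlety, present in both directions, is that the asserted conjugacy between the Bratteli--Vershik system and the S-adic subshift read on $B$ is precisely where expansiveness (equivalently, recognizability of $\boldsymbol{\tau}$) is indispensable---without it the read-off subshift can only be a proper factor of $(X_B,\lambda_B)$---so I would be careful to invoke the finite-rank recognizability machinery of \cite{DDMP21} rather than attempt to re-derive it.
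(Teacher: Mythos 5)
Your proposal is correct and takes essentially the same route as the paper: for (i) the paper also takes a contraction of the directive sequence read on $B$ and defers the verification (including recognizability, which both you and the paper draw from \cite{DDMP21}) to the proof of \cite[Proposition 4.6]{DDMP21}, and for (ii) the paper writes out exactly your reverse construction, following \cite[Proposition 2.2]{DL12}, with the same observations that constant length corresponds to the equal path number property. The only difference is that you sketch the telescoping verifications (rank, primitivity, and properness via the merging of reverse-minimal/maximal paths) that the paper leaves to the citation, and those sketches are sound.
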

\begin{proof} For (i), the required $\boldsymbol{\tau}$ is a contraction of the directive sequence read on $B$. The verification of this follows from the proof of  \cite[Proposition 4.6]{DDMP21}; only note that the equal number property of $B$ implies that the directive sequence $\boldsymbol{\tau}_B$ read on $B$, as well as any contraction of $\boldsymbol{\tau}_B$, has constant length.

For (ii), we follow \cite[Proposition 4.5]{DDMP21}, whose proof appeared as that of Durand--Leroy \cite[Proposition 2.2]{DL12}. In fact, let $\boldsymbol{\tau}=(\tau_n\colon A_{n+1}^*\to A_n^*)_{n\geq 0}$ be a primitive, proper and recognizable directive sequence with alphabet rank $K$. Define $V_0=\{v_0\}$ and $V_n=A_n$ for $n\geq 1$. For $v\in V_1=A_1$, let $e=(v_0,v,k)\in E_0$ for all $0\leq k<|\tau_0(v)|$ with $\mathsf{s}(e)=v_0$, $\mathsf{r}(e)=v$ and $e=(v_0, v, k)\preceq e'=(v_0,v,k')$ if and only if $k\leq k'$. For $n\geq 1$, $v\in V_n=A_n$, $w\in V_{n+1}=A_{n+1}$ and $0\leq k<|\tau_n(w)|$, let $e=(v,w,k)\in E_n$ if $v$ occurs in $\tau_n(w)$ as its $k$-th letter; in this case define $\mathsf{s}(e)=v$, $\mathsf{r}(e)=w$, and $e=(v,w,k)\preceq e'=(v',w,k')$ if and only if $k\leq k'$. This defines a Bratteli diagram $B$ which is simple and properly ordered, and the Bratteli--Vershik system $(X_B, \lambda_B)$ is conjugate to $(X_{\boldsymbol{\tau}}, S)$. From the construction it is clear that the rank of $B$ is the same as the alphabet rank of $\boldsymbol{\tau}$. Moreover, if $\boldsymbol{\tau}$ has constant length, then $B$ has the equal path number property.
\end{proof}

Strong Toeplitz subshifts of rank $K$ have been implicitly studied in Durand--Frank--Maass \cite{DFM15}.

\subsection{A negative answer to the Characterization Problem\label{sec:3.3}}

In this subsection we show that both forms of the Characterization Problem have a negative answer. In fact, we will construct a Toeplitz subshift of topological rank $2$ which is not a strong Toeplitz subshift of rank $2$. We first note the following property for all strong Toeplitz subshfits of rank $2$.

\begin{lemma}\label{lem:onepair} Let $\boldsymbol{\tau}=(\tau_n\colon A_{n+1}^*\to A_n^*)_{n\geq 0}$ be a constant-length, primitive, proper and recognizable directive sequence with alphabet rank $2$. Then there is a unique centered left asymptotic pair $\{x, \tilde{x}\}$ in $(X_{\boldsymbol{\tau}}, S)$. Moreover, $(-\infty, 0)\subseteq \mbox{\rm Per}(x)\cap \mbox{\rm Per}(\tilde{x})$.
\end{lemma}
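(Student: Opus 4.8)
The plan is to analyze the structure of $X_{\boldsymbol{\tau}}$ through the recognizable building maps $\tau_{[0,n)}$ and exploit the alphabet rank $2$ condition, namely that $|A_n|=2$ for all $n\geq 1$ (after telescoping, which we may do without changing the subshift). First I would set up the relevant structure: by recognizability of $\boldsymbol{\tau}$, every $x\in X_{\boldsymbol{\tau}}$ has, for each $n\geq 1$, a unique building from $\{\tau_{[0,n)}(a)\colon a\in A_n\}$, and since the morphisms have constant length $\ell_n=|\tau_{[0,n)}|$, the cut points of this building form a coset $c_n+\ell_n\mathbb{Z}$ for some $0\leq c_n<\ell_n$. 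This gives a canonical factor map onto the odometer $\mathrm{Odo}((\ell_n)_n)$, sending $x$ to the coherent sequence of residues $(c_n \bmod \ell_n)_n$; properness guarantees these cut-point data are well-defined and coherent across levels.

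The key step is to locate the asymptotic pairs. A centered left asymptotic pair $\{x,\tilde x\}$ satisfies $x(-\infty,0)=\tilde x(-\infty,0)$ and $x(0)\neq\tilde x(0)$. I would argue that such a pair forces a \emph{coincidence of cut points at position $0$}: because the pasts agree, for each $n$ the building of $x$ and of $\tilde x$ from level-$n$ blocks must agree on $(-\infty,0)$, and the disagreement at position $0$ can only arise if position $0$ is a cut point at every level $n$ (i.e. $c_n=0$ for all $n$), and the two points choose different level-$n$ letters $a_n\neq \tilde a_n$ at the block starting at $0$. Since $|A_n|=2$, there are exactly two choices $A_n=\{a_n,\tilde a_n\}$ at each level, but properness forces $\tau_{n}(a_{n+1})$ and $\tau_n(\tilde a_{n+1})$ to share a common prefix $p$; recognizability then forces the past to determine everything uniquely. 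The uniqueness of the pair follows because the point of $X_{\boldsymbol{\tau}}$ whose cut points all pass through $0$ (i.e. lies over the odometer element $\boldsymbol 0$) is essentially unique up to the binary choice of letter-sequence, and primitivity plus constant length pin down $x(-\infty,0)$ as a single well-defined left-infinite word built coherently from the two block families. The two completions $x(0)x(1)\cdots$ and $\tilde x(0)\tilde x(1)\cdots$ are the only ways to extend, giving exactly one unordered pair $\{x,\tilde x\}$.

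For the ``moreover'' clause, I would show $(-\infty,0)\subseteq \mathrm{Per}(x)\cap\mathrm{Per}(\tilde x)$ by arguing that every position $i<0$ lies strictly inside some level-$n$ block for $x$ (and likewise for $\tilde x$), because $0$ is the only position that is a cut point at \emph{every} level. Concretely, for each $i<0$ there is a largest level $n$ at which $i$ is a cut point; at level $n+1$, the position $i$ sits in the interior of the block $\tau_{[0,n+1)}(a)$ starting at some cut point $jp_{n+1}$, and since $\boldsymbol\tau$ is Toeplitz (constant length with coincidences, via Proposition~\ref{prop:ADE}), the letter at $i$ is determined periodically with period $p_{n+1}=\ell_{n+1}$. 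This exhibits $i\in\mathrm{Per}_{\ell_{n+1}}(x)$, and the same argument applies to $\tilde x$; since the two agree on $(-\infty,0)$ this is a single computation.

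The main obstacle I anticipate is proving \emph{uniqueness} of the asymptotic pair rather than merely its existence. Existence is relatively soft: it follows from compactness and the binary branching at the ``fully-cut'' odometer point $\boldsymbol 0$. Uniqueness requires ruling out centered left asymptotic pairs over \emph{other} odometer points and pairs arising from a cut-point coincidence at some but not all levels. I expect the crux to be a careful recognizability argument (invoking Lemma~\ref{lem:rec} to get a bound $r$ controlling how far back one must look to determine the current block and offset) combined with the rank-$2$ restriction, which limits the branching at each level to a single binary choice and thereby prevents more than one asymptotic pair from appearing. Handling the interaction between properness (shared prefix/suffix of $\tau_n$-images) and the constant-length coincidence structure, so that the disagreement is forced to be exactly at position $0$ and nowhere else, will be the delicate part of the write-up.
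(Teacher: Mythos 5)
Your central structural claim is false, and it inverts the actual geometry of the situation. You assert that for a centered left asymptotic pair the disagreement at position $0$ ``can only arise if position $0$ is a cut point at every level $n$ (i.e.\ $c_n=0$ for all $n$).'' Properness rules this out immediately: writing $w_{n,j}=\tau_{[0,n+1)}(j)$, the two words $w_{n,1},w_{n,2}$ share a nonempty maximal common prefix $u_n$ (of length $k_n\to\infty$), so if $0$ were a cut point at level $n$, then $x$ and $\tilde x$ would \emph{agree} at position $0$ (both blocks begin with $u_n$), contradicting $x(0)\neq\tilde x(0)$. The correct picture is the opposite one: recognizability forces the level-$n$ buildings of $x$ and $\tilde x$ to have the same cut coset and to carry equal letters on all blocks strictly left of the block containing $0$; since the two sequences agree on $[s,0)$ and differ at $0$, the two words filling that block must first differ at offset $-s$, which forces $s=-k_n$. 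So the cut containing $0$ sits at $-k_n$ and drifts to $-\infty$ with $n$; the pair does not lie over the odometer point $\boldsymbol 0$, and no position is a cut point at every level. This is exactly how the paper's proof pins down uniqueness: $\{y[-k_n,p_n-k_n),\tilde y[-k_n,p_n-k_n)\}$ must equal $\{w_{n,1},w_{n,2}\}$ for every $n$, and a consistent labelling across levels identifies $\{y,\tilde y\}$ with the explicitly constructed pair $x[-k_n,p_n-k_n)=w_{n,1}$, $\tilde x[-k_n,p_n-k_n)=w_{n,2}$.

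Your argument for the ``moreover'' clause fails for a related reason. Lying strictly inside a level-$(n+1)$ block does \emph{not} make a position $i$ periodic: $x(i)$ is $p_{n+1}$-periodic only if the offset of $i$ in its block is a coincidence of the pair $\{w_{n+1,1},w_{n+1,2}\}$, since different slots of the same cut coset may be filled by either word. Constant length with coincidences guarantees that the \emph{subshift} is Toeplitz, i.e.\ that Toeplitz sequences are dense in it, not that every point is a Toeplitz sequence --- and the asymptotic points here are precisely points where this distinction matters (compare Claim~\ref{cl:3} of the paper, where a rank-$2$ Toeplitz subshift has an asymptotic pair with an aperiodic position in $(-\infty,0)$; that construction is possible exactly because the directive sequence there is not constant-length and proper). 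The correct source of periodicity on $(-\infty,0)$ is the common prefix: with the cut at $-k_n$, the interval $[-k_n,0)$ lands inside $u_n$, which is shared by both words, so every slot of the coset carries the same letters there, giving $[-k_n,0)\subseteq{\rm Per}_{p_n}(x)\cap{\rm Per}_{p_n}(\tilde x)$ and, letting $n\to\infty$, the claim. As written, both halves of your proposal rest on the erroneous ``all cuts through $0$'' configuration and would need to be rebuilt around the drifting cuts $-k_n$.
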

\begin{proof}  Let $X=X_{\boldsymbol{\tau}}$. Without loss of generality assume $A_{n+1}=\{1,2\}$ for all $n\in\mathbb{N}$. Let $\mathsf{A}=A_0$. For each $n\in\mathbb{N}$ and $j=1,2$, let $w_{n,j}=\tau_{[0,n+1)}(j)$ and let $p_n=|w_{n,1}|=|w_{n,2}|$. 
Since $X$ is aperiodic, $w_{n,1}\ne w_{n,2}$ for every $n\in\mathbb{N}$. Without loss of generality, assume for all $n\geq 1$, if $i_n\in\mathbb{N}$ is the least such that $0\leq i_n<|\tau_n(1)|=|\tau_n(2)|$ and $\tau_n(1)(i_n)\neq \tau_n(2)(i_n)$, then we have $\tau_n(1)(i_n)=1$ and $\tau_n(2)(i_n)=2$.

For each $n\in\mathbb{N}$, let $u_n$ be the maximal common prefix of $w_{n,1}$ and $w_{n,2}$. Write $w_{n,1}=u_nv_{n,1}$ and $w_{n,2}=u_nv_{n,2}$. Then for each $n\in\mathbb{N}$, $k_n=|u_n|$, $|v_{n,1}|=|v_{n,2}|=p_n-k_n$, $u_n$ is a suffix of $u_{n+1}$, and for each $j=1,2$, $v_{n,j}$ is a prefix of $v_{n+1, j}$. By properness, we have $k_n\to +\infty$ and $p_n-k_n\to +\infty$ as $n\to +\infty$. Let $x\in A^\mathbb{Z}$ be such that $x[-k_n, p_n-k_n)=w_{n,1}$ for every $n\in\mathbb{N}$ and let $\tilde{x}\in A^\mathbb{Z}$ be such that $\tilde{x}[-k_n, p_n-k_n)=w_{n,2}$ for every $n\in\mathbb{N}$. Then $x$ and $\tilde{x}$ are well defined. It is routine to check that $\{x,\tilde{x}\}$ is a centered left asymptotic pair and $[-k_n,0)\subseteq{\rm Per}_{p_n}(x)\cap {\rm Per}_{p_n}(\tilde{x})$ for every $n\in\mathbb{N}$, so we have that $(-\infty,0)\subseteq {\rm Per}(x)\cap {\rm Per}(\tilde{x})$.

Now assume $\{y, \tilde{y}\}$ is another centered left asymptotic pair in $(X_{\boldsymbol{\tau}}, S)$. We show that $\{x,\tilde{x}\}=\{y, \tilde{y}\}$. Fix any $n\geq 1$. By the recognizability of $\boldsymbol{\tau}$ and by Lemma~\ref{lem:rec}, each of $y$ and $\tilde{y}$ has a unique building from $\{w_{n,1}, w_{n,2}\}$, and for some $\ell<0$, these buildings agree on $(-\infty, \ell)$. Let $\ell<0$ be the greatest such integer. Since $|w_{n,1}|=|w_{n,2}|=p_n$, we must have $\ell=-|u_n|=-k_n$. Thus $\{y[-k_n, p_n-k_n), \tilde{y}[-k_n, p_n-k_n)\}=\{w_{n,1}, w_{n,2}\}$. Without loss of generality, we may assume $y[-k_n, p_n-k_n)=w_{n,1}$ and $\tilde{y}[-k_n, p_n-k_n)=w_{n,2}$. It follows that for all $m\geq 1$, we indeed have $y[-k_m, p_m-k_m)=w_{m,1}$ and $\tilde{y}[-k_m, p_m-k_m)=w_{m,2}$. Thus $y=x$ and $\tilde{y}=\tilde{x}$.  
\end{proof}

We also need the following observation about $\mathcal{S}$-adic representations of Toeplitz subshifts. 

\begin{lemma}\label{gcd}
Let $\boldsymbol{\tau}=(\tau_n\colon A^*_{n+1}\to A^*_n)_{n\geq 0}$ be a primitive, proper and recognizable directive sequence of alphabet rank $2$. Suppose that $(X_{\boldsymbol{\tau}},S)$ is an aperiodic Toeplitz subshift. For each $n\in\mathbb{N}$, let $d_n=\mbox{\rm gcd}(|\tau_{[0,n+1)}(a)|\colon a\in A_{n+1})$. Then $\mbox{\rm lcm}(d_n)_{n\geq 0}$ is the scale for $(X_{\boldsymbol{\tau}}, S)$. 
\end{lemma}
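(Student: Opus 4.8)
The plan is to prove the two divisibility relations $\mbox{\rm lcm}(d_n)_{n\ge 0}\mid \mathsf{u}_x$ and $\mathsf{u}_x\mid \mbox{\rm lcm}(d_n)_{n\ge 0}$ between supernatural numbers, where $\mathsf{u}_x$ denotes the scale of $(X_{\boldsymbol\tau},S)$ (i.e. the scale of the odometer that is its maximal equicontinuous factor). First I would telescope $\boldsymbol\tau$ so that $|A_n|=2$ for all $n\ge 1$. This is harmless: one checks $d_n\mid d_{n+1}$ (the length $|\tau_{[0,n+2)}(a)|$ is a sum of lengths $|\tau_{[0,n+1)}(b)|$, each divisible by $d_n$), so $(d_n)$ is increasing under divisibility and passing to a cofinal subsequence leaves $\mbox{\rm lcm}(d_n)$ unchanged; moreover telescoping preserves primitivity, properness, recognizability, and the scale. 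Write $w_{n,j}=\tau_{[0,n+1)}(j)$ for $j\in A_{n+1}=\{1,2\}$ and $\ell_{n,j}=|w_{n,j}|$, so $d_n=\gcd(\ell_{n,1},\ell_{n,2})$.

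For $\mbox{\rm lcm}(d_n)\mid\mathsf{u}_x$ I would construct, for each $n$, a factor map onto a finite cyclic odometer. By recognizability and Lemma~\ref{lem:rec}, every $y\in X_{\boldsymbol\tau}$ has a unique building from $\{w_{n,1},w_{n,2}\}$, and its tiling near position $0$ is determined by a fixed window; let $B^{(n)}(y)$ be the resulting set of tile boundaries. Consecutive boundaries differ by $\ell_{n,1}$ or $\ell_{n,2}$, both divisible by $d_n$, so all of $B^{(n)}(y)$ lies in a single residue class mod $d_n$. Setting $\pi_n(y)$ to be the residue mod $d_n$ of the largest boundary $\le 0$ yields a continuous map with $\pi_n(Sy)=\pi_n(y)-1$, hence a factor map of $(X_{\boldsymbol\tau},S)$ onto $({\rm Odo}(d_n),S)$. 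As an equicontinuous factor it factors through the maximal equicontinuous factor ${\rm Odo}(\mathsf{u}_x)$, so $d_n\mid\mathsf{u}_x$ for every $n$, and therefore $\mbox{\rm lcm}(d_n)\mid\mathsf{u}_x$.

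The reverse divisibility $\mathsf{u}_x\mid\mbox{\rm lcm}(d_n)$ is the main obstacle, and I would establish it by showing that every prime power $q\mid\mathsf{u}_x$ divides some $d_n$. Since ${\rm Odo}(\mathsf{u}_x)$ is the maximal equicontinuous factor and $q\mid\mathsf{u}_x$, there is a factor map $\alpha\colon X_{\boldsymbol\tau}\to\mathbb{Z}_q$ with $\alpha(Sy)=\alpha(y)+1$; being continuous into a finite set, $\alpha$ is determined by a window of some radius $R$, so that whenever two positions $P,P'$ of a point satisfy $y[P-R,P+R)=y[P'-R,P'+R)$ one gets $P\equiv P'\pmod q$. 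The crucial ingredient is that properness forces $w_{n,1}$ and $w_{n,2}$ to share a common prefix and a common suffix whose lengths tend to infinity with $n$: each contains $\tau_{[0,n)}$ of the common first, respectively last, letter of $\tau_n$, and the length of such a level-$(n-1)$ word tends to infinity by primitivity. Choosing $n$ so large that both common lengths exceed $R$, the window of any $y$ centered at any tile boundary equals the concatenation of the common length-$R$ suffix and the common length-$R$ prefix; hence it is the same at every boundary, and so all boundaries of $y$ are mutually congruent mod $q$. As consecutive boundaries differ by $\ell_{n,1}$ or $\ell_{n,2}$, and both tile types occur (primitivity), we deduce $q\mid\ell_{n,1}$ and $q\mid\ell_{n,2}$, i.e. $q\mid d_n$.

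Combining the two divisibilities gives $\mbox{\rm lcm}(d_n)_{n\ge 0}=\mathsf{u}_x$, as claimed. I expect the routine work to lie in the telescoping bookkeeping and in the continuity and equivariance checks for the maps $\pi_n$, while the genuinely new step is the boundary-window argument in the reverse direction, whose success hinges precisely on properness supplying arbitrarily long common prefixes and suffixes.
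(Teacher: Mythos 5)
Your proof is correct, but it takes a genuinely different route from the paper's. The paper argues purely symbolically, at the level of essential periods of a fixed Toeplitz sequence $x$ (the scale being their least common multiple): for the direction $\mathsf{u}\,|\,\mathrm{lcm}(d_n)_{n\geq 0}$ it invokes Williams' lemma that $U=\{y\colon \mathrm{Skel}(y,p)=\mathrm{Skel}(x,p)\}$ is clopen with $\{S^kU\colon 0\le k<p\}$ a partition and $S^pU=U$, and then uses minimality together with properness to exhibit two visits to $U$ at distance exactly $|w_{n+1,j}|$ inside one point, forcing $p\,\big|\,|w_{n+1,j}|$; for the direction $\mathrm{lcm}(d_n)_{n\geq 0}\,|\,\mathsf{u}$ it uses recognizability via Lemma~\ref{lem:rec} to show that if $[-r,r)\subseteq\mathrm{Per}_p(x)$ then the unique building of $x$ from $\{w_{n,1},w_{n,2}\}$ recurs after $p$ steps, so $p=a|w_{n,1}|+b|w_{n,2}|$ and $d_n\,|\,p$. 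You instead work with the maximal equicontinuous factor: your $d_n\,|\,\mathsf{u}$ direction constructs an explicit finite cyclic factor from tile-boundary residues (recognizability supplying well-definedness and continuity) and then invokes the universal property of the maximal equicontinuous factor, while your reverse direction turns a finite divisor $q\,|\,\mathsf{u}$ into a window-determined map $\alpha$ onto $\mathbb{Z}_q$ and uses properness, through arbitrarily long common prefixes and suffixes, to make the window at every tile boundary identical, forcing $q\,\big|\,\ell_{n,1},\ell_{n,2}$. There is a pleasing symmetry: both proofs spend recognizability on one direction and properness on the other, but you replace the skeleton and partition combinatorics of \cite{Williams1984} with the abstract maximal-equicontinuous-factor property plus block-code window arguments. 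Your approach is essentially rank-free and generalizes verbatim to alphabet rank $K$ (exactly the generalization stated without proof details in Section~\ref{sec:8}), and it makes the division of labor between the two hypotheses transparent; the paper's approach is more elementary and self-contained at the symbolic level, needing only the definition of the scale via essential periods rather than the factor-theoretic characterization. Two small points to tidy in your write-up: when concluding $q\,|\,\ell_{n,1}$ and $q\,|\,\ell_{n,2}$, the fact that a single point's building uses both tile types follows from aperiodicity (a bi-infinite concatenation of one word is periodic, and in a minimal aperiodic subshift every point is aperiodic) rather than from primitivity alone, and the degenerate case $\ell_{n,1}=\ell_{n,2}$ should be noted as trivial; and your boundary-residue map must be checked surjective to be a factor map, which is immediate from its $S$-equivariance.
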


\begin{proof} Without loss of generality assume $A_{n+1}=\{1,2\}$ for all $n\in\mathbb{N}$. For each $n\in\mathbb{N}$ and $j=1,2$, let $w_{n,j}=\tau_{[0,n+1)}(j)$. Then $d_n=\mbox{\rm gcd}(|w_{n,1}|, |w_{n,2}|)$. Let $X=X_{\boldsymbol{\tau}}$ and fix a Toeplitz sequence $x\in X$. We show that every essential period of $x$ is a factor of some $d_n$, and that every $d_n$ is a factor of some essential period of $x$.

Let $p$ be an essential period of $x$.  Then by \cite[Lemma 2.3]{Williams1984}, the set $U=\{y\in X\colon {\rm Skel}(y,p)={\rm Skel}(x,p)\}$ is a clopen neighbourhood of $x$ in $X$, $\{S^kU:0\le k<p\}$ is a partition of $X$, and $S^{p}U=U$. It follows that there is $m\in \mathbb{N}$ such that $\{y\in X\colon y[-m,m)=x[-m,m)\}$ is a subset of $U$. Since $(X,S)$ is minimal, there is $m'>2m$ such that for every $y\in X$ and every subword $s$ of $y$ whose length is at least $m'$, we have that $x[-m,m)$ is a subword of $s$. Let $n\in\mathbb{N}$ be such that $|w_{n,1}|, |w_{n,2}|\ge m'$. We claim that $p$ divides both $|w_{n+1,1}|$ and $|w_{n+1,2}|$. To see this, note that $(X,S)$ is aperiodic, so there is $z\in X$ such that $z[0,|w_{n+1,1}|+|w_{n+1,2}|)=w_{n+1,1}w_{n+1,2}$. Since $\boldsymbol{\tau}$ is proper, we may assume that both $w_{n+1,1}$ and $w_{n+1,2}$ begin with $w_{n,j_0}$ for some $j_0\in\{0,1\}$. Then by the definition of $m'$, $x[-m,m)$ is a subword of $w_{n,j_0}$; in other words, there is $t\in[0,|w_{n,j_0}|-2m)$ such that $w_{i,j_0}[t,t+2m)=x[-m,m)$. So $S^{t+m}(z)[-m,m)=x[-m,m)$ and $S^{t+m+|w_{n+1,1}|}(z)[-m,m)=x[-m,m)$. It follows that $S^{t+m}(z)\in U$ and $S^{t+m+|w_{n+1,1}|}(z)\in U$, and thus $U\cap S^{|w_{n+1,1}|}U\ne\varnothing$. Since $\{S^kU\colon 0\le t<p\}$ is a partition of $X$ and $S^{p}U=U$, we have that $p$ divides $|w_{n+1,1}|$. By a similar argument, $p$ also divides $|w_{n+1,2}|$.

Conversely, fix $n\in\mathbb{N}$. By the recognizability of $\boldsymbol{\tau}$, each element of $X$ has a unique building from $\{w_{n,1}, w_{n,2}\}$. In particular, $x$ has a unique building from $\{w_{n,1}, w_{n,2}\}$. Since $x$ is aperiodic, both $w_{n,1}$ and $w_{n,2}$ occur in this building. Suppose $w_{n,1}$ occurs at position $i$ in this building of $x$. By Lemma~\ref{lem:rec} there is a positive integer $r>|i|$ such that for any $y\in X$, if $y[-r,r)=x[-r,r)$, then in the unique building of $y$ from $\{w_{n,1}, w_{n,2}\}$, $w_{n,1}$ occurs at position $i$ also. 
Now let $p$ be an essential period of $x$ such that $[-r,r)\subseteq\mbox{\rm Per}_p(x)$. Then $S^p(x)[-r,r)=x[-r,r)$, and thus in the unique building of $S^p(x)$ from $\{w_{n,1},w_{n,2}\}$, $w_{n,1}$ occurs at position $i$. Since the unique building of $S^p(x)$ is obtained from the unique building of $x$ by a shift of $p$ positions, we conclude that in the unique building of $x$ from $\{w_{n,1}, w_{n,2}\}$, $w_{n,1}$ occurs at both positions $i$ and $i+p$. Therefore $x[i, i+p)$ is a word  in $\{w_{n,1}, w_{n,2}\}^*$, and we conclude that $p$ can be written in the form $a|w_{n,1}|+b|w_{n,2}|$ for some $a,b\in\mathbb{N}$. Thus $d_n=\mbox{\rm gcd}(|w_{n,1}|,|w_{n,2}|)$ divides $p$.
\end{proof}

We are now ready to prove the main result of this section.

\begin{theorem}\label{thm:char}
There exists a Toeplitz subshift of topological rank $2$ which is not a strong Toeplitz subshift of rank $2$.
\end{theorem}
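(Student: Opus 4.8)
The plan is to build a single subshift $X$ that is both Toeplitz and of topological rank $2$, and to block strong rank $2$ by violating the \emph{periodicity} clause of Lemma~\ref{lem:onepair}. I would fix an explicit directive sequence $\boldsymbol{\tau}=(\tau_n\colon\{1,2\}^*\to\{1,2\}^*)_{n\ge 0}$ (two letters per level after telescoping) that is primitive, proper and recognizable but of genuinely non-constant length, with $|\tau_{[0,n+1)}(1)|\ne|\tau_{[0,n+1)}(2)|$ for infinitely many $n$. By Proposition~\ref{prop:equiv}(ii) the $\mathcal{S}$-adic subshift $X=X_{\boldsymbol{\tau}}$ then has a simple properly ordered Bratteli--Vershik representation of rank $2$; being infinite it is not an odometer, so its topological rank equals $2$. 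The non-constant lengths are essential, since they are exactly the feature that constant length forbids in Lemma~\ref{lem:onepair}.

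Since a proper sequence of non-constant length need not generate a Toeplitz subshift, Toeplitz-ness must be established separately. I would show that $X$ is an almost one-to-one extension of an odometer --- equivalently, exhibit the factor map onto $\mathrm{Odo}(\mathsf{u})$ and use recognizability to see that the generic fiber is a singleton --- which makes $X$ a Toeplitz subshift. With Toeplitz-ness in hand, Lemma~\ref{gcd} identifies the scale as $\mathrm{lcm}(d_n)_{n\ge 0}$ with $d_n=\gcd(|w_{n,1}|,|w_{n,2}|)$, where $w_{n,j}=\tau_{[0,n+1)}(j)$, and its proof yields the key quantitative fact that every essential period of $X$ divides some $d_n$. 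This is the period control I exploit below.

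The decisive step concerns the left asymptotic structure. Mimicking the construction at the start of Lemma~\ref{lem:onepair}, the growing common prefixes $u_n$ of $w_{n,1}$ and $w_{n,2}$ yield a centered left asymptotic pair $\{x,\tilde x\}\subseteq X$, with $x$ carrying $w_{n,1}$ and $\tilde x$ carrying $w_{n,2}$ across position $0$; the nesting of the $u_n$ now needs a short separate check, since the blocks no longer have equal lengths. Here lies the whole point: in the constant-length case every level-$n$ block begins at an \emph{arithmetic} position $jp_n-k_n$ and begins with $u_n$, which is precisely what forces $[-k_n,0)\subseteq\mathrm{Per}_{p_n}(x)$ and hence a periodic left tail. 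When $|w_{n,1}|\ne|w_{n,2}|$ the block boundaries to the left of $0$ no longer form an arithmetic progression, so the repetitions of $u_n$ occur at non-periodic spacings. Using Lemma~\ref{gcd} to enumerate the only periods available in $X$, I would show that none of them can be realized at these boundary coordinates, producing infinitely many $i<0$ with $i\in\mathrm{Aper}(x)$. In particular $(-\infty,-C)\not\subseteq\mathrm{Per}(x)$ for every $C$.

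To finish: were $X$ a strong Toeplitz subshift of rank $2$, it would be conjugate to some $X_{\boldsymbol{\sigma}}$ with $\boldsymbol{\sigma}$ constant-length, primitive, proper and recognizable of alphabet rank $2$, whose centered left asymptotic pair satisfies $(-\infty,0)\subseteq\mathrm{Per}$ by Lemma~\ref{lem:onepair}. A block code of radius $m$ transports this periodicity forward, since $[-k_n,0)\subseteq\mathrm{Per}_{p_n}$ implies $[-k_n+m,-m)\subseteq\mathrm{Per}_{p_n}$ for the image; as a conjugacy (a shift-commuting homeomorphism) carries centered left asymptotic pairs bijectively to centered left asymptotic pairs, every centered left asymptotic pair of $X$ would then have an eventually periodic left tail, contradicting the previous paragraph. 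Hence $X$ is not a strong Toeplitz subshift of rank $2$. The main obstacle I anticipate is reconciling the two halves of the construction: non-constant lengths are exactly what create the aperiodic left tail, yet they fight against Toeplitz-ness, which insists that the periodic skeleton exhaust $\mathbb{Z}$. The delicate choice is to make the gcds $d_n$ grow fast enough to fill the skeleton (via Lemma~\ref{gcd}) while the boundary coordinates of the asymptotic pair evade every available period; verifying recognizability through Lemma~\ref{lem:rec} and confirming that $\{x,\tilde x\}$ genuinely lies in $X$ will absorb most of the technical bookkeeping.
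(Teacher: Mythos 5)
Your outline reproduces the architecture of the paper's proof almost exactly — a non-constant-length, primitive, proper, recognizable directive sequence on two letters (the paper uses $|w_{n+1,1}|=4m_nd_n$, $|w_{n+1,2}|=8m_nd_n$), topological rank $2$ via Proposition~\ref{prop:equiv}(ii), Toeplitz-ness established separately, Lemma~\ref{gcd} for period control, and a centered left asymptotic pair with an aperiodic coordinate in the left tail contradicting Lemma~\ref{lem:onepair}. But the proposal stops precisely where the proof begins: Theorem~\ref{thm:char} is an existence statement, and you never exhibit the subshift. The two properties you defer — (a) that your non-constant-length sequence nevertheless generates a Toeplitz subshift, and (b) that the left tail of the asymptotic pair contains aperiodic coordinates — pull in opposite directions, as you yourself note in your closing paragraph; their simultaneous satisfiability is the entire mathematical content of the theorem, and it is resolved in the paper only by explicit word equations. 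Concretely, the paper takes $w_{n+1,1}=w_{n,1}^2(w_{n,1}w_{n,2})^4\alpha_n w_{n,1}^2\beta_n w_{n,1}^2$ and $w_{n+1,2}=(w_{n,1}w_{n,2})^2\gamma_n w_{n,1}^2\delta_n w_{n,2}^2\eta_n w_{n,1}^2\lambda_n w_{n,1}^2$ with the fillers built from $\{w_{n,1}^2,w_{n,2}^2\}$ to hit exact lengths, so that $w_{n,1}$ occurs at position $2m_nd_n$ in \emph{both} level-$(n{+}1)$ words (and at $6m_nd_n$ in the second): this placement is what produces an explicit Toeplitz sequence with $[-\ell_n,-\ell_n+|w_{n,1}|)\subseteq{\rm Per}_{4m_nd_n}$, while the inductively maintained inequality $w_{n,1}(q_n-1)\ne w_{n,2}(d_n+q_n-1)$ (the paper's (\ref{2.4})) is what makes the coordinate $-1$ of the pair $\{y,\tilde y\}$ aperiodic. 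Your heuristic that "non-arithmetic block boundaries" force aperiodic coordinates is not a proof and is not even automatically true — unequal block lengths are compatible with the boundary coordinates all landing in ${\rm Per}(y)$ — so without a concrete construction and an inductive invariant like (\ref{2.4}), step (b) is unsubstantiated.

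One point where your plan is actually more demanding than the paper, and fairly so: since "strong Toeplitz of rank $2$" is defined up to conjugacy, Lemma~\ref{lem:onepair} transfers through a block code of radius $m$ only to the statement that the unique centered pair of $X$ has $(-\infty,-C)\subseteq{\rm Per}$ for some finite $C$ (your $[-k_n+m,-m)$ computation is correct, including the bounded recentering shift). To contradict this you need aperiodic coordinates arbitrarily far left, which is exactly the strengthening you announce; the paper itself settles for the single coordinate $-1$ and invokes Lemma~\ref{lem:onepair} directly. So your final step is the right shape — but it raises the bar on step (b) even further, and you prove neither the weak nor the strong version. As it stands, the proposal is a correct strategic summary of the paper's approach with the decisive construction and both key verifications missing.
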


The rest of this subsection is devoted to a proof of Theorem~\ref{thm:char}. 

We work with the alphabet $\mathsf{A}=\{0,1\}$. We inductively define words $w_{n,1}, w_{n,2}\in\mathsf{A}^*$ for $n\in\mathbb{N}$, and then let
$$\begin{array}{rl} X=\{ x\in \mathsf{A}^{\mathbb{Z}}\colon& \!\!\!\!\!\! \mbox{ every finite subword of $x$ is a subword}\\ 
&\mbox{ of $w_{n,j}$ for some $n\in\mathbb{N}$ and $j\in\{1,2\}$}\}.
\end{array} $$

Let $w_{0,1}=0$ and $w_{0,2}=1$. Let $d_0=\mbox{\rm gcd}(|w_{0,1}|, |w_{0,2}|)=1$.  In general, suppose $w_{n,1}$ and $w_{n,2}$ have been defined, and $d_n=\mbox{\rm gcd}(|w_{n,1}|, |w_{n,2}|)$. We proceed to define $w_{n+1,1}, w_{n+1,2}\in\{w_{n,1}, w_{n,2}\}^*$.

Find $m_n\gg |w_{n,1}|+|w_{n,2}|$. We will have $|w_{n+1,1}|=4m_nd_n$ and $|w_{n+1,2}|=8m_nd_n$.
$w_{n+1,1}$ will begin with $w_{n,1}^2(w_{n,1}w_{n,2})^4$ and end with $w_{n,1}^2$. The position $2m_nd_n$ in $w_{n+1,1}$ will be the beginning of an occurrence of $w_{n,1}^2$. Since $m_n\gg |w_{n,1}|+|w_{n,2}|$, $2m_nd_n-6|w_{n,1}|-4|w_{n,2}|$ is a multiple of $2d_n$ which is much larger than $|w_{n,1}|+|w_{n,2}|$, and so it can be written in the form $2a|w_{n,1}|+2b|w_{n,2}|$ for some  $a,b\in\mathbb{N}$. As a result, we can construct a word $\alpha_n\in\{w_{n,1}^2,w_{n,2}^2\}^*$ such that $|\alpha_n|=2m_nd_n-6|w_{n,1}|-4|w_{n,2}|$. Similarly, there is a word $\beta_n\in \{w_{n,1}^2, w_{n,2}^2\}^*$ such that $|\beta_n|=2m_nd_n-4|w_{n,1}|$. Let 
\begin{equation}\label{2.2}
w_{n+1,1}=w_{n,1}^2(w_{n,1}w_{n,2})^4\alpha_n w_{n,1}^2\beta_n w_{n,1}^2.
\end{equation}

Next we define $w_{n+1,2}$. Let $\gamma_n, \delta_n, \eta_n, \lambda_n\in\{w_{n,1}^2, w_{n,2}^2\}^*$ such that 
$$\begin{array}{l}
|\gamma_n|=2m_nd_n-2|w_{n,1}|-2|w_{n,2}|, \\
|\delta_n|=2m_nd_n-2|w_{n,1}|, \\
|\eta_n|=2m_nd_n-2|w_{n,2}|, \\
|\lambda_n|=2m_nd_n-4|w_{n,1}|.
\end{array}
$$
Then let 
\begin{equation}\label{2.3}
w_{n+1,2}=(w_{n,1}w_{n,2})^2\gamma_n w_{n,1}^2\delta_n w^2_{n,2}\eta_n w_{n,1}^2\lambda_n w_{n,1}^2.
\end{equation}
In other words, $w_{n+1, 2}$ begins with $(w_{n,1}w_{n,2})^2$ and ends with $w_{n,1}^2$, with $w_{n,1}^2$ occurring in $w_{n+1,2}$ at positions $2m_nd_n$ and $6m_nd_n$, and $w_{n,2}^2$ occurring at position $4m_nd_n$.  

This finishes the inductive definition of the words $w_{n,1}, w_{n,2}$, and therefore also of the subshift $(X, S)$. It is natural to translate this definition into the definition of a directive sequence $\boldsymbol{\tau}=(\tau_n\colon A_{n+1}^*\to A_n^*)_{n\geq 0}$, where $A_0=\mathsf{A}$ and $A_{n+1}=\{1,2\}$ for each $n\in\mathbb{N}$. For example, for $n\geq 1$ we have
$$ \tau_n(1)=1112121212\tilde{\alpha}11\tilde{\beta}11 \mbox{ and } \tau_n(2)=1212\tilde{\gamma}11\tilde{\delta}22\tilde{\eta}11\tilde{\lambda}11 $$
for some suitable $\tilde{\alpha}, \tilde{\beta}, \tilde{\gamma},\tilde{\delta}, \tilde{\eta}, \tilde{\lambda}\in \{1, 2\}^*$. It is clear that $w_{n,j}=\tau_{[0,n)}(j)$ for any $n\in\mathbb{N}$ and $j=1,2$,  and $X_{\boldsymbol{\tau}}=X$. 

It is clear that $\boldsymbol{\tau}$ is primitive and proper.  By \cite[Theorem 4.6]{BSTY19}, $\boldsymbol{\tau}$ is recognizable. It follows from Proposition~\ref{prop:equiv} (ii) that $(X, S)$ has topological rank $2$.

\begin{claim} $(X,S)$ is an aperiodic Toeplitz subshift.
\end{claim}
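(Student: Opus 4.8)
The plan is to verify the two assertions separately: that $(X,S)$ is aperiodic (equivalently, infinite, since it is minimal) and that it contains a Toeplitz sequence.

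For aperiodicity, I would first note that $(X,S)$ is minimal because $\boldsymbol{\tau}$ is primitive, so it suffices to show $X$ is infinite. Let $u_n$ denote the maximal common prefix of $w_{n,1}$ and $w_{n,2}$. Since $w_{n,1}$ begins with $w_{n-1,1}^2$ while $w_{n,2}$ begins with $(w_{n-1,1}w_{n-1,2})^2$, the two words agree on their first $|w_{n-1,1}|=d_{n-1}$ symbols and then differ; hence $|u_n|\geq d_{n-1}\to\infty$. As both $w_{n,1}$ and $w_{n,2}$ occur in $X$, each $u_n$ is a right-special factor of the language of $X$. A periodic (finite) minimal subshift has right-special factors of bounded length only, so the existence of arbitrarily long right-special factors forces $X$ to be infinite, i.e. aperiodic.

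For the Toeplitz property, I would exhibit a single Toeplitz sequence $x\in X$ and invoke minimality to conclude $X=\overline{\mathcal{O}(x)}$. Since $w_{n,1}$ is simultaneously a prefix and a suffix of $w_{n+1,1}$, the requirements $x[0,|w_{n,1}|)=w_{n,1}$ and $x[-|w_{n,1}|,0)=w_{n,1}$ are consistent across $n$ and define a point $x\in X$ for which position $0$ is a block boundary at every level. The key computation is
$$ d_{n+1}=\gcd(|w_{n+1,1}|,|w_{n+1,2}|)=\gcd(4m_nd_n,8m_nd_n)=4m_nd_n=|w_{n+1,1}|, $$
so that the two level-$(n+1)$ block lengths $|w_{n+1,1}|=d_{n+1}$ and $|w_{n+1,2}|=2d_{n+1}$ are both multiples of $d_{n+1}$; consequently, by recognizability the building of $x$ from $\{w_{n+1,1},w_{n+1,2}\}$ is unique and all of its block boundaries lie in $d_{n+1}\mathbb{Z}$.

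The crucial structural claim is that, in fact, \emph{every} position in $d_{n+1}\mathbb{Z}$ is the starting position of a level-$n$ block of $x$. Indeed, a position $i\in d_{n+1}\mathbb{Z}$ either is a level-$(n+1)$ boundary, hence a level-$n$ boundary, or is the midpoint of a $w_{n+1,2}$-block, i.e. sits at offset $d_{n+1}=4m_nd_n$ inside such a block; but the construction places an occurrence of $w_{n,2}^2$ at exactly position $4m_nd_n$ of $w_{n+1,2}$, so $i$ is again a level-$n$ block start. Granting this, fix $\ell$ and choose $n$ so large that the common prefix of $w_{n,1},w_{n,2}$ (of length $\geq d_{n-1}$) and their common suffix (of length $\geq|w_{n-1,1}^2|=2d_{n-1}$, since both words end in $w_{n-1,1}^2$) each have length at least $\ell$. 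With $p=d_{n+1}$, every $i\in p\mathbb{Z}$ is a level-$n$ block start, so the level-$n$ word beginning at $i$ starts with this common prefix and the level-$n$ word ending at $i$ ends with this common suffix. Comparing with $x[0,\ell)$ and $x[-\ell,0)$ gives $x[i,i+\ell)=x[0,\ell)$ and $x[i-\ell,i)=x[-\ell,0)$ for all $i\in p\mathbb{Z}$, whence $\{-\ell,\dots,\ell-1\}\subseteq{\rm Per}_p(x)$. Letting $\ell\to\infty$ yields ${\rm Per}(x)=\mathbb{Z}$, so $x$ is a Toeplitz sequence and $(X,S)$ is an aperiodic Toeplitz subshift.

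I expect the main obstacle to be the structural claim that every element of $d_{n+1}\mathbb{Z}$ is a level-$n$ block start, together with the correct pairing of the period $d_{n+1}$ with the common prefix and suffix of the level-$n$ words $w_{n,1},w_{n,2}$ (and not with the level-$(n+1)$ words). The subtle point is that a multiple of $d_{n+1}$ can fall at the midpoint of a longer block $w_{n+1,2}$, and recognizing such a midpoint as a genuine level-$n$ block start is exactly where the prescribed placement of $w_{n,2}^2$ at coordinate $4m_nd_n$ of $w_{n+1,2}$ enters; once this is established, the growing common prefixes and suffixes of $w_{n,1},w_{n,2}$ do the rest. Note that the filler words $\alpha_n,\beta_n,\gamma_n,\delta_n,\eta_n,\lambda_n$ play no role in this part of the argument.
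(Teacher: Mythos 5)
Your proof is correct, but both halves run along genuinely different lines from the paper's. For the Toeplitz part, the paper builds a Toeplitz sequence $x$ anchored at the drifting positions $-\ell_n=-\sum_{k<n}2m_kd_k$, exploiting the occurrences of $w_{n,1}$ at position $2m_nd_n$ in $w_{n+1,1}$ and at positions $2m_nd_n$, $6m_nd_n$ in $w_{n+1,2}$, which immediately gives $[-\ell_n,-\ell_n+|w_{n,1}|)\subseteq{\rm Per}_{4m_nd_n}(x)$; you instead take the two-sided fixed-point-like sequence with a block boundary at $0$ at every level, and obtain the period $p=d_{n+1}=4m_nd_n$ from the common prefix and suffix of the level-$n$ words together with your structural claim that every element of $d_{n+1}\mathbb{Z}$ is a level-$n$ block start. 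Your structural claim is correct, and it is exactly where the designed occurrence of $w_{n,2}^2$ at position $4m_nd_n$ of $w_{n+1,2}$ enters — the same deliberate alignment the paper exploits, used at the block level rather than through explicit $w_{n,1}$-occurrences. For aperiodicity, the paper argues that a period of $x$ would force some $4m_nd_n$ to be a period, contradicting the structure of $w_{n+1,2}$; your right-special-factor argument (arbitrarily long right-special words via the common prefixes $u_n$, whereas a periodic minimal subshift of least period $p$ has no right-special factor of length $\geq p$, since the cyclic shifts of a primitive word are pairwise distinct) is more elementary and self-contained, and has the advantage of not depending on the particular Toeplitz point constructed.

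Two small blemishes, neither a real gap. First, your parenthetical that $w_{n,1}$ and $w_{n,2}$ ``agree on their first $d_{n-1}$ symbols and then differ'' is inaccurate as to where they differ: they agree for $q_n=\sum_{k<n}d_k>d_{n-1}$ symbols and first differ at position $q_n$. What your right-special argument actually needs is that neither word is a prefix of the other (so that $u_n$ admits two distinct one-letter extensions in the language); this requires the short induction that the paper carries out in its subsequent claim (showing $q_n<d_n=|w_{n,1}|$ is the exact length of the maximal common prefix), and you should include it. Second, recognizability is not needed to locate the block boundaries of your $x$ in $d_{n+1}\mathbb{Z}$: the anchored building with a boundary at $0$, whose block lengths $d_{n+1}$ and $2d_{n+1}$ are multiples of $d_{n+1}$, already suffices, and since you only need the existence of one such building, the appeal to uniqueness is harmless but superfluous.
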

\begin{proof} Since $\boldsymbol{\tau}$ is primitive, $(X, S)$ is minimal. We only need to construct an aperiodic Toeplitz sequence in $X$. Recall that for every $n\in\mathbb{N}$, $w_{n+1,1}[2m_nd_n,2m_nd_n+|w_{n,1}|)=w_{n,1}$. Let $\ell_0=0$ and $\ell_n=\sum_{0\le k< n}2m_kd_k$ for $n\ge1$. Let $x\in\mathsf{A}^{\mathbb{Z}}$ be such that $x[-\ell_n,-\ell_n+|w_{n,1}|)=w_{n,1}$ for every $n\in\mathbb{N}$. By the definition of $(w_{n,j})_{n\in\mathbb{N},1\le j\le2}$, $x$ is well defined and $x\in X$. Since $w_{n,1}$ occurs in $w_{n+1,1}$ at position $2m_nd_n$, and $w_{n,1}$ occurs in $w_{n+1,2}$ at positions $2m_nd_n$ and $6m_nd_n$, we have that $$[-\ell_n,-\ell_n+|w_{n,1}|)\subseteq{\rm Per}_{4m_nd_n}(x).$$ Since $\ell_n\to +\infty$ and $|w_{n,1}|-\ell_n\to +\infty$ as $n\to +\infty$, we conclude that  $x$ is a Toeplitz sequence. To see that $x$ is aperiodic, we note that if it was periodic, then some $4m_nd_n$ would be a period of $x$,  but $w_{n+1,2}$ does not have such a period, a contradiction.
\end{proof}

\begin{claim}\label{cl:3}There is a centered left asymptotic pair $\{y,\tilde{y}\}$ in $(X,S)$ such that $(-\infty,0)\cap {\rm Aper}(y)\cap {\rm Aper}(\tilde{y})\neq\varnothing$.
\end{claim}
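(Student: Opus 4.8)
The plan is to use the ``canonical'' centered left asymptotic pair coming from the maximal common prefix of $w_{n,1}$ and $w_{n,2}$ (the same pair built in Lemma~\ref{lem:onepair}), and then to exploit the fact that $\boldsymbol{\tau}$ is \emph{not} of constant length in order to locate an aperiodic coordinate. Write $P_n=|w_{n,1}|$ and $Q_n=|w_{n,2}|$, so from the construction $P_{n+1}=4m_nd_n$, $Q_{n+1}=8m_nd_n=2P_{n+1}$, and $d_n=P_n$. Let $U_n$ be the maximal common prefix of $w_{n,1}$ and $w_{n,2}$, put $k_n=|U_n|$, and write $w_{n,j}=U_nv_{n,j}$. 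Using $w_{n+1,1}=w_{n,1}^2\cdots$ and $w_{n+1,2}=w_{n,1}w_{n,2}\cdots$ one gets $U_{n+1}=w_{n,1}U_n$, hence $U_n=w_{n-1,1}\cdots w_{0,1}$, $k_{n+1}=k_n+P_n$, and $k_n<P_n$. I define $y,\tilde y\in\mathsf{A}^\mathbb{Z}$ by requiring $y[-k_n,P_n-k_n)=w_{n,1}$ and $\tilde y[-k_n,Q_n-k_n)=w_{n,2}$ for all $n$. Since $w_{n,1}$ occurs at position $P_n$ in $w_{n+1,1}$ and $w_{n,2}$ occurs at position $P_n$ in $w_{n+1,2}$, these occurrences are nested, so $y,\tilde y$ are well-defined points of $X$. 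As the block at $[-k_{n+1},-k_n)$ is $w_{n,1}$ for both, I obtain $y(-\infty,0)=\tilde y(-\infty,0)=\cdots w_{2,1}w_{1,1}w_{0,1}$, while a short downward induction gives $y(0)=v_{n,1}(0)=0$ and $\tilde y(0)=v_{n,2}(0)=1$. Thus $\{y,\tilde y\}$ is a centered left asymptotic pair.

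The heart of the argument is to show $-1\in{\rm Aper}(y)\cap{\rm Aper}(\tilde y)$. First I reduce aperiodicity of $-1$ to non-periodicity at the scales $P_N$. By Lemma~\ref{gcd} the scale of $X$ is $\mbox{\rm lcm}(P_n)$; choosing the $m_n$ in the construction additionally so that $n!\mid m_n$ (still compatible with $m_n\gg|w_{n,1}|+|w_{n,1}|$) guarantees that every positive integer divides $P_N$ for all large $N$. Since ${\rm Per}_p(y)\subseteq{\rm Per}_{P_N}(y)$ whenever $p\mid P_N$, it then suffices to prove $-1\notin{\rm Per}_{P_N}(y)$ for all $N$. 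I exhibit an explicit witness by reading off the level-$N$ building of $y$: the current block is $w_{N,1}$ at $[-k_N,P_N-k_N)$, and because this is the second block of the $w_{N+1,1}$ at $[-k_{N+1},\cdot)$, the next two level-$N$ blocks are $w_{N,1}$ at $[P_N-k_N,2P_N-k_N)$ and $w_{N,2}$ at $[2P_N-k_N,4P_N-k_N)$ (the latter of length $2P_N$). Hence the position $3P_N-1\equiv -1\ (\mathrm{mod}\ P_N)$ falls inside that $w_{N,2}$ block at offset $P_N+k_N-1$.

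Now I use the designed placement of $w_{n-1,2}^2$ at position $P_n=4m_{n-1}d_{n-1}$ inside $w_{n,2}$: since $k_n<2P_{n-1}=Q_{n-1}$, the offset $k_n-1$ lands inside the first factor of $w_{n-1,2}^2$, giving the recursion $w_{N,2}(P_N+k_N-1)=w_{N-1,2}(P_{N-1}+k_{N-1}-1)=\cdots=w_{1,2}(P_1)=w_{0,2}(0)=1$. Therefore $y(3P_N-1)=1\neq 0=y(-1)$, so $-1$ is a $P_N$-hole of $y$ for every $N$ and $-1\in{\rm Aper}(y)$. The same computation applies to $\tilde y$, whose current block $w_{N,2}$ at $[-k_N,2P_N-k_N)$ already contains position $P_N-1$ at offset $P_N+k_N-1$, so $\tilde y(P_N-1)=1\neq 0=\tilde y(-1)$ and $-1\in{\rm Aper}(\tilde y)$. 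This yields $-1\in(-\infty,0)\cap{\rm Aper}(y)\cap{\rm Aper}(\tilde y)$, as required.

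The main obstacle, and exactly the point where the non-constant-length design is indispensable, is this witness computation: a shift of $-1$ by the period $P_N$ reaches the \emph{aperiodic} part of the word precisely because the blocks $w_{N,2}$ have length $2P_N\neq P_N$, so a $P_N$-shift does not move block-to-block but instead penetrates the branching region $v_{N,2}$; this is what fails in the constant-length setting of Lemma~\ref{lem:onepair}, where the analogous pair is forced to be periodic on $(-\infty,0)$. I would therefore spend most of the care on verifying (a) the nesting and the exact positions of the level-$N$ building of $y$ and $\tilde y$ around $0$, and (b) the downward recursion for $w_{N,2}(P_N+k_N-1)$, confirming at each step that the relevant offset remains within the first copy of $w_{n-1,2}^2$.
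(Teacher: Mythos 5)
Your proposal is correct, and its skeleton coincides with the paper's: you build exactly the same pair $\{y,\tilde y\}$ (your $k_n$ is the paper's $q_n=\sum_{j<n}d_j$, and your identity $U_{n+1}=w_{n,1}U_n$ is the paper's induction showing $q_n$ is the length of the maximal common prefix), and your downward recursion $w_{N,2}(P_N+k_N-1)=w_{N-1,2}(P_{N-1}+k_{N-1}-1)=\cdots=w_{0,2}(0)=1$, driven by the placement of $w_{n-1,2}^2$ at position $P_n$ in $w_{n,2}$, is precisely the paper's displayed inequality (3.4), $w_{n,1}(q_n-1)\neq w_{n,2}(d_n+q_n-1)$, proved with explicit values carried along. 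You do, however, deviate at two technical junctures, and both deviations are sound. First, where the paper invokes Lemma~\ref{gcd} (the scale of $X$ is ${\rm lcm}(d_n)$) to reduce aperiodicity at $-1$ to the periods $d_n=P_n$, you instead further specialize the free parameter $m_n$ by requiring $n!\mid m_n$, so that every integer divides $P_N$ for large $N$ and ${\rm Per}_p\subseteq{\rm Per}_{P_N}$ becomes trivial; this is a fully elementary reduction, but note it proves the claim only for this specialized choice of $m_n$ rather than for every admissible construction as in the paper --- harmless here, since Theorem~\ref{thm:char} is existential and the other parts of the proof (rank $2$, Toeplitz-ness, Lemma~\ref{lem:onepair}) are insensitive to the extra divisibility. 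Second, where the paper uses recognizability of $\boldsymbol{\tau}$ to know that the unique building of $y$ from $\{w_{N,1},w_{N,2}\}$ is aligned with the defining occurrence and, by aperiodicity, contains some $w_{N,2}$-block, you avoid recognizability altogether by reading off an explicit $w_{N,2}$-block adjacent to the defining occurrence (the fourth level-$N$ block of the $w_{N+1,1}$ at $[-k_{N+1},\cdot)$ for $y$, and the defining block itself for $\tilde y$), yielding the concrete witnesses $y(3P_N-1)=1\neq 0=y(-1)$ and $\tilde y(P_N-1)=1\neq 0=\tilde y(-1)$; I verified the offsets ($3P_N-1$ sits at offset $P_N+k_N-1$ in that block, using $1\le k_N<P_N$, and $k_n-1<Q_{n-1}$ keeps the recursion in the first factor of $w_{n-1,2}^2$ at every step), and they check out. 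The explicit-block variant is arguably a small improvement, as it makes the witness computation independent of \cite[Theorem 4.6]{BSTY19}; the parameter specialization is the one point you should flag explicitly if you want the claim verbatim for the paper's construction, where the appeal to Lemma~\ref{gcd} is the way to avoid it.
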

\begin{proof}
By the definition of $(w_{n,j})_{n\in\mathbb{N},1\le j\le2}$ we have $|w_{0,1}|=|w_{0,2}|=1$ and $|w_{n,2}|=2|w_{n,1}|$ for $n\ge1$. Recall that $d_n=\mbox{\rm gcd}(|w_{n,2}|,|w_{n,1}|)$, thus we have $d_n=|w_{n,1}|$ for $n\in\mathbb{N}$. By our construction, for every $n\in\mathbb{N}$ and $j=1,2$, there is an occurrence of $w_{n,j}$ in $w_{n+1,j}$ at position $d_n$.  Let $q_0=0$ and  $q_n=\sum_{0\le k<n}d_k$ for $n\geq 1$. It is straightforward to check by induction on $n$ that $q_n$ is the length of the maximal common prefix of $w_{n,1}$ and $w_{n,2}$ for every $n\in\mathbb{N}$. Let $y,\tilde{y}\in X$ be such that $y[-q_n,-q_n+|w_{n,1}|)=w_{n,1}$ and $\tilde{y}[-q_n,-q_n+|w_{n,2}|)=w_{n,2}$ for every $n\in\mathbb{N}$. By the definition of $(w_{n,j})_{n\in\mathbb{N},1\le j\le2}$, $y$ and $\tilde{y}$ are both well defined. We have $y,\tilde{y}\in X$. Moreover, for every $n\in\mathbb{N}$, $y[-q_n,0)=\tilde{y}[-q_n,0)$ is the maximal common prefix of $w_{n,1}$ and $w_{n,2}$, so $y(-\infty,0)=\tilde{y}(-\infty,0)$. Now $y(0)=0$ and $\tilde{y}(0)=1$. Thus $(y,\tilde{y})$ is a left asymptotic pair.

Before further discussions, we show that for any $n\geq 1$,
\begin{equation}\label{2.4}
w_{n,1}(q_n-1)\ne w_{n,2}(d_n+q_n-1).
\end{equation}
This is proved by induction on $n$. When $n=1$, from the definition we have $w_{1,1}(q_1-1)=w_{1,1}(0)\ne  w_{1,2}(d_1)=w_{1,2}(d_1+q_1-1)$. In general, suppose (\ref{2.4}) holds for some $n\ge1$. The word $w_{n+1,1}$ begins with $w^2_{n,1}$, so $w_{n+1,1}(q_{n+1}-1)=w_{n,1}(q_n-1)$. Recall that $w_{n,2}$ occurs in $w_{n+1,2}$ at position $d_{n+1}$. So we have 
$$w_{n+1,2}(d_{n+1}+q_{n+1}-1)=w_{n,2}(q_{n+1}-1)=w_{n,2}(d_n+q_n-1). $$
By the induction hypothesis, we conclude that (\ref{2.4}) holds for $n+1$. 

We claim that $-1\in{\rm Aper}(y)\cap {\rm Aper}(\tilde{y})$. By Lemma \ref{gcd}, we just need to show that for every $n\in\mathbb{N}$, $-1\notin{\rm Per}_{d_n}(y)\cup {\rm Per}_{d_n}(\tilde{y})$. Fix $n\geq 1$. By the recognizability of $\boldsymbol{\tau}$, $y$ has a unique building from $\{w_{n,1}, w_{n,2}\}$. In fact, the occurrence of $w_{n,1}$ in $y$ at position $-q_n$ is a part of this unique building of $y$. It follows that there is $k\in\mathbb{N}$ such that $y(-1)=w_{n,1}(q_n-1)$ and $y(kd_n-1)=w_{n,2}(d_n+q_n-1)$. By (\ref{2.4}), $y(-1)\neq y(kd_n-1)$, and thus $-1\notin{\rm Per}_{d_n}(y)$. Similarly, $-1\notin{\rm Per}_{d_n}(\tilde{y})$. 
\end{proof}

Now by Lemma~\ref{lem:onepair} and Claim~\ref{cl:3}, $(X,S)$ is not a strong Toeplitz subshift of rank 2. The proof of Theorem~\ref{thm:char} is complete.

\section{Strong Toeplitz Subshifts of Rank 2\label{sec:4}}

In this section we study strong Toeplitz subshifts of rank $2$.  In Subsection{sec:4.1} we give some equivalent formulations of strong Toeplitz subshifts of rank $2$, and then in Subsection{sec:4.2} we use them to show that the class of all strong Toeplitz subshifts of rank $2$ is generic in the space of all infinite minimal subshifts. 

\subsection{Strong rank-2 cuts\label{sec:4.1}}

We will work with the following notion.

\begin{definition}
For an aperiodic $x\in \mathsf{A}^\mathbb{Z}$, a \textbf{strong rank-2 cut} of $x$ is a pair $(p,t)\in\mathbb{N}^2$, where $0\le t<p$, such that the set $\left\{x[t+kp,t+(k+1)p)\colon k\in\mathbb{Z}\right\}$ has exactly two elements.
\end{definition}

\begin{lemma}\label{lem:cut} Let $\boldsymbol{\tau}=(\tau_n\colon A^*_{n+1}\to A^*_n)_{n\geq 0}$ be a constant-length directive sequence with $|A_{n+1}|=2$ for all $n\in\mathbb{N}$. Suppose $X_{\boldsymbol{\tau}}$ is aperiodic. For any $n\in\mathbb{N}$, let $p_n=|\tau_{[0,n+1)}(a)|$ for any $a\in A_{n+1}$. Then for any $x\in X_{\boldsymbol{\tau}}$ and any $n\in\mathbb{N}$, there is a strong rank-2 cut $(p_n, t)$ of $x$ such that $\{x[t+kp_n, t+(k+1)p_n)\colon k\in\mathbb{Z}\}=\{\tau_{[0,n+1)}(a)\colon a\in A_{n+1}\}$. 
\end{lemma}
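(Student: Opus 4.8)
The plan is to prove that $x$ factorizes as a bi-infinite concatenation of the two words $W_1=\tau_{[0,n+1)}(1)$ and $W_2=\tau_{[0,n+1)}(2)$ (assuming without loss of generality that $A_{n+1}=\{1,2\}$), both of length $p_n$, and then to use aperiodicity to see that both words genuinely occur. Once this is done, the phase $t$ of the factorization gives the desired strong rank-$2$ cut $(p_n,t)$, since the set of length-$p_n$ blocks read off at positions $\equiv t\pmod{p_n}$ is exactly $\{W_1,W_2\}$.

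The core is a compactness/pigeonhole argument. For each $m\ge 1$ I would look at the central window $x[-mp_n,mp_n)$, which by the definition of $X_{\boldsymbol\tau}$ is a subword of some $\tau_{[0,N)}(a)$ with $a\in A_N$ and $N>0$. Because $\boldsymbol\tau$ has constant length, the length $|\tau_{[0,N)}(a)|$ is independent of $a$; moreover, as the lengths $p_k=|\tau_{[0,k+1)}(a)|$ are non-decreasing in $k$ (each $\tau_i$ being non-erasing) and this length must be at least the window length $2mp_n>p_n$, I am forced to have $N\ge n+1$. Hence $\tau_{[0,N)}(a)=\tau_{[0,n+1)}\bigl(\tau_{[n+1,N)}(a)\bigr)=W_{b_1}\cdots W_{b_L}$ is a concatenation of copies of $W_1,W_2$ whose block boundaries sit at the multiples of $p_n$. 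Locating the window at position $j$ inside this concatenation and setting $t_m=(-j)\bmod p_n$, a direct position count shows that the block boundaries falling inside the window are exactly the $x$-positions congruent to $t_m$ modulo $p_n$, and that each fully contained block equals $W_1$ or $W_2$. Since there are only $p_n$ possible phases, some value $t\in\{0,\dots,p_n-1\}$ is realized by infinitely many $m$; letting $m\to\infty$ along such $m$, every block eventually lies within a sufficiently large window, so $x[t+kp_n,t+(k+1)p_n)\in\{W_1,W_2\}$ for all $k\in\mathbb{Z}$.

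It remains to rule out a degenerate cut. If the block set $\{x[t+kp_n,t+(k+1)p_n)\colon k\in\mathbb{Z}\}$ were a singleton, then $x$ would be periodic with period $p_n$, contradicting the hypothesis that $X_{\boldsymbol\tau}$ is aperiodic (so that every point of it, including $x$, is aperiodic); in particular this also forces $W_1\ne W_2$. Being a nonempty subset of the two-element set $\{W_1,W_2\}$ that is not a singleton, the block set equals $\{W_1,W_2\}$ and has exactly two elements; thus $(p_n,t)$ is a strong rank-$2$ cut with $\{x[t+kp_n,t+(k+1)p_n)\colon k\in\mathbb{Z}\}=\{\tau_{[0,n+1)}(a)\colon a\in A_{n+1}\}$, as required. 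I expect the only delicate step to be the phase bookkeeping in the second paragraph — carefully relating the position $j$ of the window inside $\tau_{[0,N)}(a)$ to the $x$-positions of the block boundaries, and checking that the blocks at phase $t$ fully contained in the window really are the factorization blocks $W_{b_i}$. Everything else follows directly from the constant-length and aperiodicity hypotheses.
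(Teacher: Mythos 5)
Your proof is correct and follows essentially the same route as the paper's: decompose a large central window of $x$ as a subword of some $\tau_{[0,N)}(a)$ with $N>n$, read off the phase of the block boundaries modulo $p_n$, apply pigeonhole to stabilize the phase $t$ along infinitely many window sizes, and then invoke aperiodicity to rule out a singleton block set. The only difference is cosmetic: you explicitly justify $N\ge n+1$ via the constant-length/non-erasing length count, a point the paper's proof asserts without comment.
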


\begin{proof} Fix $x\in X_{\boldsymbol{\tau}}$ and $n\in\mathbb{N}$. For any $m\in\mathbb{N}$, $x[-m,m)$ is a subword of some $\tau_{[0,N_m)}(a)$ for some $N_m> n$ and $a\in A_{N_m}$; in particular, $x[-m, m)$ is a subword of some $\tau_{[0,n+1)}(u_m)$ for some $u_m\in A_{n+1}^*$. Let $w_m=\tau_{[0,n+1)}(u_m)$ and let $0\leq i_m<|w_m|$ be such that $w_m[i_m-m, i_m+m)=x[-m,m)$. Let $0\leq s_m<p_n$ be unique such that $p_n$ divides $i_m+s_m$. 

Since $0\leq s_m<p_n$ for all $m\in\mathbb{N}$, we have that for an infinite set $M\subseteq \mathbb{N}$, $s_m=s_{m'}$ for all $m, m'\in M$. Without loss of generality, we may assume $M=\mathbb{N}$. Let $t=s_m$ for any $m\in M=\mathbb{N}$. Then it is easy to see that for any $k\in\mathbb{Z}$, $x[t+kp_n, t+(k+1)p_n)\in \{\tau_{[0,n+1)}(a)\colon a\in A_{n+1}\}$. Since $|A_{n+1}|=2$, we have that $\{x[t+kp_n, t+(k+1)p_n)\colon k\in\mathbb{Z}\}$ also has exactly two elements, since otherwise $x$ is periodic with period $p$. Thus $(p_n, t)$ is a strong rank-$2$ cut of $x$ with the desired property.
\end{proof}

\begin{lemma}\label{coin1}
Let $x\in \mathsf{A}^\mathbb{Z}$ be aperiodic and let $0\le t<t'<p$. Suppose $(p,t)$ is a strong rank-2 cut of $x$. Then $(p,t')$ is also a strong rank-2 cut of $x$ if and only if $[t,t')$ or $[t',t+p)$ is a subset of ${\rm Per}_p(x)$.
\end{lemma}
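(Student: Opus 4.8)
The plan is to reduce everything to a comparison of the two block-words determined by the cut $(p,t)$. Write $\{W_1,W_2\}=\{x[t+kp,t+(k+1)p)\colon k\in\mathbb{Z}\}$ for the two length-$p$ words of $(p,t)$, so $W_1\neq W_2$ and both occur, and let $b\in\{1,2\}^\mathbb{Z}$ be the associated block-type sequence, i.e. $x[t+kp,t+(k+1)p)=W_{b_k}$. The first thing I would record is a dictionary between periodicity of positions and agreement of the two words: for an offset $0\le m<p$, since both block types occur as $k$ ranges over $\mathbb{Z}$, a position of offset $m$ lies in ${\rm Per}_p(x)$ if and only if $W_1(m)=W_2(m)$. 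Setting $\ell=t'-t\in(0,p)$, this turns the right-hand side of the lemma into purely combinatorial statements: $[t,t')\subseteq{\rm Per}_p(x)$ holds iff $W_1[0,\ell)=W_2[0,\ell)$ (the length-$\ell$ prefixes agree), and $[t',t+p)\subseteq{\rm Per}_p(x)$ holds iff $W_1[\ell,p)=W_2[\ell,p)$ (the complementary suffixes agree).

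Next I would describe the $t'$-blocks. Each word $x[t'+kp,t'+(k+1)p)$ straddles two consecutive $t$-blocks and factors as $S_{b_k}P_{b_{k+1}}$, where $P_i=W_i[0,\ell)$ and $S_i=W_i[\ell,p)$ are the length-$\ell$ prefix and length-$(p-\ell)$ suffix of $W_i$. The backward ($\Leftarrow$) direction is then immediate. If the prefixes agree, $P_1=P_2=:P$, then $W_1\neq W_2$ forces $S_1\neq S_2$, every $t'$-block equals $S_{b_k}P$, and since both block types occur both $S_1P$ and $S_2P$ appear; hence $(p,t')$ has exactly two blocks. The case where the suffixes agree is symmetric, using that every block type appears as some $b_{k+1}$. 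So in either case $(p,t')$ is a strong rank-2 cut.

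For the forward direction I would argue the contrapositive: assume $P_1\neq P_2$ and $S_1\neq S_2$, and show $(p,t')$ has at least three distinct blocks. Under this assumption the four words $S_iP_j$ are pairwise distinct (the first $p-\ell$ letters recover $i$, the last $\ell$ letters recover $j$), so the number of distinct $t'$-blocks equals the number $|C|$ of ordered pairs $(i,j)$ occurring as consecutive types $(b_k,b_{k+1})$. The crux is thus to prove $|C|\geq 3$. Here \emph{aperiodicity} enters: if $b$ were periodic with period $q$ then $x$ would be periodic with period $qp$, so $b$ is an aperiodic bi-infinite word over $\{1,2\}$ in which both letters occur. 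A short case analysis over the at most four possible pairs then gives $|C|\geq 3$: every configuration with $|C|\le 2$ either has a letter with no successor or no predecessor (impossible in a bi-infinite word using both letters), or equals $\{(1,1),(2,2)\}$ (forcing $b$ constant) or $\{(1,2),(2,1)\}$ (forcing $b$ to alternate, hence periodic). So $|C|\geq 3$, giving at least three $t'$-blocks and hence $(p,t')$ is not a strong rank-2 cut. Finally, since $x$ is aperiodic the set of $t'$-blocks always has at least two elements, so being a strong rank-2 cut is equivalent to having exactly two blocks, and the two directions combine into the stated equivalence.

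The main obstacle is the last step, namely ruling out $|C|=2$ and in particular excluding the alternating pattern $\cdots 1212\cdots$. This is exactly the point where the aperiodicity hypothesis is indispensable: the alternating block pattern would otherwise make $(p,t')$ a genuine rank-2 cut with neither prefixes nor suffixes agreeing, and the only thing preventing this is that such an $x$ is periodic.
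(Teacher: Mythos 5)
Your proposal is correct and follows essentially the same route as the paper: the same prefix/suffix decomposition of the $t'$-blocks as $S_{b_k}P_{b_{k+1}}$, the same translation of the periodicity hypotheses into agreement of prefixes/suffixes of the two $t$-blocks, and the same contrapositive argument showing that when both prefixes and suffixes differ, the four concatenations are distinct and at least three consecutive-type pairs must occur. The only difference is that you spell out the case analysis ruling out $|C|\le 2$ (constant or alternating type sequence contradicting aperiodicity), which the paper compresses to ``it is easy to see'' here and only writes out in the proof of the subsequent, more general lemma.
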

\begin{proof} Denote $\left\{x[t+kp,t+(k+1)p):k\in\mathbb{Z}\right\}$ by $W_t$ and its two elements by $w_0$ and $w_1$. Similarly denote $\left\{x[t'+kp,t'+(k+1)p)\colon k\in\mathbb{Z}\right\}$ by $W_{t'}$. Then 
$$ W_{t'}\subseteq\left\{w_i[t'-t,p)w_j[0,t'-t)\colon 0\le i,j\le1\right\}.$$ 

($\Leftarrow$) First assume $[t,t')\subseteq{\rm Per}_p(x)$. Then $w_0[0,t'-t)=w_1[0,t'-t)$, and
$$W_{t'}\subseteq\left\{w_0[t'-t,p)w_0[0,t'-t),w_1[t'-t,p)w_0[0,t'-t)\right\};$$ 
thus $W_{t'}$ has at most two elements. Since $x$ is aperiodic, $W_{t'}$ has at least two elements, and thus $(p,t')$ is a strong rank-2 cut of $x$. The argument for $[t',t+p)\subseteq{\rm Per}_p(x)$ is similar.

($\Rightarrow$) We prove the contrapositive. Assume neither $[t,t')$ nor $[t',t+p)$ is a subset of ${\rm Per}_p(x)$. We have 
$$w_0[0,t'-t)\ne w_1[0,t'-t) \mbox{ and } w_0[t'-t,p)\ne w_1[t'-t,p). $$
So $\{w_i[t'-t,p)w_j[0,t'-t):0\le i,j\le1\}$ has exactly four elements. We observe that for $0\le i,j\le1$, $w_i[t'-t,p)w_j[0,t'-t)\in W_{t'} $ if and only if there is $k\in\mathbb{Z}$ such that
$$x[t+kp,t+(k+1)p)=w_i \mbox{ and } x[t+(k+1)p,t+(k+2)p)=w_j. $$
Since $x$ is aperiodic, it is easy to see that for at least 3 distinct pairs $(i,j)\in\{0,1\}^2$, there is $k\in\mathbb{Z}$ such that $x[t+kp,t+(k+1)p)=w_i$ and $x[t+(k+1)p,t+(k+2)p)=w_j$. Then $W_{t'}$ has at least 3 elements, and $(p,t')$ is not a strong rank-2 cut of $x$.
\end{proof}

\begin{definition} Let $x\in \mathsf{A}^\mathbb{Z}$ be aperiodic. Let $p\leq q$ be postive integers. Suppose $(p, t)$ and $(q, s)$ be two strong rank-2 cuts of $x$. We say that $(p,t)$ and $(q,s)$ \textbf{coincide} if $p$ divides $q$ and for the unique $k\in\mathbb{N}$ such that $t+kp\le s<t+(k+1)p$, we have $[t+kp,s)$ or $[s,t+(k+1)p)$ is a subset of ${\rm Per}_q(x)$. 
\end{definition}

With this definition, Lemma~\ref{coin1} can be restated as: if $(p, t)$ and $(p, s)$ are both strong rank-$2$ cuts of $x$, then they must coincide. The following lemma is a generalization of Lemma~\ref{coin1}. 

\begin{lemma}\label{coin2}
Let $x\in \mathsf{A}^\mathbb{Z}$ be aperiodic. Suppose $(p,t)$ and $(q, s)$ are two strong rank-2 cuts of $x$. If $p$ divides $q$, then $(p,t)$ and $(q,s)$ coincide.
\end{lemma}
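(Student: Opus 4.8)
The plan is to reduce the coincidence of $(p,t)$ and $(q,s)$ to a purely combinatorial statement about the two length-$q$ words of the $(q,s)$-cut, and then to prove that statement by showing that its failure forces periodicity. First I would fix notation: write $q=mp$ with $m\geq 1$, let $w_0\neq w_1$ be the two length-$p$ words appearing in the $(p,t)$-cut and $W_0\neq W_1$ the two length-$q$ words appearing in the $(q,s)$-cut (distinctness in both cases comes from the cuts having rank exactly $2$), set $r=(s-t)\bmod p\in[0,p)$, let $c_-$ be the largest $(p,t)$-cut point with $c_-\leq s$ (so $s-c_-=r$, matching the index $k$ in the definition of coincidence) and $c_+=c_-+p$. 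I would also record the two binary codings $\xi,\Xi\in\{0,1\}^{\mathbb{Z}}$ defined by $x[t+kp,t+(k+1)p)=w_{\xi(k)}$ and $x[s+iq,s+(i+1)q)=W_{\Xi(i)}$; both are surjective onto $\{0,1\}$ since each cut has rank exactly $2$.

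Next I would translate the coincidence condition into block language. Using that both values of $\Xi$ occur, a direct inspection of which $(q,s)$-block a given position falls into shows that $[s,c_+)\subseteq{\rm Per}_q(x)$ if and only if $W_0$ and $W_1$ agree on their first $p-r$ coordinates (call this condition $Q$), and $[c_-,s)\subseteq{\rm Per}_q(x)$ if and only if $W_0$ and $W_1$ agree on their last $r$ coordinates (call this condition $P$). Thus the lemma reduces to proving the disjunction $P\vee Q$; this also exhibits Lemma~\ref{coin1} as the case $m=1$. The case $r=0$ is immediate, since then $P$ holds vacuously and $s=c_-$ is itself a $(p,t)$-cut point.

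The heart of the argument, and the step I expect to be the main obstacle, is proving $P\vee Q$ when $0<r<p$. The key observation is a straddling-block identity: for each $i$, the unique $(p,t)$-block that straddles the $(q,s)$-cut point $s+iq$ is the concatenation of the last $r$ letters of $W_{\Xi(i-1)}$ and the first $p-r$ letters of $W_{\Xi(i)}$, and since this length-$p$ word is a copy of $w_{\xi(k)}$ for the appropriate index $k$, it lies in the two-element set $\{w_0,w_1\}$. Assume $\neg P\wedge\neg Q$ toward a contradiction. Then $\neg P$ says that the last $r$ letters of a $W$-word determine its type, and $\neg Q$ says the first $p-r$ letters do; combined with the straddling-block identity, the single word $w_{\xi(k)}$ then determines the pair $(\Xi(i-1),\Xi(i))$. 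As that word takes only two values, the set of consecutive pairs occurring in $\Xi$ has at most two elements. A short de Bruijn-graph analysis — each letter of $\Xi$, being bi-infinite, has both a successor and a predecessor, which forces the allowed transitions to be given by a bijection of $\{0,1\}$ — then shows $\Xi$ is periodic of period $2$ (the constant case is excluded since both values occur), whence $x$ is periodic of period $2q$, contradicting aperiodicity. Establishing the straddling-block identity and the determination of the pair is the delicate part; the concluding de Bruijn step is routine.
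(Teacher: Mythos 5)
Your proof is correct and takes essentially the same approach as the paper's: the paper also argues by contradiction, turns non-coincidence into the fact that the two cut words are distinguished both by the segment before and the segment after the cut point, and uses the straddling $p$-blocks to constrain which $q$-blocks can be adjacent. Its bookkeeping assigns to each $q$-block the pair of straddling $p$-words (its ``type'') and runs the two-case analysis --- types $(0,0),(1,1)$ propagate to force period $q$, types $(0,1),(1,0)$ alternate to force period $2q$ --- which is precisely your identity-versus-transposition dichotomy phrased block-wise rather than junction-wise.
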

\begin{proof}
Toward a contradiction, assume that $(p, t)$ and $(q, s)$ are strong rank-$2$ cuts of $x$, $p$ divides $q$, and $(p,t)$ and $(q,s)$ do not coincide. Let $k_0\in\mathbb{N}$ be the unique integer such that $t+k_0p\le s<t+(k_0+1)p$. Let $\ell=s-(t+k_0p)$. Denote the set $\{x[t+kp,t+(k+1)p)\colon k\in\mathbb{Z}\}$ by $W_t$ and its two elements of by $w_0$ and $w_1$. For $k\in\mathbb{Z}$, let $u_k=x[s+kq, s+(k+1)q)$ and let $W_s=\{u_k\colon k\in\mathbb{Z}\}$. By our assumption, $W_s$ has exactly two elements.

Since $p$ divides $q$ and $(p,t)$ and $(q,s)$ do not coincide, we have that $w_0[0,\ell)\ne w_1[0,\ell)$ and $w_0[\ell,p)\ne w_1[\ell,p)$. To facilitate further discussions we make the following definition. For each $k\in\mathbb{Z}$ and $(i,j)\in\{0,1\}^2$, we say that $u_k$ has \textbf{type} $(i,j)$ if 
$$x[kq+s-\ell,kq+s-\ell+p)=w_i \mbox{ and }x[(k+1)q+s-\ell,(k+1)q+s-\ell+p)=w_j. $$ 
Note that if $(i,j)\neq (i',j')$, $u_k$ has type $(i,j)$ and $u_{k'}$ has type $(i',j')$, then $u_k\neq u_{k'}$. 
Since $(p,t)$ and $(q,s)$ do not coincide, we get $k, k'\in\mathbb{Z}$ such that $u_k$ has type $(0,j_0)$ for some $j_0\in\{0,1\}$ and $u_{k'}$ has type $(1,j_1)$ for some $j_1\in\{0,1\}$. Since $W_s$ has exactly two elements, there is a unique $j_0$ and a unique $j_1$ such that some $u_k\in W_s$ has type $(0,j_0)$ and some $u_k\in W_s$ has type $(1,j_1)$. By a similar argument we have that $j_0\neq j_1$, and hence $\{j_0,j_1\}=\{0,1\}$. We consider two cases.

Case 1: $W_s$ contains a word of type $(0,0)$ and a word of type $(1,1)$. Assume further that some $u_k$ has type $(0,0)$. Then it follows that $u_{k-1}$ and $u_{k+1}$ both have type $(0,0)$. By iterating, we get that every $u_k$ has type $(0,0)$, and thus $x$ is periodic with period $q$, a contradiction. Similarly, we get a contradiction if some $u_k$ has type $(1,1)$.

Case 2: $W_s$ contains a word of type $(0,1)$ and a word of type $(1,0)$. Note that if $u_k$ has type $(0,1)$, then $u_{k-1}$ and $u_{k+1}$ both have type $(1,0)$. Similarly, if some $u_k$ has type $(1,0)$, then $u_{k-1}$ and $u_{k+1}$ both have type $(0,1)$. In any case, it follows that $x$ is periodic with period $2q$, a contradiction. 
\end{proof}

Now we are ready to present some equivalent formulations for strong Toeplitz subshifts of rank $2$.

\begin{theorem}\label{thm:STS2}
Let $(X,S)$ be an aperiodic Toeplitz subshift with scale $\mathsf{u}$. Then the following are equivalent:
\begin{enumerate}
\item[\rm (1)] $(X,S)$ is a strong Toeplitz subshift of rank $2$;
\item[\rm (2)] For every $x\in X$, $p\,|\,\mathsf{u}$ and $m\in \mathbb{N}$, there is a strong rank-2 cut $(q,t)$ of $x$ such that $p\,|\,q\,|\,\mathsf{u}$ and the lengths of the maximal common prefix and suffix of the two elements in $\{x[t+kq,t+(k+1)q)\colon k\in\mathbb{Z}\}$ are greater than $m$;
\item[\rm (3)] There is $x\in X$ such that for every $p\,|\,\mathsf{u}$ and $m\in \mathbb{N}$, there is a strong rank-2 cut $(q,t)$ of $x$ such that $p\,|\,q\,|\,\mathsf{u}$ and the lengths of the maximal common prefix and suffix of the two elements in $\{x[t+kq,t+(k+1)q)\colon k\in\mathbb{Z}\}$ are greater than $m$.
\end{enumerate}
\end{theorem}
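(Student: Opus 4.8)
The plan is to prove the cycle $(1)\Rightarrow(2)\Rightarrow(3)\Rightarrow(1)$. The implication $(2)\Rightarrow(3)$ is immediate: since $X\neq\varnothing$, one specializes the universally quantified statement in (2) to any single $x\in X$.

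For $(1)\Rightarrow(2)$ I would first verify the conclusion for a subshift presented directly as $Y=X_{\boldsymbol{\tau}}$ with $\boldsymbol{\tau}=(\tau_n\colon A_{n+1}^*\to A_n^*)_{n\ge0}$ constant-length, primitive, proper and recognizable of alphabet rank $2$. After telescoping (which preserves all four properties) I may assume $|A_{n+1}|=2$ for every $n$; write $p_n=|\tau_{[0,n+1)}(a)|$ and $w_{n,j}=\tau_{[0,n+1)}(j)$. Lemma~\ref{lem:cut} supplies, for each $x$ and $n$, a strong rank-$2$ cut $(p_n,t)$ whose two words are exactly $w_{n,1},w_{n,2}$; properness of $\tau_n$ makes $\tau_n(1),\tau_n(2)$ share their first and last letters, and applying $\tau_{[0,n)}$ shows $w_{n,1},w_{n,2}$ share a common prefix and a common suffix of length $\ge p_{n-1}\to+\infty$. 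Since $\boldsymbol{\tau}$ has constant length, $d_n=\gcd(|w_{n,1}|,|w_{n,2}|)=p_n$, so Lemma~\ref{gcd} gives $\mathsf{u}=\mathrm{lcm}(p_n)$ and $p_n\,|\,\mathsf{u}$; as $(p_n)$ is a divisibility chain with least common multiple $\mathsf{u}$, any $p\,|\,\mathsf{u}$ divides $p_n$ for all large $n$, and choosing $n$ with $p_{n-1}>m$ yields the cut required by (2). To pass to a general strong Toeplitz subshift of rank $2$, fix a conjugacy $\varphi\colon X\to Y$ given with $\varphi,\varphi^{-1}$ of block-code radius $r=|\varphi|$ (Theorem~\ref{thm:CHL}). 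Given $x\in X$, $p\,|\,\mathsf{u}$ and $m$, apply the special case to $\varphi(x)$ with bound $m+r$, obtaining a cut $(q,t)$ whose two words $V_0,V_1$ have common prefix and suffix exceeding $m+r$. Cutting $x=\varphi^{-1}(\varphi(x))$ at the same positions, each length-$q$ block of $x$ is determined by the corresponding $\varphi(x)$-block together with $r$ symbols on either side; because those boundary symbols lie in the long common prefix/suffix of $V_0,V_1$, each $x$-block depends only on the \emph{type} of the $\varphi(x)$-block. Hence $(q,t)$ is a strong rank-$2$ cut of $x$ (aperiodicity excludes fewer than two words), the scale is unchanged since it is a conjugacy invariant, and the two $x$-words inherit common prefix/suffix longer than $m$, which is exactly (2).

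The substance of the proof is $(3)\Rightarrow(1)$. Starting from the distinguished $x\in X$, I would inductively build nested strong rank-$2$ cuts $(q_n,t_n)$ of $x$ with $q_0\,|\,q_1\,|\,\cdots\,|\,\mathsf{u}$, $\mathrm{lcm}(q_n)=\mathsf{u}$, and with the two level-$n$ words $W_{n,1},W_{n,2}$ sharing a common prefix and suffix of length $>q_{n-1}$. At step $n$ I invoke (3) with $p=\mathrm{lcm}(q_{n-1},p^{(n)})$, where $p^{(0)}\,|\,p^{(1)}\,|\,\cdots$ is a fixed chain of divisors of $\mathsf{u}$ with least common multiple $\mathsf{u}$ (this forces $\mathrm{lcm}(q_n)=\mathsf{u}$), and with a large parameter $m$, obtaining a cut $(q_n,t')$. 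The key technical point is \emph{alignment}: I must replace $t'$ by some $t_n\equiv t_{n-1}\pmod{q_{n-1}}$ so that level-$n$ boundaries refine level-$(n-1)$ ones. Lemma~\ref{coin2} shows $(q_{n-1},t_{n-1})$ and $(q_n,t')$ coincide, so the offset of $t'$ from the nearest admissible position lies in $\mathrm{Per}_{q_n}(x)$; Lemma~\ref{coin1} then lets me slide the cut to such a $t_n$, and a direct check shows that sliding by $\delta<q_{n-1}$ through a periodic stretch alters the common prefix/suffix by at most $\delta$, so a large $m$ keeps them $>q_{n-1}$. From the aligned cuts I read off a directive sequence on $A_0=\mathsf{A}$, $A_n=\{1,2\}$ $(n\ge1)$, with $\tau_0(j)=W_{0,j}$ and, for $n\ge1$, $\tau_n(j)\in\{1,2\}^*$ recording the types of the consecutive level-$n$ blocks composing $W_{n+1,j}$.

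It then remains to verify $\boldsymbol{\tau}$ and that $X_{\boldsymbol{\tau}}=X$. Constant length and alphabet rank $2$ are immediate; $X_{\boldsymbol{\tau}}=X$ follows because $\tau_{[0,n+1)}(j)=W_{n,j}$, so the language of $\boldsymbol{\tau}$ is exactly the set of subwords of $x$, and $X=\overline{\mathcal{O}(x)}$ by minimality. Properness of $\tau_n$ comes from the common prefix/suffix exceeding $q_n$, forcing $\tau_n(1),\tau_n(2)$ to share first and last letters. Primitivity is the assertion that for large $N$ no $W_{N,a}$ is a pure power of a single level-$n$ word, which holds since such a power would give arbitrarily long periodic subwords of the minimal aperiodic $x$ and force periodicity. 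Finally, recognizability of a constant-length directive sequence of alphabet rank $2$ with aperiodic limit set is automatic: uniqueness of the strong rank-$2$ cut at each scale (a consequence of Lemma~\ref{coin1}) gives every point a unique building from $\{W_{n,1},W_{n,2}\}$. Thus $\boldsymbol{\tau}$ witnesses that $X$ is a strong Toeplitz subshift of rank $2$. I expect the alignment step in $(3)\Rightarrow(1)$, together with the bookkeeping needed to keep the common prefix and suffix long through repeated sliding, to be the main obstacle; everything else is a direct appeal to the lemmas already in hand or a routine minimality/aperiodicity argument.
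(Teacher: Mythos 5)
Your proposal is correct and follows the paper's route in all essentials: (2)$\Rightarrow$(3) is trivial; (1)$\Rightarrow$(2) combines Lemma~\ref{lem:cut}, Lemma~\ref{gcd} and properness; and (3)$\Rightarrow$(1) builds nested strong rank-2 cuts using Lemmas~\ref{coin1} and~\ref{coin2} with the same sliding-and-bookkeeping scheme (demand common prefix/suffix $>2q_{n-1}$, slide by $\delta<q_{n-1}$, lose at most $\delta$), then reads off a constant-length, proper, alphabet-rank-$2$ directive sequence and concludes $X_{\boldsymbol{\tau}}=X$ by minimality.

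Three places where your justifications differ from the paper's are worth recording. First, you explicitly transfer (2) across a conjugacy $\varphi\colon X\to X_{\boldsymbol{\tau}}$ via block codes (Theorem~\ref{thm:CHL}); the paper's proof of (1)$\Rightarrow$(2) simply assumes $X=X_{\boldsymbol{\tau}}$, even though the definition of a strong Toeplitz subshift only requires conjugacy and condition (2) is not a priori conjugacy-invariant, so your extra step genuinely tightens the argument. Second, for primitivity the paper asserts outright that aperiodicity makes both level-$n$ words occur in both level-$(n+1)$ words, whereas you only claim this for some large $N$: if it failed for all $N$, the language of $X$ would contain arbitrarily long powers of a fixed word, so the minimal subshift $X$ would contain a periodic point, contradicting aperiodicity. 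Your version is all that primitivity requires and is cleanly justified. Third --- the one spot needing repair --- your recognizability claim is overstated as written: strong rank-2 cuts of $x$ at a fixed period $q_n$ are \emph{not} unique in general; Lemma~\ref{coin1} precisely characterizes when there are several (one can slide through a $q_n$-periodic stretch). What recognizability needs is that every point of $X$ has a unique building from $\{W_{n,1},W_{n,2}\}$, i.e., uniqueness of the cut whose word set is exactly this pair. This does follow from Lemma~\ref{coin1} together with aperiodicity, but only after an additional short argument: a nontrivial slide by $0<\delta<q_n$ through a periodic stretch that preserves the two-element word set forces each block to be either fixed or swapped by the cyclic rotation by $\delta$, whence $S^{\delta}(x)=x$ or $S^{2\delta}(x)=x$, contradicting aperiodicity. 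Alternatively, you can avoid this entirely by citing \cite[Theorem 4.6]{BSTY19}, which is exactly what the paper does.
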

\begin{proof}
 \noindent (1)$\Rightarrow$(2). Suppose $X=X_{\boldsymbol{\tau}}$, where $\boldsymbol{\tau}=(\tau_n\colon A^*_{n+1}\to A^*_n)_{n\geq 0}$ is a constant-length, primitive, proper and recognizable directive sequence with $A_{n+1}=\{1, 2\}$ for every $n\in\mathbb{N}$. For each $n\in\mathbb{N}$ and $j=1,2$, let $w_{n,j}=\tau_{[0,n+1)}(j)$. Fix $p\,|\,\mathsf{u}$ and $m\in \mathbb{N}$. By Lemma~\ref{gcd}, there is $n\in\mathbb{N}$ such that $|w_{n,1}|=|w_{n,2}|>m$ and $p$ divides $|w_{n,1}|$. Since $\boldsymbol{\tau}$ is proper, we have that the lengths of the maximal common prefix and suffix of $w_{n+1,1}$ and $w_{n+1,2}$ are greater than $|w_{n,1}|=|w_{n,2}|> m$. Let $q=|w_{n+1,1}|$. Then $p$ divides $q$. By Lemma \ref{gcd} again, $q$ divides $\mathsf{u}$. Now let $x\in X$ be arbitrary. By Lemma~\ref{lem:cut},  we can find a strong rank-2 cut $(q,t)$ of $x$ such that $$\{x[t+kq,t+(k+1)q)\colon k\in\mathbb{Z}\}=\{w_{i+1,1},w_{i+1,2}\}.$$
Then $(q,t)$ is the strong rank-2 cut as required.

The implication (2)$\Rightarrow$(3) is obvious.

(3)$\Rightarrow$(1). Let $(p_n)_{n\geq 0}$ be a period structure of $(X, S)$. In particular, each $p_n$ is an essential period. Fix $x\in X$ satisfying (3). We inductively define a sequence of strong rank-2 cuts of $x$, $(q_n, t_n)$ for $n\geq 1$. Let $q_0=1$, $t_0=0$. Suppose for $i\geq 0$, $q_i$ and $t_i$ have been defined so that there is $n_i>i$ with $q_i$ dividing $p_{n_i}$. We proceed to define $q_{i+1}$ and $t_{i+1}$. By (3),  there is a strong rank-2 cut $(q_{i+1}, s)$ of $x$ such that $p_{n_i}$ divides $q_{i+1}$, $q_{i+1}$ divides $\mathsf{u}$, and the lengths of the maximal common prefix and suffix of the two elements in $\{x[s+kq_{i+1},s+(k+1)q_{i+1})\colon k\in\mathbb{Z}\}$ are greater than $2q_i$. By Lemma \ref{coin2}, $(q_{i}, t_i)$ and $(q_{i+1}, s)$ coincide. Let $k\in\mathbb{N}$ be unique such that $t_i+kq_i\leq s<t_i+(k+1)q_i$. Without loss of generality assume that $[kq_i+t_i, s)\subset{\rm Per}_{q_{i+1}}(x)$ (as usual, the argument for the other case is similar). Let $t_{i+1}=kq_i+t_i$.  Then $(q_{i+1}, t_{i+1})$ is a strong rank-2 cut of $x$, the maximal common prefix and suffix of the two elements in $\{x[t_{i+1}+kq_{i+1},t_{i+1}+(k+1)q_{i+1})\colon k\in\mathbb{Z}\}$ are greater than $q_i$. This finishes the inductive definition of $(q_n, t_n)$ for $n\geq 1$. 

Now for each $n\geq 1$, let $w_{n,1}$ and $w_{n,2}$ be such that
$$\{x[t_{n}+kq_{n},t_{n}+(k+1)q_{n})\colon k\in\mathbb{Z}\}=\{w_{n,1},w_{n,2}\}. $$ 
Let $A_0$ be the alphabet of the subshift $X$, and for each $n\geq 1$, let $A_n=\{w_{n,1}, w_{n, 2}\}$. Note that $|w_{n,1}|=|w_{n,2}|=q_n$. Since for all $n\geq 0$, $t_{n+1}-t_n$ is a multiple of $q_n$, $w_{n+1,1}, w_{n+1, 2}\in \{w_{n,1}, w_{n,2}\}^*$. This allows us to define a natural morphism $\tau_n\colon A_{n+1}^*\to A_n^*$ for each $n\in\mathbb{N}$, resulting in a directive sequence $(\tau_n)_{n\geq 0}$ which is constant-length, proper, and of alphabet rank $2$. Finally, since $X$ is aperiodic, we have that for any $n\geq 1$, both $w_{n,1}$ and $w_{n,2}$ occur in both $w_{n+1, 1}$ and $w_{n+1, 2}$. Thus the contraction of $(\tau_n)_{n\geq 0}$ by omitting $\tau_0$ is a directive sequence that is primitive, and still constant-length, proper, and of alphabet rank $2$. By \cite[Theorem 4.6]{BSTY19}, $\boldsymbol{\tau}$ is recognizable. By the minimality, $(X, S)$ is generated by $\boldsymbol{\tau}$. Hence $(X, S)$ is a strong Toeplitz subshift of rank $2$.
\end{proof}

\subsection{Genericity of strong Toeplitz subshifts of rank 2\label{sec:4.2}}
In this subsection we prove that the set of all strong Toeplitz subshifts of rank 2 is generic in the space of all minimal subshifts. For this we first prove a characterization of this class of subshifts in terms of their languages. 

We will use the following concept in word combinatorics. Let $\mathsf{A}$ be a finite alphabet and let $W\subseteq \mathsf{A}^*$ be finite. Recall that $W^+$ is the set of all nonempty words $w$ built from $W$, i.e., $w$ is a subword of some word in $W^*$. For $w\in W^+$, we say that $w$ is \textbf{uniquely built} from $W$ if there is a unique sequence of words $(\beta_0, \alpha_1, \dots, \alpha_k, \alpha_{k+1})$ such that 
$$ w=\beta_0\alpha_1\cdots\alpha_k\beta_1 $$
where $k\geq 0$, $\alpha_1,\dots, \alpha_k\in W$, $\beta_0$ is a suffix of some element of $W$, and $\beta_1$ is a prefix of some element of $W$. Let $W^+_1$ be the set of all $w$ which are uniquely built from $W$.

\begin{lemma}\label{lem:unique} Let $w_1, w_2, w_3\in\mathsf{A}^*$. Suppose $|w_1|, |w_2|, |w_3|>2\sup\{|u|\colon u\in W\}$. Suppose $w_1w_2$, $w_2$ and $w_2w_3$ are all uniquely built from $W$. Then $w_1w_2w_3$ is uniquely built from $W$.
\end{lemma}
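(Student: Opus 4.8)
The plan is to phrase everything in terms of \emph{cut points} of a building and to exploit the three uniqueness hypotheses. Set $M=\sup\{|u|\colon u\in W\}$, so $|w_1|,|w_2|,|w_3|>2M$, and note that every piece of any building has length at most $M$. For a building $v=\beta_0\alpha_1\cdots\alpha_k\beta_1$ of a word $v$, I record the positions $0=c_0<c_1<\cdots<c_{k+1}<c_{k+2}=|v|$ at which the listed pieces begin, and call $c_1,\dots,c_{k+1}$ its \textbf{interior cut points}. The essential remark is that a building is recovered from its cut points: $\beta_0=v[0,c_1)$, each $\alpha_i=v[c_i,c_{i+1})\in W$, and $\beta_1=v[c_{k+1},|v|)$. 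Thus being uniquely built is equivalent to having a unique admissible set of cut points.

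The key tool is a restriction principle: if $v=v'v''$ carries a building $B$ and $|v'|,|v''|>M$, then the pieces of $B$ lying inside $v''$ form a building of $v''$, and symmetrically for $v'$. Indeed, the only piece of $B$ that can straddle the internal boundary is an interior $W$-word, since the extreme pieces $\beta_0,\beta_1$ have length at most $M$ and sit at distance more than $M$ from that boundary; hence the truncation of the straddling word to $v''$ is a suffix of an element of $W$, a legitimate leading term, and its truncation to $v'$ is a prefix of an element of $W$ (and if the boundary is itself a cut, there is nothing to check). Applying this to the unique buildings $B_{12}$ of $w_1w_2$ and $B_{23}$ of $w_2w_3$, the induced buildings on the central copy of $w_2$ are both buildings of $w_2$, hence both equal the unique building $B_2$ of $w_2$. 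Consequently every interior cut point $d$ of $B_2$ is simultaneously an interior cut point $|w_1|+d$ of $B_{12}$ and an interior cut point $d$ of $B_{23}$.

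For existence I would glue $B_{12}$ and $B_{23}$ at a shared cut. Since $|w_2|>M$, $B_2$ has an interior cut point $d$; then $|w_1|+d$ is a cut of $B_{12}$ and $d$ a cut of $B_{23}$, so the pieces of $B_{12}$ on $[0,|w_1|+d)$ followed by the pieces of $B_{23}$ on $[d,|w_2w_3|)$ shifted by $|w_1|$ concatenate to a valid building of $w_1w_2w_3$, because a full $W$-word ends at $|w_1|+d$ on the left and one begins there on the right. For uniqueness, let $B$ be any building of $w_1w_2w_3$. By the restriction principle (using $|w_1|,|w_3|>M$), the part of $B$ inside $w_1w_2$ is a building of $w_1w_2$, hence equals $B_{12}$, and the part inside $w_2w_3$ equals $B_{23}$. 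Therefore the cut points of $B$ inside $[0,|w_1w_2|]$ are exactly those of $B_{12}$, and those inside $[|w_1|,|w_1w_2w_3|]$ are exactly those of $B_{23}$; as these two ranges cover the whole word, the full cut-point set of $B$, and hence $B$ itself, is forced. This yields a unique building of $w_1w_2w_3$.

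The main obstacle I anticipate is the careful bookkeeping at the internal boundaries in the restriction principle: one must be sure that a $W$-word straddling a boundary is a genuine interior piece, not an extreme $\beta_0$ or $\beta_1$, so that its truncation really is a suffix, respectively a prefix, of an element of $W$. This is exactly where the length hypotheses $|w_i|>2M$ are indispensable, and it is also the point at which the mild ambiguity of possibly empty boundary pieces has to be controlled.
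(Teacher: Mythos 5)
Your proof is correct and takes essentially the same route as the paper's: both arguments match the restrictions of the unique buildings of $w_1w_2$ and $w_2w_3$ against the unique building of $w_2$, glue the two buildings at a common cut point inside the shared copy of $w_2$ to produce a building of $w_1w_2w_3$, and obtain uniqueness by restricting an arbitrary building of $w_1w_2w_3$ back to $w_1w_2$ and $w_2w_3$ and invoking the hypotheses. Your explicit cut-point formalism and restriction principle only make precise what the paper leaves implicit (namely that a piece straddling an internal boundary must be an interior $W$-word, which is where the length hypotheses enter), so the two proofs coincide in substance.
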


\begin{proof} Suppose $(\beta_0, \alpha_1, \dots,\alpha_k, \beta_1)$ gives the unique building of $w_2$. Since $|w_2|>2\sup\{|u|\colon u\in W\}$, $k\geq 1$. The unique building of $w_1w_2$ must be of the form $(\gamma_0, \eta_1,\dots, \eta_\ell, \gamma_1)$ where $\ell>k$, $\beta_1=\gamma_1$, $\ell_{\ell-j}=\alpha_{k-j}$ for all $0\leq j\leq k-1$, and $\beta_0$ is a suffix of $\eta_{\ell-k}$. Likewise, the unique building of $w_2w_3$ must be of the form $(\epsilon_0, \delta_1,\dots, \delta_r, \epsilon_1)$ where $r>k$, $\epsilon_0=\beta_0$, $\alpha_j=\delta_j$ for all $1\leq j\leq k$, and $\beta_1$ is a prefix of $\delta_{k+1}$. Then 
$$ s=(\gamma_0, \eta_1,\dots, \eta_{\ell}, \delta_{k+1}, \dots, \delta_r, \epsilon_1) $$
gives a building of $w_1w_2w_3$. If another sequence also gives a building of $w_1w_2w_3$, then by the uniqueness of the building of $w_1w_2$, its first $\ell+1$ many terms must coincide with the first $\ell+1$ many terms of the sequence $s$; similarly, by the uniqueness of the building of $w_2w_3$, its last $r+1$ terms must coincide with  the last $r+1$ many terms of $s$; therefore the sequence must coincide with $s$. This shows the uniqueness of the building of $w_1w_2w_3$. 
\end{proof}

For a subshift $X\subseteq \mathsf{A}^\mathbb{Z}$ and positive integer $n$, let
$$ L_n(X)=\{w\in\mathsf{A}^n\colon \exists x\in X\ \mbox{($w$ is a subword of $x$)}\}. $$
Then we have the following characterization of strong Toeplitz subshifts of rank 2.

\begin{lemma}\label{lem:STS2} Let $(X, S)\subseteq \mathsf{A}^\mathbb{Z}$ be an aperiodic Toeplitz subshift with scale $\mathsf{u}$. Then $(X,S)$ is a strong Toeplitz subshift of rank $2$ if and only if for every $p\,|\,\mathsf{u}$ and $m\in\mathbb{N}$, there exist positive integers $q, r$ and words $u, v\in\mathsf{A}^*$ such that
\begin{enumerate}
\item[\rm (i)] $|u|=|v|=q$;
\item[\rm (ii)] $p\,|\,q\,|\,\mathsf{u}$ and $q<r$;
\item[\rm (iii)] the lengths of the maximal common prefix and suffix of $u$ and $v$ are greater than $m$;
\item[\rm (iv)] $L_{2r}(X)\cup L_{4r}(X)\subseteq \{u,v\}^+_1$, i.e., every word in $L_{2r}(X)\cup L_{4r}(X)$ is uniquely built from $\{u,v\}$. 
\end{enumerate}
\end{lemma}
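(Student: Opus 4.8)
The plan is to deduce the equivalence from Theorem~\ref{thm:STS2}, which already characterizes strong Toeplitz subshifts of rank $2$ through the existence of strong rank-$2$ cuts $(q,t)$ whose two defining words have long common prefix and suffix. The bridge between that criterion and condition (iv) is the elementary observation that, since the two candidate blocks $u,v$ share the common length $q$, a building of any word from $\{u,v\}$ is completely determined by its \emph{phase}, i.e.\ the residue modulo $q$ of its cut positions: two buildings with the same phase have identical cut points, and since $u\neq v$ (by aperiodicity) the labels are then forced, so the buildings coincide. Thus ``uniquely built from $\{u,v\}$'' means ``admitting exactly one phase'', and the whole argument reduces to transferring phase information between finite windows of $x$ and global buildings of $x$.

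For the forward direction I would fix a constant-length, primitive, proper and recognizable directive sequence $\boldsymbol{\tau}$ with $A_{n+1}=\{1,2\}$ and $X=X_{\boldsymbol{\tau}}$ (after telescoping; aperiodicity rules out alphabet rank $1$), and set $w_{n,j}=\tau_{[0,n+1)}(j)$, $p_n=|w_{n,1}|=|w_{n,2}|$. Given $p\mid\mathsf{u}$ and $m$, Lemma~\ref{gcd} (with $d_n=p_n$ by constant length, so $\mathsf{u}=\mathrm{lcm}(p_n)$) lets me choose $n$ with $p\mid p_n$ and $p_{n-1}>m$; properness forces the common prefix and suffix of $w_{n,1},w_{n,2}$ to have length at least $p_{n-1}$. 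I then put $u=w_{n,1}$, $v=w_{n,2}$, $q=p_n$, which yields (i), (ii) (using $p_n\mid\mathsf{u}$) and (iii). It remains to produce $r$ witnessing (iv).

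The crux of the forward direction, and the main obstacle, is upgrading the \emph{recognizability} of $\boldsymbol{\tau}$ — which only asserts that every bi-infinite $x\in X$ has a unique building from $\{u,v\}$ — to a \emph{uniform} statement about finite words: there is $r_0$ such that every word in $L_N(X)$ with $N\geq r_0$ is uniquely built from $\{u,v\}$, whereupon any $r>q$ with $2r\geq r_0$ works. I would argue by compactness. If no such $r_0$ existed, there would be subwords $w_j$ of elements of $X$ with $|w_j|\to\infty$, each carrying two buildings of distinct phases $s_j\neq s_j'$. Realizing $w_j$ as a central window of some $x_j\in X$, centering so that position $0$ lies near the middle of $w_j$, and passing to a subsequence along which both phases stabilize modulo $q$ and $x_j\to x\in X$, each fixed block position eventually becomes a full block in the corresponding building of $x_j$, hence lies in $\{u,v\}$ in the limit. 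This produces two bi-infinite buildings of $x$ of distinct phases, contradicting recognizability. (Alternatively one extracts $r_0$ from the uniform recognizability constant of Lemma~\ref{lem:rec} applied to $\tau_{[0,n+1)}$, together with the same limiting argument.)

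For the backward direction I would verify condition (2) of Theorem~\ref{thm:STS2}. Fix $p\mid\mathsf{u}$ and $m$, take $q,r,u,v$ from (i)--(iv), and fix an arbitrary $x\in X$. By (iv) each length-$2r$ window of $x$ has a unique phase modulo $q$; comparing the two length-$2r$ sub-windows sitting inside a common length-$4r$ window, whose unique building restricts to a building of each, forces consecutive $2r$-windows to share their phase. Chaining over all positions, all $2r$-windows of $x$ share one global phase $\sigma$, so $x$ is built from $\{u,v\}$ with cut positions $\equiv\sigma\pmod q$; taking $t\in[0,q)$ with $t\equiv\sigma$, every block $x[t+kq,t+(k+1)q)$ lies in $\{u,v\}$. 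Since $x$ is aperiodic, both $u$ and $v$ must occur, so $(q,t)$ is a genuine strong rank-$2$ cut whose two defining words satisfy $p\mid q\mid\mathsf{u}$ and have common prefix and suffix of length $>m$ by (ii)--(iii). This is precisely what Theorem~\ref{thm:STS2}(2) requires, so $(X,S)$ is a strong Toeplitz subshift of rank $2$, completing the equivalence. (If preferred, Lemma~\ref{lem:unique} can be used to propagate unique building from the windows in (iv) to all long windows, giving the global phase directly.)
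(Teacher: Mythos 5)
Your proof is correct, and it follows the paper's overall architecture: both directions are routed through Theorem~\ref{thm:STS2}, with $u,v$ taken to be the two level-$n$ words $w_{n,1},w_{n,2}$, Lemma~\ref{gcd} supplying the divisibility requirements in (ii), and properness supplying (iii) — exactly as in the paper. You diverge at two technical junctures, in both cases validly. First, for (iv) the paper applies Lemma~\ref{lem:rec} to $\tau_{[0,n+1)}$ (recognizable in $X^{(n+1)}_{\boldsymbol{\tau}}$ by \cite[Lemma 3.5]{BSTY19}) to produce the radius $r$ explicitly, whereas you obtain a uniform window length $r_0$ by compactness from the bi-infinite unique-building statement; this works because your reduction of ``unique building'' to ``unique phase'' is legitimate — with blocks of equal length $q$ the cut residues determine the cut points and the word content then forces the labels — so two distinct buildings of a finite window yield distinct residues mod $q$, which persist in the limit to give two distinct buildings of a point of $X$, contradicting recognizability. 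The trade-off is that your argument is soft and non-quantitative where the paper's is direct. Second, in the backward direction the paper propagates unique building to all of $L_{2nr}(X)$ by induction via Lemma~\ref{lem:unique} and then extracts the cut, while your phase-chaining of consecutive $2r$-windows inside common $4r$-windows is an equivalent mechanism (both use the $L_{4r}$ clause of (iv) precisely to glue adjacent windows); the one point you gloss is that a length-$q$ block straddling a boundary $2kr$ need not lie in any single $2r$-window, but since $q<r$ it lies inside the $4r$-window $x[2(k-1)r,2(k+1)r)$, whose unique building restricts to those of its two halves and hence carries the same phase $\sigma$, which closes the gap between ``all windows share phase $\sigma$'' and ``every block $x[t+kq,t+(k+1)q)$ lies in $\{u,v\}$''. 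Your explicit appeal to aperiodicity to ensure both $u$ and $v$ occur, so that $(q,t)$ is a genuine strong rank-$2$ cut satisfying Theorem~\ref{thm:STS2}(2), matches the paper's implicit use of the same fact.
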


\begin{proof} $(\Rightarrow)$ Suppose $X=X_{\boldsymbol{\tau}}$, where $\boldsymbol{\tau}=(\tau_n\colon A^*_{n+1}\to A^*_n)_{n\geq 0}$ is a constant-length, primitive, proper and recognizable directive sequence with $A_0=\mathsf{A}$ and $A_{n+1}=\{1, 2\}$ for every $n\in\mathbb{N}$. For each $n\in\mathbb{N}$ and $j=1,2$, let $w_{n,j}=\tau_{[0,n+1)}(j)$. Fix $p\,|\,\mathsf{u}$ and $m\in \mathbb{N}$. By Lemma~\ref{gcd}, there is $n\in\mathbb{N}$ such that $|w_{n,1}|=|w_{n,2}|>m$ and $p$ divides $|w_{n,1}|$. Since $\boldsymbol{\tau}$ is proper, we have that the lengths of the maximal common prefix and suffix of $w_{n+1,1}$ and $w_{n+1,2}$ are greater than $|w_{n,1}|=|w_{n,2}|> m$. Let $q=|w_{n+1,1}|$, $u=w_{n+1,1}$ and $v=w_{n+1,2}$. Then $p$ divides $q$. By Lemma \ref{gcd} again, $q$ divides $\mathsf{u}$. Thus (i)--(iii) are satisfied. By the recognizability of $\boldsymbol{\tau}$, we have that for any $k\geq 1$, $\tau_k$ is recognizable in $X^{(k+1)}_{\boldsymbol{\tau}}$. By \cite[Lemma 3.5]{BSTY19}, $\tau_{[0,n+1)}$ is recognizable in $X^{(n+1)}_{\boldsymbol{\tau}}$. By Lemma~\ref{lem:rec} there is a positive integer $r$ such that for any $x, x'\in X$, if $x[-r, r)=x'[-r,r)$, then in the unique buildings of $x$ and $x'$, the occurrence of the word $u$ or $v$ containing position $0$ in $x$ coincide with the occurrence of the same word containing position $0$ in $x'$. Since $|u|=|v|$, it follows that for any $x, x'\in X$, if $x[-r,r)=x'[-r,r)$, then the unique buildings of $x$ and $x'$ coincide entirely on $[-r, r)$. Thus every word in $L_{2r}(X)$ is uniquely built from $\{u,v\}$. The argument still works with $r$ replaced by $2r$, and thus we also get $L_{4r}(X)\subseteq \{u,v\}^+_1$.

$(\Leftarrow)$ Suppose $p\,|\,\mathsf{u}$ and $m\in\mathbb{N}$. Let $q, r$ and $u, v$ satisfy (i)--(iv). Fix $x\in X$. We find a strong rank-2 cut $(q,t)$ of $x$ such that $\{x[t+kq, t+(k+1)q)\colon k\in\mathbb{Z}\}=\{u,v\}$. Then $(X,S)$ is a strong Toeplitz subshift of rank 2 by Theorem~\ref{thm:STS2}. 

We first claim that for any positive integer $n$, $L_{2nr}(X)\subseteq \{u,v\}^+_1$. This is proved by induction on $n$. The cases of $n=1$ and $n=2$ are given by (iv). Now suppose $n\geq 2$ and $L_{2r}(X)\cup L_{4r}(X)\cup\dots \cup L_{2nr}(X)\subseteq \{u, v\}^+_1$ by the inductive hypothesis. Consider $w\in L_{2(n+1)r}(X)$. Then $w[0,2nr)\in L_{2nr}(X)$, $w[2(n-1)r, 2nr)\in L_{2r}(X)$, and $w[2(n-1)r, 2(n+1)r)\in L_{4r}(X)$. By our assumption, each of these words is uniquely built from $\{u, v\}$. By Lemma~\ref{lem:unique}, $w[0,2(n+1)r)$ is uniquely built from $\{u,v\}$. This finishes the proof of the claim.

Now, since $x[-r,r)$ is uniquely built from $\{u,v\}$, there is a unique integer $t\in [0,q)$ such that for any $k\in\mathbb{Z}$ such that $[t+kq, t+(k+1)q)\subseteq [-r,r)$, $x[t+kq, t+(k+1)q)\in \{u,v\}$. From the above claim, we get that for any positive integer $n$, $x[-nr, nr)$ is uniquely built from $\{u,v\}$. It follows that for any $k\in\mathbb{Z}$ such that $[t+kq, t+(k+1)q)\subseteq [-nr, nr)$, $x[t+kq, t+(k+1)q)\in \{u, v\}$. Letting $n\to +\infty$, we conclude that $(q,t)$ is a strong rank-2 cut of $x$ with $\{x[t+kq, t+(k+1)q)\colon k\in\mathbb{Z}\}=\{u,v\}$.
\end{proof}

Following Pavlov--Schmieding \cite{PS}, let $\boldsymbol{S}$ be the space of all subshifts with some alphabet $\mathsf{A}\subseteq \mathbb{Z}$. Define a metric $d$ on $\boldsymbol{S}$ by 
$$ d(X, Y)=2^{-\inf\{n\in\mathbb{N}\colon L_n(X)\neq L_n(Y)\} }.$$
Then $d$ is equivalent to the Hausdorff metric on $\boldsymbol{S}$, which makes $\boldsymbol{S}$ a Polish space. Let $\boldsymbol{M}$ be the subset of $\boldsymbol{S}$ consisting of all infinite minimal subshifts. Then by \cite[Theorem 5.7]{PS}, $\boldsymbol{M}$ is a $G_\delta$ subset of $\boldsymbol{S}$, and hence is itself a Polish space. By \cite[Theorem 5.11]{PS},  \cite[Corollary 5.28]{PS} and our Proposition~\ref{prop:equiv} (ii), the set of all Toeplitz subshifts of topological rank $2$ is a generic subset of $\boldsymbol{M}$. In fact, consider the \textbf{universal scale} $\mathsf{u}$, which is defined as
$$ \mathsf{u}=\displaystyle\prod_{p\in P}p^\infty. $$
Then it was proved in \cite{PS} that the set of all Toeplitz subshifts with the universal scale and topological rank 2 is a generic subset of $\boldsymbol{M}$.

\begin{theorem}\label{thm:gen} The subset of $\boldsymbol{M}$ consisting of all strong Toeplitz subshifts of rank $2$ is generic in $\boldsymbol{M}$.
\end{theorem}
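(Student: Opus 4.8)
The plan is to combine the language characterization of Lemma~\ref{lem:STS2} with the Baire category theorem. Recall from the discussion preceding the theorem that the set $G$ of all Toeplitz subshifts with the universal scale $\mathsf{u}=\prod_{p\in P}p^\infty$ and topological rank $2$ is generic in $\boldsymbol{M}$; in particular $G$ is comeager and dense, and every element of $G$ is an infinite minimal, hence aperiodic, Toeplitz subshift whose scale $\mathsf{u}$ is divisible by every positive integer. For positive integers $p$ and $m\geq 0$, define $U_{p,m}$ to be the set of all $X\in\boldsymbol{M}$ for which there exist positive integers $q,r$ and words $u,v\in\mathsf{A}^*$ satisfying conditions (i)--(iv) of Lemma~\ref{lem:STS2}, the divisibility $q\,|\,\mathsf{u}$ in (ii) being automatic for the universal scale, so that (ii) reduces to $p\,|\,q$ and $q<r$. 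By Lemma~\ref{lem:STS2}, for $X\in G$ we have that $X$ is a strong Toeplitz subshift of rank $2$ if and only if $X\in\bigcap_{p,m}U_{p,m}$. Since $G$ is comeager, it therefore suffices to prove that each $U_{p,m}$ is comeager; then $G\cap\bigcap_{p,m}U_{p,m}$ is comeager and contained in the set of strong Toeplitz subshifts of rank $2$, which is consequently generic.

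First I would show that each $U_{p,m}$ is open. For fixed $q,r,u,v$, conditions (i)--(iii) do not involve $X$ at all, while condition (iv), namely $L_{2r}(X)\cup L_{4r}(X)\subseteq\{u,v\}^+_1$, depends only on $L_{4r}(X)$ (as $L_{2r}(X)$ is determined by $L_{4r}(X)$). The map $X\mapsto L_{4r}(X)$ is locally constant for the metric $d$, since $d(X,Y)<2^{-4r}$ forces $L_{4r}(X)=L_{4r}(Y)$; hence for each admissible quadruple $(q,r,u,v)$ the set of $X$ satisfying (iv) is clopen. As $U_{p,m}$ is the union of these clopen sets over the countably many admissible quadruples, it is open.

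The substantial step is to show each $U_{p,m}$ is dense, and since it is open this will make $\bigcap_{p,m}U_{p,m}$ a dense $G_\delta$. Given a nonempty open $V\subseteq\boldsymbol{M}$, density of $G$ yields a Toeplitz subshift $X\in V\cap G$; fix a Toeplitz sequence $x$ generating $X$ and $N$ with $\{Y:L_N(Y)=L_N(X)\}\subseteq V$. Put $m'=\max(m,N)+1$. Using a period structure $(p_n)_{n\geq 0}$ of $X$ (so $\bigcup_n{\rm Per}_{p_n}(x)=\mathbb{Z}$ and every positive integer, in particular $p$, divides some $p_n$), I would choose $n$ so large that $p\,|\,p_n$, that $[-m',m')\subseteq{\rm Per}_{p_n}(x)$, and that every word of $L_{p_n}(X)$ contains every word of $L_N(X)$, the last by uniform recurrence. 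Aligning the fundamental domains with the blocks $x[kp_n,(k+1)p_n)$, aperiodicity of $X$ provides two distinct such blocks $u,v$; as they share the common $p_n$-skeleton they agree on all $p_n$-periodic positions, in particular on $[0,m')$ and $[p_n-m',p_n)$, giving common prefix and suffix of length $\geq m'>m$. I would then define $Y=X_{\boldsymbol{\tau}}$ by setting $\tau_0(1)=u$, $\tau_0(2)=v$ and choosing, for $k\geq 1$, constant-length proper primitive morphisms $\tau_k\colon\{1,2\}^*\to\{1,2\}^*$ whose lengths sweep all prime powers, so that by Lemma~\ref{gcd} the scale of $Y$ is the universal scale $\mathsf{u}$. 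Then $\boldsymbol{\tau}$ is constant-length, primitive and proper, hence recognizable by \cite[Theorem 4.6]{BSTY19}, so $Y$ is a strong Toeplitz subshift of rank $2$ of universal scale; by Lemma~\ref{lem:STS2} (applied to $p\,|\,\mathsf{u}$ and $m$) this yields a witness and hence $Y\in U_{p,m}$.

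It remains to verify $Y\in V$, i.e.\ $L_N(Y)=L_N(X)$, which is the crux of the construction. A length-$N$ subword of a point of $Y$ either lies inside a single block $u$ or $v$, in which case it is a subword of $x$ and hence lies in $L_N(X)$, with every word of $L_N(X)$ occurring already inside $u$; or it straddles a junction between two blocks. Since $[-m',m')\subseteq{\rm Per}_{p_n}(x)$ with $m'>N$, every block at the chosen offset agrees with $x$ on its first and last $m'$ coordinates, so any length-$N$ window straddling a junction coincides with a length-$N$ subword of $x$ straddling a block boundary, and hence lies in $L_N(X)$. Therefore $L_N(Y)=L_N(X)$ and $Y\in V\cap U_{p,m}$, proving density. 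The main obstacle is precisely this control of the junctions: one must place the cut offset inside a long $p_n$-periodic run so that concatenating the two chosen blocks introduces no length-$N$ word foreign to $L_N(X)$, which is what preserves the finite language and keeps $Y$ in the prescribed neighbourhood while forcing it to be a strong Toeplitz subshift of rank $2$.
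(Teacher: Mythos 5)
Your overall route is genuinely different from the paper's. The paper does not run a Baire category argument directly: it invokes the transference criterion of Pavlov--Schmieding \cite[Theorem 5.4]{PS}, verifying (a) that the class of strong Toeplitz subshifts of rank $2$ is closed under injective constant-length morphisms (this is where the density content is hidden, inside the \cite{PS} machinery) and (b) that the class is relatively $G_\delta$ inside the generic class $\boldsymbol{T}_{\infty,2}$, using exactly the language characterization of Lemma~\ref{lem:STS2}. Your openness argument for $U_{p,m}$ is the same $G_\delta$-ness as the paper's (b), done by hand and correctly (local constancy of $X\mapsto L_{4r}(X)$, with $L_{2r}(X)$ determined by $L_{4r}(X)$). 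Your density step replaces the \cite{PS} black box with an explicit perturbation: cutting a Toeplitz sequence $x$ along a period $p_n$ with $[-m',m')\subseteq{\rm Per}_{p_n}(x)$, so that all blocks share long common prefixes and suffixes and junctions only produce subwords of $x[-m',m')$, is a sound way to preserve $L_N$ and stay in the prescribed neighbourhood; this is an attractive, self-contained alternative if it is completed.

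There is, however, one genuine gap: nothing in your construction guarantees that $Y=X_{\boldsymbol{\tau}}$ is \emph{infinite} (equivalently aperiodic), and this is needed three times --- for $Y\in\boldsymbol{M}$ at all (you are proving density in $\boldsymbol{M}$), for Lemma~\ref{gcd} (stated only for aperiodic Toeplitz subshifts, so your scale computation needs it), and for the recognizability citation to \cite[Theorem 4.6]{BSTY19} and the application of Lemma~\ref{lem:STS2}. Your stipulation ``constant-length proper primitive morphisms $\tau_k$ whose lengths sweep all prime powers'' does not rule out degenerate choices: for instance $\tau_k(1)=\tau_k(2)=w_k$ is constant-length, proper and primitive, but then $\tau_{[1,k)}(1)=\tau_{[1,k)}(2)$ for all $k$, the level-$1$ subshift collapses to a single periodic orbit, and $Y$ is the finite orbit of a periodic point --- while still satisfying $L_N(Y)=L_N(X)$, since your junction control is indifferent to the order of the blocks. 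So as written the density step can output a point outside $\boldsymbol{M}$. The fix is easy but must be said: choose the tail $(\tau_k)_{k\geq1}$ so that the level-$1$ $\mathcal{S}$-adic subshift on $\{1,2\}$ is aperiodic (e.g.\ take $\tau_k(1)\neq\tau_k(2)$ of a single-hole/Toeplitz shape as in Lemma~\ref{lem:sh}, with lengths still sweeping all prime powers), and then observe that since $u\neq v$ have equal length, the map $z\mapsto\tau_0(z)$ is injective, so a finite $Y$ would force the level-$1$ subshift to be finite; hence $Y$ is infinite, aperiodic, and everything else in your argument goes through.
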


\begin{proof} By \cite[Theorem 5.4]{PS}, it suffices to show that 
\begin{enumerate}
\item[(a)] the class of all strong Toeplitz subshifts of rank $2$ is closed under any injective, constant-length morphism, i.e., if $\tau\colon A^*\to B^*$ is an injetive, constant-length morphism and $X\subseteq A^\mathbb{Z}$ is a strong Toeplitz subshift of rank $2$, then $\bigcup_{k\in\mathbb{Z}}S^k\tau(X)$ is a strong Toeplitz subshift of rank $2$; and
\item[(b)] the class of all strong Toeplitz subshifts of rank $2$ is a relatively $G_\delta$ subset of the class of all elements of $\boldsymbol{M}$ which are Toeplitz subshifts with the universal scale and topological rank $2$.
\end{enumerate}

For (a), suppose $X=X_{\boldsymbol{\tau}}$, where $\boldsymbol{\tau}=(\tau_n\colon A_{n+1}^*\to A_n^*)_{n\geq 0}$ is a constant-length, primitive, proper and recognizable directive sequence with $|A_{n+1}|=2$ for all $n\in\mathbb{N}$. Let $\tau\colon A_0^*\to B^*$ be an injective, constant-length morphism. Define $B_0=B$ and $B_{n+1}=A_{n+1}$ for all $n\in\mathbb{N}$. Define $\tau'_0\colon B_1^*\to B_0^*$ by $\tau'_0=\tau\circ \tau_0$. For all $n\in\mathbb{N}$, let $\tau'_{n+1}=\tau_{n+1}$. Then $\boldsymbol{\tau}'=(\tau'_n\colon B_{n+1}^*\to B_n^*)_{n\geq 0}$ is a constant-length, primitive and proper directive sequence. By \cite[Theorem 4.6]{BSTY19}, $\boldsymbol{\tau}'$ is recognizable. Now it is clear that the subshift $\bigcup_{k\in\mathbb{Z}}S^k\tau(X)$ is generated by $\boldsymbol{\tau}'$, and thus it is a strong Toeplitz subshift of rank $2$. 

For (b), let $\boldsymbol{T}_{\infty, 2}$ denote the subset of $\boldsymbol{S}$ consisting of all Toeplitz subshifts with the universal scale $\mathsf{u}$ and topological rank $2$. Then by Lemma~\ref{lem:STS2}, the set of all strong Toeplitz subshifts of rank $2$ is the intersection of the following set with $\boldsymbol{T}_{\infty, 2}$:
$$ \displaystyle\bigcap_{p, m\in\mathbb{N}}\bigcup_{p\,|\, q, \\ u, v\in L_1(X)^q}\bigcup_{r>q} \left\{X\in\boldsymbol{M}\colon L_{2r}(X)\cup L_{4r}(X)\subseteq \{u,v\}^+_1\right\}. $$ 
Since this set is $G_\delta$ in $\boldsymbol{M}$, we conclude that the subset of $\boldsymbol{M}$ consisting of all strong Toeplitz subshifts of rank $2$ is a relatively $G_\delta$ in $\boldsymbol{T}$.
\end{proof}

\begin{corollary} The set of all strong Toeplitz subshifts of finite rank is generic in the space of all infinite minimal subshifts.
\end{corollary}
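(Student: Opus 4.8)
The plan is to deduce this immediately from Theorem~\ref{thm:gen} by a containment-and-monotonicity argument, since genericity passes to supersets. First I would recall that, by our definition, a subshift is a \emph{strong Toeplitz subshift of finite rank} precisely when it is a strong Toeplitz subshift of rank $K$ for some $K$; in particular every strong Toeplitz subshift of rank $2$ is a strong Toeplitz subshift of finite rank (taking $K=2$). Hence, writing $\boldsymbol{M}$ for the space of all infinite minimal subshifts, the set of all strong Toeplitz subshifts of finite rank \emph{contains} the set of all strong Toeplitz subshifts of rank $2$ as a subset.

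Next I would invoke Theorem~\ref{thm:gen}, which asserts that the subset of $\boldsymbol{M}$ consisting of all strong Toeplitz subshifts of rank $2$ is generic in $\boldsymbol{M}$; unravelling the definition of genericity, this set contains a dense $G_\delta$ subset $D$ of $\boldsymbol{M}$. Since the set of all strong Toeplitz subshifts of finite rank contains the set of all strong Toeplitz subshifts of rank $2$, it in turn contains $D$. By definition, a subset of a Polish space is generic exactly when it contains a dense $G_\delta$ subset, so the set of all strong Toeplitz subshifts of finite rank is generic in $\boldsymbol{M}$, which is the desired conclusion.

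There is no genuine obstacle here: the entire content has already been established in Theorem~\ref{thm:gen}, and the corollary is merely the observation that genericity is monotone under inclusion. The only point worth stating explicitly is the set-theoretic inclusion coming from the definition of strong Toeplitz subshift of finite rank, after which the result is immediate.
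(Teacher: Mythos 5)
Your proof is correct and matches the paper's intended argument: the corollary is stated immediately after Theorem~\ref{thm:gen} with no separate proof, precisely because every strong Toeplitz subshift of rank $2$ is by definition a strong Toeplitz subshift of finite rank, so the larger set inherits genericity. Nothing is missing.
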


\section{The Complexity of Classification Problems\label{sec:5}}
In this section we consider some classification problems for Toeplitz subshfits of topological rank $2$. In Subsection~\ref{sec:5.1} we prove the main technical characterization for the conjugacy problem for Toeplitz subshifts of topological rank $2$. In Subsection~\ref{sec:5.2} we deduce that the conjugacy problem, the flip conjugacy problem and the bi-factor problem are all hyperfinite.

\subsection{The conjugacy problem\label{sec:5.1}} We will work with some additional concepts in word combinatorics. 

Let $\mathsf{A}$ be a finite alphabet and let $u, v\in \mathsf{A}^*$ be nonempty words. Following \cite{GJJLLSW25}, we say that $\{u, v\}$ is a \textbf{Euclidean pair} if there exist $w\in \mathsf{A}^*$ and $k,\ell\in\mathbb{N}$ such that $u=w^k$ and $v=w^{\ell}$; otherwise $\{u, v\}$ is a \textbf{non-Euclidean pair}.

The following lemma appeared as \cite[Lemma 7.1]{DDMP21} and is a weak form of the Fine--Wolf theorem.

\begin{lemma}[\cite{DDMP21}]\label{lem:distinguish}
Let $\{u, v\}\subseteq \mathsf{A}^*$ be a non-Eucliean pair. Then there exist $n<|u|+|v|$ and $\alpha\in\mathsf{A}^n$ such that for any $x\in u\{u,v\}^*$ and $y\in v\{u,v\}^*$ with $|x|, |y|\geq n+1$, $\alpha$ is a common prefix of $x$ and $y$, and $x(n)\neq y(n)$. 
\end{lemma}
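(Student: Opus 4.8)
The plan is to reduce the statement to the two purely periodic one-sided words $u^{\infty}=uuu\cdots$ and $v^{\infty}=vvv\cdots$, where $u^{\infty}(i)=u(i\bmod|u|)$ and $v^{\infty}(i)=v(i\bmod|v|)$. First I would record that $\{u,v\}$ being a non-Euclidean pair is equivalent to $u^{\infty}\neq v^{\infty}$: if $u^{\infty}=v^{\infty}$, then this common infinite word is periodic with both periods $|u|$ and $|v|$, hence with period $\gcd(|u|,|v|)$, and reading off its prefix of length $\gcd(|u|,|v|)$ produces a word $\omega$ with $u=\omega^{|u|/\gcd(|u|,|v|)}$ and $v=\omega^{|v|/\gcd(|u|,|v|)}$, so $\{u,v\}$ is Euclidean; the converse is clear. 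Thus from non-Euclideanness I may let $n$ be the (finite) length of the longest common prefix of $u^{\infty}$ and $v^{\infty}$ and set $\alpha=u^{\infty}[0,n)=v^{\infty}[0,n)$, so that $u^{\infty}(n)\neq v^{\infty}(n)$.

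Next I would bound $n<|u|+|v|$ using the Fine--Wilf periodicity theorem. The word $\alpha$ has length $n$ and has both period $|u|$ (being a prefix of $u^{\infty}$) and period $|v|$ (being a prefix of $v^{\infty}$). If we had $n\ge|u|+|v|-\gcd(|u|,|v|)$, then Fine--Wilf would force $\alpha$ to have period $\gcd(|u|,|v|)$, and exactly as in the first step this would exhibit $u$ and $v$ as powers of a common word, contradicting non-Euclideanness. Hence $n<|u|+|v|-\gcd(|u|,|v|)<|u|+|v|$, which is the required bound on the position.

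The heart of the proof, and the step I expect to be the main obstacle, is to show that every $x\in u\{u,v\}^{*}$ with $|x|\ge n+1$ satisfies $x[0,n+1)=u^{\infty}[0,n+1)$, and symmetrically that every $y\in v\{u,v\}^{*}$ with $|y|\ge n+1$ satisfies $y[0,n+1)=v^{\infty}[0,n+1)$. The difficulty is that a factorization of $x$ into blocks from $\{u,v\}$ need not be aligned with the period $|u|$, since a $v$-block shifts the phase, so a priori $x$ could disagree with $u^{\infty}$ before position $n$. The key observation that removes this obstacle is that, because $n<|u|+|v|$, every block of the factorization of $x$ that meets the window $[0,n+1)$ must begin at a multiple of $|u|$. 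Indeed, $x$ begins with the block $u$, so all blocks preceding the first occurrence of a $v$-block are copies of $u$ beginning at $0,|u|,2|u|,\dots$; and the first $v$-block, if any, begins at some multiple $s\ge|u|$ of $|u|$ and ends at $s+|v|\ge|u|+|v|>n$, so it already covers the remainder of the window. Consequently, for each position $i\le n$ the block of $x$ containing $i$ starts at some $q\equiv 0\pmod{|u|}$ with $q\le i$, and its content agrees with $u^{\infty}$ on the relevant range: a $u$-block equals $u^{\infty}[0,|u|)$, while a $v$-block agrees with $u^{\infty}$ on $[0,\min(|v|,n))$ because $v=v^{\infty}[0,|v|)$ and $v^{\infty},u^{\infty}$ agree on $[0,n)$ (here $i-q<n$ since $q\ge|u|\ge 1$). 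Hence $x(i)=u^{\infty}(i-q)=u^{\infty}(i)$ by $|u|$-periodicity, giving $x[0,n+1)=u^{\infty}[0,n+1)$. The argument for $y$ is identical, with the roles of $u$ and $v$ and of the periods $|u|$ and $|v|$ exchanged.

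Finally I would assemble the pieces: for any admissible $x$ and $y$ we obtain $x[0,n)=u^{\infty}[0,n)=\alpha=v^{\infty}[0,n)=y[0,n)$, so $\alpha$ is a common prefix of $x$ and $y$, while $x(n)=u^{\infty}(n)\neq v^{\infty}(n)=y(n)$. Combined with $n<|u|+|v|$ from the second step, this is precisely the conclusion, with witnessing word $\alpha\in\mathsf{A}^{n}$.
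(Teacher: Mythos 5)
Your proof is correct, and it is essentially the intended argument: the paper itself gives no proof of this lemma, importing it as \cite[Lemma 7.1]{DDMP21} and describing it precisely as a weak form of the Fine--Wilf theorem, which is exactly the engine of your second step. Your three components --- identifying $\alpha$ as the maximal common prefix of $u^{\infty}$ and $v^{\infty}$ (finite by non-Euclideanness), bounding $n<|u|+|v|-\gcd(|u|,|v|)$ via Fine--Wilf, and the alignment observation that every block of a factorization of $x\in u\{u,v\}^*$ meeting the window $[0,n+1)$ starts at a multiple of $|u|$ (so at most one $v$-block intrudes, and it agrees with $u^{\infty}$ where it matters) --- all check out, including the edge cases $n=0$ and factorizations with no $v$-block.
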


We call this word $\alpha$ the \textbf{distinguished prefix} of $u$ and $v$, similarly, the \textbf{distinguished suffix} of $u$ and $v$ is also well defined. Note that the distinguished prefix of $u$ and $v$ must be longer than or equal to the maximal common prefix of $u$ and $v$, similarly for the distinguished suffix of $u$ and $v$.

Let $u, v\in \mathsf{A}^*$ be nonempty words. We consider the morphism $\tau_{u,v}\colon \{0,1\}^*\to \mathsf{A}^*$ given by $\tau_{u,v}(0)=u$ and $\tau_{u,v}(1)=v$. By \cite[Theorem 3.1]{BSTY19}, $\tau_{u,v}$ is fully recognizable for aperiodic points. Let $X_{u,v}$ denote the subshift 
$$\bigcup_{k\in\mathbb{Z}}S^k\tau_{u,v}(\{0,1\}^\mathbb{Z}). $$ 
Then $x\in X_{u,v}$ if and only if $x$ is built from $\{u, v\}$. It is also easy to see that $\{u,v\}$ is a non-Eucliean pair if and only if $X_{u,v}$ contains an aperiodic element. 

    \begin{lemma} \label{lem:lr}
Let $\mathsf{A}$ be a finite alphabet and let $\{u,v\}\subseteq \mathsf{A}^*$ be a non-Euclidean pair.  Suppose the distinguished prefix and suffix of $u$ and $v$ are $\alpha$ and $\beta$ respectively. Then there exists $R\in\mathbb{N}$ such that for any aperiodic $x, x'\in X_{u,v}$, for any $r>R$ and any $i,i'\in \mathbb{Z}$, if $x[i, i+r)=x'[i', i'+r)$ but $x(i-1)\neq x'(i'-1)$ and $x(i+r)\neq x'(i'+r)$, then $x[i, i+r)$ and $x'[i', i'+r)$ have common prefix $\beta$ and common suffix $\alpha$, and the unique building of $x$ from $\{u, v\}$ on $[i+|\beta|,i+r-|\alpha|)$ coincides with the unique building of $x'$ from $\{u, v\}$ on $[i'+|\beta|, i'+r-|\alpha|)$.
    \end{lemma}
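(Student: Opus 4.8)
The plan is to use recognizability to synchronize the two buildings on the interior of the common window, and then to use the distinguished prefix $\alpha$ and the distinguished suffix $\beta$ to push the synchronization out to the exact block boundaries forced by the two differing boundary letters.

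Since $\{u,v\}$ is a non-Euclidean pair, by \cite[Theorem 3.1]{BSTY19} the morphism $\tau_{u,v}$ is fully recognizable for aperiodic points; in particular every aperiodic $x\in X_{u,v}$ has a unique building from $\{u,v\}$, and by Lemma~\ref{lem:rec} there is a constant $r_0$ such that whenever two aperiodic points of $X_{u,v}$ agree on a window of radius $r_0$ about two positions, the block of the building containing those positions has the same type and the same offset in both. I fix such an $r_0$ and take $R=2r_0+2(|u|+|v|)$. Let $x,x',r,i,i'$ be as in the statement, put $w=x[i,i+r)=x'[i',i'+r)$ and $\delta=i'-i$. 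For every $p\in[i+r_0,i+r-r_0]$ the radius-$r_0$ windows of $x$ at $p$ and of $x'$ at $p+\delta$ lie inside the common window, hence agree, so by the choice of $r_0$ the cut positions and block types of the two buildings coincide (under the shift $\delta$) throughout $[i+r_0,i+r-r_0]$.

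Starting from a common cut in this interior range I would read the building outward. By Lemma~\ref{lem:distinguish}, at any genuine cut the type of the block immediately to the right is determined by the next $|\alpha|+1$ letters, and the type of the block immediately to the left by the preceding $|\beta|+1$ letters; applying this repeatedly to the common content $w$ shows that every full block of $x$'s building that is contained in the window agrees in type and position with the corresponding full block of $x'$. It remains to locate the two blocks straddling the endpoints. Let $D$ be the block of $x$ containing position $i+r$, with $f$ the offset of $i+r$ in $D$, so that $w[r-f,r)=D[0,f)$ and $x(i+r)=D[f]$, and let $D'$ be the corresponding block of $x'$. If $f>|\alpha|$ then $D[0,f)$ already reaches the position where $u$ and $v$ first differ, so $D$ and $D'$ would be forced to have the same type, whence $x(i+r)=x'(i'+r)$, contrary to hypothesis; if $f<|\alpha|$ then $u$ and $v$ still agree at offset $f$, so again $x(i+r)=x'(i'+r)$. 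Therefore $f=|\alpha|$, the straddling blocks are of opposite types, and $w[r-|\alpha|,r)=\alpha$, i.e.\ $w$ ends with $\alpha$ and the last full-block cut sits at $i+r-|\alpha|$. The mirror-image argument with $\beta$, using $x(i-1)\neq x'(i'-1)$, gives $w[0,|\beta|)=\beta$ and places the first full-block cut at $i+|\beta|$. Hence the full-block part of the window is exactly $[i+|\beta|,i+r-|\alpha|)$, and on it the two buildings coincide, which is the assertion.

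The main obstacle is precisely this boundary analysis. The recognizability constant $r_0$ only controls positions at distance at least $r_0$ from the endpoints of the window, whereas $|\alpha|$ and $|\beta|$ may be much smaller than $r_0$; so the coincidence of the two buildings has to be propagated out of the recognizable interior all the way to the true block boundaries, and the overlap lengths there must be pinned down exactly. This is what the sharper length-$(|\alpha|+1)$ and length-$(|\beta|+1)$ determinacy furnished by Lemma~\ref{lem:distinguish}, combined with the information that the letters just outside the window differ, accomplishes; the only care needed is in the degenerate cases where $|\alpha|$ or $|\beta|$ is comparable to $|u|+|v|$, which are handled by reading the window from both ends.
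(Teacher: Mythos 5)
Your overall strategy --- interior synchronization of the two buildings via recognizability (Lemma~\ref{lem:rec}), then an application of Lemma~\ref{lem:distinguish} at the two window ends --- is the same as the paper's, and your interior step is fine. The genuine gap is in the endpoint analysis. You define $D$ as the block of $x$ containing position $i+r$, let $f$ be the offset of $i+r$ in $D$, speak of ``the corresponding block'' $D'$ of $x'$, and conclude $f=|\alpha|$. This tacitly assumes that $D$ and $D'$ start at corresponding positions, and both that assumption and the conclusion $f=|\alpha|$ can fail, because the distinguished prefix $\alpha$ may be \emph{longer} than $\min(|u|,|v|)$ (it need only be shorter than $|u|+|v|$). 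Concretely, take $u=01$, $v=0100$; this is a non-Euclidean pair with $\alpha=010$, so $|\alpha|=3>|u|$. Suppose the two buildings coincide up to a common cut $c$ deep inside the window, after which $x$ reads $uu\cdots=0101\cdots$ while $x'$ reads $v\cdots=0100\cdots$. Then $x$ and $x'$ agree exactly up to $i+r=c+3$, so your hypotheses hold, but the block of $x$ containing $i+r$ is the \emph{second} $u$, starting at $c+2$, giving $f=1\neq 3=|\alpha|$; meanwhile the block of $x'$ containing $i'+r$ starts at the shifted image of $c$ with offset $3$. Moreover the first $u$ of $x$ is a full block contained in the window with no corresponding block of $x'$ at all, so your intermediate claim that ``every full block contained in the window agrees in type and position'' is also false near the edge. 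Your closing remark that the degenerate cases are ``handled by reading the window from both ends'' does not repair this: backward reading needs a synchronized cut at the right end to start from, which is precisely what is missing there. Your case $f<|\alpha|$ is additionally misstated ($u$ and $v$ need not both have an offset-$f$ letter), though that is cosmetic compared to the alignment issue.

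The correct statement, and the one the paper proves, is about \emph{cuts}, not about the block containing $i+r$: the first common cut $c^*$ (moving right from the synchronized interior) at which the two buildings assign different types satisfies $c^*=i+r-|\alpha|$. Indeed, if the types at a common cut $c$ agree, the two blocks are equal as words, so $x$ and $x'$ agree past the next cut, which is therefore again common and (since $x(i+r)\neq x'(i'+r)$) lies at or before $i+r$; iterating, the cuts strictly increase but stay bounded by $i+r$, so one must reach a common cut $c^*$ where the types disagree. At such a cut, Lemma~\ref{lem:distinguish} says the two continuations agree for exactly $|\alpha|$ letters and then differ, while the hypotheses say their first difference after $c^*$ is at $i+r$; hence $c^*+|\alpha|=i+r$, the window ends with $\alpha$, and the buildings coincide up to $c^*$. (In the example above, $c^*=c$, not the start of $D$.) The paper packages this without any iteration by passing to the unique centered $\tau_{u,v}$-representations $(y,k)$ and $(y',k')$: Lemma~\ref{lem:rec} gives $k=k'$ and $y(0)=y'(0)$, one takes the maximal interval on which the type sequences $y$ and $y'$ agree, and applies Lemma~\ref{lem:distinguish} at the common cut where they first disagree; no analysis of the block straddling the boundary is needed. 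Your proof needs to be rewritten along these lines before the boundary step is sound.
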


\begin{proof} Let $r_0$ be the positive integer obtained by applying Lemma~\ref{lem:rec} to $\tau_{u,v}$. Let $R=2r_0$. Suppose $x, x'\in X_{u,v}$ are aperiodic. 
By shifting $x$ and $x'$ if necessary, we may assume without loss of generality that for $a\leq -r_0$ and $b\geq r_0$, $x[a,b)=x'[a,b)$ but $x(a-1)\neq x'(a-1)$ and $x(b)\neq x'(b)$. Since $\tau_{u,v}$ is fully recognizable for aperiodic points, both $x$ and $y$ have unique centered $\tau_{u,v}$-representations, which we denote as $(y,k)$ and $(y',k')$ respectively. Since $x[-r_0,r_0)=x'[-r_0,r_0)$, we have $k=k'$ and $y(0)=y'(0)$ by Lemma~\ref{lem:rec}. 

Let $\ell\leq 0$ be the least and $m>0$ be the largest such that $y[\ell, m)=y'[\ell,m)$. Thus $y(\ell-1)\neq y'(\ell-1)$ and $y(m)\neq y'(m)$. Let $p$ be the beginning position of the occurrence of $\tau(y(m))$ in $x$. Then $p$ is also the beginning position of the occurrence of $\tau(y'(m))$ in $x'$. By Lemma~\ref{lem:distinguish} and our assumption, we must have that $p+|\alpha|=b$ and $x[p, b)=x'[p, b)=\alpha$. Similarly, let $q$ be the beginning position of the occurrence of $\tau(y(\ell))$ in $x$. Then $q$ is also the beginning position of the occurrence of $\tau(y'(\ell))$ in $x'$. By Lemma~\ref{lem:distinguish}, we must have that $a+|\beta|=q$ and $x[a, q)=x'[a,q)=\beta$.  This proves the lemma.
\end{proof}

We will also use the following observation about $p$-holes in conjugate subshifts.

  \begin{lemma}\label{lem:hole}
         Let $X,Y$ be Toeplitz subshifts, let $\varphi \colon X \to Y$ be a conjugacy map, let $x \in X$ and let $p$ be a positive integer. Recall that $|\varphi|$ is the larger value between the width of a block code for $\varphi$ and the width of a block code for $\varphi^{-1}$. 
    If $i\in\mathbb{Z}$ is a $p$-hole of $x$, then there is $j\in\mathbb{Z}$ such that $|i-j| <|\varphi|$ and 
    $j$ is a $p$-hole of $\varphi(x)$.
    \end{lemma}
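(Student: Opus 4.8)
The plan is to prove the contrapositive, using the block code for $\varphi^{-1}$ supplied by the Curtis--Hedlund--Lyndon theorem (Theorem~\ref{thm:CHL}). Write $y=\varphi(x)$ and $|\varphi|=2n+1$, and let $C'$ be the block code of width $|\varphi|$ for $\varphi^{-1}$, so that $x(m)=C'(y[m-n,m+n+1))$ for every $m\in\mathbb{Z}$. I would show that if every $j$ with $|i-j|<|\varphi|$ lies in ${\rm Per}_p(y)$, then $i\in{\rm Per}_p(x)$; the lemma then follows by contraposition, and in fact the witnessing $p$-hole $j$ can be taken in the window $[i-n,i+n]$, so that $|i-j|\le n<|\varphi|$, matching the required bound.

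First I would unpack the hypothesis on $x$: since $i$ is a $p$-hole, $i\notin{\rm Per}_p(x)$, so there is some $k_0\in\mathbb{Z}$ with $x(i)\neq x(i+k_0p)$. The aim is to show that forcing $p$-periodicity of $y$ on the window determining $x(i)$ would contradict this inequality.

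Next, assume toward a contradiction that every $j$ with $|i-j|<|\varphi|$ is in ${\rm Per}_p(y)$; in particular every $j\in\{i-n,\dots,i+n\}$ satisfies $y(j)=y(j+k_0p)$. Reading this off coordinatewise gives the equality of words $y[i-n,i+n+1)=y[i+k_0p-n,i+k_0p+n+1)$. Applying the block code $C'$ to both windows then yields $x(i)=C'(y[i-n,i+n+1))=C'(y[i+k_0p-n,i+k_0p+n+1))=x(i+k_0p)$, contradicting the choice of $k_0$. Hence some $j\in\{i-n,\dots,i+n\}$ must be a $p$-hole of $y$, and such a $j$ satisfies $|i-j|\le n<|\varphi|$.

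There is essentially no obstacle here beyond the bookkeeping of window sizes. The two points to be careful about are to use the block code for $\varphi^{-1}$ (which recovers $x$ from $y$) rather than the one for $\varphi$, and to note that its width $|\varphi|=2n+1$ forces the displacement $|i-j|\le n$ to lie strictly below $|\varphi|$, exactly as stated. Note also that the argument never uses that $X$ and $Y$ are Toeplitz beyond the fact that $\varphi$ is a conjugacy of subshifts, so the normalization "both $C$ and $C'$ have width $|\varphi|$" recorded after Theorem~\ref{thm:CHL} is all that is needed.
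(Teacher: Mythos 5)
Your proof is correct and essentially identical to the paper's: both argue by contradiction/contraposition, assuming no $p$-hole of $\varphi(x)$ near $i$ and then applying the width-$|\varphi|$ block code $C'$ for $\varphi^{-1}$ to the $p$-periodic window of $\varphi(x)$ to force $x(i)=x(i+kp)$, contradicting that $i$ is a $p$-hole of $x$. Your observation that the witnessing hole can in fact be taken with $|i-j|\le n$ is a harmless slight sharpening of the stated bound $|i-j|<|\varphi|=2n+1$.
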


\begin{proof} Let $C$ be a block code for $\varphi^{-1}$ of width $|\varphi|=2n+1$. Toward a contradiction, assume that none of the integers $j$, where $i-|\varphi|<j<i+|\varphi|$, is a $p$-hole of $\varphi(x)$. Then $[i-|\varphi|+1, i+|\varphi|)\subseteq \mbox{\rm Per}_p(\varphi(x))$. It follows from Theorem~\ref{thm:CHL} that for all $k\in\mathbb{Z}$, 
$$\begin{array}{rcl}
x(i)&=&\varphi^{-1}(\varphi(x))(i) \\
&=&C(\varphi(x)[i-n, i+n+1)) \\
&=&C(\varphi(x)[i+kp-n, i+kp+n+1)) \\
&=& \varphi^{-1}(\varphi(x))(i+kp) \\
&=& x(i+kp). 
\end{array}$$
This contradicts the assumption that $i$ is a $p$-hole of $x$.
\end{proof}

We will use some notations from Kaya \cite{Kaya} and define some new ones.

Let $X=\overline{\mathcal{O}(x)}$ be a Toeplitz subshift generated by an aperiodic Toeplitz sequence $x\in\mathsf{A}^\mathbb{Z}$. Let $p$ be an essential period of $X$. Recall that ${\rm Parts}(X, p)$ is the partition of $X$ according to the $p$-skeletons, i.e.,
$$ {\rm Parts}(X, p)=\{ W\colon \exists 0\leq k<p\ (y\in W\mbox{ if and only if } {\rm Skel}(y,p)=S^k{\rm Skel}(x,p))\}. $$
Furthermore, we define ${\rm Parts}_*(X,p)$ to be the following subset of ${\rm Parts}(X,p)$:
     $$
      {\rm Parts}_*(X,p)=\{W\in {\rm Parts}(X,p)\colon  {\rm Skel}(W,p)(-1)= \square,\  {\rm Skel}(W,p)(0)\neq \square\}.
     $$
     For $W\in {\rm Parts}_*(X, p)$, define ${\rm length}(W)$ to be the least $i\in\mathbb{N}$ such that ${\rm Skel}(W,p)(i)=\square$. Let 
$$ \ell(X, p)=\max\{ {\rm length}(W)\colon W\in {\rm Parts}_*(X,p) \} $$
and define 
$$ {\rm Parts}_M(X,p)=\{W\in {\rm Parts}_*(X,p)\colon {\rm length}(W)=\ell(X, p) \}. $$
Finally, let 
$$ \chi(X,p)=\left\{S^{\lfloor\frac{\ell(X,p)}{2}\rfloor}W\colon W\in {\rm Parts}_M(X,p)\right\}. $$
Roughly speaking, any element $W\in {\rm Parts}(X,p)$ corresponds to a unique $p$-skeleton, and those in ${\rm Parts}_*(X,p)$ have their blocks aligned properly at position $0$. The elements of ${\rm Parts}_M(X, p)$ are those in ${\rm Parts}_*(X,p)$ with the longest blocks aligned at position $0$, and then $\chi(X,p)$ are obtained from ${\rm Parts}_M(X,p)$ by recentering the longest blocks. Note that all these objects are finite sets of $K(\mathsf{A}^\mathbb{Z})$.

We will use the following result from \cite{Kaya} (\cite[Lemma 9]{Kaya}), which gives more details than Theorem~\ref{thm:DKL} does.

\begin{lemma}[\cite{Kaya}]\label{lem:kaya} Let $\mathsf{A}$ be a finite alphabet, let $x, y\in\mathsf{A}^\mathbb{Z}$ be Toeplitz sequences, and let $X, Y$ be Toeplitz subshifts generated by $x, y$, respectively. Suppose $\varphi\colon X\to Y$ is a conjugacy map with $\varphi(x)=y$. Then for any positive integer $p$ such that $[-|\varphi|, |\varphi|]\subseteq \mbox{\rm Per}_p(x)\cap \mbox{\rm Per}_p(y)$, there exists $\phi\in \mbox{\rm Sym}(\mathsf{A}^p)$ such that $y=\widehat{\phi}(x)$, i.e., for all $k\in \mathbb{Z}$,
$$ y[kp, (k+1)p)=\phi(x[kp, (k+1)p)). $$
\end{lemma}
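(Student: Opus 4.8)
The plan is to show that, under the stated hypothesis, the conjugacy acts on $x$ one $p$-block at a time, because each $p$-block of $x$ is flanked by a fixed $p$-periodic collar whose symbols are independent of the block. By Theorem~\ref{thm:CHL}, write $|\varphi|=2n+1$ and fix a block code $C$ of width $|\varphi|$ for $\varphi$ and a block code $C'$ of width $|\varphi|$ for $\varphi^{-1}$, so that $y(i)=C(x[i-n,i+n+1))$ and $x(i)=C'(y[i-n,i+n+1))$ for all $i\in\mathbb{Z}$.

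First I would note that $\mbox{\rm Per}_p(x)$ is invariant under translation by $p$, so from $[-|\varphi|,|\varphi|]\subseteq\mbox{\rm Per}_p(x)$ one gets $[kp-|\varphi|,kp+|\varphi|]\subseteq\mbox{\rm Per}_p(x)$ for every $k\in\mathbb{Z}$, and likewise for $y$. Since $n<|\varphi|$, for $0\le j\le n$ the positions $kp-j$ and $kp+j$ lie in $\mbox{\rm Per}_p(x)$, whence $x(kp-j)=x(-j)$ and $x(kp+j)=x(j)$ independently of $k$. Thus the fixed words $L=x[-n,0)$ and $R=x[p,p+n)$ are exactly the left and right length-$n$ contexts of every $p$-block of $x$; that is, $x[kp-n,(k+1)p+n)=L\,x[kp,(k+1)p)\,R$ for all $k$. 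I would then define $G\colon\mathsf{A}^p\to\mathsf{A}^p$ by letting $\tilde w=LwR$ and setting $G(w)(j)=C(\tilde w[j,j+2n+1))$ for $0\le j<p$.

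The key computation is that, for $w=x[kp,(k+1)p)$, the $C$-window of each $i\in[kp,(k+1)p)$ stays inside $[kp-n,(k+1)p+n)$, so $y[kp,(k+1)p)=G(w)$ for every $k$, the context never reaching past the periodic collar. Repeating the construction with $C'$ and the contexts of $y$ produces $G'\colon\mathsf{A}^p\to\mathsf{A}^p$ with $x[kp,(k+1)p)=G'(y[kp,(k+1)p))$. Writing $W_x=\{x[kp,(k+1)p)\colon k\in\mathbb{Z}\}$ and $W_y=\{y[kp,(k+1)p)\colon k\in\mathbb{Z}\}$, the identities $\varphi^{-1}\circ\varphi=\mathrm{id}_X$ and $\varphi\circ\varphi^{-1}=\mathrm{id}_Y$ force $G\rest W_x$ and $G'\rest W_y$ to be mutually inverse, so $G$ restricts to a bijection $W_x\to W_y$. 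As $|\mathsf{A}^p\setminus W_x|=|\mathsf{A}^p\setminus W_y|$, I extend $G\rest W_x$ by an arbitrary bijection of the complements to a permutation $\phi\in\mbox{\rm Sym}(\mathsf{A}^p)$. Then $y[kp,(k+1)p)=\phi(x[kp,(k+1)p))$ for all $k$, i.e.\ $y=\widehat\phi(x)$.

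The step I expect to be the main obstacle is the second one: verifying that the periodicity genuinely propagates to every block boundary and, decisively, that the width-$|\varphi|$ block-code window never escapes the periodic collar into a $p$-hole. This is exactly where the hypothesis $[-|\varphi|,|\varphi|]\subseteq\mbox{\rm Per}_p(x)\cap\mbox{\rm Per}_p(y)$ is used in full; once it is in hand, $G$ depends only on its input block and the remaining bijection-and-extension argument is routine bookkeeping.
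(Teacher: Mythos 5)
Your proof is correct: the hypothesis $[-|\varphi|,|\varphi|]\subseteq\mathrm{Per}_p(x)\cap\mathrm{Per}_p(y)$ does propagate by $p$-translation to every block boundary, the width-$|\varphi|$ code windows for positions in $[kp,(k+1)p)$ indeed never leave the collared block $L\,x[kp,(k+1)p)\,R$ (the last window ends at $(k+1)p+n-1$, inside $R$), and the mutually inverse maps $G$ and $G'$ yield a bijection $W_x\to W_y$ that extends to the required $\phi\in\mathrm{Sym}(\mathsf{A}^p)$ since the complements in $\mathsf{A}^p$ are equinumerous. The paper gives no proof of this lemma --- it is quoted as \cite[Lemma 9]{Kaya} --- and your argument is essentially the standard one from that source, so there is nothing in the paper for it to diverge from.
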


The following fact is also essentially from \cite{Kaya}. We give a proof for the convenience of the reader.

\begin{lemma}\label{lem:Part} Let $X, Y$ be aperiodic Toeplitz subshifts, let $\varphi\colon X\to Y$ be a conjugacy map, let $p$ be a positive integer, and let $W\in \mbox{\rm Parts}(X, p)$. Suppose for all integer $i\in [-2|\varphi|, 2|\varphi|]$, $\mbox{\rm Skel}(W, p)(i)\neq\square$. Then $\varphi(W)\in {\rm Parts}(Y, p)$ and there exists $\phi\in\mbox{\rm Sym}(\mathsf{A}^p)$ such that $\varphi(W)=\widehat{\phi}(W)$.
\end{lemma}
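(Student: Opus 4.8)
The plan is to reduce the statement to the single-sequence version of Lemma~\ref{lem:kaya} and then propagate the resulting block permutation across the whole partition piece $W$ using continuity. First I would fix a Toeplitz sequence $x\in W$; such an $x$ exists because $W$ is a nonempty clopen set and the Toeplitz sequences form a comeager, hence dense, subset of $X$. I would then use $x$ as the generating Toeplitz sequence defining the partition, so that $W=\overline{A(x,p,0)}=\overline{\{S^{mp}x\colon m\in\mathbb{Z}\}}$. Since every element of $W$ has $p$-skeleton ${\rm Skel}(W,p)$, the hypothesis that ${\rm Skel}(W,p)(i)\neq\square$ on $[-2|\varphi|,2|\varphi|]$ translates into $[-2|\varphi|,2|\varphi|]\subseteq {\rm Per}_p(x)$. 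I would set $y=\varphi(x)$ and note, via a routine block-code argument based on Theorem~\ref{thm:CHL}, that $\varphi$ carries Toeplitz sequences to Toeplitz sequences; hence $y$ is a Toeplitz sequence generating $Y$.

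The crux is to transfer the $p$-periodicity near $0$ from $x$ to $y$ so that Lemma~\ref{lem:kaya} becomes applicable. For this I would apply Lemma~\ref{lem:hole} to the conjugacy $\varphi^{-1}\colon Y\to X$, using that $|\varphi^{-1}|=|\varphi|$: if some $j\in[-|\varphi|,|\varphi|]$ were a $p$-hole of $y$, then there would be a $p$-hole $i$ of $\varphi^{-1}(y)=x$ with $|i-j|<|\varphi|$, whence $|i|<2|\varphi|$, contradicting $[-2|\varphi|,2|\varphi|]\subseteq {\rm Per}_p(x)$. This is exactly where the doubling built into the hypothesis is consumed: it absorbs the block-code width when periodicity is pushed across the conjugacy, and I expect this periodicity transfer to be the one genuinely delicate step. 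Having obtained $[-|\varphi|,|\varphi|]\subseteq {\rm Per}_p(x)\cap {\rm Per}_p(y)$, Lemma~\ref{lem:kaya} yields a permutation $\phi\in{\rm Sym}(\mathsf{A}^p)$ with $\varphi(x)=y=\widehat{\phi}(x)$.

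Finally I would globalize this identity from the single point $x$ to all of $W$. Since $\widehat{\phi}$ acts blockwise on the $p$-blocks it commutes with $S^p$, and $\varphi$ commutes with $S^p$ because it is a conjugacy; hence for every $m\in\mathbb{Z}$,
$$ \varphi(S^{mp}x)=S^{mp}\varphi(x)=S^{mp}\widehat{\phi}(x)=\widehat{\phi}(S^{mp}x). $$
Thus $\varphi$ and $\widehat{\phi}$ agree on the set $A(x,p,0)=\{S^{mp}x\colon m\in\mathbb{Z}\}$, which is dense in $W$; as both maps are continuous (indeed homeomorphisms), they agree on $W=\overline{A(x,p,0)}$, giving $\varphi(W)=\widehat{\phi}(W)$. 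Moreover, since $\varphi$ is a homeomorphism, $\varphi(W)=\overline{\varphi(A(x,p,0))}=\overline{\{S^{mp}y\colon m\in\mathbb{Z}\}}=\overline{A(y,p,0)}$, which is a member of ${\rm Parts}(Y,p)$ because $y$ generates $Y$. This establishes both conclusions, with the periodicity transfer of the middle paragraph being the main obstacle and the remaining steps being bookkeeping with the definitions.
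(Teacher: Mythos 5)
Your proposal is correct and follows essentially the same route as the paper: pick a Toeplitz sequence $x\in W$, transfer the $p$-periodicity near $0$ across the conjugacy, invoke Lemma~\ref{lem:kaya} to get $\phi$ with $\varphi(x)=\widehat{\phi}(x)$, and globalize to $W$ using that both $\varphi$ and $\widehat{\phi}$ commute with $S^p$. The only difference is cosmetic: where the paper simply asserts $[-|\varphi|,|\varphi|]\subseteq {\rm Per}_p(\varphi(x))$, you supply the justification explicitly by applying Lemma~\ref{lem:hole} to $\varphi^{-1}$, which is exactly the argument the paper leaves implicit.
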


\begin{proof} Let $x\in X$ be a Toeplitz sequence. It is easy to see that $\varphi(x)$ is also a Toeplitz sequence. Without loss of generality, assume $W=\overline{A(x,p,0)}=\overline{\{S^{kp}(x)\colon k\in\mathbb{Z}\}}$. Then $\varphi(W)=\overline{A(\varphi(x),p,0)}\in {\rm Parts}(Y, p)$. Also, $x\in W$, and therefore $[-2|\varphi|, 2|\varphi|]\subseteq \mbox{\rm Per}_p(x)$. It follows that $[-|\varphi|, |\varphi|]\subseteq \mbox{\rm Per}_p(\varphi(x))$. Thus by Lemma~\ref{lem:kaya}, there exists $\phi\in\mbox{\rm Sym}(\mathsf{A}^p)$ such that $\varphi(x)=\widehat{\phi}(x)$.

Note that $\widehat{\phi}$ commutes with $S^p$. It follows that 
$$ \varphi(W)=\varphi(\overline{A(x,p,0)})=\overline{\varphi(A(x,p,0))}=\overline{\widehat{\phi}(A(x,p,0))}=\widehat{\phi}(\overline{A(x,p,0)})=\widehat{\phi}(W). $$
\end{proof}

We are now ready to prove the main technical result of this subsection.
  
    \begin{theorem}\label{thm:main}
      Let $(X,S)$ and $(Y,S)$ be aperiodic Toeplitz subshifts of topological rank $2$. Suppose $(p_n)_{n\geq 0}$ is an increasing sequence of positive integers such that $p_n\,|\, p_{n+1}$ for all $n\in\mathbb{N}$ and ${\rm lcm}(p_n)_{n\geq 0}$ is the scale for both $(X, S)$ and $(Y, S)$. Then $(X,S)$ and $(Y,S)$ are conjugate if and only if there exists $N\in\mathbb{N}$ such that for all $n\geq N$, $$\chi(X,p_n)\,\, E_{p_n}^{\rm fin}\,\, \chi(Y,p_n).$$
          \end{theorem}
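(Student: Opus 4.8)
The plan is to prove the two implications separately; the easier one is $(\Leftarrow)$, which I sketch in full.

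$(\Leftarrow)$ Assume $\chi(X,p_n)\,E_{p_n}^{\rm fin}\,\chi(Y,p_n)$ for all $n\geq N$, and fix a single $n\geq N$ with $p_n>1$. Since $p_n$ is an essential period of the aperiodic Toeplitz subshift $X$, the $p_n$-skeleton has both holes and non-holes, so ${\rm Parts}_*(X,p_n)\neq\varnothing$ and hence $\chi(X,p_n)\neq\varnothing$. Pick $V\in\chi(X,p_n)$; by the definition of $E_{p_n}^{\rm fin}$ there is $V'\in\chi(Y,p_n)$ with $V\,E_{p_n}\,V'$, i.e.\ $V'=\widehat{\phi}(V)$ for some $\phi\in{\rm Sym}(\mathsf{A}^{p_n})$. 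Now $V$ is a shift of a partition element of $X$, hence clopen in $X$, and likewise $V'$ is clopen in $Y$; as the Toeplitz sequences are comeager in both $X$ and $Y$, and $\widehat{\phi}\colon V\to V'$ is a homeomorphism, the set of $x\in V$ for which both $x$ and $\widehat{\phi}(x)$ are Toeplitz is comeager in the nonempty Baire space $V$, hence nonempty. Choosing such an $x$ and setting $y=\widehat{\phi}(x)\in V'$, minimality gives $X=\overline{\mathcal{O}(x)}$ and $Y=\overline{\mathcal{O}(y)}$, and by construction $y[kp_n,(k+1)p_n)=\phi(x[kp_n,(k+1)p_n))$ for all $k\in\mathbb{Z}$. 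Theorem~\ref{thm:DKL} then produces a conjugacy between $(X,S)$ and $(Y,S)$. Note that this direction uses the hypothesis for just one index $n\geq N$.

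$(\Rightarrow)$ Let $\varphi\colon X\to Y$ be a conjugacy and write $w=|\varphi|$. I would first observe that $\ell(X,p_n)\to\infty$ and $\ell(Y,p_n)\to\infty$: since $p_n\,|\,p_{n+1}$, the sets ${\rm Per}_{p_n}(x)$ increase with union $\mathbb{Z}$, so every fixed interval eventually becomes hole-free and the maximal run grows without bound. Fix $N$ with $\ell(X,p_n),\ell(Y,p_n)>6w$ for $n\geq N$, and fix $n\geq N$, abbreviating $p=p_n$. For $V\in\chi(X,p)$ the $p$-skeleton of $V$ is hole-free on a centred interval of length $\ell(X,p)>6w$, in particular on $[-2w,2w]$; hence Lemma~\ref{lem:Part} applies to the partition element $V$ and yields $\phi\in{\rm Sym}(\mathsf{A}^{p})$ with $\varphi(V)=\widehat{\phi}(V)\in{\rm Parts}(Y,p)$, so that $V\,E_{p}\,\varphi(V)$.

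The remaining task is to identify $\varphi(V)$, up to $E_p$, with an element of $\chi(Y,p)$. Here I would apply Lemma~\ref{lem:hole} in both directions to match the holes of $V$ with those of $\varphi(V)$ to within distance $w$: no hole of $\varphi(V)$ can lie in the interior of the image of the central run, so the central run of $V$, of length $\ell(X,p)$, maps to a run of $\varphi(V)$ of length at least $\ell(X,p)-2w$; and because all partition elements of $Y$ at level $p$ have $p$-skeletons that are shifts of one another, every such skeleton contains a run of the global maximal length $\ell(Y,p)$. The symmetric argument with $\varphi^{-1}$ gives $|\ell(X,p)-\ell(Y,p)|\leq 2w$. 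I would then promote this to the statement that for $n\geq N$ the image of the \emph{centred maximal} run is again a centred maximal run of $Y$; once this is known, the $S^{p}$-invariance of partition elements lets the residual shift be absorbed, so $\varphi(V)$ coincides with some $V^{*}\in\chi(Y,p)$ and $[V]_{E_p}=[\varphi(V)]_{E_p}=[V^{*}]_{E_p}$ belongs to the set of $E_p$-classes of $\chi(Y,p)$. Running the same argument with $\varphi^{-1}$ yields the reverse inclusion, whence $\chi(X,p)\,E_p^{\rm fin}\,\chi(Y,p)$.

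I expect this last step to be the main obstacle. The bound $|\ell(X,p_n)-\ell(Y,p_n)|\leq 2w$ carries an irreducible slack of order $w$, and converting the approximate matching of runs into the exact assertions that $\ell(X,p_n)=\ell(Y,p_n)$ for all large $n$ and that the recentering shifts agree modulo $p_n$ requires a finer analysis than Lemma~\ref{lem:hole} alone provides. The natural tool is the recognizability estimate of Lemma~\ref{lem:lr}: realizing the rank-$2$ structure (via Theorem~\ref{thm:DDMP21}) as a non-Euclidean pair whose distinguished prefix and suffix govern the agreement run at each level, one can pin down the alignment of $\varphi$ on the longest blocks and thereby transport the maximal run exactly to the maximal run, after which the $S^{p_n}$-invariance of partition elements closes the argument.
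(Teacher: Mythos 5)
Your $(\Leftarrow)$ direction is correct and essentially the paper's own argument: the paper also fixes one $n$, takes $W\in\chi(X,p_n)$, a Toeplitz $x\in W$, and feeds $\widehat{\phi}(x)$ into Theorem~\ref{thm:DKL}; your Baire-category argument producing $x$ with both $x$ and $\widehat{\phi}(x)$ Toeplitz is a legitimate way to fill in a detail the paper asserts directly. (One small misstatement: $p_n$ need not be an essential period of $X$ --- the hypothesis only says ${\rm lcm}(p_n)_{n\geq 0}$ is the scale --- but for large $n$ some essential period divides $p_n$, so ${\rm Per}_{p_n}$ of a Toeplitz sequence is nonempty and $\chi(X,p_n)\neq\varnothing$ anyway.)

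The $(\Rightarrow)$ direction has a genuine gap, and it sits exactly where you flag it. Your use of Lemma~\ref{lem:Part} gives $V\,E_{p_n}\,\varphi(V)$ with $\varphi(V)\in{\rm Parts}(Y,p_n)$, and Lemma~\ref{lem:hole} gives $\bigl\lvert\ell(X,p_n)-\ell(Y,p_n)\bigr\rvert\leq 2|\varphi|$; but neither fact identifies $[\varphi(V)]_{E_{p_n}}$ with the class of an element of $\chi(Y,p_n)$: distinct partition elements of $Y$ (shifts $S^jZ$ with $0<j<p_n$) are in general not $E_{p_n}$-related, and a length comparison with slack of order $|\varphi|$ cannot decide which run of the $Y$-skeleton is maximal, since competing runs may differ in length by less than that slack. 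What closes this --- and what the paper actually proves --- is an \emph{exact and uniform} length formula. The paper writes every point of $X$ as a unique building from a proper rank-$2$ pair $\{w_{m,1},w_{m,2}\}$ whose common prefix and suffix exceed $2|\varphi|$, pushes this pair through the block code to a pair $\{u_1,u_2\}$ for $Y$, and then combines Lemma~\ref{lem:lr} (all points of the recentered $W$ carry the same building on the central run, flanked by the distinguished suffix $\beta_w$ and prefix $\alpha_w$) with Lemma~\ref{lem:hole} (the flanking holes of the image can move by less than $|\varphi|$, while $|u_1|,|u_2|>|\varphi|$) to show that the holes of $\varphi(x)$ flanking the image run sit at exactly $-r+|\beta_w|-|\beta_u|-1$ and $s-|\alpha_w|+|\alpha_u|$. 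Hence ${\rm length}(Z_0)={\rm length}(W_0)+(|\alpha_u|-|\alpha_w|)+(|\beta_u|-|\beta_w|)$, with a correction term independent of the partition element and of $n$. This uniformity is precisely what lets maximality transfer (by the symmetric argument with $\varphi^{-1}$), yields $\varphi(W)=S^iZ$ with $Z\in\chi(Y,p_n)$ and $|i|<2|\varphi|$, and only then is Lemma~\ref{lem:Part} applied to the conjugacy $S^{-i}\varphi$ to conclude $W\,E_{p_n}\,Z$. Your closing sketch names the right tools, but the step you defer --- ``transport the maximal run exactly to the maximal run'' --- is the actual content of the proof, not a routine consequence of the bounds you established.
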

    \begin{proof} $(\Rightarrow)$ We will show that every conjugacy map will eventually preserve the maximality of the length of  $W\in {\rm Parts}_M(X,p_n)$. By Proposition~\ref{prop:equiv} (i), we may assume $X=X_{\boldsymbol{\tau}}$, where $\boldsymbol{\tau}=(\tau_n\colon A_{n+1}^*\to A_n^*)_{n\geq 0}$ is a primitive, proper and recognizable directive sequence with $A_0=\mathsf{A}$ and $A_{n+1}=\{1,2\}$ for all $n\in\mathbb{N}$. For each $n\in\mathbb{N}$ and $j=1,2$, let $w_{n,j}=\tau_{[0,n+1)}(j)$. Let $\varphi\colon X\to Y$ be a conjugacy map between $(X, S)$ and $(Y, S)$. In view of Theorem~\ref{thm:CHL}, let $C, C'$ be block codes for $\varphi, \varphi^{-1}$ respectively, both having width $|\varphi|=2c+1$.

Since $\boldsymbol{\tau}$ is proper, there exists a large enough $m\in\mathbb{N}$ such that the common prefix and the common suffix of $w_{m,1}$ and $w_{m,2}$ are both longer than $2|\varphi|$. Fix such an $m$, and denote the common prefix of $w_{m,1}$ and $w_{m,2}$ by $\alpha$ and the common suffix of $w_{m,1}$ and $w_{m,2}$ by $\beta$. Then $|\alpha|, |\beta|>2|\varphi|=4c+2$. 

Let $\beta_0=\beta[|\beta|-c, |\beta|)$ and $\alpha_0=\alpha[0, c)$. Applying the block code $C$ to $\beta_0w_{m,1}\alpha_0$ and $\beta_0w_{m,2}\alpha_0$, we obtain words $u_1$ and $u_2$ such that $|w_{m,1}|=|u_1|$ and $|w_{m,2}|=|u_2|$. Let $\delta$ be the common prefix of $u_1$ and $u_2$, and let $\gamma$ be the common suffix of $u_1$ and $u_2$. Since $|\alpha|, |\beta|>2|\varphi|$, we have $|\delta|, |\gamma|>|\varphi|$. Similar to the above, we can recover $w_{m,1}$ and $w_{m,2}$ by applying $C'$ in the reverse direction. 

Let $\alpha_w$ be the distinguished prefix of $w_{m,1}$ and $w_{m,2}$ given by Lemma~\ref{lem:distinguish}. Similarly, let $\beta_w$ be the distinguished suffix of $w_{m,1}$ and $w_{m,2}$, let $\alpha_u$ be the distinguished prefix of $u_1$ and $u_2$, and let $\beta_u$ be the distinguished suffix of $u_1$ and $u_2$. By the correspondence between the pairs $\{u_1, u_2\}$ and $\{w_{m,1}, w_{m,2}\}$, we have that $\Bigl\lvert|\alpha_w|-|\alpha_u|\Bigr\rvert<|\varphi|$ and $\Bigl\lvert|\beta_w|-|\beta_u|\Bigr\rvert<|\varphi|$.

 Since every $x\in X$ is built from $\{w_{m,1}, w_{m,2}\}$, and $\varphi$ is surjective, we have that every $y\in Y$ is built from $\{u_1, u_2\}$. Since $X, Y$ are aperiodic, $\{w_{m,1}, w_{m,2}\}$ and $\{u_1, u_2\}$ are non-Euclidean pairs. Also, by \cite[Theorem 3.1]{BSTY19}, both $\tau_{w_{m,1}, w_{m,2}}$ and $\tau_{u_1, u_2}$ are fully recognizaible for aperiodic points. It follows that every $x\in X$ is uniquely built from $\{w_{m,1}, w_{m,2}\}$ and every $y\in X$ is uniquely built from $\{u_1, u_2\}$. Moreover, for any $x\in X$, the unique centered $\tau_{w_{m,1}, w_{m,2}}$-representation of $x$ is exactly the same as the unique centered $\tau_{u_1, u_2}$-representation of $\varphi(x)$.

Let $N\in\mathbb{N}$ be sufficiently large such that 
$$\ell(X,p_N)>4(|w_{m,1}|+|w_{m,2}|)+21|\varphi|. $$
Then by Lemma~\ref{lem:distinguish}, $\ell(X, p_N)>4\max\{|\alpha_w|, |\beta_w|, |\alpha_u|, |\beta_u|\}$.

Now fix $n\geq N$. We have that $\ell(X, p_n)\geq \ell(X, p_N)$. We will show that $\chi(X, p_n)\, E^{\rm fin}_{p_n}\,\chi(Y, p_n)$. However, for the clarity of our discussions, we consider an arbitrary $W_0\in \mbox{\rm Parts}_*(X,p_n)$ where $\mbox{\rm length}(W_0)>4(|w_{m,1}|+|w_{m,2}|)$, and let 
$$ W=S^{\left\lfloor\frac{{\rm length}(W_0)}{2}\right\rfloor}W_0. $$
Let 
$$ r=\left\lfloor \frac{{\rm length}(W_0)}{2}\right\rfloor \mbox{ and } s={\rm length}(W_0)-r. $$

We claim that for all $x, x'\in W$, 
\begin{enumerate}
\item[(i)] $x[-r, -r+|\beta_w|)=x'[-r, -r+|\beta_w|)=\beta_w$;
\item[(ii)] $x[s-|\alpha_w|, s)=x'[s-|\alpha_w|,s)=\alpha_w$;
\item[(iii)] the unique building of $x$ from $\{w_{n,1}, w_{n,2}\}$ on $[-r+|\beta_w|, s-|\alpha_w|)$ coincides with the unique building of $x'$ from $\{w_{m,1}, w_{m,2}\}$ on $[-r+|\beta_w|, s-|\alpha_w|)$.
\end{enumerate}
To prove the claim, let $x, x'\in W$. Then $x[-r,s)=x'[-r,s)$. Since ${\rm Skel}(W,p_n)(-r-1)=\square$, there exists $\tilde{x}\in W$ such that $x(-r-1)\neq \tilde{x}(-r-1)$. Suppose $t\geq s$ is the largest such that $x[-r, t)=\tilde{x}[-r,t)$. Then $x(t)\neq \tilde{x}(t)$. By Lemma~\ref{lem:lr}, we get that $x[-r, t)$ and $\tilde{x}[-r, t)$ have common prefix $\beta_w$ and common suffix $\alpha_w$, and the unique building of $x$ from $\{w_{m,1}, w_{m,2}\}$ on $[-r+|\beta_w|, t-|\alpha_w|)$ coincides with the unique building of $\tilde{x}$ from $\{w_{m,1}, w_{m,2}\}$ on $[-r+|\beta_w|, t-|\alpha_w|)$. In particular, $x[-r, r+|\beta_w|)=\beta_w$. Repeating this argument for $x'$, we get (i). Since ${\rm Skel}(W, p_n)(s)=\square$, a similar argument yields (ii). For (iii), note that either $x'(-r-1)\neq x(-r-1)$ or $x'(-r-1)\neq \tilde{x}(-r-1)$. Thus, in either case, Lemma~\ref{lem:lr} gives that the unique building of $x'$ from $\{w_{m,1}, w_{m,2}\}$ on $[-r+|\beta_w|, s-|\alpha_w|)$ coincides with the unique building of $x$ from $\{w_{m,1}, w_{m,2}\}$ on $[-r+|\beta_w|, s-|\alpha_w|)$. This completes the proof of the claim.

Now it follows from the claim that for all $x, x'\in W$,
\begin{enumerate}
\item[(iv)] $\varphi(x)[-r+|\beta_w|, s-|\alpha_w|)=\varphi(x')[-r+|\beta_w|, s-|\alpha_w|)$;
\item[(v)] the unique building of $\varphi(x)$ from $\{u_1, u_2\}$ on $[-r+|\beta_w|, s-|\alpha_w|)$ coincides with the unique building of $\varphi(x')$ from $\{u_1, u_2\}$ on $[-r+|\beta_w|, s-|\alpha_w|)$;
\item[(vi)] $\varphi(x)[-r+|\beta_w|-|\beta_u|, -r+|\beta_w|)=\varphi(x')[-r+|\beta_w|-|\beta_u|, -r+|\beta_w|)=\beta_u$;
\item[(vii)] $\varphi(x)[s-|\alpha_w|, s-|\alpha_w|+|\alpha_u|)=\varphi(x')[s-|\alpha_w|,s-|\alpha_w|+|\alpha_u|)=\alpha_u$.
\end{enumerate}    
Since $W$ is closed under $S^p$, we also get that for any $x\in W$, 
$$ \left[-r+|\beta_w|-|\beta_u|, s-|\alpha_w|+|\alpha_u|\right)\subseteq {\rm Per}_{p_n}(\varphi(x)). $$
Next we verify that 
$$ -r+|\beta_w|-|\beta_u|-1\not\in {\rm Per}_{p_n}(\varphi(x)) \mbox{ and } s-|\alpha_w|+|\alpha_u|\not\in {\rm Per}_{p_n}(\varphi(x)). $$
Toward a contradiction, let $t=-r+|\beta_w|-|\beta_u|-1$ and assume $t\in {\rm Per}_{p_n}(\varphi(x))$. Since $\varphi(x)$ is uniquely built from $\{u_1, u_2\}$, it follows from (v) that for some $u\in\{u_1,u_2\}$, $$\varphi(x)[t-|u|+1+kp_n, t+1+kp_n)=u$$ for all $k\in\mathbb{Z}$. In particular, $$[t-|u|+1, t+1)\subseteq {\rm Per}_{p_n}(\varphi(x)).$$ However, since $t$ is a $p_n$-hole of $x$, we know by Lemma~\ref{lem:hole} that $\varphi(x)$ has a $p_n$-hole within the interval $[t-|\varphi|+1, t+|\varphi|)$. Since $|u|>|\varphi|$, this is a contradiction. Thus $t\not\in {\rm Per}_{p_n}(\varphi(x))$. The proof of $s-|\alpha_w|+|\alpha_u|\not\in {\rm Per}_{p_n}(\varphi(x))$ is similar.

By Lemma~\ref{lem:Part}, $\varphi(W)\in {\rm Parts}(Y, p_n)$. Thus, to summarize, we have shown that $$\varphi(W)=S^{r-|\beta_w|+|\beta_u|}Z_0, $$
where $Z_0\in {\rm Parts}_*(Y, p_n)$ with
$$ {\rm length}(Z_0)= s-|\alpha_w|+|\alpha_u|+r-|\beta_w|+|\beta_u|={\rm length}(W_0)-|\alpha_w|+|\alpha_u|-|\beta_w|+|\beta_u|. $$
Note that 
$$ \bigl\lvert{\rm length}(Z_0)-{\rm length}(W_0)\bigr\rvert<2|\varphi|. $$
Let 
$$ Z=S^{\left\lfloor\frac{{\rm length}(Z_0)}{2}\right\rfloor}Z_0. $$
Then $\varphi(W)=S^iZ$ where
$$ i=r-|\beta_w|+|\beta_u|-\left\lfloor\frac{{\rm length}(Z_0)}{2}\right\rfloor.$$
We have $|i|<2|\varphi|$.

By a symmetric argument, we also get that for any $Z_0\in {\rm Parts}_*(Y, p_n)$ where ${\rm length}(Z_0)>4(|u_1|+4|u_2|)=4(|w_{m,1}|+|w_{m,2}|)$, there is $W_0\in {\rm Parts}_*(X, p_n)$ with 
$${\rm length}(W_0)={\rm length}(Z_0)+|\alpha_w|-|\alpha_u|+|\beta_w|-|\beta_u|$$ 
so that $\varphi^{-1}(Z)=S^jW$ for some $|j|<2|\varphi|$. 

Finally, suppose $W\in \chi(X, p_n)$. Then $${\rm length}(W_0)=\ell(X, p_n)> 4(|w_{m,1}|+|w_{m,2}|)+21|\varphi|,$$ and we get that ${\rm length}(Z_0)>4(|u_1|+|u_2|)$. In this case, we can conclude that $Z\in \chi(Y, p_n)$. This is because, assume there is some $Z_0'\in \mbox{\rm Parts}_*(Y,p_n)$ with ${\rm length}(Z_0')>{\rm length}(Z_0)$, then we would get $W_0'\in\mbox{\rm Parts}_*(X, p_n)$ with ${\rm length}(W_0')>{\rm length}(W_0)$, which is a contradiction.

Since $|i|<2|\varphi|$, we have that $|S^{-i}\varphi|<5|\varphi|$. Since $\ell(X, p_n)>21|\varphi|$, we have that for all integer $j\in [-2|S^{-i}\varphi|, 2|S^{-i}\varphi|]$, $\mbox{\rm Skel}(W, p_n)(j)\neq \square$. Thus by Lemma~\ref{lem:Part}, we conclude that there is $\phi\in\mathsf{A}^{p_n}$ such that $Z=S^{-1}\varphi(W)=\widehat{\phi}(W)$. This shows that $Z\, E_{p_n}\, W$. 

Thus we have shown that for all $W\in \chi(X, p_n)$, there is $Z\in \chi(Y, p_n)$ such that $Z\, E_{p_n}\, W$. By a symmetric argument, we can get that for all $Z\in \chi(Y, p_n)$, there is $W\in \chi(X, p_n)$ such that $W\, E_{p_n}\, Z$. Thus $\chi(X, p_n)\, E^{{\rm fin}}_{p_n}\, \chi(Y, p_n)$.

       $(\Leftarrow)$ Supose $\chi(X,p_n)\,\, E_{p_n}^{\rm fin}\,\, \chi(Y,p_n)$. Take any $W\in \chi(X,p_n)$ and any Topelitz sequence $x\in W$. There is $Z\in \chi(Y, p_n)$ such that $W\, E_p\, Z$. Let $\phi\in\mathsf{A}^{p_n}$ be such that $Z=\widehat{\phi}(W)$. Then $\widehat{\phi}(x)$ is a Toeplitz sequence in $Y$. By Theorem~\ref{thm:DKL}, $X$ and $Y$ are conjugate.
        \end{proof}

\subsection{The hyperfiniteness of classification problems\label{sec:5.2}} Recall from Subsection~\ref{sec:4.2} that $\boldsymbol{S}$ is the Polish space of all subshifts with some alphabet $\mathsf{A}\subseteq \mathbb{Z}$. Let $\boldsymbol{T}$ denote the subspace of $\boldsymbol{S}$ consisting of all aperiodic Toeplitz subshifts and let $\boldsymbol{T}_2$ denote the subspace of  $\boldsymbol{S}$ consisting of all aperiodic Toeplitz subshifts of topological rank $2$. Both $\boldsymbol{T}$ and $\boldsymbol{T}_2$ are Borel subsets of $\boldsymbol{S}$, and therefore they are standard Borel spaces. 

On the other hand, consider the Polish space $\mathbb{N}^{\mathbb{N}}$ with the product topology. Let
$$ \boldsymbol{P}=\left\{ (p_n)_{n\geq 0}\in \mathbb{N}^{\mathbb{N}}\colon p_n>0 \mbox{ and } p_n\,|\,p_{n+1} \mbox{ for all $n\in\mathbb{N}$}\right\}. $$
Then $\boldsymbol{P}$ is a closed subset of $\mathbb{N}^{\mathbb{N}}$, and hence a Polish space. Also let $\boldsymbol{N}$ be the set of all supernatural numbers. Consider the bijection $\mu\colon (\mathbb{N}\cup\{\infty\})^\mathbb{N}\to \boldsymbol{N}$ defined by
$$ \mu((a_n)_{n\geq 0})=\prod_n p_n^{a_n},$$
where $p_n$ is the $(n+1)$-th prime number in the increasing order, i.e., $p_0=2$, $p_1=3$, etc. Since $(\mathbb{N}\cup\{\infty\})^\mathbb{N}$ with the product topology of discrete topologies is a Polish space, we naturally regard $\boldsymbol{N}$ as a Polish space as well. It is easy to see that there is a Borel map $\pi\colon \boldsymbol{N}\to \boldsymbol{P}$ such that for all $\mathsf{u}\in\boldsymbol{N}$, ${\rm lcm}(\pi(\mathsf{u}))=\mathsf{u}$ (for details, see \cite[Section 2.6]{Kaya}).

The following lemma follows from the main result of \cite{Williams1984}.

\begin{lemma}[\cite{Williams1984}]\label{lem:mef} There is a Borel map $\delta\colon \boldsymbol{T}\to \boldsymbol{N}$ such that for any $X\in \boldsymbol{T}$, ${\rm Odo}(\delta(X))$ is the maximal equicontinuous factor of $X$. In particular, if $X, Y\in \boldsymbol{T}$ are conjugate, then $\delta(X)=\delta(Y)$. 
\end{lemma}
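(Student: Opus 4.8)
The plan is to set $\delta(X)=\mathsf{u}_X$, the scale of $X$, which is a well-defined supernatural number by Williams \cite{Williams1984}; as recalled in Subsection~\ref{sec:2.3}, the odometer ${\rm Odo}(\mathsf{u}_X)$ is then the maximal equicontinuous factor of $X$. The final clause of the lemma is immediate once $\delta$ is defined this way: the maximal equicontinuous factor is a topological conjugacy invariant, and two odometers are conjugate if and only if they have equal scales, so conjugate $X,Y\in\boldsymbol{T}$ must satisfy $\mathsf{u}_X=\mathsf{u}_Y$. Thus the entire content of the lemma is the Borelness of the map $X\mapsto\mathsf{u}_X$.

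To establish Borelness I would reduce to the divisibility sets. A supernatural number is determined by the family of ordinary integers dividing it; under the identification $\mu$ of $\boldsymbol{N}$ with $(\mathbb{N}\cup\{\infty\})^\mathbb{N}$, the coordinate giving the exponent $n_p(X)$ of the prime $p$ satisfies $n_p(X)\geq k$ iff $p^k\mid\mathsf{u}_X$. Hence it suffices to show that for every positive integer $q$ the set $B_q=\{X\in\boldsymbol{T}\colon q\mid\mathsf{u}_X\}$ is Borel: then each coordinate map $X\mapsto n_p(X)$ is Borel into $\mathbb{N}\cup\{\infty\}$, and so $\delta$ is Borel into $\boldsymbol{N}$.

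The key step is a combinatorial characterization of $B_q$. I claim that for $X\in\boldsymbol{T}$ one has $q\mid\mathsf{u}_X$ if and only if $X$ admits a factor map onto the cyclic rotation $(\mathbb{Z}/q\mathbb{Z},+1)$. Indeed, $(\mathbb{Z}/q\mathbb{Z},+1)$ is equicontinuous, so any factor map from $X$ onto it factors through the maximal equicontinuous factor ${\rm Odo}(\mathsf{u}_X)$, and ${\rm Odo}(\mathsf{u}_X)$ has $\mathbb{Z}/q\mathbb{Z}$ as a factor exactly when $q\mid\mathsf{u}_X$. By Theorem~\ref{thm:CHL} and compactness, such a factor map, if it exists, is given by a block code: there are $n\in\mathbb{N}$ and a coloring $c\colon L_{2n+1}(X)\to\mathbb{Z}/q\mathbb{Z}$ with $\pi(x)=c(x[-n,n+1))$, and the equivariance $\pi\circ S=(+1)\circ\pi$ translates into the finitary constraint that for every $w\in L_{2n+2}(X)$ one has $c(w[1,2n+2))=c(w[0,2n+1))+1$. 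Conversely any coloring satisfying this constraint yields a well-defined, continuous, equivariant, and (since its image is nonempty and $+1$-invariant in $\mathbb{Z}/q\mathbb{Z}$) surjective factor map. Therefore
\[
q\mid\mathsf{u}_X\iff \exists n\in\mathbb{N}\ \exists c\colon L_{2n+1}(X)\to\mathbb{Z}/q\mathbb{Z}\ \forall w\in L_{2n+2}(X)\ \bigl(c(w[1,2n+2))=c(w[0,2n+1))+1\bigr).
\]

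Finally I would read off Borelness from this formula. For each fixed $n$ the bracketed statement depends only on the finite sets $L_{2n+1}(X)$ and $L_{2n+2}(X)$, and the existence of a suitable finite coloring $c$ is a decidable property of this finite data. Since the metric on $\boldsymbol{S}$ makes each map $X\mapsto L_m(X)$ locally constant, the set of $X$ satisfying the inner condition for a fixed $n$ is clopen in $\boldsymbol{T}$; hence $B_q$, being a countable union over $n$ of these clopen sets, is (relatively open, and in particular) Borel. I expect the main obstacle to be not this Borel bookkeeping, which is routine, but the careful justification of the dynamical equivalence ``$q\mid\mathsf{u}_X\iff X$ factors onto $\mathbb{Z}/q\mathbb{Z}$'' — in particular invoking the universal property of the maximal equicontinuous factor and the classification of odometers by their scales — together with the verification that the finite-window coloring condition captures \emph{exactly} the factor maps onto $\mathbb{Z}/q\mathbb{Z}$.
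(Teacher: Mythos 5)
Your proposal is correct, and your choice of $\delta$ (the scale $\mathsf{u}_X$) is exactly the map the paper intends; the difference is in how much is actually proved. The paper offers no argument at all: it simply records the lemma as a consequence of the main result of \cite{Williams1984} (that ${\rm Odo}(\mathsf{u}_X)$ is the maximal equicontinuous factor), treating the Borelness of $X\mapsto\mathsf{u}_X$ as routine bookkeeping --- implicitly, one can extract the essential periods of a Toeplitz subshift in a Borel way from its language, much as in \cite[Section 2.6]{Kaya}. You instead supply a self-contained measurability argument: you characterize the finite divisors $q$ of $\mathsf{u}_X$ dynamically, as those $q$ for which $X$ factors onto the cyclic rotation $(\mathbb{Z}/q\mathbb{Z},+1)$ (using the universal property of the maximal equicontinuous factor plus the fact that an odometer has $\mathbb{Z}/q\mathbb{Z}$ as a factor exactly when $q\mid\mathsf{u}$), and then finitize the existence of such a factor map through Theorem~\ref{thm:CHL} into a coloring condition on the finite sets $L_{2n+1}(X)$, $L_{2n+2}(X)$, which is locally constant in the metric on $\boldsymbol{S}$. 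All three steps check out: the odometer-factor claim follows from the standard classification of clopen $S^q$-invariant partitions (or eigenvalue groups) of odometers, the block-code equivalence is exactly CHL in both directions, and the local constancy of $X\mapsto L_m(X)$ makes each divisibility set $B_q$ relatively open, hence Borel, so each prime-exponent coordinate of $\delta$ is Borel. What your route buys is an explicit, language-level proof of the measurability that the paper leaves to the reader, and in fact the stronger conclusion that each set $\{X\in\boldsymbol{T}\colon q\mid\mathsf{u}_X\}$ is relatively open; what it costs is reliance on two standard dynamical facts (the universal property of the maximal equicontinuous factor and the classification of finite cyclic factors of odometers) that the scale-from-essential-periods route avoids.
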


Let $X$ be an uncountable standard Borel space. Since $X$ is Borel isomorphic to $\mathbb{R}$, we can fix a Borel linear ordering $\prec$ on $X$. Suppose $0\in X$ is a distinguished element of $X$. Let $X^{{\rm fin}}$ be the space of all finite subsets of non-zero elements of $X$. Then we can regard $X^{{\rm fin}}$ as the following Borel subset of $X^\mathbb{N}$:
$$ \bigl\{(x_n)_{n\geq 0}\colon \exists N\in\mathbb{N}\ [\,\forall n\in\mathbb{N}\  (n\leq N\leftrightarrow  x_n\neq 0) \mbox{ and } \forall n<N\ (x_n\prec x_{n+1})\,] \bigr\}. $$
Thus $X^{{\rm fin}}$ becomes a standard Borel space. 

The following result determines the complexity of the conjugacy problem for Toeplitz subshifts of topological rank $2$.

\begin{theorem}\label{thm:con}
The conjugacy relation on $\boldsymbol{T}_2$ is hyperfinite. 
\end{theorem}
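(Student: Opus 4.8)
The plan is to realize the conjugacy relation on $\boldsymbol{T}_2$ as a countable Borel equivalence relation that Borel reduces to $E_1$, and then to invoke the Kechris--Louveau theorem (Theorem~\ref{thm:KL}). First I would record that conjugacy is countable and Borel: by the Curtis--Hedlund--Lyndon theorem (Theorem~\ref{thm:CHL}), two subshifts are conjugate exactly when some block code together with an inverse block code implements a conjugacy; each block code is determined by a finite amount of data and there are only countably many of them over alphabets $\mathsf{A}\subseteq\mathbb{Z}$, and for each fixed block code the set of pairs it conjugates is Borel. Hence conjugacy on $\boldsymbol{T}_2$ is a countable union of Borel graphs, i.e. a countable Borel equivalence relation. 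Consequently it suffices to produce an uncountable standard Borel space $Z$ and a Borel reduction of conjugacy to $E_1(Z)$, at which point Theorem~\ref{thm:KL} finishes the argument.

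The reduction is built from the characterization in Theorem~\ref{thm:main}. Let $\delta\colon\boldsymbol{T}\to\boldsymbol{N}$ be the Borel scale map of Lemma~\ref{lem:mef} and let $\pi\colon\boldsymbol{N}\to\boldsymbol{P}$ be the Borel map with ${\rm lcm}(\pi(\mathsf{u}))=\mathsf{u}$. For $X\in\boldsymbol{T}_2$ put $\mathsf{u}=\delta(X)$ and $p_n=\pi(\mathsf{u})_n$; the map $(X,n)\mapsto\chi(X,p_n)$, with $\chi(X,p)$ the finite subset of $K(\mathbb{Z}^\mathbb{Z})$ defined in Subsection~\ref{sec:5.1}, is Borel, since every ingredient ($p$-skeletons, the partition ${\rm Parts}(X,p)$, the length function, and the recentering shift) is computed from $X$ and $p$ in a Borel fashion. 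Next, because ${\rm Sym}(\mathsf{A}^p)$ is finite, $E_p^{\rm fin}$ is a \emph{finite} Borel equivalence relation on the space of finite subsets of $K(\mathbb{Z}^\mathbb{Z})$, uniformly in $p$; fixing a Borel linear order $\prec$ on that space and letting $\theta_p(\mathcal{K})$ be the $\prec$-least member of the (finite) $E_p^{\rm fin}$-class of $\mathcal{K}$ yields a Borel map $(p,\mathcal{K})\mapsto\theta_p(\mathcal{K})$ that is a complete invariant for $E_p^{\rm fin}$, i.e. $\mathcal{K}\,E_p^{\rm fin}\,\mathcal{L}\iff\theta_p(\mathcal{K})=\theta_p(\mathcal{L})$.

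With these ingredients I would define $F\colon\boldsymbol{T}_2\to(\boldsymbol{N}\times W)^{\mathbb{N}}$, where $W$ is the standard Borel space of finite subsets of $K(\mathbb{Z}^\mathbb{Z})$, by
$$ F(X)_n=\bigl(\delta(X),\ \theta_{p_n}(\chi(X,p_n))\bigr),\qquad p_n=\pi(\delta(X))_n. $$
Let $Z=\boldsymbol{N}\times W$, an uncountable standard Borel space, and let $E_1(Z)$ be eventual agreement on $Z^{\mathbb{N}}$. The key point is that the scale, sitting in the first coordinate of every term, is constant in $n$, so $F(X)\,E_1(Z)\,F(Y)$ forces $\delta(X)=\delta(Y)$ (eventual equality of a constant sequence is equality) together with eventual equality of the invariants. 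By completeness of $\theta$, the latter says exactly that $\chi(X,p_n)\,E_{p_n}^{\rm fin}\,\chi(Y,p_n)$ for all large $n$; and since $\delta(X)=\delta(Y)$ makes $(p_n)$ a common period structure with ${\rm lcm}(p_n)$ the shared scale of both, Theorem~\ref{thm:main} yields that $X$ and $Y$ are conjugate. Conversely, if $X$ and $Y$ are conjugate then $\delta(X)=\delta(Y)$ by Lemma~\ref{lem:mef}, and Theorem~\ref{thm:main} provides the eventual $E_{p_n}^{\rm fin}$-equivalence, so $F(X)\,E_1(Z)\,F(Y)$. Thus $F$ is a Borel reduction of conjugacy to $E_1(Z)$, and the theorem follows.

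The conceptually heavy work is already contained in Theorem~\ref{thm:main}; within the present argument the only genuine obstacle is the Borel bookkeeping, namely verifying that $(X,p)\mapsto\chi(X,p)$ is Borel and that the finite relations $E_p^{\rm fin}$ admit a \emph{uniformly} Borel family of complete invariants. The second point rests on the observation that each $E_p^{\rm fin}$-class is finite, since any set $E_p^{\rm fin}$-equivalent to $\mathcal{K}$ consists of subsets of the finite union of the $E_p$-classes represented in $\mathcal{K}$, so the $\prec$-least selector is well defined and Borel. I would also stress that recognizing the target as $E_1$ (rather than some more complex relation) is precisely what makes Theorem~\ref{thm:KL} applicable and delivers hyperfiniteness.
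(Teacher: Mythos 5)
Your proof is correct and follows essentially the same route as the paper: countability of conjugacy via Curtis--Hedlund--Lyndon, the Borel maps $\delta$ and $\pi$, Borelness of $X\mapsto\chi(X,p)$, a uniformly Borel complete invariant for the finite relations $E_p^{\rm fin}$, a reduction to $E_1$, and then Theorem~\ref{thm:KL}. The only cosmetic difference is that the paper converts $E_p^{\rm fin}$ into equality via the map $\lambda(p,\mathcal{K})=\{[K]_{E_p}\colon K\in\mathcal{K}\}$ into $(\boldsymbol{K}^{\rm fin})^{\rm fin}$, whereas you use a $\prec$-least selector on the finite $E_p^{\rm fin}$-classes; both are standard ways of smoothing a finite Borel equivalence relation.
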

\begin{proof} By Theorem~\ref{thm:CHL}, each $X\in \boldsymbol{T}$ is conjugate to at most countably many $Y\in \boldsymbol{T}$. Thus the conjugacy relation on $\boldsymbol{T}$ is countable. 

Let $\theta=\pi\circ \delta$, where $\delta$ is from Lemma~\ref{lem:mef} and $\pi$ is from the paragraph preceding Lemma~\ref{lem:mef}. Then $\theta\colon \boldsymbol{T}\to \boldsymbol{P}$ is Borel, and for conjugate $X, Y\in \boldsymbol{T}$, $\theta(X)=\theta(Y)$. 

Now let $\boldsymbol{K}$ be the space of all compact subsets of $\mathsf{A}^\mathbb{Z}$ for some $\mathsf{A}\subseteq\mathbb{Z}$. Equipped with the Hausdorff metric, $\boldsymbol{K}$ is a Polish space. Let $\varnothing$ be the distinguished element of $\boldsymbol{K}$. Then $\boldsymbol{K}^{{\rm fin}}$ is a standard Borel space. Let $\varnothing$ be the distinguished element of $\boldsymbol{K}^{{\rm fin}}$ again. Then $(\boldsymbol{K}^{{\rm fin}})^{{\rm fin}}$ is again a standard Borel space. 

It is now routine to check that, given any positive integer $p$, the map $X\mapsto \chi(X, p)$ from $\boldsymbol{T}$ to $\boldsymbol{K}^{{\rm fin}}$ is Borel. Let $\mathbb{N}_+$ be the space of all positive integers with the discrete topology. It is also routine to check that the map $\lambda\colon \mathbb{N}_+\times\boldsymbol{K}^{{\rm fin}}\to (\boldsymbol{K}^{\rm fin})^{\rm fin}$ defined by 
$$ \lambda(p, \mathcal{K})=\{[K]_{E_p}\colon K\in \mathcal{K}\} $$
is Borel. It is also clear that for $p\in\mathbb{N}_+$ and $\mathcal{K}, \mathcal{L}\in \boldsymbol{K}^{\rm fin}$, 
$$\mathcal{K}\, E_p^{\rm fin}\, \mathcal{L}\iff \lambda(p, \mathcal{K})=\lambda(p, \mathcal{L}). $$
Thus the map 
$$ X\mapsto \Biggl( \theta(X), \lambda\Bigl( (\theta(X))_n, \chi\bigl(X, (\theta(X))_n\bigr)\Bigr)\Biggr)_{n\geq 0} $$
is a Borel map from $\boldsymbol{T}$ to $Y^\mathbb{N}$, where $Y=\boldsymbol{P}\times (\boldsymbol{K}^{\rm fin})^{\rm fin}$. By Theorem~\ref{thm:main}, this map restricted on $\boldsymbol{T}_2$ is a Borel reduction from the conjugacy relation on $\boldsymbol{T}_2$ to $E_1(Y)$. 

By Theorem~\ref{thm:KL}, the conjugacy relation on $\boldsymbol{T}_2$ is hyperfinite.
\end{proof}

Given Toeplitz subshifts $X, Y\subseteq \mathsf{A}^\mathbb{Z}$, we say that $X$ and $Y$ are \textbf{flip conjugate} if $(X, S)$ is conjugate to either $(Y, S)$ or $(Y, S^{-1})$. 

\begin{corollary} The flip conjugacy relation on $\boldsymbol{T}_2$ is hyperfinite.
\end{corollary}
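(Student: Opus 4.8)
The plan is to realize flip conjugacy as a finite-index extension of the conjugacy relation and then invoke Lemma~\ref{lem:JKL}, using the hyperfiniteness of conjugacy already established in Theorem~\ref{thm:con}. Concretely, I would introduce the spatial reversal map. For $x\in\mathsf{A}^\mathbb{Z}$ set $x^-(i)=x(-i)$, and for a subshift $X\subseteq\mathsf{A}^\mathbb{Z}$ let $X^-=\{x^-\colon x\in X\}$. Then $(X^-,S)$ is conjugate to $(X,S^{-1})$, so that $X$ and $Y$ are flip conjugate if and only if $X$ is conjugate to $Y$ or $X$ is conjugate to $Y^-$. The first step is thus to record that $\iota\colon\boldsymbol{S}\to\boldsymbol{S}$, $\iota(X)=X^-$, is a Borel involution: it is in fact a homeomorphism of $\boldsymbol{S}$ (reversal is continuous for the metric $d$, since $L_n(X^-)$ is determined by $L_n(X)$), and $\iota\circ\iota=\mathrm{id}$.

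Next I would check that $\iota$ preserves the space $\boldsymbol{T}_2$. Reversing a Toeplitz sequence yields a Toeplitz sequence, so $\iota$ maps aperiodic Toeplitz subshifts to aperiodic Toeplitz subshifts; and since $(X^-,S)$ is conjugate to $(X,S^{-1})$, which has the same Bratteli--Vershik rank as $(X,S)$ (one obtains a representation of the inverse system by reversing the order $\preceq$ on each edge set), reversal preserves topological rank~$2$. Hence $\iota\rest\boldsymbol{T}_2$ is a Borel involution of $\boldsymbol{T}_2$. Writing $E$ for the conjugacy relation and $F$ for the flip conjugacy relation on $\boldsymbol{T}_2$, we then have
$$ X\,F\,Y\iff X\,E\,Y\ \text{ or }\ X\,E\,\iota(Y). $$
Since $E$ is Borel and $\iota$ is Borel, $F$ is a Borel equivalence relation; since $E$ is countable (as noted in the proof of Theorem~\ref{thm:con}) and $F\subseteq E\cup(E\circ\iota)$, $F$ is a countable Borel equivalence relation with $E\subseteq F$.

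Finally I would observe that $F$ has finite index over $E$: the $F$-class of any $X$ is exactly the union of the $E$-class of $X$ and the $E$-class of $\iota(X)$, so each $F$-class contains at most two $E$-classes, giving index at most $2$. With $E$ hyperfinite by Theorem~\ref{thm:con} and $F$ finite index over $E$, Lemma~\ref{lem:JKL} yields that $F$ is hyperfinite, which is the assertion. The only points requiring genuine (though routine) verification—and hence the main obstacle—are confirming that $\iota$ is Borel and, more importantly, that it preserves \emph{both} Toeplitzness and topological rank $2$ so that it restricts to an involution of $\boldsymbol{T}_2$; once reversal is known to map $\boldsymbol{T}_2$ into itself, the finite-index bound and the appeal to Lemma~\ref{lem:JKL} are immediate.
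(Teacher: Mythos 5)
Your proposal is correct and follows essentially the same route as the paper: the paper also deduces the result by noting that flip conjugacy has finite index over conjugacy and then applying Theorem~\ref{thm:con} together with Lemma~\ref{lem:JKL}. The only difference is that the paper treats the finite-index claim as obvious, whereas you spell it out via the reversal involution $\iota$ and its preservation of $\boldsymbol{T}_2$; this is a routine but harmless elaboration.
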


\begin{proof} Obviously, the flip conjugacy relation has finite index over the conjugacy relation. By Theorem~\ref{thm:con} and 
Lemma~\ref{lem:JKL}, the flip conjugacy relation on $\boldsymbol{T}_2$ is hyperfinite.
\end{proof}

Given Toeplitz subshifts $X, Y\subseteq \mathsf{A}^\mathbb{Z}$, define $X\sim_{\rm bf}Y$ if $(X, S)$ is a factor of $(Y, S)$ and $(Y, S)$ is a factor of $(X, S)$. We call $\sim_{\rm bf}$ the \textbf{bi-factor} relation.

\begin{corollary} The bi-factor relation on $\boldsymbol{T}_2$ is hyperfinite.
\end{corollary}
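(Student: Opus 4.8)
Write $\cong$ for the conjugacy relation on $\boldsymbol{T}_2$. The plan is to exhibit $\cong$ as a hyperfinite subrelation of $\sim_{\rm bf}$ of finite index, and then to invoke Lemma~\ref{lem:JKL}. Since every conjugacy map is in particular a factor map, conjugate subshifts are mutual factors, so $\cong\ \subseteq\ \sim_{\rm bf}$; and by Theorem~\ref{thm:con} the relation $\cong$ on $\boldsymbol{T}_2$ is hyperfinite. It therefore remains to check that $\sim_{\rm bf}$ is a countable Borel equivalence relation and that it has finite index over $\cong$.

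Countability and Borelness follow exactly as for the conjugacy relation. By Theorem~\ref{thm:CHL} every factor map between subshifts is implemented by a block code of some finite width, and for each width there are only finitely many block codes on a given finite alphabet; hence each $X\in\boldsymbol{T}_2$ has only countably many subshift factors, so each $\sim_{\rm bf}$-class is countable. For Borelness, the relation ``$X$ is a factor of $Y$'' is a countable union, over widths $n$ and the finitely many block codes $C$ of width $n$, of the Borel conditions ``$C$ induces a well-defined continuous surjection from $Y$ onto $X$ commuting with $S$''; intersecting the conditions for the two directions shows that $\sim_{\rm bf}$ is Borel. That $\sim_{\rm bf}$ is transitive (hence an equivalence relation) is immediate from composing factor maps.

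The main step is the finite index claim, and this is where we use the recent result of Espinoza \cite{Es23}: a minimal subshift of finite topological rank has, up to conjugacy, only finitely many aperiodic subshift factors. Fix $X\in\boldsymbol{T}_2$. Every $Y$ in the $\sim_{\rm bf}$-class of $X$ is in particular a factor of $X$, and $Y$, being an element of $\boldsymbol{T}_2$, is aperiodic; hence $Y$ is conjugate to one of the finitely many aperiodic factors of $X$. Consequently the $\sim_{\rm bf}$-class of $X$ meets only finitely many $\cong$-classes, so $\sim_{\rm bf}$ is finite index over $\cong$. Applying Lemma~\ref{lem:JKL} with $E=\ \cong$ and $F=\ \sim_{\rm bf}$ then yields that $\sim_{\rm bf}$ is hyperfinite.

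I anticipate the only delicate point to be the finite index claim, since it rests entirely on Espinoza's finiteness theorem for aperiodic factors of finite-rank subshifts; one must confirm that the cited version applies to topological rank $2$ and genuinely produces finitely many \emph{conjugacy} classes of aperiodic factors. The countability and Borelness verifications are routine adaptations of the arguments already used in the proof of Theorem~\ref{thm:con}.
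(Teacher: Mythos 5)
Your proposal is correct and follows essentially the same route as the paper: both use Theorem~\ref{thm:CHL} to see that $\sim_{\rm bf}$ is a countable Borel equivalence relation, Espinoza's finiteness theorem (\cite[Theorem 1.7]{Es23}) to get finite index of $\sim_{\rm bf}$ over the conjugacy relation, and then Theorem~\ref{thm:con} together with Lemma~\ref{lem:JKL} to conclude hyperfiniteness. Your extra care about Borelness and about the applicability of Espinoza's result is sound but amounts to making explicit what the paper leaves as routine.
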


\begin{proof} By Theorem~\ref{thm:CHL}, the bi-factor relation on $\boldsymbol{T}$ is a countable Borel equivalence relation. By Espinoza \cite[Theorem 1.7]{Es23}, each minimal subshift of finite topological rank has finitely many aperiodic subshift factors up to conjugacy. Thus the bi-factor relation on $\boldsymbol{T}_2$ has finite index over the conjugacy relation. By Theorem~\ref{thm:con} and Lemma~\ref{lem:JKL}, the bi-factor relation on $\boldsymbol{T}_2$ is hyperfinite.
\end{proof}

In this rest of this subsection we show that the conjugacy relation, the flip conjugacy relation, and the bi-factor relation on $\boldsymbol{T}_2$ are all not smooth. By Lemma~\ref{lem:Thomas} it suffices to prove this for the conjugacy relation on $\boldsymbol{T}_2$.

We consider one of the simplest kind of Toeplitz subshifts below. Let $\mathsf{A}$ be a finite alphabet and let $x\in\mathsf{A}^\mathbb{Z}$ be a Toeplitz sequence with a period structure $(p_n)_{n\geq 0}$. Say that $x$ has \textbf{single holes} with respect to $(p_n)_{n\geq 0}$ if for all $n\in\mathbb{N}$, there is exactly one $p_n$-hole of $x$ in the interval $[0,p_n)$. A Toeplitz subshift $X$ is said to have \textbf{single holes} if there is a period structure $(p_n)_{n\geq 0}$ of $X$ and a Toeplitz sequence $x\in X$ which has single holes with respect to $(p_n)_{n\geq 0}$. 

\begin{lemma}\label{lem:sh} Let $X\subseteq \{0,1\}^{\mathbb{Z}}$ be an aperiodic Toeplitz subshift with single holes. Then $X$ is a strong Toeplitz subshift of rank $2$.
\end{lemma}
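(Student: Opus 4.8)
The plan is to verify condition (3) of Theorem~\ref{thm:STS2} for a single-holes Toeplitz sequence, which immediately yields that $X$ is a strong Toeplitz subshift of rank $2$. Fix a period structure $(p_n)_{n\geq 0}$ of $X$ and a Toeplitz sequence $x\in X$ that has single holes with respect to it, and for each $n$ let $h_n\in[0,p_n)$ be the unique $p_n$-hole of $x$ in $[0,p_n)$. Since the set of $p_n$-holes of $x$ is $p_n$-periodic, this means ${\rm Per}_{p_n}(x)=\mathbb{Z}\setminus(h_n+p_n\mathbb{Z})$.

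The key observation is that, over the binary alphabet, \emph{every} pair $(p_n,t)$ with $0\le t<p_n$ is automatically a strong rank-2 cut of $x$. Indeed, each window $[t+kp_n,t+(k+1)p_n)$ contains exactly one hole, located at the fixed internal coordinate $s=(h_n-t)\bmod p_n$; at every other internal coordinate $c$ the position $t+kp_n+c$ lies in ${\rm Per}_{p_n}(x)$, so the entry of $x$ there is prescribed by ${\rm Skel}(x,p_n)$ and is the same for all $k$. Thus the blocks $x[t+kp_n,t+(k+1)p_n)$ can differ only at coordinate $s$; and since $h_n$ is a genuine $p_n$-hole, the value of $x$ at positions $\equiv h_n\pmod{p_n}$ is not constant, so over $\{0,1\}$ both values occur there and the cut has exactly two blocks. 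In particular these two blocks differ in precisely one coordinate, so their maximal common prefix has length $s$ and their maximal common suffix has length $p_n-1-s$. Choosing $t$ with $t\equiv h_n-\lfloor p_n/2\rfloor\pmod{p_n}$ places the hole near the center, making both of these lengths at least $\lfloor p_n/2\rfloor-1$.

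It then remains to match the divisibility bookkeeping in condition (3). Here I would use that, since $(p_n)_{n\geq 0}$ is a period structure, its least common multiple equals the scale, $\mathsf{u}={\rm lcm}(p_n)_{n\geq 0}$: each $p_n$ is an essential period, and this odometer is exactly the maximal equicontinuous factor of $X$ (see \cite{Williams1984}). Because $p$ is a finite number, $p\,|\,\mathsf{u}={\rm lcm}(p_n)$ forces $p\,|\,p_N$ for all sufficiently large $N$. Now, given $p\,|\,\mathsf{u}$ and $m\in\mathbb{N}$, choose $N$ so large that both $p\,|\,p_N$ and $p_N>2m+2$ hold, and put $q=p_N$ together with $t\equiv h_N-\lfloor p_N/2\rfloor\pmod{p_N}$, $0\le t<q$. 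By the key observation $(q,t)$ is a strong rank-2 cut whose two blocks differ in a single central coordinate, so their maximal common prefix and suffix both exceed $m$; moreover $p\,|\,q$ and $q=p_N\,|\,\mathsf{u}$. This is exactly what condition (3) of Theorem~\ref{thm:STS2} demands, so $X$ is a strong Toeplitz subshift of rank $2$.

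The only genuinely nontrivial point, and the place where I expect to spend most care, is the identity $\mathsf{u}={\rm lcm}(p_n)_{n\geq 0}$: condition (3) quantifies over all $p\,|\,\mathsf{u}$, and the reduction to the concrete cut $q=p_N$ breaks down without knowing that every divisor of the scale already divides some member of the given period structure. (One should resist the temptation to prove this through a naive $\gcd$ identity such as ${\rm Per}_a(x)\cap{\rm Per}_b(x)={\rm Per}_{\gcd(a,b)}(x)$, which is \emph{false} in general; the correct route is through the maximal equicontinuous factor, using that the factor map induced by a period structure is almost one-to-one because Toeplitz points are comeager.) Everything else — that each $(p_n,t)$ is a rank-2 cut, that both block-values occur at the hole, and that centering the hole lengthens the common prefix and suffix — is an immediate consequence of having a single hole over a two-letter alphabet.
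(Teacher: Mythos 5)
Your proof is correct, but it takes a different route from the paper's. The paper never invokes Theorem~\ref{thm:STS2}: after telescoping the period structure so that $p_{n+1}>3p_n$ and $[-p_n,p_n)\subseteq{\rm Per}_{p_{n+1}}(x)$ (which forces the single $p_{n+1}$-hole in $[0,p_{n+1})$ to lie in $[p_n,p_{n+1}-p_n)$), it sets $W_n=\{x[kp_n,(k+1)p_n)\colon k\in\mathbb{Z}\}$, makes exactly your core observation that a single hole over $\{0,1\}$ gives $|W_n|=2$, notes $W_m\subseteq W_n^*$ for $n\le m$, and reads off from this a constant-length, primitive, proper directive sequence of alphabet rank $2$ generating $X$; recognizability is quoted from \cite[Theorem 4.6]{BSTY19}, and the conclusion is then just the definition of a strong Toeplitz subshift of rank $2$. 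You instead verify condition (3) of Theorem~\ref{thm:STS2}, phrasing the same combinatorial fact as ``every $(p_n,t)$ is a strong rank-2 cut'' and centering the hole to make the common prefix and suffix long (your arithmetic works: with $p_N\geq 2m+3$ the prefix has length $\lfloor p_N/2\rfloor\geq m+1$ and the suffix $\lceil p_N/2\rceil-1\geq m+1$, both $>m$). The trade-off is precisely the one you flagged: condition (3) quantifies over all $p\,|\,\mathsf{u}$, so your reduction needs $\mathsf{u}={\rm lcm}(p_n)_{n\geq 0}$, i.e.\ that every finite divisor of the scale divides some $p_N$. That fact is true, and your justification is the right one — each $p_n$ is an essential period, the odometer of the period structure is the maximal equicontinuous factor by \cite{Williams1984}, and since the $\mathbb{Z}_q$-rotation coming from any essential period $q$ must factor through the maximal equicontinuous factor, $q$ divides ${\rm lcm}(p_n)_{n\geq 0}$. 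So your argument is complete, but it imports Williams' theorem on the maximal equicontinuous factor, which the paper's direct construction avoids entirely; in exchange, your version uses Theorem~\ref{thm:STS2} as a black box, so the only genuinely new work is the cut observation plus the scale identity, whereas the paper redoes (in compressed form) the directive-sequence construction that underlies the implication (3)$\Rightarrow$(1) of that theorem.
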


\begin{proof} Let $x\in X$ be an aperiodic Toeplitz sequence which has single holes with respect to some period structure $(p_n)_{n\geq 0}$. Note that if $(q_n)_{n\geq 0}$ is a subsequence of $(p_n)_{n\geq 0}$, then $x$ still has single holes with respect to $(q_n)_{n\geq 0}$. Without loss of generality, assume $p_{n+1}>3p_n$, and $[-p_n, p_n)\subseteq {\rm Per}_{p_{n+1}}(x)$. Thus the single $p_{n+1}$-hole in the interval $[0, p_{n+1})$ occurs in $[p_n, p_{n+1}-p_n)$. For each $n\in \mathbb{N}$, 
let $W_n=\{x[kp_n, (k+1)p_n)\colon k\in\mathbb{Z}\}$. Then $W_n$ has at most two elements. Since $X$ is aperiodic, each $W_n$ has exactly two elements. If $n\leq m$, then $W_m\subseteq W_n^*$. It follows that there is a constant-length, primitive and proper directive sequence $\boldsymbol{\tau}$ of alphabet rank $2$ that generates the subshift $X$. By \cite[Theorem 4.6]{BSTY19}, $\boldsymbol{\tau}$ is recognizable. Thus $X$ is a strong Toeplitz subshift of rank $2$.
\end{proof}

\begin{proposition} The following relations on the class of all strong Toeplitz subshifts of rank $2$ are not smooth:
\begin{enumerate}
\item[\rm (i)] the conjugacy relation;
\item[\rm (ii)] the flip conjugacy relation;
\item[\rm (iii)] the bi-factor relation.
\end{enumerate}
\end{proposition}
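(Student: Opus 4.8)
The plan is to show that the conjugacy relation on the class of strong Toeplitz subshifts of rank $2$ is not smooth; the other two cases then follow at once. Indeed, the conjugacy relation is contained in both the flip conjugacy relation and the bi-factor relation (a conjugacy is in particular a flip conjugacy, and it witnesses the bi-factor relation via itself and its inverse), and all three are countable Borel equivalence relations. Hence by Lemma~\ref{lem:Thomas}, if either the flip conjugacy relation or the bi-factor relation were smooth, then the conjugacy relation would be smooth as well. So it suffices to produce a Borel reduction of $E_0$ to the conjugacy relation whose range consists of strong Toeplitz subshifts of rank $2$; since $E_0$ is not smooth and smoothness is downward closed under $\leq_B$, this forces the conjugacy relation to be non-smooth.

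To build the reduction, I would fix once and for all a rapidly increasing period structure $(p_n)_{n\ge 0}$ with $p_n \mid p_{n+1}$ and $L_n := p_{n+1}/p_n \ge 3$, and assign to each $\eta = (a_n)_{n \ge 0} \in \{0,1\}^{\mathbb{N}}$ an aperiodic Toeplitz sequence $x_\eta \in \{0,1\}^{\mathbb{Z}}$ with single holes with respect to $(p_n)$. At stage $n$ the single $p_n$-hole (say at position $h_n$) has $L_n$ lifts modulo $p_{n+1}$; I keep one fixed lift as the new hole $h_{n+1}$ and fill the remaining $L_n - 1$ positions by a word $F_n \in \{0,1\}^{L_n - 1}$ chosen to encode $a_n$. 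I would require that (i) $F_n$ carries a fixed unambiguous marker making the fillings shift-rigid, (ii) distinct values of $a_n$ give distinct $F_n$, and (iii) each $F_n$ has strictly more $0$'s than $1$'s, so that no $F_n$ is the bitwise complement of any $F_{n'}$. These choices keep every $p_n$ essential and make the common scale $\mathsf{u} = {\rm lcm}(p_n)$ independent of $\eta$. By Lemma~\ref{lem:sh} each $X_\eta := \overline{\mathcal{O}(x_\eta)}$ is a strong Toeplitz subshift of rank $2$, and since $L_k(X_\eta)$ depends only on $a_0,\dots,a_n$ for $p_n$ comparable to $k$, the map $\eta \mapsto X_\eta$ is continuous, hence Borel.

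The easy direction is $\eta \mathrel{E_0} \eta' \Rightarrow X_\eta \cong X_{\eta'}$: if $a_m = a'_m$ for all $m \ge N$, then $x_\eta$ and $x_{\eta'}$ have identical fillings from stage $N$ on, so their two $p_N$-blocks differ only through the stage-$(<N)$ content; a single permutation $\phi \in {\rm Sym}(\{0,1\}^{p_N})$ matches them block-for-block along the identical hole patterns, and Theorem~\ref{thm:DKL} yields a conjugacy. For the converse I would invoke Theorem~\ref{thm:main}: with the common structure $(p_n)$, $X_\eta \cong X_{\eta'}$ iff $\chi(X_\eta,p_n) \mathrel{E^{\rm fin}_{p_n}} \chi(X_{\eta'},p_n)$ for all large $n$. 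The single-hole hypothesis makes this tractable: exactly one element of ${\rm Parts}_*(X_\eta,p_n)$ survives (the one whose hole sits at $-1$), so $\chi(X_\eta,p_n)$ is a singleton $\{K_n^\eta\}$, and the condition collapses to $K_n^\eta \mathrel{E_{p_n}} K_n^{\eta'}$. A block permutation carrying $K_n^\eta$ onto $K_n^{\eta'}$ can only relabel the two level-$n$ hole-blocks, so it maps the level-$n$ derived subshift of $X_\eta$ onto that of $X_{\eta'}$ either directly or after the global flip $0 \leftrightarrow 1$; by the marker and majority-$0$ conditions on the $F_m$, the flip is impossible and the shift ambiguity is killed, forcing $F_m = F'_m$, i.e. $a_m = a'_m$, for all $m \ge n$. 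Thus $X_\eta \cong X_{\eta'}$ iff $\eta \mathrel{E_0} \eta'$, completing the reduction.

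The main obstacle is precisely this converse rigidity computation: identifying $\chi(X_\eta,p_n)$ up to $E^{\rm fin}_{p_n}$ with the level-$n$ derived subshift taken modulo the global flip, and then designing the fillings $F_n$ so that this invariant is exactly $E_0$ — that is, so that the block-permutation freedom (which is what permits finitely many coordinates to differ) together with the residual flip and shift symmetries collapses no more $\eta$'s than the tail-equivalence classes. Lemma~\ref{lem:hole} and Lemma~\ref{lem:kaya}, which underlie Theorem~\ref{thm:main}, are the tools that confine a conjugacy's effect on holes to within its bounded block-code width and thereby render the derived structure a conjugacy invariant; the delicate part will be the bookkeeping that turns ``eventual equality up to flip of the derived data'' into ordinary tail equivalence of $\eta$.
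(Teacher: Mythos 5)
Your proposal is correct in outline, and its top-level structure coincides exactly with the paper's: both dispose of (ii) and (iii) by observing that the conjugacy relation is contained in the flip conjugacy and bi-factor relations and invoking Lemma~\ref{lem:Thomas}, and both land a Borel reduction of $E_0$ inside the class of strong Toeplitz subshifts of rank $2$ via Lemma~\ref{lem:sh} applied to single-hole $\{0,1\}$-Toeplitz subshifts. The difference is in the core reduction itself: the paper simply cites the construction in the proof of \cite[Theorem 4.2]{Thomas2013}, which already produces a Borel reduction of $E_0$ to the conjugacy relation on aperiodic $\{0,1\}$-Toeplitz subshifts with single holes, whereas you rebuild such a construction from scratch and verify its converse direction using the paper's own Theorem~\ref{thm:main}. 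Your verification route is sound: with single holes, ${\rm Parts}_*(X,p_n)$ is indeed a singleton, so $\chi(X,p_n)$ is a singleton and $E^{\rm fin}_{p_n}$ collapses to $E_{p_n}$; a block permutation carrying one part onto the other must biject the two hole-blocks and hence induces either the identity or the global flip on the derived fill-sequence subshifts, and your majority-of-$0$'s condition kills the flip. In fact the ``delicate bookkeeping'' you flag is easier than you fear: since you fix the hole positions and their lifts independently of $\eta$, the single holes of the two derived Toeplitz generators sit at identical positions at every level, so the shift relating their skeletons must be trivial, and equality of skeletons directly yields $F_m=F'_m$, i.e.\ $a_m=a'_m$, for all $m\geq n$; the only genuine care needed is ensuring each $p_n$ is essential (so that ${\rm lcm}(p_n)_{n\geq 0}$ is the common scale, as Theorem~\ref{thm:main} requires) and that each $X_\eta$ is aperiodic, hence of topological rank exactly $2$. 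In terms of trade-offs, the paper's citation is much shorter and avoids redoing Thomas's combinatorics, while your version is self-contained within the paper's machinery and exhibits concretely how Theorem~\ref{thm:main} computes the conjugacy invariant for single-hole subshifts.
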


\begin{proof} In the proof of \cite[Theorem 4.2]{Thomas2013}, Thomas constructed a Borel reduction from $E_0$ to the conjugacy relation between aperiodic $\{0,1\}$-Toeplitz subshifts with single holes. By Lemma~\ref{lem:sh}, the conjugacy relation for strong Toeplitz subshifts of rank $2$ is not smooth. This proves (i). The cases of (ii) and (iii) follow from Lemma~\ref{lem:Thomas}.
\end{proof}

\section{The Inverse Problem of Toeplitz Subshifts\label{sec:6}}

In this section we give a description of the Toeplitz subshifts which are conjugate to their inverses, and show that a generic Toeplitz subshift is not conjugate to its inverse.

Let $\mathsf{A}$ be a finite alphabet. For any $u\in\mathsf{A}^*$, define $u^{\bot}\in \mathsf{A}^{|u|}$ by
$$ u^{\bot}(k)=u(|u|-1-k) $$
for $0\leq k<|u|$. We call $u^{\bot}$ the \textbf{reverse} of $u$. For any $x\in \mathsf{A}^\mathbb{Z}$, define $x^{\bot}\in\mathsf{A}^\mathbb{Z}$ by
$$ x^{\bot}(i)=x(-i) $$
for all $i\in\mathbb{Z}$. Again call $x^{\bot}$ the \textbf{reverse} of $x$. Easily, if $y=S^j(x)$ then $y^{\bot}=S^{-j}(x^{\bot})$. If $x$ is a Toeplitz sequence, then so is $x^{\bot}$.

Now suppose $X\subseteq \mathsf{A}^{\mathbb{Z}}$ is a subshift. Then the \textbf{inverse} subshift of $(X, S)$ is $(X, S^{-1})$. If $X=\overline{\mathcal{O}(x)}$ is a Toeplitz subshift generated by a Toeplitz sequence $x\in \mathsf{A}^{\mathbb{Z}}$, then $(X, S^{-1})$ is conjugate to the Toeplitz subshift $\overline{\mathcal{O}(x^{\bot})}$ generated by the Toeplitz sequence $x^{\bot}$; the conjugacy map is the reverse map $y\mapsto y^{\bot}$.

The Inverse Problem asks when a Toeplitz subshift is conjugate to its inverse subshift. We give an answer below.

\begin{definition}\label{def:sym} Let $x\in \mathsf{A}^\mathbb{Z}$ be a Toeplitz sequence. Let $p<q$ be positive integers such that $p\,|\,q$. We say that $x$ has \textbf{nice symmetries} with respect to $(p, q)$ if there is $1<m\leq q+1$ such that
\begin{enumerate}
\item[(a)] for any $0\leq k<q/p$, $$[kp, (k+1)p)\subseteq {\rm Per}_q(x)$$ 
if and only if 
$$[m-(k+1)p, m-kp)\subseteq {\rm Per}_q(x), $$
and
\item[(b)] for any $0\leq k, k'<q/p$, if $$[kp, (k+1)p), [k'p, (k'+1)p)\subseteq {\rm Per}_q(x),$$ then $$x[kp, (k+1)p)=x[k'p, (k'+1)p)$$ if and only if $$x[m-(k+1)p, m-kp)=x[m-(k'+1)p, m-k'p).$$
\end{enumerate}
\end{definition}

Note that all the intervals in the above definition are subintervals of $[-q, q]$. It is easy to see that if $x$ has nice symmetries with respect to $(p_0, q)$ and $p_0\,|\,p_1\,|\,q$, then $x$ has nice symmetries with respect to $(p_1, q)$. In addition, if $p\,|\, q_0\,|\,q_1$ and $x$ has nice symmetries with respect with $(p,q_1)$, witnessed by $1<m_1\leq q_1+1$, then the unique $m_0$ such that $1<m_0\leq q_0+1$ and $q_0\,|\, (m_1-m_0)$ witnesses that $x$ has nice symmetries with respect to $(p, q_0)$. 

\begin{theorem}\label{thm:inv} Let $(X, S)$ be a Toeplitz subshift with scale $\mathsf{u}$. Then the following are equivalent:
\begin{enumerate}
\item[\rm (1)] $(X, S)$ is conjugate to $(X, S^{-1})$;
\item[\rm (2)] For every Toeplitz sequence $x\in X$, there is $p\,|\,\mathsf{u}$ such that for any $q\,|\,\mathsf{u}$ with $p\,|\,q$, $x$ has nice symmetries with respect to $(p, q)$;
\item[\rm (3)] For a nonmeager set of $x\in X$, there is $p\,|\,\mathsf{u}$ such that for any $q\,|\,\mathsf{u}$ with $p\,|\,q$, $x$ has nice symmetries with respect to $(p, q)$;
\item[\rm (4)] There exist a Toeplitz sequence $x\in X$ and $p\,|\,\mathsf{u}$ such that for any $q\,|\,\mathsf{u}$ with $p\,|\,q$, $x$ has nice symmetries with respect to $(p, q)$.
\end{enumerate}
\end{theorem}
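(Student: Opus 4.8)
The plan is to strip the inverse down to the reverse map and then read the nice-symmetries condition off the odometer factor and off Theorem~\ref{thm:DKL}. First I would use that the reverse map $R(z)=z^\bot$ satisfies $R\circ S=S^{-1}\circ R$, hence $R\circ S^{-1}=S\circ R$, so $R$ conjugates $(X,S^{-1})$ with $(X^\bot,S)$, where $X^\bot=\overline{\mathcal{O}(x^\bot)}$ for a Toeplitz generator $x$; therefore $(X,S)\cong(X,S^{-1})$ if and only if $(X,S)\cong(X^\bot,S)$. Both sides being Toeplitz, Theorem~\ref{thm:DKL} and Lemma~\ref{lem:kaya} let me test this conjugacy by matching blocks: a conjugacy sending $x$ to a shift $S^{j}(x^\bot)$ exists precisely when, for some period $q$, there is $\phi\in{\rm Sym}(\mathsf{A}^q)$ with $S^{j}(x^\bot)=\widehat{\phi}(x)$. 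The implications $(2)\Rightarrow(3)\Rightarrow(4)$ are immediate, since the Toeplitz sequences form a comeager (hence nonmeager) subset of $X$ and ``nice symmetries'' is defined only for Toeplitz sequences; so the theorem reduces to the cycle $(1)\Rightarrow(2)$ and $(4)\Rightarrow(1)$.

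For $(1)\Rightarrow(2)$ I would separate conditions (a) and (b) of Definition~\ref{def:sym}. Condition (a) is the statement that the set of $q$-hole residues of $x$ is symmetric about a fixed axis, and I would obtain it from the maximal equicontinuous (odometer) factor $\pi\colon X\to\mathrm{Odo}(\mathsf{u})$ of Williams \cite{Williams1984}: a conjugacy $\varphi\colon X\to X^\bot$ induces a translation $t\mapsto t+a$ on $\mathrm{Odo}(\mathsf{u})$, while $R$ induces $t\mapsto -t$, and comparing the two resulting descriptions of the $q$-hole residue set of $X^\bot$ forces that set, for $x$, to be invariant under the reflection $r\mapsto -a-r\pmod q$, which is exactly (a) with the center $m$ reading off $a\bmod q$. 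Condition (b) I would extract from Lemma~\ref{lem:kaya}: for $q\,|\,\mathsf{u}$ large enough that $[-|\varphi|,|\varphi|]\subseteq{\rm Per}_q(x)\cap{\rm Per}_q(\varphi(x))$ there is $\phi$ with $\varphi(x)=\widehat{\phi}(x)$, so $\varphi(x)$ and $x$ have the same equality pattern on filled blocks; since $\varphi(x)$ shares its $q$-skeleton with a shift of $x^\bot$ (again by Williams), reflecting this equality pattern gives (b). Running this for a cofinal set of $q$ and invoking the remark after Definition~\ref{def:sym} (restriction of the center modulo a smaller period) yields nice symmetries for every $q\,|\,\mathsf{u}$ with $p\,|\,q$.

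For $(4)\Rightarrow(1)$ I would run the construction backwards. The coherence remark lets me take the centers $m_q$ compatible, so they assemble into a single odometer translation $a$ and a candidate target $y=S^{1-m}(x^\bot)$, for which $y(i)=x(m-1-i)$. Conditions (a) and (b) then say that $x$ and $y$ have matching $q$-skeletons and matching content on filled blocks. To promote this to a genuine block permutation I would fix a period $Q\,|\,\mathsf{u}$ and show $\widehat{\phi}(x)=y$ for a suitable $\phi\in{\rm Sym}(\mathsf{A}^Q)$: the positions of a $Q$-block that are not $Q$-periodic become periodic at some larger $Q'\,|\,\mathsf{u}$, so nice symmetries at $Q'$ controls them, and $\bigcup_{q}{\rm Per}_q(x)=\mathbb{Z}$ guarantees that finitely many levels suffice on each block. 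Theorem~\ref{thm:DKL} then delivers the conjugacy $X\cong X^\bot\cong(X,S^{-1})$.

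The main obstacle I anticipate is the alignment of the reflection axis with block boundaries. The symmetry forced by (a) is a reflection about $-a/2$, so the center is pinned down only up to a factor of two, and halving the odometer element $a$ to recenter $x$ need not correspond to an integer shift; this is the same $2$-to-$1$ phenomenon that produced the period $2q$ in Lemma~\ref{coin2}. I expect to handle it by passing to the period $2Q$ (and, if necessary, replacing $x$ by an integer shift) so that the reflected $Q$-windows fall on $2Q$-block boundaries, after which the skeleton-and-content symmetry can be assembled into a single permutation. Marrying this alignment step to the finer-period argument that upgrades skeleton matching to full-block matching is the technical heart of the proof.
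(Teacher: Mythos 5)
Your overall architecture coincides with the paper's: reduce the inverse to the reversed subshift $\overline{\mathcal{O}(x^\bot)}$, prove (1)$\Rightarrow$(2) by producing a block permutation with $\varphi(x)=\widehat{\phi}(x)$ and reading the reflection off the $q$-skeleton class of $\varphi(x)$, dismiss (2)$\Rightarrow$(3)$\Rightarrow$(4), and prove (4)$\Rightarrow$(1) by assembling coherent witnesses and building a permutation level-by-level using $\bigcup_{q}{\rm Per}_q(x)=\mathbb{Z}$ (the paper extracts the coherent sequence $(m_n)$ by K\"onig's lemma from a finite-splitting tree of witnesses, which is the rigorous form of your ``the coherence remark lets me take the centers compatible''). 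But there is a genuine gap in how you handle the target of the conjugacy. You test conjugacy against ``a shift $S^{j}(x^\bot)$'' and take as candidate target $y=S^{1-m}(x^\bot)$ for a single integer $m$. The coherent centers $(m_n)$ are only compatible modulo $q_n$; they define an element of ${\rm Odo}(\mathsf{u})$ which in general is not an integer, so no single shift $S^{1-m}(x^\bot)$ exists. Likewise in (1)$\Rightarrow$(2) the image $\varphi(x)$ lies in $\overline{\mathcal{O}(x^\bot)}$ but generally \emph{not} in the orbit of $x^\bot$; only its $q$-skeletons are shifts of those of $x^\bot$, which is exactly what the paper uses, via $\varphi(x)\in\overline{A(x^\bot,q,i)}$ and $m=q-i+1$. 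The repair, and the paper's route in (4)$\Rightarrow$(1), is to define $\phi$ on each $p$-block $u=x[ip,(i+1)p)$ by $\phi(u)=\bigl(x[m_n-(i+1)p,\,m_n-ip)\bigr)^{\bot}$ for any $n$ with $[ip,(i+1)p)\subseteq{\rm Per}_{q_n}(x)$ (well-definedness is precisely conditions (a) and (b) together with the coherence $q_n\,|\,(m_{n'}-m_n)$), and then to verify membership $\widehat{\phi}(x)\in\overline{\mathcal{O}(x^\bot)}$ window by window: $\widehat{\phi}(x)[-cp,cp)=S^{1-m_n}(x^\bot)[-cp,cp)$ with $n$ depending on $c$. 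Only then does Theorem~\ref{thm:DKL} apply, with $y:=\widehat{\phi}(x)$ as the generator of the target.

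Your anticipated ``main obstacle'' is spurious. The reflection axis is not pinned down only up to a factor of two: the witness $m$ is part of the data of Definition~\ref{def:sym}, and in (1)$\Rightarrow$(2) it is read off directly from the partition class containing $\varphi(x)$ rather than recovered from an abstract symmetry of the hole set, so no halving of an odometer element ever occurs. Nor do the reflected windows need to align with block boundaries: condition (b) compares words $x[m-(k+1)p,\,m-kp)$ at positions that are generally not multiples of $p$, and the definition of $\phi$ above reads its values at exactly such unaligned positions; the detour through period $2Q$ and the recentering shift you propose are unnecessary and buy nothing. Finally, a quantifier point in (1)$\Rightarrow$(2): you must fix the block size $p$ once --- Theorem~\ref{thm:DKL} (or Lemma~\ref{lem:kaya}) applied to the pair $(x,\varphi(x))$ yields one $p$ and one $\phi\in{\rm Sym}(\mathsf{A}^p)$ valid for all $k\in\mathbb{Z}$ --- and then let $q$ range over multiples of $p$. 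Nice symmetries pass downward in the large period $q$ (the remark before Theorem~\ref{thm:inv}) but not downward in the block size, so applying Lemma~\ref{lem:kaya} separately at each large $q$, as your sketch literally does, would only give symmetries with respect to pairs $(q,q')$, not the fixed $(p,q)$ that statement (2) requires.
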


\begin{proof} (1)$\Rightarrow$(2). Let $x\in X\subseteq \mathsf{A}^\mathbb{Z}$ be a Toeplitz sequence. Suppose $\varphi$ is a conjugacy map from $X=\overline{\mathcal{O}(x)}$ to $\overline{\mathcal{O}(x^{\bot})}$. By Theorem~\ref{thm:DKL}, there exist a positive integer $p$ and $\phi\in {\rm Sym}(\mathsf{A}^p)$ such that for all $k\in \mathbb{Z}$, $\varphi(x)=\widehat{\phi}(x)$. Take an arbitrary $q\,|\,\mathsf{u}$ such that $p\,|\,q$. Suppose $\varphi(x)\in \overline{A(x^{\bot},q,i)}$ for $0\leq i<q$. Let $m=q-i+1$. We next verify that $m$ witnesses that $x$ has nice symmetries with respect to $(p,q)$.

For Definition~\ref{def:sym} (a), note that for any $0\leq k<q/p$, 
$$\begin{array}{rl}
& [kp, (k+1)p)\subseteq {\rm Per}_q(x) \\
\iff & [kp, (k+1)p)\subseteq {\rm Per}_q(\varphi(x)) \\
\iff & [kp, (k+1)p)\subseteq {\rm Per}_q(S^i(x^{\bot})) \\
\iff & (q-i-(k+1)p, q-i-kp]\subseteq {\rm Per}_q(x) \\
\iff & [m-(k+1)p, m-kp)\subseteq {\rm Per}_q(x).
\end{array}
$$

For Definition~\ref{def:sym} (b), suppose $0\leq k, k'<q/p$ and 
$$ [kp, (k+1)p), [k'p, (k'+1)p)\subseteq {\rm Per}_q(x). $$
Then
$$\begin{array}{rl}
& x[kp, (k+1)p)=x[k'p, (k'+1)p) \\
\iff & \varphi(x)[kp, (k+1)p)=\varphi(x)[k'p, (k'+1)p) \\
\iff & S^i(x^{\bot})[kp, (k+1)p)=S^i(x^{\bot})[k'p, (k'+1)p) \\
\iff & x[q-i-(k+1)p+1, q-i-kp+1) \\
& =x[q-i-(k'+1)p+1, q-i-k'p+1) \\
\iff & x[m-(k+1)p, m-kp) =x[m-(k'+1)p, m-k'p).
\end{array}
$$

(2)$\Rightarrow$(3) is obvious.

(3)$\Rightarrow$(4). Since the set of all Toeplitz sequences in $X$ is comeager, any nonmeager set contains a Toeplitz sequence.

(4)$\Rightarrow$(1). Fix a Toeplitz sequence $x\in X$ and a positive integer $p$ satisfying (3). We define a $\phi\in {\rm Sym}(\mathsf{A}^p)$ so that $\widehat{\phi}(x)\in \overline{\mathcal{O}(x^{\bot})}$. By Theorem~\ref{thm:DKL}, $X$ and $\overline{\mathcal{O}(x^{\bot})}$ are conjugate. 

Fix a sequence $(q_n)_{n\geq 0}$ where $q_0=p$, ${\rm lcm}(q_n)_{n\geq 0}=\mathsf{u}$, and for all $n\in\mathbb{N}$, $q_{n+1}>q_n$ and $q_n\,|\, q_{n+1}$. 

Consider a finite splitting tree $\mathcal{T}$ defined as follows. Each node of $\mathcal{T}$ is a finite sequence $(m_j)_{1\leq j<\ell}$, where $\ell$ is a positive integer, satisfying
\begin{enumerate}
\item[(i)] for all $1\leq j<\ell$, $1<m_j\leq q_j+1$;
\item[(ii)] for all $1\leq j<\ell-1$, $q_j\,|\,(m_{j+1}-m_j)$;
\item[(iii)] for all $1\leq j<\ell$, $m_j$ witnesses that $x$ has nice symmetries with respect to $(p, q_j)$.
\end{enumerate}
By (i), $\mathcal{T}$ is finite splitting. By (3) and the remarks preceding Theorem~\ref{thm:inv}, $\mathcal{T}$ is infinite. By K\"{o}nig's lemma, $\mathcal{T}$ has an infinite branch. Thus there is an infinite sequence $(m_n)_{n\geq 1}$ such that for any positive integer $\ell$, $(m_j)_{1\leq j<\ell}\in \mathcal{T}$. 

We now define $\phi\in{\rm Sym}(\mathsf{A}^p)$. Let $u\in \mathsf{A}^p$. If $u\neq x[ip, (i+1)p)$ for any $i\in\mathbb{Z}$, then define $\phi(u)=u$. Otherwise, assume $u=x[ip, (i+1)p)$. Let $n\geq 1$ be such that $[ip, (i+1)p)\subseteq {\rm Per}_{q_n}(x)$. Define
$$ \phi(u)=\phi(x[ip, (i+1)p))=\bigl( x[m_n-(i+1)p, m_n-ip)\bigr)^{\bot}. $$

To see that $\phi$ is well defined, we need to show that the definition of $\phi$ does not depend on the choice of either $i$ or $n$. Let $i, i'\in\mathbb{Z}$ be such that 
$$x[ip, (i+1)p)=x[i'p, (i'+1)p). $$
Let $n\geq 1$ be such that $[ip, (i+1)p)\subseteq {\rm Per}_{q_n}(x)$, and let $n'\geq 1$ be such that $[i'p, (i'+1)p)\subseteq {\rm Per}_{q_{n'}}(x)$. Without loss of generality, assume $n'\geq n$. Let $0\leq k_n<q_n/p$ be unique such that $q_n\,|\,(k_n-i)$. Then $$x[ip, (i+1)p)=x[k_np, (k_n+1)p). $$ By Definition~\ref{def:sym} (a), $[m_n-(k_n+1)p, m_n-k_np)\subseteq {\rm Per}_{q_n}(x)$. Thus $$x[m_n-(k_n+1)p, m_n-k_np)=x[m_n-(i+1)p, m_n-ip).$$ Similarly, let $0\leq k_{n'}<q_{n'}/p$  be unique such that $q_{n'}\,|\, (k_{n'}-i')$. Then $$x[i'p, (i'+1)p)=x[k_{n'}p, (k_{n'}+1)p)$$ and $$x[m_{n'}-(k_{n'}+1)p, m_{n'}-k_{n'}p)=x[m_{n'}-(i'+1)p, m_{n'}-i'p).$$
Then 
$$ x[k_np, (k_n+1)p)=x[k_{n'}p, (k_{n'}+1)p). $$
By Definition~\ref{def:sym} (b), we have
$$ x[m_{n'}-(k_n+1)p, m_{n'}-k_np)=x[m_{n'}-(k_{n'}+1)p, m_{n'}-k_{n'}p).$$
Since $q_n\,|\, (m_{n'}-m_n)$, we have
$$ x[m_{n'}-(k_n+1)p, m_{n'}-k_np)=x[m_n-(k_n+1)p, m_n-k_np). $$
Thus 
$$ x[m_n-(k_n+1)p, m_n-k_np)=x[m_{n'}-(k_{n'}+1)p, m_{n'}-k_{n'}p). $$
Thus $\phi$ is well defined. 

It remains to show that $\widehat{\phi}(x)\in \overline{\mathcal{O}(x^{\bot})}$. Fix any $c\in\mathbb{N}$. It suffices to find $\ell_c$ such that 
$$ \widehat{\phi}(x)[-cp, cp)=S^{\ell_c}(x^{\bot})[-cp, cp). $$
Let $n\geq 1$ be such that $[-cp, cp)\subseteq {\rm Per}_{q_n}(x)$. Then by the definition of $\widehat{\phi}$, we have
$$ \widehat{\phi}(x)[-cp, cp)=\bigl( x[m_n-cp, m_n+cp)\bigr)^{\bot}. $$
Let $\ell_c=1-m_n$. Then
$$\begin{array}{rcl}
\widehat{\phi}[-cp, cp)& =& \big( x[m_n-cp, m_n+cp)\bigr)^{\bot} \\
&=& x^{\bot}[1-m_n-cp, 1-m_n+cp) \\
&=& S^{\ell_c}(x^{\bot})[-cp, cp). 
\end{array}
$$
\end{proof}

As a corollary, we obtain a criterion for a positive solution of the Inverse Problem for a Toeplitz subshift with single holes. This has previously been proved in Yu \cite{Yu} for the case $\mathsf{A}=\{0,1\}$. We use the following terminology and notation. Let $\theta\in {\rm Sym}(\mathsf{A})$. A word $u\in \mathsf{A}^*$ is a \textbf{$\theta$-palindrome} if $\theta(u)=u^{\bot}$. If $x\in\mathsf{A}$ is a Toeplitz sequence with single holes with respect to a period structure $(p_n)_{n\geq 0}$, then for any $n<m$, the restriction of ${\rm Skel}(x, p_n)$ between two consecutive $p_m$-holes is of the form
$$ u\,\square\, u\,\square \cdots \square\, u; $$
the \textbf{$(n,m)$-filling} is the unique word $w\in \mathsf{A}^*$ so that the ${\rm Skel}(x, p_m)$ between two consecutive $p_m$-holes is
$$ u\,w(0)\,u\,w(1)\cdots w(|w|-1)\,u. $$
Note that the length of the $(n,m)$-filling is $(p_m/p_n)-1$.

\begin{corollary} Let $x\in\mathsf{A}^\mathbb{Z}$ be a Toeplitz sequence with single holes with respect to a period structure $(p_n)_{n\geq 0}$. Then $(\overline{\mathcal{O}(x)}, S)$ is conjugate to $(\overline{\mathcal{O}(x)}, S^{-1})$ if and only if there exist $n\in\mathbb{N}$ and $\theta\in{\rm Sym}(\mathsf{A})$ such that for all $m>n$, the $(n,m)$-filling is a $\theta$-palindrome.
\end{corollary}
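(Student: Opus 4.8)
The plan is to deduce this from Theorem~\ref{thm:inv} by specializing the notion of nice symmetries to the single-holes setting. Write $X=\overline{\mathcal{O}(x)}$ with scale $\mathsf{u}={\rm lcm}(p_n)_{n\ge 0}$. Using the two remarks following Definition~\ref{def:sym} (that nice symmetries propagate upward in the first coordinate and downward in the second), I would first recast the condition in Theorem~\ref{thm:inv}(4) into the following form: there is an index $n$ such that $x$ has nice symmetries with respect to $(p_n,p_m)$ for every $m>n$. Indeed, any $p\mid\mathsf{u}$ divides some $p_n$, and any $q\mid\mathsf{u}$ with $p_n\mid q$ divides some $p_m$; the remarks then transfer the nice-symmetries data between $(p,q)$ and $(p_n,p_m)$ in both directions. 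So it suffices to match ``$x$ has nice symmetries with respect to $(p_n,p_m)$ for all $m>n$'' with the palindrome-filling condition.

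Next I would set up the single-holes dictionary. Fix $n$ and shift $x$ so that $0$ is a $p_n$-hole; then the $p_n$-holes are exactly $p_n\mathbb{Z}$, and the $p_m$-holes form a single residue class $h_m+p_m\mathbb{Z}$ with $h_m\in p_n\mathbb{Z}$. Partitioning a $p_m$-period into the blocks $[kp_n,(k+1)p_n)$, each block contains exactly one $p_n$-hole, and exactly one block per period contains the $p_m$-hole; every other block lies in ${\rm Per}_{p_m}(x)$ and equals a fixed frame with a single letter inserted at its hole, namely the value of $x$ there, which is precisely a letter of the $(n,m)$-filling $w_m$. With this, Definition~\ref{def:sym}(a) forces the reflection center to sit at the $p_m$-hole, since the hole block is the unique non-periodic block and hence must be fixed by the reflection; and Definition~\ref{def:sym}(b), read off on the periodic blocks, reduces to the statement that $w_m$ and its reverse $w_m^\bot$ induce the same partition of coordinates by equality. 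Equivalently there is $\theta_m\in{\rm Sym}(\mathsf{A})$ with $\theta_m(w_m)=w_m^\bot$, i.e. $w_m$ is a $\theta_m$-palindrome. A short computation with $x^\bot(j)=x(-j)$ confirms that the reflected filling is exactly $w_m^\bot$.

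The main obstacle is to upgrade the level-dependent permutations $\theta_m$ to a single $\theta$ working for all large $m$. For this I would exploit the self-similar nesting of the fillings: for $n<m'<m$ one has
\[ w_m=w_{m'}\,\gamma_1\,w_{m'}\,\gamma_2\cdots\gamma_r\,w_{m'}, \]
where $\gamma_1\cdots\gamma_r$ is the $(m',m)$-filling and $r=p_m/p_{m'}-1$, since the $p_{m'}$-holes inside a $p_m$-period subdivide it into segments each carrying the same shorter filling $w_{m'}$. Matching the outermost factors of $\theta_m(w_m)=w_m^\bot=w_{m'}^\bot\,\gamma_r\cdots\gamma_1\,w_{m'}^\bot$ yields $\theta_m(w_{m'})=w_{m'}^\bot$, so $\theta_m$ witnesses the palindrome condition at every lower level; on the finite, increasing, hence eventually constant set of letters occurring in the $w_m$, the $\theta_m$ must therefore agree, and amalgamating them produces a single $\theta$ and an $n$ with $\theta(w_m)=w_m^\bot$ for all $m>n$. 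For the converse I would run the dictionary backwards: given such a $\theta$, for each $m>n$ take the hole-centered reflection (reduced into $(1,p_m+1]$), verify Definition~\ref{def:sym}(a) from the fact that the unique non-periodic block is fixed, and Definition~\ref{def:sym}(b) from $\theta(w_m)=w_m^\bot$ together with the bijectivity of $\theta$, and then invoke Theorem~\ref{thm:inv} to conclude $(X,S)\cong(X,S^{-1})$. I expect the genuinely delicate points to be the alignment bookkeeping for the reflection center (the off-by-one in Definition~\ref{def:sym} and the reduction of centers modulo $p_m$) and the verification that the reflected filling is literally $w_m^\bot$ rather than a cyclic rotation of it.
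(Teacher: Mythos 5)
Your proposal is correct and follows essentially the same route as the paper: specialize Theorem~\ref{thm:inv} via the remarks preceding it to the period structure $(p_n)_{n\geq 0}$, translate nice symmetries with respect to $(p_n,p_m)$ into the statement that the $(n,m)$-filling is a $\theta_m$-palindrome, and then observe that $\theta$ can be chosen independently of $m$. The paper compresses the last two steps into ``it is easy to see'' and ``moreover, $\theta$ does not depend on $m>n$,'' whereas you supply the hole-centered reflection bookkeeping and the nesting identity $w_m=w_{m'}\,\gamma_1\,w_{m'}\cdots\gamma_r\,w_{m'}$ that justify them; both of your arguments check out.
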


\begin{proof} By Theorem~\ref{thm:inv}, $(\overline{\mathcal{O}(x)}, S)$ is conjugate to $(\overline{\mathcal{O}(x)}, S^{-1})$ if and only if there exist $n\in\mathbb{N}$ such that for any $m>n$, $x$ has nice symmetries with respect to $(p_n, p_m)$. It is easy to see that $x$ has nice symmetries with respect to $(p_n, p_m)$ if and only if the $(n,m)$-filling is a $\theta$-palindrome for some $\theta\in\mathsf{A}$. Moreover, $\theta$ does not depend on $m>n$.
\end{proof}

Next we prove that the set of all Toeplitz subshifts which are not conjugate to their inverses is generic in the space of all infinite minimal subshifts. For this, we consider strong Toeplitz subshifts of rank $2$. 

Let $\mathsf{A}$ be a finite alphabet, let $\ell$ be a positive integer, let $W\subseteq \mathsf{A}^\ell$, and let $0\leq i<\ell$. We say that $i$ is a \textbf{coincidence} of $W$ if the $i$-th letters of all words in $W$ are the same. The set of all coincidences of $W$ is denoted ${\rm coinc}(W)$.

\begin{definition}\label{def:sym2} Let $\mathsf{A}$ be a finite alphabet. Let $p<q<r$ be positive integers such that $p\,|\,q$ and $r>3q$.  Let $W\subseteq \mathsf{A}^q$. Let $u\in \mathsf{A}^{2r}\cap W^+_1$ and $w=u[r-q, r+q+1)$. Let $r-q<a\leq r$ be the beginning position of an occurrence of a word in $W$ in the unique building of $u$ from $W$. Let 
$$Q=\left[{\rm coinc}(W^3)\cap [r-a, r-a+2q+1)\right]-(r-a).$$  
We say that $u$ has \textbf{nice symmetries} with respect to $(p, q)$ and $W$ if there is $1<m\leq q+1$ such that
\begin{enumerate}
\item[(a)] for any $0\leq k<q/p$, $$[kp, (k+1)p)\subseteq Q$$ 
if and only if 
$$[q+m-(k+1)p, q+m-kp)\subseteq Q, $$
and
\item[(b)] for any $0\leq k, k'<q/p$, if $$[kp, (k+1)p), [k'p, (k'+1)p)\subseteq Q,$$ then $$w[kp, (k+1)p)=w[k'p, (k'+1)p)$$ if and only if $$w[q+m-(k+1)p, q+m-kp)=w[q+m-(k'+1)p, q+m-k'p).$$
\end{enumerate}
\end{definition}

The point of this definition is that it is a finitary property of a finite word. But we use it to approximate Toeplitz sequences having nice symmetries with respect to $(p, q)$.  

\begin{lemma}\label{lem:STS2inv} Let $\mathsf{A}$ be a finite alphabet. Let $X\subseteq \mathsf{A}^\mathbb{Z}$ be a strong Toeplitz subshift of rank 2. Let $\mathsf{u}$ be the scale of $X$. Then $(X, S)$ is conjugate to $(X, S^{-1})$ if and only if for a nonmeager set of $x\in X$, 
there exists $p\,|\,\mathsf{u}$ such that for all integers $q,r$ and words $u, v\in \mathsf{A}^*$ satisfying (i)--(iv) of Lemma~\ref{lem:STS2}, $x[-r, r)$ has nice symmetries with respect to $(p, q)$ and $\{u,v\}$.
\end{lemma}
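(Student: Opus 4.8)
The plan is to deduce the lemma from Theorem~\ref{thm:inv} by recognizing the right-hand side as a purely word-combinatorial transcription of condition (3) of that theorem. First I would fix $p\,|\,\mathsf{u}$ and isolate the following \emph{pointwise} equivalence as the engine of the proof: for every $x\in X$ and every quadruple $(q,r,u,v)$ satisfying (i)--(iv) of Lemma~\ref{lem:STS2} with $r>3q$ (writing $W=\{u,v\}$), the sequence $x$ has nice symmetries with respect to $(p,q)$ in the sense of Definition~\ref{def:sym} \emph{if and only if} the word $x[-r,r)$ has nice symmetries with respect to $(p,q)$ and $W$ in the sense of Definition~\ref{def:sym2}, witnessed by the \emph{same} integer $m$. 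Granting this, both sides of the lemma assert that one and the same subset of $X$ is nonmeager, and the biconditional follows from Theorem~\ref{thm:inv} ((1)$\Leftrightarrow$(3)).

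The heart of the argument is the verification that the finite set $Q$ of Definition~\ref{def:sym2} faithfully records the $q$-periodic positions of $x$ inside the central window. By the proof of the $(\Leftarrow)$ direction of Lemma~\ref{lem:STS2} (which uses only (i)--(iv)), every $x\in X$ has a strong rank-2 cut $(q,t)$ with $\{x[t+kq,t+(k+1)q):k\in\mathbb{Z}\}=W$; since $X$ is aperiodic both blocks occur, so a position $i$ lies in ${\rm Per}_q(x)$ exactly when its offset $(i-t)\bmod q$ is a coincidence of $W$. Because ${\rm coinc}(W^3)$ is precisely the $q$-periodic repetition (three copies) of ${\rm coinc}(W)$ inside $[0,3q)$, and because the block boundary $a$ read off from the unique building of the finite word $x[-r,r)$ (unique by (iv), and equal to the restriction of the unique global building of $x$ provided by recognizability, \cite[Theorem 3.1]{BSTY19}) makes $t_0:=a-r\in(-q,0]$ a genuine block boundary of $x$, a direct alignment computation yields $Q=\{s\in[0,2q+1):s-q\in{\rm Per}_q(x)\}$; that is, $Q$ is ${\rm Per}_q(x)$ restricted to the window $[-q,q+1)$ and translated by $s\mapsto s-q$, while $w=x[-q,q+1)$ records $x$ on the same window. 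Using this dictionary, the $q$-periodicity of ${\rm Per}_q(x)$ (which lets one replace $[kp-q,(k+1)p-q)$ by $[kp,(k+1)p)$) and the constancy of $x$ along each $q$-periodic residue class, conditions (a) and (b) of Definition~\ref{def:sym2} transcribe term by term into conditions (a) and (b) of Definition~\ref{def:sym} with the same witness $m$ (clause (a) is invoked to guarantee that the reflected intervals appearing in (b) are themselves periodic). One only has to check that all intervals $[kp,(k+1)p)$ and $[m-(k+1)p,m-kp)$ are subintervals of $[-q,q]$, as recorded after Definition~\ref{def:sym}, so that they fall inside the window.

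With the pointwise equivalence in hand, I would match the quantifier patterns. For conjugacy $\Rightarrow$ finitary condition, Theorem~\ref{thm:inv} supplies a nonmeager set of $x$ and a $p$ such that $x$ has nice symmetries with respect to $(p,q)$ for all $q\,|\,\mathsf{u}$ with $p\,|\,q$; given any $(q,r,u,v)$ satisfying (i)--(iv) for this $p$, clause (ii) gives $p\,|\,q\,|\,\mathsf{u}$, so the equivalence delivers nice symmetries of $x[-r,r)$. For the converse, given the finitary condition and any $q_0\,|\,\mathsf{u}$ with $p\,|\,q_0$, I would apply Lemma~\ref{lem:STS2} with $q_0$ in place of $p$ and with $m$ large, obtaining $(q,r,u,v)$ with $q_0\,|\,q$ and (by enlarging $r$) $r>3q$; since $p\,|\,q_0\,|\,q$, this quadruple also satisfies (i)--(iv) for the original $p$, so the finitary hypothesis applies, and the equivalence gives nice symmetries of $x$ with respect to $(p,q)$. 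The descent remark after Definition~\ref{def:sym} (nice symmetries at $(p,q)$ with $q_0\,|\,q$ imply nice symmetries at $(p,q_0)$) then yields condition (3). As the pointwise equivalence holds for \emph{every} $x\in X$, the two nonmeager sets literally coincide, and Theorem~\ref{thm:inv} completes the proof.

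The hard part will be the bookkeeping in the second paragraph: confirming that $a$ is a true block boundary of $x$ (not merely of the standalone word $x[-r,r)$), and getting every alignment and off-by-one right in the identity $Q=\{s:s-q\in{\rm Per}_q(x)\}$, including that the window $[r-a,r-a+2q+1)$ sits inside $[0,3q)$ where ${\rm coinc}(W^3)$ lives. A secondary, minor point is the gap between the hypothesis $r>3q$ of Definition~\ref{def:sym2} and the weaker clause $q<r$ of Lemma~\ref{lem:STS2}; this is harmless because $w$, $a$ (hence $r-a=-t_0$) and $Q$ depend only on $q$, $W$ and the central window of $x$, so enlarging $r$ (which preserves (i)--(iv) via the concatenation argument of Lemma~\ref{lem:unique}) changes none of them, and one may always assume $r>3q$.
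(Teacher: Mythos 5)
Your proposal is correct and takes essentially the same route as the paper's own proof: both reduce the lemma to Theorem~\ref{thm:inv} via the identification of the set $Q$ of Definition~\ref{def:sym2} with ${\rm Per}_q(x)\cap[0,2q+1)$ (your translated form $\{s\colon s-q\in{\rm Per}_q(x)\}$ is the same set by $q$-periodicity), with Lemma~\ref{lem:STS2} supplying the quadruples in the backward direction and the descent remark preceding Theorem~\ref{thm:inv} finishing it. The only differences are presentational: you fill in the coincidence/block-boundary computation and the $r>3q$ adjustment that the paper handles by assertion, and you package both directions as one pointwise equivalence invoking (1)$\Leftrightarrow$(3) of Theorem~\ref{thm:inv}, where the paper argues the two directions separately through (1)$\Rightarrow$(2) and (4)$\Rightarrow$(1) using Toeplitz sequences.
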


\begin{proof} $(\Rightarrow)$ Let $x\in X$ be a Toeplitz sequence. By Theorem~\ref{thm:inv} there is $p\,|\, \mathsf{u}$ such that for all $q>p$ with $p\,|\, q\,|\, \mathsf{u}$, $x$ has nice symmetries with respect to $(p, q)$. Suppose $q, r, u, v$ satisfy (i)--(iv) of Lemma~\ref{lem:STS2}. Then $x$ is uniquely built from $\{u,v\}$. Let $r$ be the integer given by Lemma~\ref{lem:rec} for the recognizability of $\tau_{u,v}$. Without loss of generality we may assume $r>3q$. Let $u=x[-r,r)$. Then the set $Q$ in Definition~\ref{def:sym2} is ${\rm Per}_q(x)\cap[0, 2q+1)$. Thus $x[-r,r)$ has nice symmetries with respect to $(p,q)$ and $\{u,v\}$.

$(\Leftarrow)$ Since the set of Toeplitz sequences in $X$ is comeager, there exists a Toeplitz sequence $x\in X$ satisfying the property stated in the lemma. Fix  $p\,|\,\mathsf{u}$ witnessing this property. Let $q_0>p$ be such that $p\,|\, q_0$. Applying Lemma~\ref{lem:STS2} to $q_0$, we get that there are $q, r, u, v$ satisfying (i)--(iv) of Lemma~\ref{lem:STS2}. By our assumption, $x[-r, r)$ has nice symmetries with respect to $(p, q)$ and $\{u, v\}$. Note that $x[-r, r)\in \{u,v\}^+_1$ by Lemma~\ref{lem:STS2} (iv). We have that $x$ has nice symmetries with respect to $(p, q)$. Since $p\,|\,q_0\,|\, q$, by the remarks preceding Theorem~\ref{thm:inv}, $x$ has nice symmetries with respect to $(p, q_0)$. By Theorem~\ref{thm:inv}, $(X, S)$ is conjugate to $(X, S^{-1})$.
\end{proof}

\begin{theorem} The set of all strong Toeplitz subshifts of rank $2$ which are not conjugate to their inverses is generic in the space of all infinite minimal subshifts.
\end{theorem}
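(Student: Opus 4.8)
The plan is to combine the genericity of strong Toeplitz subshifts of rank $2$ (Theorem~\ref{thm:gen}) with a Baire category argument built on the characterization in Lemma~\ref{lem:STS2inv}. Since Theorem~\ref{thm:gen} already gives that the class of strong Toeplitz subshifts of rank $2$ is comeager in $\boldsymbol{M}$, it suffices to prove that the subclass of those which are conjugate to their inverse is meager. For $X\in\boldsymbol{M}$ and a positive integer $p$, set
$$A_p(X)=\{x\in X\colon \text{for all }q,r,u,v\text{ satisfying (i)--(iv) of Lemma~\ref{lem:STS2}, } x[-r,r)\text{ has nice symmetries w.r.t. }(p,q)\text{ and }\{u,v\}\}.$$
By Lemma~\ref{lem:STS2inv}, together with the fact that a countable union is nonmeager iff one of its members is, a strong Toeplitz subshift $X$ of rank $2$ with scale $\mathsf{u}$ is conjugate to $(X,S^{-1})$ if and only if $A_p(X)$ is nonmeager in $X$ for some $p\,|\,\mathsf{u}$. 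Each condition ``$x[-r,r)$ has nice symmetries'' depends only on the coordinates of $x$ in $[-r,r)$, so each is clopen and $A_p(X)$ is closed; being closed in the Cantor set $X$, it is nonmeager in $X$ if and only if it has nonempty interior. Hence the strong Toeplitz subshifts of rank $2$ conjugate to their inverse lie in $\bigcup_p B_p$, where $B_p=\{X\in\boldsymbol{M}\colon A_p(X)\text{ has nonempty interior}\}$, and it is enough to show that each $B_p$ is meager.

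To show $B_p$ is meager I would exhibit a dense $G_\delta$ set inside its complement $G_p=\{X\colon A_p(X)\text{ has empty interior}\}$. Call a centered word $s$ (i.e. $s=x[-k,k)$ for some $x\in X$) \emph{$p$-bad in $X$} if there are $q,r,u,v$ satisfying (i)--(iv) of Lemma~\ref{lem:STS2} and a word $t\in L_{2r}(X)$ in which $s$ occurs centrally such that $t$, read as $x'[-r,r)$, fails to have nice symmetries with respect to $(p,q)$ and $\{u,v\}$. All the witnessing data ($u,v$, condition (iv), and the offending word $t$) are determined by $L_M(X)$ for $M=4r$, so ``$s$ is $p$-bad in $X$'' is an open condition on $X$. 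Consequently, for each $k$ the set $U_{p,k}=\{X\in\boldsymbol{M}\colon \text{every }s\in L_{2k}(X)\text{ is }p\text{-bad in }X\}$ is open, since near a given point $L_{2k}(X)$ is locally constant and only finitely many words $s$ are involved. If $X\in\bigcap_k U_{p,k}$, then for every $k$ and every $s\in L_{2k}(X)$ there is $x\in X$ with $x[-k,k)=s$ and $x\notin A_p(X)$; as the cylinders $\{x\colon x[-k,k)=s\}$ form a clopen basis for $X$, the complement of $A_p(X)$ is dense, so the closed set $A_p(X)$ has empty interior and $X\in G_p$. Thus $\bigcap_k U_{p,k}\subseteq G_p$, and everything reduces to proving that each $U_{p,k}$ is dense.

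The density of $U_{p,k}$ is the heart of the matter, and I expect it to be the main obstacle. Given a nonempty basic open $V\subseteq\boldsymbol{M}$ determined by a prescribed language $L_N$, I must produce $X\in V$ in which every centered word of length $2k$ is $p$-bad. By Theorem~\ref{thm:gen} there is a strong Toeplitz subshift of rank $2$ in $V$; the task is to arrange, without disturbing $L_N$, that the higher-level building words are asymmetric. Concretely, a word $t=x[-r,r)$ can have nice symmetries with respect to $(p,q)$ and $\{u,v\}$ only if the coincidence pattern $Q$ and the central block $w$ of Definition~\ref{def:sym2} are invariant under the reflection prescribed there; this is the rank-$2$ analogue of a filling being a $\theta$-palindrome, exactly as in the corollary to Theorem~\ref{thm:inv}. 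The perturbation lemma I must establish is that in any prescribed neighborhood in $\boldsymbol{M}$ one can realize a strong Toeplitz subshift of rank $2$ whose level-$n$ substitution words $w_{n+1,1},w_{n+1,2}\in\{w_{n,1},w_{n,2}\}^*$ are chosen so that at cofinally many scales $q$ no reflection as in Definition~\ref{def:sym2} matches the arrangement of the two building words; a single such asymmetric high-level block forces every short centered window to admit an extension with no nice symmetry, making all of them $p$-bad. The freedom to choose the substitutions at all sufficiently high levels (which do not affect $L_N$ once the words are long and the morphisms are proper) is what makes such asymmetric realizations dense, in the spirit of the flexibility constructions of Pavlov--Schmieding \cite{PS}.

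Granting the density of each $U_{p,k}$, the set $D_p=\bigcap_k U_{p,k}$ is a dense $G_\delta$ contained in $G_p$, so $B_p=\boldsymbol{M}\setminus G_p$ is meager for every $p$. Then $\bigcap_p D_p$ is comeager, and for $X\in\bigcap_p D_p$ every $A_p(X)$ has empty interior, hence is meager, so $A(X)=\bigcup_p A_p(X)$ is meager. Intersecting with the comeager class of strong Toeplitz subshifts of rank $2$ from Theorem~\ref{thm:gen} and applying Lemma~\ref{lem:STS2inv}, every $X$ in the comeager set obtained by intersecting that class with $\bigcap_p D_p$ is a strong Toeplitz subshift of rank $2$ that is not conjugate to its inverse. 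Therefore this class contains a comeager set and is itself generic in $\boldsymbol{M}$. One may alternatively invoke Lemma~\ref{lem:categoryquantifiers} to see that each $B_p$, being defined through a category quantifier over a Borel set, is Borel and hence has the Baire property, which gives a second route to its meagerness once its complement is shown to be nonmeager in every nonempty open set.
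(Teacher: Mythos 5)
Your reduction to meagerness of the conjugate-to-inverse class, the definition of the closed sets $A_p(X)$, and the observation that a closed set is nonmeager in $X$ iff it has nonempty interior are all sound, and this part of your argument runs parallel to the paper's use of Lemma~\ref{lem:STS2inv}. But the proof has a genuine gap exactly where you say it does: the density of each $U_{p,k}$ is ``granted,'' not proved, and your sketch of the perturbation lemma (choose asymmetric substitution words at all high levels without disturbing a prescribed finite language, and check that one asymmetric high-level block defeats \emph{every} admissible tuple $(q,r,u,v)$ of Definition~\ref{def:sym2} simultaneously) is the entire mathematical content of the theorem. Note that density of $U_{p,k}$ follows from the theorem itself (a comeager set is dense), so you cannot wave at it; an independent construction is required, and making it rigorous would involve controlling nice symmetries at cofinally many scales against all possible rank-$2$ building pairs, which is nontrivial. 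The paper avoids any density construction altogether: it invokes the Pavlov--Schmieding criterion \cite[Theorem 5.4]{PS}, reducing genericity of $\boldsymbol{N}$ to (a) closure of $\boldsymbol{N}$ under injective constant-length morphisms, proved by pulling a hypothetical conjugacy between $Y=\overline{\mathcal{O}(\tau(x))}$ and $Y^\bot$ back through $\tau$ via Theorem~\ref{thm:DKL} to produce a conjugacy of $(X,S)$ with $(X,S^{-1})$, and (b) $\boldsymbol{N}$ being relatively $G_\delta$, proved by exhibiting the closed property $P(X,x,p,q,r,u,v)$ and applying the category-quantifier Lemma~\ref{lem:categoryquantifiers}. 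You essentially have the paper's part (b) in hand, but you replace part (a) --- the step that does the real work in the paper --- with an unestablished claim.

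There is also a concrete error in your openness argument. You assert that all witnessing data for ``$s$ is $p$-bad in $X$'' are determined by $L_{4r}(X)$, but condition (ii) of Lemma~\ref{lem:STS2} requires $p\,|\,q\,|\,\mathsf{u}$, where $\mathsf{u}$ is the scale of $X$; divisibility into the scale is an asymptotic property of $X$ (determined by its essential periods) and is not decided by any finite language $L_M(X)$, so the condition is not locally constant and $U_{p,k}$ is not open in $\boldsymbol{M}$ as defined. Relatedly, when $p\nmid\mathsf{u}$ conditions (i)--(iv) are vacuous, so $A_p(X)=X$ and $X\in B_p$ automatically; both defects can likely be repaired by relativizing the whole argument to the comeager class of Toeplitz subshifts with the universal scale (where every $q$ divides $\mathsf{u}$), as the paper implicitly does by working relative to the strong-Toeplitz class, but as written the topological claims on which your Baire-category scheme rests do not hold in $\boldsymbol{M}$.
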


\begin{proof} Let $\boldsymbol{N}$ denote the subset of $\boldsymbol{M}$ consisting of all strong Toeplitz subshifts of rank $2$ which are not conjugate to their inverses. By Theorem~\ref{thm:gen} and \cite[Theorem 5.4]{PS}, it suffices to show that
\begin{enumerate}
\item[(a)] $\boldsymbol{N}$ is closed under any injective, constant-length morphism; and
\item[(b)] $\boldsymbol{N}$ is a relative $G_\delta$ subset of the class of all strong Toeplitz subshifts of rank $2$.
\end{enumerate}

For (a), suppose $X\subseteq A^\mathbb{Z}$ and $X\in \boldsymbol{N}$. Then $X$ is aperiodic. Let $x\in X$ be a Toeplitz sequence. Let $\{u, v\}$ be a non-Eucliean pair such that $x$ is uniquely built from $\{u, v\}$. Let $\tau\colon A^*\to B^*$ be an injective, constant-length morphism. Let $\tau^{\bot}\colon A^*\to B^*$ be the morphism defined by $\tau^{\bot}(a)=\tau(a)^\bot$ for any $a \in A$. Let $y=\tau(x)$. 
Let $Y=\overline{\mathcal{O}(y)}$ and $Y^{\bot}=\overline{\mathcal{O}(y^{\bot})}$. Note that $\tau^\bot(x^\bot)$ is a shift of $y^{\bot}$. Thus $$Y^\bot=\overline{\mathcal{O}(y^\bot)}=\overline{\mathcal{O}(\tau^\bot(x^\bot))}.$$ Also note that every element of $Y$ is uniquely built from $\{\tau(u), \tau(v)\}$, and every element of $Y^{\bot}$ is uniquely built from $\{\tau(u)^{\bot}, \tau(v)^{\bot}\}$.

 Toward a contradiction, assume $Y$ is conjugate to $Y^{\bot}$. Suppose $\varphi\colon Y\to Y^{\bot}$ is a conjugacy map. Without loss of generality, we may assume that in the unique building of $\varphi(y)$ from $\{\tau(u)^{\bot}, \tau(v)^{\bot}\}$, $0$ is the beginning position of an occurrence of either $\tau(u)^{\bot}$ or $\tau(v)^{\bot}$. By Theorem~\ref{thm:DKL}, there exist a positive integer $p$ and a permutation $\phi\in {\rm Sym}(B^p)$ such that $\varphi(y)=\widehat{\phi}(y)$. Without loss of generality, we may assume $|\tau|\,\big|\,p$. Let $p=k|\tau|$. Then $\phi$ can be viewed as a bijection from $\tau(A)^k$ to $\tau^\bot(A)^k$. Define $\psi\in {\rm Sym}(A^k)$ by $\psi(\alpha)=\beta$ if $\phi(\tau(\alpha))=\tau^{\bot}(\beta)$. Then we have $\tau^\bot(\widehat{\psi}(x))=\widehat{\phi}(y)$. Since $\widehat{\phi}(y)=\varphi(y)\in \overline{\mathcal{O}(\tau^\bot(x^\bot))}$, we have that $\widehat{\psi}(x)\in \overline{\mathcal{O}(x^\bot)}$. By Theorem~\ref{thm:DKL}, $(X, S)$ and $(X, S^{-1})$ are conjugate, a contradiction. This proves (a).

For (b), we first note that  the following property 
$$P(X, x, p, q, r, u, v)$$
of parameters $(X, x, p, q, r, u, v)$  is a closed subset of the Polish space $$ \boldsymbol{M}\times \mathsf{A}^\mathbb{Z}\times \mathbb{N}^3\times (\mathsf{A}^*)^2: $$
\begin{quote} $x\in X$, $u=x[-r, r)$, and if $p, q, r, u, v$ satisfy (i)--(iv) of Lemma~\ref{lem:STS2}, then $u$ has nice symmetries with respect to $(p, q)$ and $\{u, v\}$.
\end{quote}
Then by Lemma~\ref{lem:STS2inv}, if $X$ is a strong Toeplitz subshift of rank $2$, then $X\in\boldsymbol{N}$ if and only if 
\begin{quote}
for a comeager set of $x\in X$, for all positive interger $p$ there are $q, r, u, v$ such that $P(X, x, p, q, r, u, v)$ fails.
\end{quote}
By Lemma~\ref{lem:categoryquantifiers} (i), $\boldsymbol{N}$ is a relative $G_\delta$ subset of $\boldsymbol{M}$.
\end{proof}

\begin{corollary} The set of all Toeplitz subshifts which are not conjugate to their inverses is generic in the space of all infinite minimal subshifts.
\end{corollary}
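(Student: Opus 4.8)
The plan is to deduce this corollary directly from the immediately preceding theorem, which asserts that the set of all strong Toeplitz subshifts of rank $2$ which are not conjugate to their inverses is generic in $\boldsymbol{M}$. The only thing to check is an inclusion of sets: every strong Toeplitz subshift of rank $2$ is in particular a Toeplitz subshift. Indeed, by definition such a subshift is conjugate to an $\mathcal{S}$-adic subshift generated by a constant-length, primitive, proper and recognizable directive sequence, and by Proposition~\ref{prop:ADE} (the equivalence of (1) and (2)) any such $\mathcal{S}$-adic subshift is a Toeplitz subshift; since being Toeplitz is a conjugacy invariant, the subshift itself is Toeplitz. Moreover it is infinite and minimal, hence an element of $\boldsymbol{M}$.

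Thus, writing $\boldsymbol{N}$ for the set of strong Toeplitz subshifts of rank $2$ not conjugate to their inverses and $\boldsymbol{N}'$ for the set of all Toeplitz subshifts not conjugate to their inverses, the observation above gives $\boldsymbol{N}\subseteq\boldsymbol{N}'$. By the previous theorem $\boldsymbol{N}$ is generic in $\boldsymbol{M}$, so it contains a dense $G_\delta$ subset $G$ of $\boldsymbol{M}$. Then $G\subseteq\boldsymbol{N}\subseteq\boldsymbol{N}'$, so $\boldsymbol{N}'$ also contains the dense $G_\delta$ set $G$ and is therefore generic in $\boldsymbol{M}$. There is no genuine obstacle here: genericity (containing a dense $G_\delta$) is trivially inherited by any superset, and the lone nontrivial point—that a strong Toeplitz subshift of rank $2$ is genuinely Toeplitz—is immediate from the characterization in Proposition~\ref{prop:ADE}.
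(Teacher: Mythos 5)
Your proof is correct and is exactly the argument the paper intends (the paper leaves this corollary without an explicit proof precisely because it is this immediate): the generic set of strong Toeplitz subshifts of rank $2$ not conjugate to their inverses is contained in the set of all Toeplitz subshifts not conjugate to their inverses, since by Proposition~\ref{prop:ADE} and conjugacy-invariance of Toeplitz-ness every strong Toeplitz subshift of rank $2$ is Toeplitz, and genericity passes to supersets.
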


\section{Automorphism Groups\label{sec:8}}

In this section we present some results about automorphism groups of Toeplitz subshifts of finite rank. 

It follows from Donoso--Durand--Maass--Petite \cite[Theorem 3.1]{DDMP16} and \cite[Corollary 7.4]{DDMP21} that if $(X, S)$ is a minimal Cantor system of topological rank $2$, then the automorphism group of $(X, S)$ is exactly $\langle S\rangle$. For $\mathcal{S}$-adic subshifts of finite alphabet rank, it was proved as Espinoza--Maass \cite[Theorem 1.1]{EM} that their automorphism groups are viturally $\mathbb{Z}$.

On the other hand, it follows from Donoso--Durand--Maass--Petite \cite[Theorem 3.2]{DDMP17} (see also \cite[Lemmas 2.1 and 2.4]{DDMP16}) that the automorphism group of a Toeplitz subshift is abelian. 

Combining these results together, we have the following fact. 

\begin{proposition}\label{prop:aut} Let $(X, S)$ be a Toeplitz subshift of finite rank. Let $\mathsf{u}$ be the scale for$(X, S)$. Then the automorphism group of $(X, S)$ is isomorphic to $\mathbb{Z}\oplus (\mathbb{Z}/n\mathbb{Z})$, where $n\,|\, \mathsf{u}$ is such that $p^\infty\!\!\!\not{|}\,\mathsf{u}$ for all prime $p\,|\, n$.
\end{proposition}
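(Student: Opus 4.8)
The plan is to realize $\Aut(X,S)$ as a subgroup of the odometer ${\rm Odo}(\mathsf{u})$ and then read off its structure from that of the ambient group. First I would record the purely group-theoretic consequence of the two cited inputs. Since $\Aut(X,S)$ is abelian (by \cite{DDMP17}) and virtually $\mathbb{Z}$ (by \cite{EM}), it is finitely generated, because a group containing a finitely generated finite-index subgroup is itself finitely generated. By the structure theorem for finitely generated abelian groups, together with the existence of a finite-index copy of $\mathbb{Z}$ which forces the free rank to be exactly one, we get $\Aut(X,S)\cong\mathbb{Z}\oplus F$ for a finite abelian group $F$, namely its torsion subgroup. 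The remaining task is to identify $F$.

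Next I would introduce the canonical homomorphism $\Phi\colon \Aut(X,S)\to \Aut({\rm Odo}(\mathsf{u}),S)$ induced by the maximal equicontinuous factor map $\pi\colon X\to {\rm Odo}(\mathsf{u})$, which exists by \cite{Williams1984}. By functoriality of the maximal equicontinuous factor, each $\phi\in\Aut(X,S)$ descends to a unique $\bar\phi$ with $\bar\phi\circ\pi=\pi\circ\phi$, and $\phi\mapsto\bar\phi$ is a homomorphism. I would then identify $\Aut({\rm Odo}(\mathsf{u}),S)$ with ${\rm Odo}(\mathsf{u})$ acting by translations: any homeomorphism commuting with the minimal rotation $x\mapsto x+\boldsymbol{1}$ agrees with the translation by its value at $\boldsymbol{0}$ on the dense orbit $\{n\boldsymbol{1}\colon n\in\mathbb{Z}\}$, hence everywhere.

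The key step, and the one I expect to be the main obstacle, is the injectivity of $\Phi$. Here I would use that an infinite Toeplitz subshift is an almost one-to-one extension of its odometer factor: each Toeplitz sequence $x\in X$ has singleton fiber $\pi^{-1}(\pi(x))=\{x\}$, and these points form a comeager (in particular dense) subset of $X$. If $\bar\phi=\mathrm{id}$, then $\pi\circ\phi=\pi$, so for each such $x$ we have $\phi(x)\in\pi^{-1}(\pi(\phi(x)))=\pi^{-1}(\pi(x))=\{x\}$, giving $\phi(x)=x$; by density and continuity $\phi=\mathrm{id}$. Thus $\Phi$ embeds $\Aut(X,S)\cong\mathbb{Z}\oplus F$ into ${\rm Odo}(\mathsf{u})$, and in particular it carries the torsion subgroup $F$ isomorphically onto a finite subgroup of the torsion subgroup of ${\rm Odo}(\mathsf{u})$.

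Finally I would compute that torsion subgroup. Writing $\mathsf{u}=\prod_p p^{n_p}$, the Chinese remainder theorem gives ${\rm Odo}(\mathsf{u})\cong\prod_p A_p$, where $A_p=\mathbb{Z}/p^{n_p}\mathbb{Z}$ when $n_p<\infty$ and $A_p=\mathbb{Z}_p$, which is torsion-free, when $n_p=\infty$; hence the torsion subgroup of ${\rm Odo}(\mathsf{u})$ is $\bigoplus_{p\,:\,n_p<\infty}\mathbb{Z}/p^{n_p}\mathbb{Z}$. A finite subgroup $F$ of this splits as the direct sum of its $p$-parts $F_p\le\mathbb{Z}/p^{n_p}\mathbb{Z}$, each of which is cyclic of order $p^{k_p}$ with $k_p\le n_p$ and $k_p=0$ for all but finitely many $p$. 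Being a direct sum of cyclic groups of pairwise coprime orders, $F\cong\mathbb{Z}/n\mathbb{Z}$ with $n=\prod_p p^{k_p}$. By construction $n\mid\mathsf{u}$, and any prime $p\mid n$ has $k_p\ge 1$, which forces $n_p<\infty$, i.e. $p^\infty\nmid\mathsf{u}$. Combined with the first step this yields $\Aut(X,S)\cong\mathbb{Z}\oplus(\mathbb{Z}/n\mathbb{Z})$ with $n$ exactly as claimed.
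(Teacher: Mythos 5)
Your proof is correct and follows essentially the same route as the paper: embed $\Aut(X,S)$ into $({\rm Odo}(\mathsf{u}),+)$ via the maximal equicontinuous factor, use the Espinoza--Maass virtually-$\mathbb{Z}$ result together with abelianness to get the $\mathbb{Z}\oplus F$ splitting, and then identify $F$ inside the torsion subgroup $\bigoplus_{p^\infty\nmid\,\mathsf{u}}\mathbb{Z}/p^{n_p}\mathbb{Z}$ of the odometer. The only difference is that you prove the embedding (functoriality plus injectivity via the almost one-to-one extension over the odometer) where the paper simply cites \cite[Lemmas 2.1 and 2.4]{DDMP16}, so your write-up is a more self-contained version of the same argument.
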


\begin{proof} By \cite[Lemmas 2.1 and 2.4]{DDMP16}, the automorphism group of $(X, S)$ is a subgroup of the automorphism group of $({\rm Odo}(\mathsf{u}), S)$, which is in turn a subgroup of the additive group $({\rm Odo}(\mathsf{u}), +)$. It follows that the automorphism group of $(X, S)$ is isomorphic to $\mathbb{Z}\oplus F$, where $F$ is a finite subgroup of $({\rm Odo}(\mathsf{u}), +)$. It is easy to see that the torsion part of $({\rm Odo}(\mathsf{u}), +)$ is of the form $\bigoplus_{p\in T}p^{n_p}$, where $T=\{p\in P\colon p^\infty\!\!\!\not{|}\,\mathsf{u}\}$ and $n_p$ is a positive integer. Any subgroup of it is cyclic and has the form $\mathbb{Z}/n\mathbb{Z}$, where the prime factors of $n$ are in $T$.
\end{proof}

One naturally wonders if the automorphism group of $(X, S)$ is always isomorphic to $\mathbb{Z}$ as in the rank $2$ case. In the following we present an example which gives a negative answer.

\begin{example}
There is a Toeplitz subshift of topological rank at most 4 whose automorphism group is isomorphic to $\mathbb{Z}\oplus (\mathbb{Z}/n\mathbb{Z})$ where $n>1$.
\end{example}
\begin{proof}
For every $u\in \{0,1\}^*$, by $\widetilde{u}$ we denote the word such that $|\widetilde{u}|=|u|$ and $\widetilde{u}(m)=1-u(m)$ for every $0\le m< |u|$. Equivalently, let $\varphi$ be the {\em flip morphism}, i.e., the morphism given by $\varphi(0)=1$ and $\varphi(1)=0$; then $\tilde{u}=\varphi(u)$. $\varphi$ also induces an isomorphism $\varphi^*\colon \{0,1\}^\mathbb{Z}\to \{0,1\}^\mathbb{Z}$, where $\varphi^*(x)(k)=\varphi(x(k))$ for all $k\in\mathbb{Z}$.

Indutively define $\{w_{i,1}, w_{i,2}, w_{i,3}, w_{i,4}\}_{n\geq 0}$ as follows. Let 
$$u_0=0001, \ v_0=0111, $$
and let 
$$\begin{array}{rcl}
w_{0,1}\!\!\!\!&=&\!\!\!\! u_0\,\widetilde{u_0}, \\
w_{0,2}\!\!\!\!&=&\!\!\!\!u_0\,\widetilde{v_0\,}, \\
w_{0,3}\!\!\!\!&=&\!\!\!\!v_0\,\widetilde{u_0}, \\
w_{0,4}\!\!\!\!&=&\!\!\!\!v_0\,\widetilde{v_0\,}. 
\end{array}
$$ 
Suppose for $i\geq 0$ we have defined $u_i$, $v_i$ and $w_{i,j}$ for $1\le j\le 4$, and we have 
$$\begin{array}{rcl}
w_{i,1}\!\!\!\!&=&\!\!\!\!u_i\,\widetilde{u_i\,}, \\
w_{i,2}\!\!\!\!&=&\!\!\!\! u_i\widetilde{\,v_i\,}, \\
w_{i,3}\!\!\!\!&=&\!\!\!\!v_i\,\widetilde{u_i\,}, \\
w_{i,4}\!\!\!\!&=&\!\!\!\!v_i\widetilde{\,v_i\,}.
\end{array}
$$
 Then let $$u_{i+1}=u_i\,\widetilde{u_i\,}\,u_i\widetilde{\,v_i\,}\,v_i\,\widetilde{u_i\,}\,v_i\widetilde{\,v_i\,}\,u_i\,\widetilde{u_i\,}\,u_i$$ and $$v_{i+1}=u_i\,\widetilde{u_i\,}\,u_i\widetilde{\,v_i\,}\,v_i\,\widetilde{u_i\,}\,v_i\widetilde{\,v_i\,}\,v_i\,\widetilde{u_i\,}\,u_i.$$ Note that both $u_{i+1}$ and $v_{i+1}$ begin with $w_{i,1}w_{i,2}w_{i,3}w_{i,4}$ and end with $\widetilde{w_{i,1}}$. We define 
$$\begin{array}{rcl}
w_{i+1,1}\!\!\!\!&=&\!\!\!\!u_{i+1}\,\widetilde{u_{i+1}}, \\
w_{i+1,2}\!\!\!\!&=&\!\!\!\!u_{i+1}\,\widetilde{v_{i+1}}, \\
w_{i+1,3}\!\!\!\!&=&\!\!\!\!v_{i+1}\,\widetilde{u_{i+1}}, \\
w_{i+1,4}\!\!\!\!&=&\!\!\!\!v_{i+1}\,\widetilde{v_{i+1}}.
\end{array}$$
 Then in every $w_{i+1,j}$ where $1\le j\le4$, the $u_i$ or $v_i$ item and the $\widetilde{u_i\,}$ or $\widetilde{\,v_i\,}$ item occur in turn, so $w_{i+1,j}$ is built from $\{w_{i,j}:1\le j\le4\}$. Moreover, the building of $w_{i+1,j}$ from $\{w_{i,j}:1\le j\le4\}$ begins and ends with $w_{i,1}$. 

There is a natural directive sequence $\boldsymbol{\tau}=(\tau_{i}\colon A_{i+1}^*\to A_i^*)_{i\geq 0}$ given by the above definition. We have $\mathsf{A}=A_0=\{0,1\}$ and $|A_{i+1}|=4$ for all $i\geq 0$. Since $\boldsymbol{\tau}$ is constant-length, primitive and proper, $X_{\boldsymbol{\tau}}$ is a Toeplitz subshift by Proposition~\ref{prop:ADE} (5). Obviously $\boldsymbol{\tau}$ has alphabet rank $4$, hence $X_{\boldsymbol{\tau}}$ has topological rank at most $4$ by Theorem~\ref{thm:DDMP21}. 

It is now easy to check that $\varphi^*$ is an automorphism of $X_{\boldsymbol{\tau}}$. Clearly $\varphi^*$ is non-trivial and has order $2$. 
\end{proof}

The next theorem shows that for any $n\geq 1$, the group $\mathbb{Z}\oplus(\mathbb{Z}/n\mathbb{Z})$ can be realized as the isomorphism type of the automorphism group of a Toeplitz subshift of finite rank.

\begin{theorem} \label{thm:aut}
For any $n\geq 1$, there is a strong Toeplitz subshift $(X, S)$ of rank $\max\{2, n^n\}$ such that the automorphism group of $(X, S)$ is isomorphic to $\mathbb{Z}\oplus (\mathbb{Z}/n\mathbb{Z})$. 
\end{theorem}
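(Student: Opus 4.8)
The plan is to dispose of $n=1$ at once and, for $n\ge 2$, to generalize the construction of the Example immediately preceding this theorem, replacing the order-two flip $\varphi$ by the full $n$-cycle on a size-$n$ alphabet and then pinning down the scale so that the resulting torsion is \emph{exactly} $\mathbb{Z}/n\mathbb{Z}$. For $n=1$ we have $\mathbb{Z}\oplus(\mathbb{Z}/1\mathbb{Z})\cong\mathbb{Z}$ and $\max\{2,n^n\}=2$, so it suffices to take any strong Toeplitz subshift of rank $2$ (such exist, e.g.\ by Theorem~\ref{thm:gen}); its automorphism group is $\langle S\rangle\cong\mathbb{Z}$ by the rank-$2$ theorem of Donoso--Durand--Maass--Petite recalled at the start of this section. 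So assume $n\ge 2$. I would work over the alphabet $\mathsf{A}=\mathbb{Z}/n\mathbb{Z}$, let $\sigma\colon\mathsf{A}\to\mathsf{A}$ be the $n$-cycle $\sigma(k)=k+1$, and let $\sigma^*\colon\mathsf{A}^\mathbb{Z}\to\mathsf{A}^\mathbb{Z}$ act letterwise; then $\sigma^*$ commutes with $S$ and has order $n$. The goal is to build a constant-length, primitive, proper directive sequence $\boldsymbol{\tau}=(\tau_i\colon A_{i+1}^*\to A_i^*)_{i\ge 0}$ with $A_0=\mathsf{A}$ and $|A_{i+1}|=n^n$, so that by Proposition~\ref{prop:ADE}(5) together with recognizability from \cite[Theorem~4.6]{BSTY19}, $X_{\boldsymbol{\tau}}$ is a strong Toeplitz subshift of rank $n^n=\max\{2,n^n\}$ on which $\sigma^*$ is an automorphism of order $n$.

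The construction generalizes the Example. At each level $i$ I would carry $n$ ``base words'' $b_{i,0},\dots,b_{i,n-1}$ of a common length $\ell_i$, and take the $n^n$ level-$i$ blocks to be the $\sigma$-twisted products
$$ w_{i,\vec c}=b_{i,c_1}\,\sigma(b_{i,c_2})\,\sigma^2(b_{i,c_3})\cdots\sigma^{n-1}(b_{i,c_n}),\qquad \vec c=(c_1,\dots,c_n)\in(\mathbb{Z}/n\mathbb{Z})^n, $$
so that $|w_{i,\vec c}|=n\ell_i=:p_i$, and $\tau_{[0,i+1)}$ sends the index of $\vec c$ in $A_{i+1}$ to $w_{i,\vec c}\in\mathsf{A}^*$. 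The recursion defines each $b_{i+1,k}$ as a fixed $\sigma$-twisted concatenation of $M$ level-$i$ base words, where $M$ is a constant coprime to $n$, arranging (exactly as in the Example) that every $b_{i+1,k}$ begins and ends with common level-$i$ material and that all pairs of level-$i$ blocks occur consecutively. Then $\ell_{i+1}=M\ell_i$ and $p_{i+1}=Mp_i$, so every level-$(i+1)$ block is a concatenation of $M$ level-$i$ blocks, whence $\boldsymbol{\tau}$ has constant length $M$, is proper and primitive. The point of the $\sigma$-twist is that $\sigma(w_{i,\vec c})=\sigma(b_{i,c_1})\cdots\sigma^{n-1}(b_{i,c_{n-1}})\,b_{i,c_n}$ is again a \emph{shifted} concatenation of blocks; since all block adjacencies occur, $\sigma$ carries $\mathcal{L}(X_{\boldsymbol{\tau}})$ into itself and $\sigma^*$ is an automorphism. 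As $\sigma$ has no nontrivial letterwise fixed point, $\sigma^*$ has order exactly $n$, and being of finite order it is not a power of $S$.

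The decisive bookkeeping is the choice of lengths. Taking a prime $q\nmid n$ and setting $\ell_0=q^{t}$, $M=q^{s}$ (with $t,s$ large enough for properness, aperiodicity, and recognizability) gives $p_i=n\,q^{\,t+si}$. Since $\boldsymbol{\tau}$ is constant-length, its scale is the least common multiple of the common image lengths $p_i$ (by the same reasoning as in Lemma~\ref{gcd}, specialized to the constant-length case), so $\mathsf{u}={\rm lcm}(p_i)=n\,q^{\infty}$. Thus $q^{\infty}\,|\,\mathsf{u}$ while each prime $p\,|\,n$ divides $\mathsf{u}$ to the \emph{finite} power $v_p(n)$, and hence the torsion subgroup of $({\rm Odo}(\mathsf{u}),+)$ is exactly $\mathbb{Z}/n\mathbb{Z}$. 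Now Proposition~\ref{prop:aut} gives that the automorphism group is $\mathbb{Z}\oplus(\mathbb{Z}/m\mathbb{Z})$ for some $m\,|\,\mathsf{u}$ all of whose prime factors appear to finite power in $\mathsf{u}$, which forces $m\,|\,n$; on the other hand the finite-order automorphism $\sigma^*$ has order $n$, so its class lies in the torsion summand $\mathbb{Z}/m\mathbb{Z}$ and $n\,|\,m$. Therefore $m=n$ and the automorphism group is $\mathbb{Z}\oplus(\mathbb{Z}/n\mathbb{Z})$.

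The main obstacle is precisely this combinatorial and arithmetic design. One must simultaneously keep the per-level expansion $M$ coprime to $n$ and the base length $\ell_0$ free of the primes dividing $n$, so that the scale is $n\,q^{\infty}$ and the torsion does \emph{not} overshoot $\mathbb{Z}/n\mathbb{Z}$ (unlike in the Example, where the scale carries an extra power of a prime dividing $n$ and the order is only guaranteed to be some $n>1$); and one must realize the $\sigma$-twisted blocks so that $\mathcal{L}(X_{\boldsymbol{\tau}})$ is genuinely $\sigma$-invariant while $\boldsymbol{\tau}$ stays constant-length, primitive, proper, and recognizable. The technical heart is checking that the $\sigma$-twisted recursion aligns the base-word pieces into valid level-$i$ blocks, i.e.\ that $\sigma^*$ acts as the ``$1/n$-period'' translation on the odometer factor; once this is verified, the remaining verifications (Toeplitzness, recognizability, and the scale computation) are routine applications of the results quoted above.
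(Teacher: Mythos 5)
Your overall strategy is the same as the paper's: dispose of $n=1$ via the rank-$2$ automorphism-group theorem of Donoso--Durand--Maass--Petite, and for $n\ge 2$ build a constant-length, primitive, proper, recognizable directive sequence with $n^n$ letters per level whose level-$i$ blocks are twisted products of $n$ base words under a letterwise cyclic symmetry, then pin down the group by a two-sided squeeze: the order-$n$ letterwise automorphism forces $n\mid |C|$, while a scale of the form $n\cdot M^{\infty}$ with $\gcd(M,n)=1$ together with Proposition~\ref{prop:aut} forces $|C|\mid n$. The paper's construction is essentially yours with $M=mn+1$ and with the phase encoded into the alphabet: it works over $|\mathsf{A}|=n^2$ letters $f(r,s)$, so the level-$0$ base words are single letters and the twist is visible at the letter level.

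There is, however, one concrete error in your numerics: requiring only that $M$ be coprime to $n$ is not enough for your central claim that every level-$(i+1)$ block is a concatenation of $M$ level-$i$ blocks. Track the twist exponents: inside $b_{i+1,k}$ they run $0,1,\dots,(M-1)\bmod n$, so in $w_{i+1,\vec c}=b_{i+1,c_1}\,\sigma(b_{i+1,c_2})\cdots\sigma^{n-1}(b_{i+1,c_n})$ the second chunk begins at exponent $1$, whereas a decomposition into level-$i$ blocks (whose exponent pattern is always $0,1,\dots,n-1$) requires it to begin at exponent $M\bmod n$; the alignment therefore forces $M\equiv 1\pmod n$, which is exactly why the paper's recursion length is $mn+1$. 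For instance with $n=3$, $M=5$ (coprime to $3$) the concatenated exponent sequence is $0,1,2,0,1,\,1,2,0,1,2,\,2,0,1,2,0$, not the cyclic $0,1,2,0,\dots$, so the $w_{i+1,\vec c}$ are not built from the $w_{i,\vec d}$ and $\boldsymbol{\tau}$ is not even defined. The slip is repairable inside your scheme: take $M=q^{s}$ with $s$ a multiple of the multiplicative order of $q$ modulo $n$, or simply drop the prime-power insistence and take any $M\equiv 1\pmod n$ (then $\gcd(M,n)=1$ is automatic, and with $\ell_0=1$ as in the paper the scale is $n\,M^{\infty}$, so your torsion computation goes through verbatim). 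Separately, with your size-$n$ alphabet the level-$0$ base words are no longer single letters, so you still owe a concrete choice of the $b_{0,k}$ and of the twisted recursion making the subshift aperiodic, the sequence proper and primitive, and everything recognizable; the paper discharges exactly this by constructing the auxiliary recipe words $\mathcal{U}$ with properties (1)--(4) (common prefix/suffix for properness, all length-$n$ adjacencies for primitivity and for $\sigma$-invariance of the language, and a unique-occurrence property for recognizability). Your sketch correctly identifies this alignment as ``the technical heart'' but does not carry it out, and as stated the arithmetic condition governing it is wrong.
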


The rest of this section is devoted to a proof of Theorem~\ref{thm:aut}. 

By \cite[Corollary 7.4]{DDMP21}, any Toeplitz subshift of topological rank $2$ has an automorphism group isomorphic to $\mathbb{Z}$. This is the $n=1$ case. For the rest of this proof, assume $n\geq 2$.

Fix a sufficiently large integer $m$. We first construct a subset $\mathcal{U}$ of finite words in alphabet $\mathsf{B}=\{0,1,2,\dots,n-1\}$ with the following properties:
\begin{enumerate}
\item each word in $\mathcal{U}$ has length $mn+1$, i.e., $U\subseteq \mathsf{B}^{mn+1}$, and $|\mathcal{U}|=n$;
\item for any $u,u'\in \mathcal{U}$, $u[0,n)=u'[0,n)$ and
$$\begin{array}{rcl} u[(m-1)n+1,mn+1)\!\!\!\!&=&\!\!\!\!u((m-1)n, mn] \\
&=&\!\!\!\!u'((m-1)n,mn]=u'[(m-1)n+1,mn+1); 
\end{array}$$
\item for any $\alpha, \beta\in\mathsf{B}^n$ and any $u\in\mathcal{U}$, there is $k<m$ such that $u[kn,(k+2)n)=\alpha\beta$;
\item for any $u,u',u''\in\mathcal{U}$, if $u$ occurs in $u'u''$ at position $i$, then $i=0$ (and $u=u'$) or $i=mn+1$ (and $u=u''$).
\end{enumerate}
We construct the elements of $\mathcal{U}$ as $(u_t)_{0\leq t<n}$. Arbitrarily fix $\alpha_0\in \mathsf{B}^n$ which begins and ends with $01\in \mathsf{B}^*$. Fix a word $\gamma_0\in \mathsf{B}^*$ which is a concatenation of all the words in $\{\alpha\beta\colon \alpha, \beta\in\mathsf{B}^n\}$ without repetititions; in particulat $|\gamma_0|$ is a multiple of $n$. Without loss of generality, we may assume that $\gamma_0$ begins and ends with $\alpha_0$. For each $0\leq t<n$, let $\eta_t$ be the word $t^{6n+1}\in \mathsf{B}^*$. Then define
$$ u_t=\alpha_0\gamma_0\eta_t\alpha_0 $$
for all $0\leq t<n$. It is easy to see that (1)--(3) are satisfied. For (4), we note that $\gamma_0$ does not contain an occurrence of $t^{6n+1}$ for any $t\in\mathsf{B}$. Moreover, for $t\in\{0,1\}\subseteq\mathsf{B}$, $u_t$ contains a unique occurrence of $t^{6n+2}$ which contains the demonstrated occurrence of $\eta_t$; for $t\in\mathsf{B}\setminus\{0,1\}$, the demonstrated occurrence of $\eta_t$ is the unique occurrence of $t^{6n+1}$ in $u_t$. For $t, t'\in \mathsf{B}$ with $t\neq t'$, $u_{t'}$ does not contain any occurrence of $t^{6n+1}$. Property (4) follows from these observations.
 
In the rest of the proof we use the enumeration $(u_{t})_{0\le t< n}$ of the elements of $\mathcal{U}$. 

We will use the follwing notation. For any integer $k$, let $(k)_n$ denote the remainder $k\ (\!\!\!\!\mod n)$, i.e., $(k)_n$ is the unique $0\leq j<n$ with $k\equiv j\ (\!\!\!\!\mod n)$. 

Let $\mathsf{A}$ be an alphabet with $|\mathsf{A}|=n^2$. Fix a bijection $f$ from $\{0,1,2,\dots, n-1\}\times\{0,1,2,\dots, n-1\}$ to $\mathsf{A}$. Define a morphism $\phi\colon\mathsf{A}^*\to\mathsf{A}^*$ by letting $\phi(f(r,s))=f(r,(s+1)_n)$.

Since $|\{0,1,2,\dots,n-1\}^n|=n^n$, for notational convenience, we define 
$$(w_{i,\alpha})_{i\geq 0,\,\alpha\in \{0,1,2,\dots,n-1\}^n}$$ by induction on $i$. First let $v_{0,r,s}=f(r,s)$ for $0\le r,s< n$. Then let
$$w_{0,\alpha}=v_{0,\alpha(0),0}v_{0,\alpha(1),1}\cdots v_{0,\alpha(n-1),n-1}=f(\alpha(0),0)f(\alpha(1),1)\cdots f(\alpha(n-1),n-1)$$
for all $\alpha\in\{0,1,2,\dots, n-1\}^n$. It is clear that for distinct $\alpha, \beta\in\{0,1,2,\dots,n-1\}^n$, we have $w_{0,\alpha}\neq w_{0,\beta}$. Also, for any word $w\in \mathsf{A}^n$ of the form
$$ w=v_{0,r_0,0}v_{0,r_1,1}\cdots v_{0,r_{n-1},n-1}$$
with $r_0, r_1,\dots, r_{n-1}\in \{0,1,\dots, n-1\}$, if we let $\alpha=r_0r_1\cdots r_{n-1}$, then $w=w_{0,\alpha}$. It follows that for any word $w\in \mathsf{A}^*$ with $n\,|\,|w|$, if $w$ is written as a concatenation of $v_{0,r,s}$'s and the indices $s$ are ordered as $0,1,2,\dots, n-1,0, 1,2,\dots$, then $w$ is built from 
$$W_0=\big\{w_{0,\alpha}\colon \alpha\in\{0,1,2,\dots, n-1\}^n\big\}.$$

Suppose for $i\geq 0$, $(v_{i,r,s})_{0\le r,s< n}$ and $(w_{i,\alpha})_{\alpha\in \{0,1,2,\cdots,n-1\}^n}$ have been defined so that the following hold:
\begin{itemize}
\item[(i)] $w_{i,\alpha}=v_{i,\alpha(0),0}v_{i,\alpha(1),1}\cdots v_{i,\alpha(n-1),n-1}$;
\item[(ii)] for any $0\le r, s<n $, $v_{i,r,s}=\phi^s(v_{i,r,0})$.
\end{itemize} 
Then for $0\leq t<n$, define $$u_{i+1,t}=v_{i,u_t(0),(0)_n}v_{i,u_t(1),(1)_n}\cdots v_{i,u_t(k),(k)_n}\cdots v_{i,u_t(mn),(mn)_n}.$$ For $0\leq r, s<n$, define 
$$v_{i+1,r,0}=u_{i+1,r} \mbox{ and } v_{i+1,r,s}=\phi^s(v_{i+1,r,0}). $$ For $\alpha\in \{0,1,2,\dots,n-1\}^n$, define 
$$w_{i+1,\alpha}=v_{i+1,\alpha(0),0}v_{i+1,\alpha(1),1}\cdots v_{i+1,\alpha(n-1),n-1}.$$ 
This finishes the inductive definition.

It is easy to see that for any $i\geq 0$, $\alpha\in\{0,1,2,\cdots, n-1\}^n$, $0\leq r, s<n$, we have
$$ |v_{i,r,s}|=(mn+1)^i \mbox{ and } |w_{i,\alpha}|=n(mn+1)^i. $$
For each $i\geq 0$, let
$$ W_i=\big\{w_{i,\alpha}\colon \alpha\in\{0,1,2,\dots, n-1\}^n\big\}.$$
Now each element of $W_1$ has length $n(mn+1)$, which is a multiple of $n$. Moreover, if we write each element of $W_1$ as a concatenation of $v_{0,r,s}$'s, then the indices $s$ are ordered as $0,1,2,\dots, n-1, 0,1,2,\dots$. It follows from the observation we made in the definition of case $i=0$ that each element of $W_1$ is built from $W_0$. By a similar argument, we conclude that for any $i\geq 0$, each element of $W_{i+1}$ is built from $W_i$. 

The definition naturally gives rise to a directive sequence $\boldsymbol{\tau}=(\tau_i\colon A_{i+1}^*\to A_i^*)_{i\geq 0}$, where $A_0=\mathsf{A}$ and $|A_{i+1}|=n^n$ for all $i\geq 0$. 
It is clear that $\boldsymbol{\tau}$ has constant length. Moreover, $\boldsymbol{\tau}$ is proper by (2), primitive by (3), and recognizable by (4). Thus $(X_{\boldsymbol{\tau}}, S)$ is a strong Toeplitz subshift of rank $n^n$.
 
Fix arbitray $i\geq 0$, $\alpha\in \{0,1,2,\dots,n-1\}^n$ and $0<s<n$, and consider $\phi^s(w_{i,\alpha})$. By induction on $i\geq 0$, one can easily verify that
$$ \phi^s(w_{i,\alpha})=v_{i, \alpha(0), (s)_n}v_{i,\alpha(1), (1+s)_n}\cdots v_{i, \alpha(n-1), (n-1+s)_n}. $$
It follows that if $\beta, \gamma\in \{0,1,2,\cdots,n-1\}^n$ and $(\beta\gamma)[s,s+n)=\alpha$, then $\phi^s(w_{i,\alpha})$ occurs in $w_{i,\beta}w_{i,\gamma}$ at position $s(mn+1)^i$. By (3), for any $\beta,\gamma\in\{0,1,2,\dots,n-1\}^n$, we have that $w_{i,\beta}w_{i,\gamma}$ is a subword of $w_{i+1,\alpha}$. Thus in particular $\phi^s(w_{i,\alpha})$ is a subword of $w_{i+1,\alpha}$.

Let $\varphi\colon \mathsf{A}^\mathbb{Z}\to \mathsf{A}^\mathbb{Z}$ be defined by 
$$\varphi(x)(k)=\phi(x(k))$$ 
for any $x\in \mathsf{A}^\mathbb{Z}$ and $k\in\mathbb{Z}$. Then by the above observation, $\varphi$ is an automorphism of $(X_{\boldsymbol{\tau}},S)$. We obviously have that $\varphi^s\ne \mbox{id}$ for $1\le s<n$ and $\varphi^n=\mbox{id}$. So the order of $\varphi$ is $n$. By Proposition~\ref{prop:aut}, the automorphism group of $(X_{\boldsymbol{\tau}},S)$ is isomorphic to $\mathbb{Z}\oplus C$, where $C$ is a finite cyclic group and $|C|$ is a multiple of $n$.

Now we note the following generalization of Lemma~\ref{gcd}.

\begin{lemma} Let $K\geq 2$ and let $\boldsymbol{\tau}=(\tau_i\colon A^*_{i+1}\to A^*_i)_{i\geq 0}$ be a primitive, proper and recognizable directive sequence of alphabet rank $K$. Suppose that $(X_{\boldsymbol{\tau}},S)$ is an aperiodic Toeplitz subshift. For each $i\geq 0$, let $d_i=\mbox{\rm gcd}(|\tau_{[0,i+1)}(a)|\colon a\in A_{i+1})$. Then $\mbox{\rm lcm}(d_i)_{i\geq 0}$ is the scale for $(X_{\boldsymbol{\tau}}, S)$. 
\end{lemma}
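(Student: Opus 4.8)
The plan is to follow the proof of Lemma~\ref{gcd} essentially verbatim, the only difference being that the alphabet $A_{n+1}$ at each level may now contain more than two letters. I would fix a Toeplitz sequence $x\in X_{\boldsymbol{\tau}}$ and, for $n\in\mathbb{N}$ and $a\in A_{n+1}$, write $w_{n,a}=\tau_{[0,n+1)}(a)$, so that $d_n=\gcd(|w_{n,a}|\colon a\in A_{n+1})$. Since the scale $\mathsf{u}$ of a Toeplitz subshift is by definition the $\mathrm{lcm}$ of its essential periods, it suffices to prove two divisibility statements: (1) every essential period $p$ of $x$ divides some $d_n$; and (2) every $d_n$ divides some essential period of $x$. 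Statement (1) yields $\mathsf{u}\mid \mathrm{lcm}(d_n)_{n}$ and statement (2) yields $\mathrm{lcm}(d_n)_{n}\mid \mathsf{u}$, which together give the desired equality of supernatural numbers.

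For (1), let $p$ be an essential period. As in Lemma~\ref{gcd} I would invoke \cite[Lemma 2.3]{Williams1984} to obtain the clopen set $U=\{y\in X\colon {\rm Skel}(y,p)={\rm Skel}(x,p)\}$ with $\{S^kU\colon 0\le k<p\}$ a partition of $X$ and $S^pU=U$, then use minimality to fix $m$ with $\{y\colon y[-m,m)=x[-m,m)\}\subseteq U$ and $m'>2m$ so that every subword of length at least $m'$ occurring in $X$ contains $x[-m,m)$, and finally pick $n$ with $|w_{n,a}|\ge m'$ for all $a\in A_{n+1}$. Properness of $\boldsymbol{\tau}$ forces all the words $w_{n+1,a}$ ($a\in A_{n+2}$) to share the common prefix $w_{n,j_0}$, where $j_0\in A_{n+1}$ is the first letter of every $\tau_{n+1}(a)$; since $|w_{n,j_0}|\ge m'$, the block $x[-m,m)$ occurs at some position $t$ inside $w_{n,j_0}$. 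Now for each $a\in A_{n+2}$ I claim there is $b\in A_{n+2}$ with $w_{n+1,a}w_{n+1,b}$ a subword of some $z\in X$: by primitivity $a$ occurs in $L^{(n+2)}(\boldsymbol{\tau})$, hence in some $z'\in X^{(n+2)}_{\boldsymbol{\tau}}$, where it is followed by a letter $b$, so $\tau_{[0,n+2)}(ab)=w_{n+1,a}w_{n+1,b}$ lies in the language of $X$. Because both $w_{n+1,a}$ and $w_{n+1,b}$ begin with $w_{n,j_0}$, the two copies of $x[-m,m)$ sitting at the corresponding offsets are exactly $|w_{n+1,a}|$ apart, whence $S^{t+m}z,\,S^{t+m+|w_{n+1,a}|}z\in U$ and so $U\cap S^{|w_{n+1,a}|}U\neq\varnothing$; the partition property then gives $p\mid|w_{n+1,a}|$. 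As $a$ was arbitrary, $p\mid\gcd_{a}|w_{n+1,a}|=d_{n+1}$.

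For (2), fix $n$. By recognizability each element of $X$ has a unique building from $\{w_{n,a}\colon a\in A_{n+1}\}$; I would fix a letter $a_0$ occurring in the building of $x$, say $w_{n,a_0}$ appears at position $i$. Lemma~\ref{lem:rec} then provides $r>|i|$ such that any $y\in X$ agreeing with $x$ on $[-r,r)$ has $w_{n,a_0}$ occurring at position $i$ in its own building. Choosing an essential period $p$ with $[-r,r)\subseteq {\rm Per}_p(x)$, the point $S^p(x)$ agrees with $x$ on $[-r,r)$, and since its building is the shift by $p$ of the building of $x$, the word $w_{n,a_0}$ occurs at both $i$ and $i+p$ in the building of $x$. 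Hence $x[i,i+p)$ is a concatenation of words from $\{w_{n,a}\}$, so $p=\sum_{a} c_a|w_{n,a}|$ for some $c_a\in\mathbb{N}$, and therefore $d_n=\gcd_{a}|w_{n,a}|$ divides $p$.

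The single place where the argument is genuinely more than a transcription of the rank-$2$ case is the existence, in step (1), of a successor letter $b$ for each $a\in A_{n+2}$; I expect this to be the only real obstacle, and I handle it by the observation that every letter of $A_{n+2}$ occurs in $X^{(n+2)}_{\boldsymbol{\tau}}$ (by primitivity) and is followed there by some letter. Everything else is identical to Lemma~\ref{gcd}, with sums over $A_{n+1}$ replacing the two-term expressions. I note in particular that the hypothesis of finite alphabet rank is never used beyond the finiteness of each $A_i$, which is what guarantees the defining gcd's exist.
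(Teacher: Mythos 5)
Your proof is correct and is exactly what the paper intends: the paper's own proof of this lemma is the single line ``The proof is similar to that of Lemma~\ref{gcd},'' and your argument is that rank-$2$ proof transcribed with $\{w_{n,a}\colon a\in A_{n+1}\}$ in place of $\{w_{n,1},w_{n,2}\}$, with both divisibility directions carried over intact. Your one genuinely new step --- using primitivity to supply, for each $a\in A_{n+2}$, a successor letter $b$ with $w_{n+1,a}w_{n+1,b}$ in the language of $X$, where the rank-$2$ proof invoked aperiodicity to place $w_{n+1,1}w_{n+1,2}$ in some $z\in X$ --- is a sound (indeed cleaner) replacement, since the argument only ever uses that the successor block begins with the common prefix $w_{n,j_0}$ guaranteed by properness.
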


\begin{proof} The proof is similar to that of Lemma~\ref{gcd}.
\end{proof}

Since $|w_{i, \alpha}|=n(nm+1)^n$ for all $i\geq 0$ and $\alpha\in\{0,1,2,\dots, n-1\}^n$, the lemma gives that the scale for $(X_{\boldsymbol{\tau}}, S)$ is the supernatural number $n\times (mn+1)^{\infty}$. By Proposition~\ref{prop:aut}, the automorphism group of $(X_{\boldsymbol{\tau}}, S)$ is isomorphic to $\mathbb{Z}\oplus C$, where $C$ is a cyclic group and $|C|$ is a factor of $n$. Hence $|C|=n$ and $C$ is isomorphic to $\mathbb{Z}/n\mathbb{Z}$. 

The proof of Theorem~\ref{thm:aut} is complete.

\section{Open Problems\label{sec:7}}

In this final section we list some open problems for future research.

The following problem is still open.

\begin{problem} Determine the automorphism group of an arbitrary Toeplitz subshift of finite rank.
\end{problem}

The issue is to determine the order of the finite part of the automorphism group from a Toeplitz sequence.

Concerning the Characterization Problem, we still have the following question.

\begin{problem} Is there a function $h\colon \mathbb{N}\to\mathbb{N}$ such that any Toeplitz subshift of topological rank $K\geq 2$ is a strong Toeplitz subshift of rank $h(K)$? 
\end{problem}

Concerning the classification problems, we have the following questions.

\begin{problem} Is the conjugacy relation for all Toeplitz subshifts of finite topological rank hyperfinite?
\end{problem}

\begin{problem} Is the bi-factor relation for Toeplitz subshifts of finite rank the same as the conjugacy relation?
\end{problem}

In this paper we did not consider the orbit equivalence at all. But it is known from Giordano--Putnam--Skau \cite{GPS} that the orbit equivalence for minimal Cantor systems is determined by the orbit equivalence of their corresponding sets of invariant probability measures, which are in turn determined by their corresponding sets of ergodic invariant measures. We are able to show that any Toeplitz subshift of finite rank has only finitely many ergodic invariant measures (this will appear elsewhere). In addition, for unique ergodic systems, it is known (see \cite[Corollary 1]{GPS}) that the orbit equivalence of a single measure is reducible to the equality of countable sets of real numbers. It is therefore conceivable that the orbit equivalence for Toeplitz subshifts of finite rank is Borel reducible to the equality of countable sets of real numbers. But the following problem remains.

\begin{problem} What is the complexity of the orbit equivalence relation among Toeplitz subshifts of finite rank?
\end{problem}

\end{document}